\numberwithin{equation}{section}
\newtheorem{thm}[equation]{Theorem}
\newtheorem{prop}[equation]{Proposition}
\newtheorem{coro}[equation]{Corollary}
\newtheorem{lem}[equation]{Lemma}
\theoremstyle{definition}
\newtheorem{defi}[equation]{Definition}
\newtheorem{rem}[equation]{Remark}
\newtheorem{exa}[equation]{Example}
\newcommand{\Alg}{{\mathbf{Alg}}}
\newcommand{\Top}{{\mathbf{Top}}}
\newcommand{\sSet}{\mathbf{sSet}}
\newcommand{\Map}{\mathrm{Map}}
\newcommand{\uhom}{{\underline \hom}}
\newcommand{\uHOM}{{\underline{\mathrm{HOM}}}}
\newcommand{\diag}{\mathrm{diag}}
\newcommand{\id}{\mathrm{id}}
\newcommand{\Exinf}{{\mathrm{Ex}^{\infty}}}
\newcommand{\colim}{\operatornamewithlimits{colim}}
\newcommand{\sd}{\mathrm{sd}}
\newcommand{\op}{\mathrm{op}}
\newcommand{\ob}{\mathrm{ob}}
\newcommand{\const}{\mathrm{const}}
\newcommand{\bigast}{\mathop{\scalebox{1.5}{\raisebox{-0.2ex}{$\ast$}}}}
\newcommand{\Fib}{\mathit{Fib}}
\newcommand{\ev}{\mathrm{ev}}
\newcommand{\Z}{\mathbb{Z}}
\newcommand{\N}{\mathbb{N}}
\newcommand{\Minf}{M_\infty}
\newcommand{\kk}{{kk}}
\newcommand{\Fun}{\mathrm{Fun}}
\newcommand{\ho}{\mathrm{Ho}}
\newcommand{\ind}{\mathrm{ind}}
\newcommand{\concat}{\mathrm{concat}}
\newcommand{\clas}{\mathrm{clas}}
\newcommand{\fk}{\mathfrak{K}}
\newcommand{\ks}{{\fk_s}}
\newcommand{\kf}{{\fk_f}}
\newcommand{\cC}{\mathcal{C}}
\newcommand{\cD}{\mathcal{D}}
\newcommand{\cF}{\mathcal{F}}
\newcommand{\ufk}{\underline{\fk}}
\newcommand{\uks}{{\ufk_s}}
\newcommand{\ukf}{{\ufk_f}}
\newcommand{\tF}{\tilde{F}}
\newcommand{\hF}{\hat{F}}
\newcommand{\vF}{\check{F}}
\newcommand{\scrC}{\mathscr{C}}
\newcommand{\scrU}{\mathscr{U}}
\newcommand{\scrP}{\mathscr{P}}
\newcommand{\scrE}{\mathscr{E}}
\newcommand{\inc}{\mathrm{incl}}
\newcommand{\pr}{\mathrm{pr}}
\newcommand{\Iso}{\mathrm{Iso}}
\newcommand{\MOm}{{\mathcal{M}^\Omega}}
\newcommand{\CAlg}{{C^*\Alg}}
\newcommand{\FSch}{{\cF_{Sch}}}
\newcommand{\KK}{\mathrm{KK}}
\newcommand{\scrT}{\mathscr{T}}
\newcommand{\KH}{\operatorname{KH}}
\newcommand{\bbC}{\mathbb{C}}
\newcommand{\cM}{\mathcal{M}}
\begin{document}

\title{Homotopy structures realizing algebraic kk-theory}
\author{Eugenia Ellis}
\email{eellis@fing.edu.uy}
\address{IMERL. Facultad de Ingenier\'\i a. Universidad de la Rep\'ublica. Montevideo, Uruguay.}
\author{Emanuel Rodr\'iguez Cirone}
\email{erodriguezcirone@cbc.uba.ar}
\address{Dep. Matemática -- CBC -- UBA, Buenos Aires, Argentina.}
\subjclass[2020]{19D55, 18N45, 18N60, 19K35.}

\thanks{All authors were partially supported by grants PICT-2021-I-A-00710, \emph{$K$-theory, homology, and noncommutative geometry}, and UBACyT 2023 20020220300206BA, \emph{Álgebra, geometría y topología no conmutativas}. The first author was partially supported by ANII, CSIC and PEDECIBA. The second author was partially supported by grant PIP 2021-2023 GI, 11220200100423CO, \emph{Álgebra y geometría no conmutativas}.}

\keywords{Bivariant algebraic $K$-theory, $\infty$-categories, categories of fibrant objects.}

\begin{abstract}
Algebraic $\kk$-theory, introduced by Corti\~nas and Thom, is a bivariant $K$-theory defined on the category $\Alg$ of algebras over a commutative unital ring $\ell$. 
It consists of a triangulated category $\kk$ endowed with a functor from $\Alg$ to $\kk$ that is the universal excisive, homotopy invariant and matrix-stable homology theory.
Moreover, one can recover Weibel's homotopy $K$-theory $\KH$ from $\kk$ since we have $\kk(\ell,A)=\KH(A)$ for any algebra $A$. 
We prove that $\Alg$ with the split surjections as fibrations and the $\kk$-equivalences as weak equivalences is a stable category of fibrant objects, whose homotopy category is $\kk$.
As a consecuence of this, we prove that the Dwyer-Kan localization $\kk_\infty$ of the $\infty$-category of algebras at the set of $\kk$-equivalences is a stable infinity category whose homotopy category is $\kk$.
\end{abstract}

\maketitle

\section{Introduction}
Kasparov's $KK$-theory, introduced in \cite{kasparov}, is the major tool in noncommutative topology. To every pair $(A, B)$ of separable $C^*$-algebras it associates an abelian group $KK(A, B)$ that generalizes both of topological $K$-theory and topological $K$-homology. These groups were deeply studied by Cuntz, who gave in \cite{newlook} an alternative description of them and provided a new perspective on the theory. The groups $KK(A, B)$ are the hom-sets of an additive category $KK$ whose objects are separable $C^*$-algebras. Higson proved in \cite{higson} that $KK$ is the target of the universal homotopy invariant, $C^*$-stable and split-exact functor from the category $\CAlg$ of separable $C^*$-algebras into an additive category. Meyer and Nest proved in \cite{mn} that the category $KK$ is actually triangulated in a natural way. Cuntz also analyzed $KK$-theory from an algebraic standpoint and defined in \cite{cuntzlc} a bivariant $K$-theory for all locally convex algebras.

Motivated by the works of Cuntz and Higson, algebraic $\kk$-theory was introduced by Corti\~nas and Thom in \cite{cortho} as a completely algebraic counterpart of Kasparov's $KK$-theory. To every pair $(A,B)$ of algebras over a commutative ring $\ell$ it associates an abelian group $\kk(A,B)$ that generalizes Weibel’s homotopy $K$-theory $\KH$, defined in \cite{chuck}. The groups $kk(A,B)$ are the hom-sets of a triangulated category $kk$ whose objects are $\ell$-algebras. This category $\kk$ is the target of the universal polynomial homotopy invariant, $\Minf$-stable and excisive functor from the category $\Alg$ of $\ell$-algebras into a triangulated category.

Triangulated categories often appear (but not always; see \cite{muro}) as the homotopy categories of stable $\infty$-categories. A natural question is whether there exist stable $\infty$-categories whose homotopy categories are the bivariant $K$-theory categories mentioned above. For Kasparov's $KK$-theory this is answered affirmatively by Land and Nikolaus in \cite{ln}. Indeed, they construct a stable $\infty$-category $KK_\infty$ whose homotopy category is $KK$, upon performing the Dwyer-Kan localization of the $\infty$-category $\CAlg$ at the set of $KK$-equivalences. The same methods are used in \cite{bel} to establish the $G$-equivariant case for a countable group $G$. A different approach is taken in \cite{bunke}, where a stable $\infty$-category realizing $KK$-theory is constructed independently of the classical $KK$-groups. In this paper we  address this question for algebraic $\kk$-theory---the $G$-equivariant case \cite{ekk} will be addressed in a separate work. Our main result is the following:
\begin{thm}[Theorem \ref{thm:main}]\label{thm:intromain}
    Let $\Alg$ be the category of $\ell$-algebras and let $W_\kk$ be the set of $\kk$-equivalences in $\Alg$. Let $\kk_\infty:=\Alg[W_\kk^{-1}]$ be the Dwyer-Kan localization. Then $\kk_\infty$ is a stable $\infty$-category whose homotopy category is triangle equivalent to $\kk$.
\end{thm}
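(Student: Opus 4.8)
The plan is to deduce the theorem from the more concrete assertion, established in the body of the paper, that $\cC:=\Alg$ with the ($\ell$-linearly) split surjections as fibrations and the $\kk$-equivalences as weak equivalences is a \emph{stable} category of fibrant objects whose homotopy category is $\kk$. Granting this, the first step is to invoke the general dictionary between categories of fibrant objects and $\infty$-categories (Cisinski; Szumi\l{}o): the Dwyer--Kan localization $\cC[W_\kk^{-1}]=\kk_\infty$ is a finitely complete $\infty$-category, the localization functor $q\colon\Alg\to\kk_\infty$ carries finite homotopy limits -- in particular the terminal object and pullbacks of fibrations -- to finite limits, and $\ho(\kk_\infty)$ is the ordinary localization of $\Alg$ at $W_\kk$, which by K.\ Brown's theorem is the homotopy category $\ho(\cC)=\kk$. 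Moreover $\kk_\infty$ is pointed: since algebras are not required to be unital, the zero algebra is a zero object of $\Alg$; it stays terminal in $\kk_\infty$ (being the empty homotopy limit), and it is initial there because $\pi_n\Map_{\kk_\infty}(0,X)\cong\pi_0\Map_{\kk_\infty}(0,\Omega^nX)=\mathrm{Hom}_{\kk}(0,\Omega^nX)=0$ for all $n\ge0$ -- using that $\Map_{\kk_\infty}(0,-)$ preserves limits and that $0$ is a zero object of the additive category $\kk$ -- so $\Map_{\kk_\infty}(0,X)$ is weakly contractible.

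It remains to see that $\kk_\infty$ is stable and that the identification $\ho(\kk_\infty)\cong\kk$ respects the triangulations. For stability I would use the criterion that a pointed, finitely complete $\infty$-category on which the loop functor is an equivalence is stable (the dual of Lurie's criterion expressed via suspension), so the task is to show $\Omega\colon\kk_\infty\to\kk_\infty$ is an equivalence. Since $q$ carries the homotopy fiber product $0\times_A^h0$ -- modelled in $\cC$ by the loop algebra $\Omega^{\cC}A=(t^2-t)A[t]$ -- to the categorical pullback $0\times_{qA}0$, the functor induced by $\Omega$ on $\ho(\kk_\infty)=\kk$ is the loop functor $\Omega^{\cC}$ of the stable category of fibrant objects $\cC$, i.e.\ the desuspension $[-1]$, which is an equivalence of $\kk$. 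To upgrade this from the homotopy category to $\kk_\infty$ one works with mapping spaces rather than their $\pi_0$: in any pointed finitely complete $\infty$-category there are natural isomorphisms $\pi_n\Map(X,Y)\cong\mathrm{Hom}_{\ho}(X,\Omega^nY)$ (from $\Map(X,-)$ preserving limits), and since $\Omega^{\cC}$ is an equivalence every object of $\kk_\infty$ is an arbitrarily iterated loop object, so each $\Map_{\kk_\infty}(X,Y)$ is an infinite loop space and the comparison map $\Map_{\kk_\infty}(X,Y)\to\Map_{\kk_\infty}(\Omega X,\Omega Y)$ is an infinite loop map. On homotopy groups, at the zero-map basepoint, this map is $[-1]$ applied to $\mathrm{Hom}_{\kk}(X,\Omega^nY)$, hence bijective; an infinite loop map that is a $\pi_*$-isomorphism is a weak equivalence, so $\Omega$ is fully faithful, and it is essentially surjective since $[-1]$ is, hence an equivalence. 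Thus $\kk_\infty$ is stable.

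For the triangulations: $\ho(\kk_\infty)$ carries the canonical triangulation of a stable $\infty$-category, with shift $\Omega^{-1}$, matching $[1]$ on $\kk$ under the identification above; and a triangle is distinguished in $\ho(\kk_\infty)$ iff it is the image of a fiber sequence of $\kk_\infty$, iff -- representing any morphism of $\kk_\infty$ by a fibration of algebras, as one may in a category of fibrant objects -- it is isomorphic to the sequence $q(A')\to q(A)\to q(A'')$ of an $\ell$-linearly split extension $0\to A'\to A\to A''\to0$; and these are precisely the triangles generating the triangulation of $\kk$ in the sense of Corti\~nas--Thom. Hence $\ho(\kk_\infty)\simeq\kk$ as triangulated categories.

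The one genuinely non-formal step is the passage from ``$\Omega$ is an equivalence on the homotopy category $\kk$'' (which is part of the category-of-fibrant-objects input) to ``$\Omega$ is an equivalence on the $\infty$-category $\kk_\infty$'': it forces one to control the full mapping spaces of the localization, not merely their $\pi_0$, and this is exactly where the identification $\pi_n\Map\cong\mathrm{Hom}_{\ho}(-,\Omega^n-)$ and the (infinite) loop-space structure of these mapping spaces do the work.
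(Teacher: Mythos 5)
Your proposal is correct and follows essentially the same route as the paper: the category-of-fibrant-objects structure together with Cisinski's localization results gives finite limits and pointedness of $\kk_\infty$ and identifies $\ho(\kk_\infty)$ with the $1$-categorical localization, which is $\kk$ by Theorem \ref{kkuniprop}; the loop functor is identified with the loop-algebra functor via the Milnor square $\Omega A\to PA\to A$ as in Lemma \ref{lem:omegas}; and stability and the triangulated comparison are obtained as in Lemmas \ref{lem:main} and \ref{lem:triangleshokk}. The only noteworthy difference is that where the paper cites \cite{ln}*{Lemma 3.4 (2)} to pass from ``$\Omega$ is an equivalence on $\ho(\kk_\infty)$'' to ``$\Omega$ is an equivalence of $\kk_\infty$'', you reprove that lemma by hand using the infinite-loop-space structure on mapping spaces, and you phrase the triangle comparison via split extensions rather than the mapping-path sequences $\Omega B\to P_f\to A\to B$, which is equivalent by excision.
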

To prove this result we follow the path taken in \cite{ln}. A key tool for controlling finite limits in $\kk_\infty$ is the fact that $\Alg$ admits the structure of a category of fibrant objects where the weak equivalences are the $\kk$-equivalences. This fact is proved in Proposition \ref{prop:algcof} and can be considered as an analogue of \cite{uuye}*{Theorem 2.29} in the algebraic context. To show that the loop functor in $\kk_\infty$ is an equivalence, we use that the homotopy category of this category of fibrant objects is the category $\kk$ defined by Corti\~nas-Thom; the latter is established in Theorem \ref{kkuniprop}. Along the way, we prove the following theorem, which is of independent interest.
\begin{thm}[Theorem \ref{thm:kkloc}]
    Let $W_H$ be the family of polynomial homotopy equivalences in $\Alg$. Let $W_S=\{A\to \Minf A\}_{A\in\Alg}$ be the family of upper-left corner inclusions. Let $W_E$ be the family of classifying maps of those extensions whose middle term is either contractible or an infinite-sum ring.
    Then the functor $j:\Alg\to\kk$ is initial in the category of those functors $F:\Alg\to \cC$ such that:
    \begin{itemize}
        \item $\cC$ is a category with finite products and $F$ preserves finite products;
        \item $F$ sends morphisms in $W_H\cup W_S\cup W_E$ to isomorphisms.
    \end{itemize}
\end{thm}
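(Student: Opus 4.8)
The plan is to extract the statement from Corti\~nas--Thom's construction of $\kk$ \cite{cortho}, by checking that that construction only ever uses the morphisms collected in $W_H$, $W_S$ and $W_E$ and makes sense with a target that merely has finite products rather than a triangulated structure. Since $j$ is the identity on objects and, in the model of \cite{cortho}, every morphism of $\kk$ is built out of polynomial-homotopy classes of algebra homomorphisms, of the matrix-stabilization maps, and of classifying maps of extensions, it is enough to prove two things: (i) that $j$ itself preserves finite products and inverts $W_H\cup W_S\cup W_E$; and (ii) that any functor $F$ with these two properties factors through $j$, necessarily uniquely.

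For (i): $j$ preserves finite products because $\kk$ is additive and $j$ preserves biproducts, and it inverts $W_H$ and $W_S$ by polynomial homotopy invariance and $\Minf$-stability. For $W_E$, recall that by excision the classifying map $\xi\colon JC\to A$ of an extension $0\to A\to B\to C\to 0$ is, up to the canonical isomorphism $j(JC)\cong j(C)[-1]$, the connecting morphism of a distinguished triangle $j(A)\to j(B)\to j(C)\to j(A)[1]$. If $B$ is contractible then $j(B)=0$, since the zero endomorphism of $B$ factors through the zero object and $\id_B$ is polynomially homotopic to it; if $B$ is an infinite-sum ring then $j(B)=0$ by the Eilenberg swindle, as in \cite{cortho}. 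In either case the triangle forces the connecting morphism, hence $j(\xi)$, to be an isomorphism, so $j$ inverts $W_E$.

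For (ii), I would proceed stage by stage, following the construction of $\kk$ in \cite{cortho}. Inverting $W_H$ makes $F$ factor through the category of algebras and polynomial homotopy classes of homomorphisms. Inverting $W_S$ makes $F$ $\Minf$-stable, and then --- combined with the rotation homotopies available because $F$ also inverts $W_H$ --- the argument going back to Higson \cite{higson}, carried out algebraically in \cite{cortho}, endows each morphism set $\calC(Z,F(B))$ with a natural abelian group structure, compatible with composition and finite products; the point to check here is that the diagonal homomorphism $B\times B\to M_2B$ becomes, after applying $F$ and the $\Minf$-stability isomorphism, the codiagonal $F(B)\times F(B)\to F(B)$, so that this structure matches the biproduct structure of $\kk$. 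Finally, inverting $W_E$ --- in particular inverting the classifying maps $JX\to\Omega X$ of the path extensions, whose middle terms are contractible, together with their iterated loops --- supplies exactly the invertibility that is built in when one deloops to form $\kk$. Putting the three stages together produces a functor $\bar F\colon\kk\to\calC$ with $\bar Fj=F$; and $\bar F$ is unique because each stage is itself an initial construction among functors with the relevant property, so $\bar F$ is forced on objects (where $j$ is a bijection) and on all morphisms of $\kk$.

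The step I expect to be the main obstacle is the last one: one must check that the rather restricted family $W_E$ --- only the extensions whose middle term is contractible or an infinite-sum ring --- is nevertheless enough to force all of the identifications that are made in passing to $\kk$, equivalently that a finite-product-preserving functor inverting $W_H\cup W_S\cup W_E$ automatically inverts every morphism of $\Alg$ that becomes invertible in $\kk$. Concretely this amounts to re-reading the proof in \cite{cortho} that $\kk$ is triangulated and isolating the part of it that is visible to, and forced upon, a product-preserving functor. Once that is done, the assembly of $\bar F$ and the verification that $\bar Fj=F$ are routine.
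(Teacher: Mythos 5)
Your overall shape---factor $F$ stage by stage through the Corti\~nas--Thom construction and then argue uniqueness---does match the paper's strategy in outline (the paper works through the explicit model $\fk\to\kf\to\ks$ of $\kk$), but the step you defer at the end as ``the main obstacle'' is not a routine verification: it is the actual content of the proof, and your sketch does not contain the idea that makes it work. A morphism of $\kk$ is represented by an algebra homomorphism $f\colon J^vA\to M_k\Minf B^{S^v}_r$, and with a target $\calC$ having only finite products there is no loop or desuspension functor with which to cancel $J^v$ against $(-)^{S^v}$; so ``inverting $W_E$ supplies exactly the invertibility that is built in when one deloops'' does not by itself produce a morphism $F(A)\to F(B)$. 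The paper's solution is to suspend on the algebra side: apply $F(\Sigma^v-)$ to $f$ and use the classifying maps of the cone extension $\Minf A\to\Gamma A\to\Sigma A$ (middle term an infinite-sum ring, hence in $W_E$) and of the path extensions (middle term contractible) to construct isomorphisms $F(\Sigma^vJ^vA)\cong F(A)$ and $F(\Sigma^vB^{S^v})\cong F(B)$ (the maps $\alpha^{n,k}$ and $\beta^{n,k}$ of Definitions \ref{defi:alpha} and \ref{defi:beta}). This forces the stronger notion of $F$-equivalence (invertibility after tensoring with any algebra $C$, Definition \ref{defi:Fequiv}) so that these identifications survive suspension, and the compatibility of the resulting assignment with the transition maps of the colimit and with the composition law of $\fk$ is a genuine computation (Lemmas \ref{lem:alphapq}, \ref{lem:betapq}, \ref{lem:kappaAB}), involving a sign $(-1)^{pq}$ that only makes sense because $F(A)$ is first shown to be a commutative group object of $\calC$ using finite products, matrix stability and $W_E$ (Lemmas \ref{lem:comm} and \ref{lem:comg}). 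Your Higson-style remark about group structures on hom-sets $\calC(Z,F(B))$ is not a substitute: the group structure is needed on the objects $F(A)$ themselves, compatibly with loop concatenation, to make these formulas well defined.

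Your uniqueness argument also has a gap: the intermediate universal properties in \cite{cortho} (and in the model $\fk$, $\kf$, $\ks$) are universal among excisive, homotopy-invariant homology theories with \emph{triangulated} targets, not among finite-product-preserving functors inverting the specific families $W_H$, $W_S$, $W_E$, so the claim that ``each stage is itself an initial construction among functors with the relevant property'' is not available to you. The paper proves uniqueness directly, by showing that every morphism of $\kk$, together with the identifications $\alpha$ and $\beta$, is a zig-zag of morphisms in the image of $j$ (Lemma \ref{lem:zzagkk}), so that any two extensions of $F$ along $j$ must agree.
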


The stability of $\kk_\infty$ implies that $\kk$-theory is naturally enriched over spectra. A point-set level construction of a spectrum representing $\kk$-theory was presented in \cite{gar1}. This was later used in \cite{gar2} to prove that $\kk$ is contravariantly equivalent to a full subcategory of certain motivic stable homotopy category. Our methods are completely different from those used in \cite{gar1} and \cite{gar2}.

In \cite{bunke}, a stable $\infty$-category representing Kasparov's $KK$-theory is constructed by enforcing the universal properties one by one through a sequence of localizations. It is suggested in loc.\ cit.\ that this method, with some modifications, should also work for algebraic $\kk$-theory. The first localization performed in \cite{bunke} is the Dwyer-Kan localization of $\CAlg$ at the homotopy equivalences; write $\CAlg_h$ for the resulting $\infty$-category.
It is proved in \cite{bunke}*{Proposition 3.5} that $\CAlg_h$ is equivalent to coherent nerve of $\CAlg$, considered as a Kan-complex enriched category. In the algebraic context, we get the following result.
\begin{prop}
    Let $\Alg_h$ be the Dwyer-Kan localization of $\Alg$ at the polynomial homotopy equivalences. Then $\Alg_h$ is equivalent to the coherent nerve of $\Alg$, considered as a Kan-complex enriched category with the hom-spaces $\uHOM(A,B)$ defined in \eqref{homkan}.
\end{prop}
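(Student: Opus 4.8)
The plan is to follow the proof of \cite{bunke}*{Proposition 3.5}. Write $\underline{\Alg}$ for the category $\Alg$ regarded as a Kan-complex enriched category via the hom-spaces $\uHOM(A,B)$ of \eqref{homkan} (the fact that these are Kan complexes is part of the data), and let $\iota\colon\Alg\to\underline{\Alg}$ be the simplicial functor that is the identity on objects and, on mapping objects, the inclusion $\Hom_{\Alg}(A,B)=\uHOM(A,B)_0\hookrightarrow\uHOM(A,B)$ of $0$-simplices, where the source is viewed as a simplicially discrete category. Applying the homotopy coherent nerve $N_\Delta$ gives a functor of $\infty$-categories $N_\Delta(\iota)\colon\Alg\to N_\Delta(\underline{\Alg})$. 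Since $\pi_0\uHOM(A,B)$ is the set of polynomial homotopy classes of algebra morphisms $A\to B$, every polynomial homotopy equivalence is carried by $N_\Delta(\iota)$ to a morphism that becomes invertible in $\ho N_\Delta(\underline{\Alg})$, hence to an equivalence. By the universal property of the Dwyer--Kan localization, $N_\Delta(\iota)$ factors, essentially uniquely, through a functor
\[
\Phi\colon \Alg_h=\Alg[W_H^{-1}]\longrightarrow N_\Delta(\underline{\Alg}),
\]
and the proposition is exactly the assertion that $\Phi$ is an equivalence.

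As $\Alg\to\Alg_h$ and $N_\Delta(\iota)$ are both bijective on objects, $\Phi$ is essentially surjective, so it remains to prove that
\[
\Phi_{A,B}\colon \Map_{\Alg_h}(A,B)\longrightarrow \Map_{N_\Delta(\underline{\Alg})}(A,B)
\]
is a weak homotopy equivalence for all $A,B\in\Alg$. The target is harmless: $\underline{\Alg}$ is a fibrant simplicial category, so the derived mapping spaces of its coherent nerve are canonically weakly equivalent to its simplicial hom-sets, giving $\Map_{N_\Delta(\underline{\Alg})}(A,B)\simeq\uHOM(A,B)$. Everything therefore reduces to showing that $\uHOM(A,B)=\Hom_{\Alg}(A,B^{\Delta^\bullet})$ is a model for the localization mapping space $\Map_{\Alg_h}(A,B)$, compatibly with the canonical maps out of $\Hom_{\Alg}(A,B)$.

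To obtain this I would use the cosimplicial polynomial coordinate rings as a functorial resolution, just as Higson's path object is used in \cite{bunke} and \cite{ln} for $C^*$-algebras. One endows $\Alg$ with a category-of-fibrant-objects structure whose weak equivalences are $W_H$ and whose fibrations are the $\ell$-split surjections (surjections admitting an $\ell$-linear section), so that $B^{\Delta^1}=B[t]$ is a path object for $B$. The simplicial $\ell$-algebra $B^{\Delta^\bullet}=B\otimes_\ell\ell^{\Delta^\bullet}$ then serves as a homotopically constant, Reedy fibrant resolution of $B$: the augmentation $\mathrm{const}\,B\to B^{\Delta^\bullet}$ is a degreewise polynomial homotopy equivalence because $\ell\to\ell^{\Delta^n}\cong\ell[t_1,\dots,t_n]$ is one (a polynomial ring being polynomially contractible), while the fibrancy of the $n$-th matching map reduces to the $\ell$-split extension property of the polynomial simplex along $\ell^{\Delta^n}\to\ell^{\partial\Delta^n}$ --- the same combinatorial input that makes the $\uHOM(A,B)$ Kan complexes. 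With this in hand, the standard computation of mapping spaces in the Dwyer--Kan localization of a category of fibrant objects via such resolutions --- where no cofibrancy condition on the source is needed, precisely because all objects are fibrant --- yields $\Map_{\Alg_h}(A,B)\simeq\Hom_{\Alg}(A,B^{\Delta^\bullet})=\uHOM(A,B)$, and unwinding the construction identifies this equivalence with $\Phi_{A,B}$.

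The principal obstacle is this last step, which has two parts one must spell out carefully. First, that $(\Alg,\ell\text{-split surjections},W_H)$ is genuinely a category of fibrant objects: the only non-formal axiom is stability of trivial fibrations under pullback, i.e.\ that polynomial homotopy equivalences are preserved by pullback along $\ell$-split surjections; this is an analogue of Proposition \ref{prop:algcof} --- indeed a simpler one, since only polynomial homotopy invariance, not $\kk$-invariance, is at issue --- and rests on the behaviour of contractible $\ell$-split extensions. Second, one needs the precise form of the theorem that localization mapping spaces in a category of fibrant objects are computed by $\Hom$ into a homotopically constant Reedy fibrant resolution, and one must verify that $B^{\Delta^\bullet}$ meets its hypotheses; here care is required because $-\otimes_\ell B$ need not commute with the finite limits defining the matching objects, so the identification of the matching maps with $B\otimes_\ell(\ell^{\Delta^n}\to\ell^{\partial\Delta^n})$ must be justified rather than assumed. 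In both parts the content ultimately comes down to the polynomial contractibility of the $\ell^{\Delta^n}$ and to the $\ell$-split surjectivity of the restriction maps $\ell^{\Delta^n}\to\ell^{\partial\Delta^n}$ (equivalently $\ell^{\Delta^n}\to\ell^{\Lambda^n_k}$), so it is genuinely the same lemma underlying the Kan property of $\uHOM$.
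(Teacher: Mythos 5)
Your reduction to showing that the derived mapping spaces of $\Alg_h$ are modelled by $\uHOM(A,B)$ is reasonable, but the tool you propose for that step does not exist: the triple $(\Alg,\ \ell\text{-split surjections},\ W_H)$ is \emph{not} a category of fibrant objects, and the axiom you dismiss as ``a simpler analogue of Proposition \ref{prop:algcof}'' is exactly the one that fails. This is the content of Lemma \ref{betano} and Proposition \ref{prop:algfnotcof} in the paper: the evaluation $\ev=(\ev_0,\ev_1):\ell[t]\to\ell\times\ell$ is a split surjection, and pulling back the weak equivalence of towers in \eqref{eq:sq1} along such fibrations produces the map $\beta:\Omega_0\to\Omega_1$, which is a $\kk$-equivalence but not a polynomial homotopy equivalence whenever $\ell$ is a domain with $\KH_0(\ell)\neq 0$. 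Consequently $(FW_1)$ (equivalently, the co-glueing lemma you are implicitly invoking) fails for $W_H$ with any class of fibrations closed under pullback and containing $\ev$; this is precisely why the paper only obtains a fibrant-objects structure for $W_{\kk}$, and why it records that it does not even know whether $\Alg_h$ has pullbacks. So the ``standard computation of localization mapping spaces via Reedy fibrant resolutions in a category of fibrant objects'' is not available here, and your argument that $\Map_{\Alg_h}(A,B)\simeq\uHOM(A,B)$ collapses at its main step. (A smaller inaccuracy pointing in the same direction: the Kan property of $\uHOM$ is not a consequence of split surjectivity of the restriction maps $\ell^{\Delta^n}\to\ell^{\Lambda^n_k}$; the simplicial set $\uhom(A,B)$ is not fibrant in general, and $\uHOM$ is Kan only because one applies $\Exinf$.)

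The paper's proof avoids resolutions altogether and instead adapts the proof of \cite{HA}*{Proposition 1.3.4.7} by hand, since that proposition cannot be quoted directly (the exponential law $(B^X)^Y\cong B^{X\times Y}$ fails for polynomial function algebras, \cite{cortho}*{Remark 3.1.4}). Concretely, one shows that the marked map $(N(\calC_0),W)\to(N(\calC'),W)$ is a weak equivalence of marked simplicial sets, where $\calC$ is the simplicially enriched (opposite) category of algebras and $\calC'$ its fibrant replacement with mapping spaces $\uHOM$; this reduces to proving that for each $n$ the functor $F:(N(\calC_0),W_0)\to(N(\calC_n),W_n)$ is a marked equivalence, with $\hom_{\calC_n}(A,B)=\hom_{\Alg}(B,A^{\Delta^n})$. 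The retraction $G$ satisfies $GF=\id$, and $FG\simeq\id$ is established by an explicitly constructed functor $U(A)=A^{\Delta^1}$ whose action on morphisms uses the twist $\rho$ determined by $s_i\mapsto\sigma s_i$ (the device substituting for the missing exponential law), together with marked natural transformations $\tau_0:FG\to U$ and $\tau_1:\id\to U$ induced by the two vertex inclusions $\Delta^0\to\Delta^1$. If you want to salvage your outline, the part up to and including the identification $\Map_{N_\Delta(\uAlg)}(A,B)\simeq\uHOM(A,B)$ can stay, but the computation of $\Map_{\Alg_h}(A,B)$ must be replaced by an argument of this kind rather than by a fibrant-objects resolution with weak equivalences $W_H$.
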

Back on the topological side, it is proved in \cite{bunke}*{Proposition 3.17} that $\CAlg_h$ is left-exact using that $\CAlg$ admits the structure of a category of fibrant objects with homotopy equivalences as weak equivalences \cite{uuye}*{Thm. 2.19}. At this point, an important difference arises between the topological and algebraic contexts. We show in Proposition \ref{prop:algfnotcof} that there is no reasonable family of fibrations making $\Alg$ into a category of fibrant objects with polynomial homotopy equivalences as weak equivalences. As a consecuence, we do not know whether the $\infty$-category $\Alg_h$ has pullbacks. This indicates that polynomial homotopies behave worse than continuous ones and suggests that more changes are needed to translate the methods of \cite{bunke} to the algebraic context.

\section{Preliminaries}
We recall definitions and results that will be used later on. Throughout this paper, $\ell$ is a commutative ring with unit and $\Alg$ is the category of associative not necessarily unital $\ell$-algebras.

\subsection{Homotopies}\label{aho}

\subsubsection{Homotopy of continuous maps}
Let $f,g:X\to Y$ be continuous maps of topological spaces. We say that $f$ is \emph{homotopic} to $g$, denoted by $f\sim g$, if there exists a continuous map $H:X\times [0,1]\to Y$ such that $H(x,0)=f(x)$ and $H(x,1)=g(x)$. Note that $H$ can be viewed as a continuous map $H:X\to C([0,1],Y)$ such that $\ev_0\circ H = f$ and $\ev_1\circ H = g$.
Here, $C([0,1],Y)$ denotes the space of continuous maps $[0,1]\to Y$.
It is well known that $\sim$ is an equivalence relation. Indeed, to show transitivity we can proceed as follows. Let $H$ be a homotopy from $f$ to $g$ and $H'$ be a homotopy from $g$ to $h$. By the universal property of the pullback, we have a unique map $H''$ making the following diagram commute.
\begin{equation}\label{eq:hotop}\begin{gathered}\xymatrix@C=1.5em@R=3em{
X \ar@/^1pc/[drr]^{H'}\ar@![dr]^{\exists ! \;H''}\ar@/_2pc/[ddr]_-{H} &&\\ 
& C([0,1],Y)\underset{Y}{\times}C([0,1],Y)\ar[d]_-{\pr_1}\ar[r]^-{\pr_{2}} & C([0,1],Y)
\ar[d]^{\ev_0}\\
& C([0,1],Y)\ar[r]_-{\ev_1}& Y \ar@{}[ul]|(0.7){\text{\scalebox{2}{$\lrcorner$}}}
}\end{gathered}\end{equation}
Since $[0,1]\lor [0,1]\cong [0,\tfrac12]\lor [\tfrac12, 1]=[0,1]$, we have $C([0,1],Y)\times_YC([0,1],Y)\cong C([0,1],Y)$ and we can consider $H''$ as a continuous map $X\to C([0,1],Y)$. This $H''$ is a homotopy from $f$ to $h$.

\subsubsection{Homotopy of algebra homomorphisms}\label{sec:hah}
Let $f,g: A\to B$ be algebra homomorphisms. We say that $f$ is \emph{elementary homotopic} to $g$, denoted by $f\sim_e g$, if there exists an algebra homomorphism $H:A\to B[t]$ such that
$\ev_0\circ H= f$ and $\ev_1\circ H=g$, where $\ev_i:B[t]\to B$ is the evaluation at $i$ for $i=0,1$.
Note that $\sim_e$ is reflexive and symmetric, but not transitive. Indeed, since $B[t]\times_B B[t]\not\cong B[t]$, we cannot obtain a new homotopy by glueing two homotopies.
We say that $f$ is {\emph{homotopic}} to $g$, denoted by $f\sim g$, if there exist $h_1$,\ldots, $h_n\in \hom_{\Alg}(A,B)$ such that
\[
f\sim_e h_1 \sim_e \cdots \sim_e h_n \sim_e g.
\]
Put $B_n[t]=\{(p_0,\ldots,p_n)\in B[t]\times \cdots \times B[t]\mid \ev_1(p_i)=\ev_0(p_{i+1})\text{ for $0\leq i <n$} \}$. 
Define $\ev_i:B_n[t]\to B$ by $\ev_0(p_0,\ldots,p_n)= \ev_0(p_0)$ and $\ev_1(p_0,\ldots,p_n)= \ev_1(p_n)$. 
We have a directed system
\[
B_{\bullet}[t]: B[t] \xrightarrow{\beta_0} B_1[t] \xrightarrow{\beta_1} \cdots \to B_n[t] \xrightarrow{\beta_n} B_{n+1}[t]\xrightarrow{\beta_{n+1}}\cdots
\]
where $\beta_n(p_0,\ldots,p_n)= (p_0,\ldots,p_n, \ev_1(p_n))$. Note that $\ev_i$ induces a morphism $\ev_i:B_\bullet[t]\to B$ in the category of ind-algebras. With this notation, $f$ is homotopic to $g$ if there exists a morphism of ind-algebras $H:A\to B_\bullet[t]$ such that $\ev_0\circ H= f$ and $\ev_1\circ H=g$. Homotopy is a transitive relation. 
Indeed, let $f,h,g \in \hom_{\Alg}(A,B)$ such that $f\sim h$ and $h\sim g$. Then there exist algebra homomorphisms $H:A\to B_n[t]$ and $H':A\to B_{m}[t]$ such that
$\ev_0\circ H = f$, $\ev_1\circ H= h$, $\ev_0\circ H'= h$ and $\ev_1 \circ H'= g$. 
By the universal property of the pullback, there is a unique morphism $H'': A \to B_n[t]\times_BB_{m}[t]= B_{n+m}[t]$ such that $\ev_0\ \circ H'' = f$ and $\ev_1\ \circ H''= g$.
\[
\xymatrix@C=1.5em@R=3em{
A \ar@/^1pc/[drr]^{H'}\ar@![dr]^{\exists ! \;H''}\ar@/_1pc/[ddr]_{H} &&\\ 
& B_n[t]\times_BB_{m}[t]\ar[d]_-{pr}\ar[r]^-{pr} &B_{m}[t]
\ar[d]^{\ev_0}\\
& B_n[t]\ar[r]_{\ev_1}&B\ar@{}[ul]|(.70){\text{\scalebox{2}{$\lrcorner$}}}
}
\]
Homotopy is compatible with composition of algebra homomorphisms. We have a category $[\Alg]$ whose objects are the $\ell$-algebras and whose hom-sets are defined by:
\[\hom_{[\Alg]}(A,B)=[A,B]:=\hom_\Alg(A,B)/\sim\]

\subsection{Simplicial enrichment of algebras}\label{sec:enrichKan}
Let us recall the simplicial enrichment in $\Alg$ introduced in \cite{cortho}. 
Let $B$ be an algebra. For $n\geq 0$, the algebra $B^{\Delta^n}$ of \emph{$B$-valued polynomial functions on the standard $n$-simplex} is defined as $B^{\Delta^n}:=B[t_0,\dots, t_n]/\langle t_0+\cdots + t_n-1\rangle$. For a simplicial set $X$, the algebra of \emph{$B$-valued polynomial functions on $X$} is defined as $B^X:=\hom_\sSet(X,B^\Delta)$
where $B^\Delta$ is the simplicial algebra $[n]\mapsto B^{\Delta^n}$.

The category $\Alg$ can be enriched over simplicial sets, as we proceed to recall. For a pair of algebras $(A,B)$, let $\uhom(A,B)$ be the simplicial set defined by:
\[[n]\mapsto \hom_{\Alg}(A, B^{\Delta^n})\]
We have a simplicial composition
\[\circ:\uhom(B,C)\times\uhom(A,B)\to\uhom(A,C)\]
defined as follows. Let $f\in\uhom(A,B)_n$ and $g\in\uhom(B,C)_n$ be represented by $a:A\to B^{\Delta^n}$ and $b:B\to C^{\Delta^n}$ respectively. Then $g\circ f$ is represented by the composite
\begin{equation}\label{eq:simplicialcomposition}\xymatrix{A\ar[r]^-{a} & B^{\Delta^n}\ar[r]^-{b^{\Delta^n}} & (C^{\Delta^n})^{\Delta^n}\ar[r]^-{\mu} & C^{\Delta^n\times\Delta^n}\ar[r]^-{\diag^*} & C^{\Delta^n} }\end{equation}
where $\mu$ is the morphism defined in \cite{htpysimp}*{Section 3.1}.

\begin{rem}
    Upon identifying $C^{\Delta^n}\cong C\otimes \ell^{\Delta^n}$, the composite $\diag^*\circ \mu$ in \eqref{eq:simplicialcomposition} equals the morphism
$\id_C\otimes m_{\Delta^n}:C\otimes\ell^{\Delta^n}\otimes\ell^{\Delta^n}\to C\otimes\ell^{\Delta^n}$
where $m_{\Delta^n}$ is the multiplication in the commutative algebra $\ell^{\Delta^n}$.
\end{rem}

Upon applying $\Exinf$ to the enrichment described above we get an enrichment of $\Alg$ over Kan complexes, see \cite{cortho}*{Thm. 3.2.3}. Explicitely, we have:
\begin{equation}\label{homkan}
\uHOM(A,B):=\Exinf\uhom(A,B)= \colim_r\hom_{\Alg}(A, B^{\Delta^\bullet}_r)
\end{equation}
Here we write $B^{\Delta^n}_r$ for $B^{\sd^r\Delta^n}$, where $\sd^r\Delta^n$ is the $r$-fold subdivision of $\Delta^n$.

\begin{rem}
    For $B\in\Alg$, $X\in\sSet$ and $r\geq 0$, let $B^X_r$ denote $B^{\sd^rX}$, where $\sd^r\Delta^n$ is the $r$-fold subdivision of $\Delta^n$. As explained in \cite{cortho}*{Section 3.2}, we have an ind-algebra $B^X_\bullet$ with transition morphisms induced by the last vertex map.
\end{rem}
\begin{rem} Two algebra homomorphisms $f,g:A\to B$ are homotopic if and only if there exist $r\in \N$ and $H: A \to B_{r}^{\Delta^1}$ such that $d_0\circ H =f$ and $d_1\circ H=g$.
\end{rem}

\subsection{Categories of fibrant objects}\label{lcof}
Following \cite{brown}, a {\emph{category of fibrant objects}} is a category $\cC$ with terminal object $*$ and distinguished subcategories  $\cF$ and $W$ such that:
\begin{enumerate}
\item[$(F_1)$] The isomorphisms of $\cC$ are in $\cF$.
\item[$(F_2)$] The pullback in $\cC$ of a morphism in $\cF$ exists and is in $\cF$.
\item[$(F_3)$] For any object $B$ of $\cC$, the morphism $B\rightarrow *$ is in $\cF$.
\item[$(W_1)$] The isomorphisms of $\cC$ are in $W$.
\item[$(W_2)$] If two of $f$, $g$ and $gf$ are in $W$, then so is the third.
\item[$(FW_1)$] The pullback in $\cC$ of a morphism in $W\cap \cF$ is in $W\cap \cF$.
\item[$(FW_2)$] For any object $B$ of $\cC$, the diagonal map $B\rightarrow B\times B$ admits a factorization 
\[
\xymatrix{B \ar[r]^-{\sim}_-{s}& B^I \ar@{->>}[r]_-{d} &B\times B }
\]
where $s\in W$ and $d\in\cF$.
The triple $(B^I, s, d)$ is called a $\emph{path-object}$ for $B$.
\end{enumerate}
The morphisms in $\cF$ are called {\emph{fibrations}} and are denoted by $\twoheadrightarrow$.
The morphisms in $W$ are called {\emph{weak equivalences}} and are denoted by $\overset{\sim}{\to}$.
The morphisms in $W\cap \cF$ are called {\emph{trivial fibrations}} and are denoted by $\overset{\sim}{\twoheadrightarrow}$.
    
The structure of a category of fibrant objects $\cC$ with weak equivalences $W$ is a tool to deal with the localization $\cC[W^{-1}]$. The latter is called the \emph{homotopy category} of $\cC$ and is denoted by $\ho(\cC)$.

\begin{lem}\label{lem:cofproducts}
    Let $(\cC, \cF, W)$ be a category of fibrant objects. Then $\cC[W^{-1}]$ has finite products and the localization functor $\cC\to \cC[W^{-1}]$ commutes with finite products.
\end{lem}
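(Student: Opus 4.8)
The plan is to reduce the statement to a computation of the hom-sets of $\ho(\cC)=\cC[W^{-1}]$; write $\gamma\colon\cC\to\ho(\cC)$ for the localization functor. First note that $\cC$ has finite products: it has a terminal object $*$, and for $A,B\in\cC$ the product $A\times B$ is the pullback of $B\to *$ along $A\to *$, which exists by $(F_2)$ together with $(F_3)$. I would then invoke Brown's description of the homotopy category of a category of fibrant objects \cite{brown}: for $X,Z\in\cC$ there is a natural bijection
\[
\ho(\cC)(\gamma X,\gamma Z)\ \cong\ \colim_{X'\xrightarrow{\ \sim\ }X}\pi(X',Z),
\]
where $\pi(X',Z)$ denotes the set of homotopy classes of morphisms $X'\to Z$, the colimit runs over a filtered diagram of weak equivalences with target $X$, and the bijection is compatible with $\gamma$ (a morphism $f\colon X\to Z$ of $\cC$ is sent to the class of $f$ in the component at $\id_X$) and with postcomposition. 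The terminal object is then immediate: $\pi(X',*)$ is a one-point set for all $X'$, so the colimit is a one-point set, and $\gamma(*)$ is terminal in $\ho(\cC)$.

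For binary products, the one place where the axioms of a category of fibrant objects are really used is the claim that $A^I\times B^I$ is a path object for $A\times B$ whenever $(A^I,s_A,d_A)$ and $(B^I,s_B,d_B)$ are path objects for $A$ and $B$, the diagonal of $A\times B$ being factored through the maps $s_A\times s_B$ and $\sigma\circ(d_A\times d_B)$, with $\sigma$ the canonical isomorphism $(A\times A)\times(B\times B)\cong(A\times B)\times(A\times B)$. That $\sigma\circ(d_A\times d_B)\in\cF$ follows because $\cF$ is closed under composition and, by $(F_2)$, under pullback along fibrations. For $s_A\times s_B\in W$, observe that $d_0^A:=\pr_1\circ d_A\colon A^I\to A$ is a composite of fibrations — $d_A\in\cF$ and the projection $A\times A\to A$ is a pullback of $A\to *$ — hence $d_0^A\in\cF$, and it is a weak equivalence by $(W_2)$ since $d_0^A s_A=\id_A$; thus $d_0^A$ is a trivial fibration, and so is $d_0^B$. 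Writing $d_0^A\times d_0^B=(d_0^A\times\id)\circ(\id\times d_0^B)$, each factor is a pullback of a trivial fibration along a projection, hence a trivial fibration by $(FW_1)$; so $d_0^A\times d_0^B$ is a trivial fibration with section $s_A\times s_B$, and $(W_2)$ gives $s_A\times s_B\in W$.

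With these path objects in hand, for every $X'$ the two projections induce a bijection $\pi(X',A\times B)\xrightarrow{\ \cong\ }\pi(X',A)\times\pi(X',B)$ — surjectivity is clear, and injectivity is obtained by splicing a chain of elementary homotopies in $A$ and one in $B$, padded with constant homotopies to the same length, into a single chain of elementary homotopies in $A\times B$ through the path objects $A^I\times B^I$. Since finite products commute with filtered colimits of sets, Brown's formula then yields a natural bijection
\[
\ho(\cC)(\gamma X,\gamma(A\times B))\ \cong\ \ho(\cC)(\gamma X,\gamma A)\times\ho(\cC)(\gamma X,\gamma B),
\]
and chasing the identifications shows it is $\alpha\mapsto(\gamma(\pr_A)\circ\alpha,\gamma(\pr_B)\circ\alpha)$. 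By Yoneda, $\gamma(A\times B)$ together with $\gamma(\pr_A),\gamma(\pr_B)$ is a product of $\gamma A$ and $\gamma B$ in $\ho(\cC)$. Combined with the terminal object, this shows $\ho(\cC)$ has finite products and that $\gamma$ preserves them.

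I expect the main obstacle to be organizational rather than conceptual: one must be careful with the exact shape of Brown's hom-set formula (that the indexing diagram is filtered, that the bijection is natural, and that it is compatible with composition, so that postcomposition with $\gamma(\pr_A)$ corresponds to postcomposition with $\pr_A$ on homotopy classes), and with the chain-of-homotopies argument when the two coordinate chains have different lengths. The genuinely content-bearing input is that $A^I\times B^I$ is a path object for $A\times B$, and there the crucial point is that $(FW_1)$ can be applied separately in each variable.
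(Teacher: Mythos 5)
Your argument is correct, but it follows a different route than the paper. The paper reduces the statement, via Lemma \ref{lem:cisinski} (the adjunction $\Delta\dashv\times$ descends to localizations once $W$ is closed under finite products of morphisms), to the single claim that $f\times g\in W$ whenever $f,g\in W$; this is then proved by writing $f\times g=(\id\times g)\circ(f\times\id)$ and observing that each factor is a pullback of a weak equivalence along a projection (a fibration), so Brown's lemma on stability of weak equivalences under pullback along fibrations \cite{brown}*{I.4 Lemma 2} applies. You instead work directly with Brown's filtered-colimit description of $\ho(\cC)(X,Z)$, prove that a product of path objects is a path object for the product (your $(FW_1)$ argument with the trivial fibrations $d_0^A\times d_0^B$), deduce $\pi(X',A\times B)\cong\pi(X',A)\times\pi(X',B)$, and conclude by commuting finite products with filtered colimits and Yoneda. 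Both are sound. The paper's route is shorter and yields the more reusable intermediate fact that $W$ is closed under products of morphisms (of which your claim $s_A\times s_B\in W$ is a special case, proved by essentially the same mechanism: $(FW_1)$ is the trivial-fibration instance of the pullback-stability lemma the paper invokes); your route is more explicit at the level of hom-sets and exhibits the product in $\ho(\cC)$ concretely, at the cost of depending on the precise form of Brown's Theorem 1 (filteredness of the index category, naturality, and compatibility with composition), which you rightly flag as the organizational burden. Two cosmetic points: $(F_2)$ gives closure of $\cF$ under pullback along \emph{arbitrary} morphisms, which is what you actually use when pulling $d_A$ back along a projection (harmless, since projections are fibrations anyway), and the phrase ``chain of elementary homotopies'' belongs to the algebra setting of Section \ref{sec:hah}; in Brown's abstract setting a homotopy is a map into \emph{some} path object, and your product-path-object construction already handles transitivity without any splicing.
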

\begin{proof}
It suffices to show that $f\times g\in W$ whenever $f,g\in W$. Let $f:A\to A'$ and $g:B\to B'$ morphisms in $W$ and consider the following pullbacks of algebras:
\[\xymatrix{A\times B\ar[d]_-{f\times \id}\ar[r] & A\ar[d]_-{\sim}^-f \\
A'\times B\ar@{->>}[r] & A'}\qquad \qquad
\xymatrix{A'\times B\ar[d]_-{\id\times g}\ar[r] & B\ar[d]_-{\sim}^-{g} \\
A'\times B'\ar@{->>}[r] & B'}\]
By \cite{brown}*{I.4 Lemma 2}, we have that $f\times \id$ and $g\times\id$ are weak equivalences and thus $f\times g=(\id\times g)\circ(f\times \id)$ is a weak equivalence as well. 
\end{proof}

\subsubsection{Examples} For any model category $\cM$, let $\cM_f$ be the full subcategory consisting of the fibrant objects. Then $\cM_f$ is naturally a category of fibrant objects, with weak equivalences and fibrations restricted from $\cM$. 
For example, let $\Top$ be the category of compactly generated weakly Hausdorff topological spaces together with continuous maps.
Then $\Top$ is a category of fibrant objects with $W$ the weak homotopy equivalences and $\cF$ the Serre fibrations.

Two examples in the context of $C^*$-algebras are provided in \cite{uuye}. Let $\CAlg$ be the category of separable $C^*$-algebras with $*$-homomorphisms. The category $\CAlg$ is naturally enriched over $\Top$ upon endowing $\hom_\CAlg(A,B)$ with the compact-open topology; see \cite{uuye}*{Remark 2.1}. 
If $B$ is a $C^*$-algebra and $X$ is a compact Hausdorff space, we write $B^{X}$ for the $C^*$-algebra $\hom_{\Top}(X,B)\cong C(X)\otimes B$. Let $\ev_{x}:B^{X} \rightarrow B$ be the evaluation at $x\in X$.
Two $*$-homomorphisms $f_0, f_1: A\rightarrow B$ are \emph{homotopic} if there exists a $*$-homomorphism $H:A\rightarrow B^I$, with $I=[0,1]$, such that $f_0=\ev_0 \circ H$ and $f_1=\ev_1\circ H$. In this case, we write $f_0\sim f_1$. Denote the set of homotopy classes of $*$-homomorphisms $A\to B$ by:
\[
[A,B]=\hom_{\CAlg}(A,B)/\sim
\]
We have a category $[\CAlg]$ whose objects are the separable $C^*$-algebras and whose hom-sets are the sets $[A,B]$.
A $*$-homomorphism $f$ is a \emph{homotopy equivalence} if $[f]$ is invertible in $[\CAlg]$. By
\cite{uuye}*{Prop. 2.13}, $f$ is a homotopy equivalence if and only if the induced map $f_*:\hom_\CAlg(D,A)\rightarrow  \hom_\CAlg(D,B)$
is a weak equivalence in $\Top$ for all $D\in\CAlg$.
A $*$-homomorphism $p:E\rightarrow B$ is called a {\emph{Schochet fibration}} \cite{scho} if the induced map $p_*:\hom_\CAlg(D,E)\rightarrow  \hom_\CAlg(D,B)$
is a Serre fibration in $\Top$ for all $D\in\CAlg$. 
By \cite{uuye}*{Theorem 2.19}, we have a category of fibrant objects $(\CAlg, \FSch, W_{h})$ where the weak equivalences are the homotopy equivalences and the fibrations are the Schochet fibrations. The corresponding homotopy category is $[\CAlg]$. 
This example is not of the form $\cM_f$ for any model category $\cM$; see \cite{uuye}*{Appendix}. 

Let $KK$ be the category whose objects are the separable $C^*$-algebras and whose hom-sets are the Kasparov groups $KK(A,B)$. A $*$-homomorphism $f:A \to B$ is a \emph{$KK$-equivalence} if the induced morphism $f_*: KK(D,A)\to KK(D,B)$ is an isomorphism for all $D \in \CAlg$.
By \cite{uuye}*{Theorem 2.29}, we have a category of fibrant objects $(\CAlg, \FSch, W_{KK})$ where the weak equivalences are the $KK$-equivalences and the fibrations are the Schochet fibrations.

\subsection{Bivariant algebraic \texorpdfstring{$K$-theory}{K-theory}}\label{sec:kk}
\label{dkk}
Let us recall the details of the algebraic $\kk$-theory introduced in \cite{cortho}.  An alternative construction of $\kk$-theory was given in \cite{garkuni}. An \emph{extension} is a short exact sequence  of $\ell$-algebras
\begin{equation}\label{eq:ext}\scrE:\xymatrix{A\ar[r] & B\ar[r] & C}\end{equation}
that splits as a sequence of $\ell$-modules. An \emph{excisive homology theory} on $\Alg$ consists of a functor $H:\Alg\to\scrT$ into a triangulated category $\scrT$ together with a morphism $\partial_{\scrE}:\Omega H(C)\to H(A)$ for every extension $\scrE$ such that:
\begin{enumerate}
    \item the triangle $\Omega H(C)\overset{\partial_\scrE}{\to} H(A)\to H(B)\to H(C)$ is distinguished for every extension $\scrE$;
    \item the morphisms $\partial_\scrE$ are natural with respect to morphisms of extensions---a \emph{morphism of extensions} is just a morphism of short exact sequences in $\Alg$.
\end{enumerate}
Here we write $\Omega$ for the desuspension functor in $\scrT$. We say that $H$ is \emph{homotopy invariant} if it sends polynomially homotopic morphisms to the same morphism. We say that $H$ is \emph{$\Minf$-stable} if it sends the upper-left corner inclusion $A\to \Minf A$, $a\mapsto a\cdot e_{1,1}$, to an isomorphism for every algebra $A$. Here, we write $\Minf A$ for the algebra of finitely supported matrices with coefficients in $A$ indexed over $(\Z_{\geq 1})^2$.
\begin{thm}[\cite{cortho}*{Theorem 6.6.2}]
    There exists a triangulated category $\kk$ endowed with a functor $j:\Alg\to \kk$ that is the universal homotopy invariant and $\Minf$-stable excisive homology theory. That is, any homotopy invariant and $\Minf$-stable excisive homology theory factors uniquely through $j$.
\end{thm}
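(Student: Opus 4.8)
Since this is \cite{cortho}*{Theorem 6.6.2}, I would prove it by following the construction of loc.\ cit.; here is the outline I would carry out. The plan is to impose the three properties one at a time. Homotopy invariance and $\Minf$-stability are built into the \emph{coefficient object}: one works in the Kan-enriched category with mapping spaces \eqref{homkan} and replaces $B$ by a functorial model $B^{\mathfrak s}$ that is homotopy-fibrant and $\Minf$-stable, assembled from $\Exinf$, the corner inclusions $B\to\Minf B$ and the cone ring recalled below, so that $[A,B^{\mathfrak s}]:=\pi_0\uHOM(A,B^{\mathfrak s})$ automatically inverts polynomial homotopies and matrix stabilization. Excision is imposed through the universal linearly split extension $\scrE_{\mathrm{univ}}\colon JA\hookrightarrow TA\twoheadrightarrow A$, with $TA=\bigoplus_{n\ge1}A^{\otimes n}$ the tensor algebra, $TA\to A$ the multiplication map and $A=A^{\otimes1}\hookrightarrow TA$ the linear section; for any extension $\scrE\colon A'\to B\to C$ a linear section of $B\to C$ produces, by freeness of $TC$, a classifying map $\xi_\scrE\colon JC\to A'$, well defined up to homotopy. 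One then defines
\[
\kk(A,B):=\colim_n\,[\,J^nA,\ \Omega^nB^{\mathfrak s}\,],
\]
with composition and transition maps induced by iterating $J$ and composing with the classifying map $J\Omega^nB^{\mathfrak s}\to\Omega^{n+1}B^{\mathfrak s}$ of the path extension; $j\colon\Alg\to\kk$ is the tautological functor.

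The next task is to make $\kk$ a triangulated category, and two comparison isomorphisms do most of the work. First, $TA$ is polynomially contractible: the algebra homomorphism $TA\to TA[t]$ determined on generators by $a\mapsto(1-t)a$ is an elementary homotopy from $\id_{TA}$ to $0$; hence $H(TA)=0$ for every homotopy-invariant $H$, and applying an excisive homotopy-invariant $H$ to $\scrE_{\mathrm{univ}}$ gives a natural isomorphism $\Omega H(A)\xrightarrow{\,\sim\,}H(JA)$, so that $J$ ``is'' the loop functor. Second, the path extension $\Omega A\to PA\to A$, with $\Omega A=\ker(A^{\Delta^1}\to A^{\partial\Delta^1})$ and $PA$ the contractible kernel of evaluation at one vertex, yields $\Omega H(A)\xrightarrow{\,\sim\,}H(\Omega A)$; comparing it with $\scrE_{\mathrm{univ}}$ and using naturality of $\partial$ shows that $\rho_A\colon JA\to\Omega A$ is inverted by every such $H$. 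One then checks that $\Omega\colon\kk\to\kk$ is an equivalence: its inverse is the suspension $\Sigma$ attached to the extension $\Minf A\to\Gamma A\to\Sigma A$, where $\Gamma$ is an infinite sum ring (Karoubi's cone), $\Minf\subset\Gamma$ is an ideal and $\Sigma=\Gamma/\Minf$; flasqueness gives $H(\Gamma A)=0$ while $\Minf$-stability turns $H(\Minf A)$ into $H(A)$, whence $H(\Sigma A)\cong\Omega^{-1}H(A)$, and for $H=j$ this gives $\Omega\circ\Sigma\cong\id\cong\Sigma\circ\Omega$. A triangle in $\kk$ is declared distinguished when it is isomorphic to $\Omega j(C)\xrightarrow{\partial_\scrE}j(A)\to j(B)\to j(C)$ for an extension $\scrE$, with $\partial_\scrE=j(\xi_\scrE)\circ j(\rho_C)^{-1}$; one then verifies TR1--TR4. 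TR1 reduces, via the isomorphisms above, to completing $j$ of an algebra map $g$ to a triangle, which one does with the mapping-path extension $\Omega B\to\Fib(g)\to A$. Verifying TR3 and especially the octahedral axiom TR4 with explicit algebra extensions, together with the invertibility of $\Omega$, is the technical heart of the argument and the step I expect to be the main obstacle.

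Finally, the universal property. Given $H\colon\Alg\to\scrT$ excisive, homotopy invariant and $\Minf$-stable, one sets $\bar H(A):=H(A)$; note that $H$ then preserves finite products and sends polynomial homotopy equivalences, corner inclusions and classifying maps of extensions with contractible or infinite-sum-ring middle term to isomorphisms. A class $\phi\in\kk(A,B)$ is represented by a homotopy class $f\colon J^nA\to\Omega^nB^{\mathfrak s}$, and one defines $\bar H(\phi)$ as the composite
\[
H(A)\xrightarrow{\,\sim\,}\Omega^{-n}H(J^nA)\xrightarrow{\,\Omega^{-n}H(f)\,}\Omega^{-n}H(\Omega^nB^{\mathfrak s})\xrightarrow{\,\sim\,}H(B),
\]
using the iterated isomorphisms $H(J^nA)\cong\Omega^nH(A)$, $H(\Omega^nB^{\mathfrak s})\cong\Omega^nH(B^{\mathfrak s})\cong\Omega^nH(B)$ from the previous paragraph and that $\Omega$ is invertible on $\scrT$. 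One checks that $\bar H(\phi)$ does not depend on the representative (homotopy invariance of $H$; compatibility with the colimit transitions uses that $H(\rho)$ is invertible), that $\bar H$ is functorial, that $\bar Hj=H$, and that $\bar H$ takes distinguished triangles to distinguished ones with matching boundary maps---the last point is exactly excision of $H$. For uniqueness, every morphism of $\kk$ can be written using $j$ of algebra maps and inverses of $j(w)$ for $w$ a homotopy equivalence, a corner inclusion or a classifying map as above---essentially the content of Theorem \ref{thm:kkloc}---so any triangulated functor with $\bar Hj=H$ is forced to agree with the one constructed. Besides the triangulated axioms, the remaining delicate point is the bookkeeping that makes this well-definedness and functoriality rigorous, i.e.\ keeping track of the iterated classifying maps through the defining colimit.
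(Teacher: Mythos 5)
This statement is not proved in the paper at all: it is recalled verbatim from Corti\~nas--Thom, and the only ``proof'' the paper offers is the citation, together with the later subsection recalling an equivalent explicit construction in three steps ($\fk\to\kf\to\ks$, following \cite{tesisema} and \cite{loopstho}) whose universal property identifies $\uks$ with $\kk$. Your outline reproduces the original Corti\~nas--Thom strategy correctly in its essentials: contractibility of $TA$ via $a\mapsto(1-t)a$, the resulting identification of $J$ with a loop functor for any excisive homotopy-invariant theory, invertibility of $\Omega$ via the cone extension $\Minf A\to\Gamma A\to\Sigma A$, triangles from path extensions, and the extension of a given theory $H$ along $j$ by the composite you wrote. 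The one place where your packaging drifts from the actual construction is the definition of the hom-sets: in \cite{cortho} these are $\colim_n[J^nA,\Minf B^{S^n}_\bullet]$, where $B^{S^n}_\bullet$ is the ind-algebra of polynomial functions on simplicial spheres with subdivision (last-vertex) transition maps; this ind-structure, not a literal iterated loop algebra $\Omega^n B$, is what makes the homotopy classes into groups and makes composition well defined, and the $\Minf$-stabilization sits on the target rather than in a fibrant replacement ``$B^{\mathfrak s}$'' (the cone ring enters only in proving $\Omega$ invertible, not in the hom-sets). With that correction your plan is the standard one, and, as you say, the real work is the verification of the triangulated axioms and the bookkeeping for well-definedness of the induced functor --- which is exactly what occupies the bulk of \cite{cortho} (and, in the reformulation used in this paper, of \cite{loopstho} and \cite{tesisema}).
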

The triangulated category $\kk$ and the functor $j$ are uniquely determined up to equivalence of triangulated categories. The category $\kk$ can be constructed so that its objects are the $\ell$-algebras and the functor $j$ is the identity on objects. We often write $\kk(A,B)$ instead of $\kk(j(A), j(B))$.

For $A,B\in\Alg$ and $n\in\Z$, put $\kk_n(A,B):=\kk(A,\Omega^nB)$. These groups fit into a long exact sequence
\[\xymatrix@C=1.5em{\cdots\ar[r] & \kk_{n+1}(D, C)\ar[r] & \kk_n(D,A)\ar[r] & \kk_n(D,B)\ar[r] & \kk_n(D, C)\ar[r] & \cdots}\]
upon applying $\kk(D,-)$ to the extension \eqref{eq:ext}.
\begin{thm}[\cite{cortho}*{Theorem 8.2.1}] Let $A$ be an algebra and let $\KH_*(A)$ be Weibel's homotopy $K$-theory groups of $A$ \cite{chuck}. Then there exists a natural isomorphism
\[
\kk_*(\ell,A)\cong \KH_*(A).
\]
\end{thm}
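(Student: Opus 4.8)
The plan is to construct a natural comparison homomorphism $\kk_*(\ell,A)\to\KH_*(A)$, reduce the claim that it is an isomorphism to the case $*=0$ using that $\kk$ is stable, and then settle the degree-zero statement by matching the Corti\~nas-Thom colimit description of $\kk(\ell,B)$ with Weibel's construction of $\KH$ from the simplicial ring $B^{\Delta^\bullet}$.

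First I would check that $\KH$, regarded via its defining spectrum as a functor $\Alg\to\ho(\mathbf{Sp})$, is an excisive, homotopy invariant and $\Minf$-stable homology theory. Homotopy invariance is built into the construction of $\KH$; $\Minf$-stability follows from Morita invariance of nonconnective $K$-theory, the compatibility of $K$-theory with filtered colimits, and the presentation of $\Minf A$ as a filtered colimit of finite matrix algebras over $A$; and excision is the property that $\KH$ turns every extension $\scrE\colon A\rightarrowtail B\twoheadrightarrow C$ into a fiber sequence $\KH(A)\to\KH(B)\to\KH(C)$, which provides a distinguished triangle and a natural boundary map $\partial_\scrE$. By the universal property of $\kk$ (\cite{cortho}*{Theorem 6.6.2}) there is then a unique triangulated functor $\overline{\KH}\colon\kk\to\ho(\mathbf{Sp})$ with $\overline{\KH}\circ j=\KH$. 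Composing the induced maps $\kk(\ell,\Omega^nA)\to[\KH(\ell),\Omega^n\KH(A)]$ with restriction along the unit map $\mathbb{S}\to\KH(\ell)$ of the $E_\infty$-ring spectrum $\KH(\ell)$ (using that $\ell$ is commutative) yields natural homomorphisms $\eta_{n,A}\colon\kk_n(\ell,A)\to\KH_n(A)$; because $\overline{\KH}$ is triangulated and the unit map is fixed, $\eta$ is a morphism of homology theories, i.e.\ it is compatible with the long exact sequences attached to extensions.

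Next I would reduce to $n=0$. It is a standard feature of $\kk$ that the loop and suspension algebra constructions model $\Omega$ and $\Sigma$, i.e.\ $j(\Omega^{\mathrm{alg}}B)\cong\Omega j(B)$ and $j(\Sigma^{\mathrm{alg}}B)\cong\Sigma j(B)$ naturally in $B$---these follow from the contractibility of the path and cone algebras together with $\Minf$-stability and excision. Applying the triangulated functor $\overline{\KH}$ gives natural equivalences $\KH(\Omega^{\mathrm{alg}}B)\simeq\Omega\KH(B)$ and $\KH(\Sigma^{\mathrm{alg}}B)\simeq\Sigma\KH(B)$, and, since $\eta$ is a morphism of homology theories, it is compatible with the resulting identifications. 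Thus $\eta_{n,A}$ is identified for $n\ge 0$ with $\eta_{0,(\Omega^{\mathrm{alg}})^nA}$ and for $n<0$ with $\eta_{0,(\Sigma^{\mathrm{alg}})^{-n}A}$, so it suffices to prove that $\eta_{0,B}\colon\kk(\ell,B)\to\KH_0(B)$ is an isomorphism for every algebra $B$.

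For this last step I would unwind the colimit description of $\kk$-groups from \cite{cortho}: $\kk(\ell,B)$ is a filtered colimit of sets of $\Minf$-stable polynomial-homotopy classes of algebra homomorphisms out of iterated $J$-powers of $\ell$ into infinite matrix rings over the subdivisions of $B^{\Delta^\bullet}$. Using the classification of such homotopy classes in terms of the $K$-groups of polynomial rings, each term of the colimit is identified---compatibly with the transition maps---with a homotopy group of the simplicial spectrum $p\mapsto\mathbb{K}(B^{\Delta^p})$, so that the colimit computes $\pi_0$ of its geometric realization, which is $\KH_0(B)$ by definition. I expect this to be the main obstacle: the reductions above are formal, but the content of the theorem lies precisely in this dictionary between the $J$-construction---which simultaneously carries out $\Minf$-stabilization, group completion and delooping---and Weibel's realization model of $\KH$, and establishing it requires the full homotopy-classification results for algebra homomorphisms into infinite matrix rings.
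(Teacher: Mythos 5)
First, a point of reference: the paper itself offers no proof of this statement---it is recalled verbatim from Corti\~nas--Thom and cited as \cite{cortho}*{Theorem 8.2.1}---so your proposal has to be measured against that source rather than against anything in this text. The formal skeleton you set up is sound and does match how the comparison is organized there: one checks that $\KH$, viewed as a functor to the homotopy category of spectra with Weibel's excision boundary maps, is an excisive, homotopy invariant and $\Minf$-stable homology theory, obtains a triangulated functor $\kk\to\ho(\mathbf{Sp})$ from the universal property, produces $\eta_{n,A}\colon\kk_n(\ell,A)\to\KH_n(A)$ by evaluating against the unit class in $\KH_0(\ell)$, and reduces to degree zero using that the loop and cone extensions realize $\Omega$ and $\Sigma$ on both sides.

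The genuine gap is that everything after that point---which is the entire content of the theorem---is deferred rather than proved. Your description of the degree-zero step is not accurate as stated: $\pi_0$ of the realization of $p\mapsto \mathbb{K}(B^{\Delta^p})$ is not simply the colimit of homotopy groups of the simplicial terms, and the identification of the terms $[J^n\ell, M_kM_\infty B^{S^n}_r]$ with $K$-theoretic groups is not a routine unwinding of definitions. In Corti\~nas--Thom this is where the work lies: the homotopy classes of maps out of $J^n\ell$ into matrices over iterated loop algebras are identified with Karoubi--Villamayor-type $K$-groups, and the colimit is matched with $\KH$ via Weibel's formula expressing $\KH_n(A)$ as a colimit of $KV$-groups of iterated loop algebras; your plan acknowledges this as ``the main obstacle'' but supplies no argument for it. In addition, even granting such a dictionary, your proof would not yet be complete: you define $\eta$ abstractly through the universal property, but you never argue that the explicit colimit-by-colimit isomorphism you intend to construct agrees with $\eta$ (or, alternatively, that $\eta$ itself is bijective); without that compatibility the two halves of the argument do not combine to prove that your natural transformation is an isomorphism.
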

 
For $A\in\Alg$, put $PA:=tA[t]$ and $\Omega A:=(t^2-t)A[t]$. These are called the \emph{path algebra} and the \emph{loop algebra} of $A$ respectively and fit into the \emph{loop extension} of $A$; see \cite{cortho}*{Sec. 4.5}. The desuspension $\Omega$ in the triangulated category $\kk$ is induced by these loop algebras; see \cite{cortho}*{Lemma 6.3.9}. Let $f:A\to B$ be an algebra homomorphism. Let $P_f$ be defined by the following morphism of extensions where the square on the right is a pullback:
\begin{equation}\label{eq:pathseq}\begin{gathered}\xymatrix{
\Omega B\ar@{=}[d]\ar[r]^-{\iota_f} & P_f\ar[d]\ar[r]^-{\pi_f} & A\ar[d]^-{f} \\
\Omega B\ar[r]^-{\inc} & PB\ar[r]^-{\ev_1} & B
}\end{gathered}\end{equation}
By \cite{cortho}*{Def. 6.5.1}, a triangle in $\kk$ is {\emph{distinguished}} if and only if it is isomorphic to one of the form
\[\xymatrix{\Omega B\ar[r]^-{j(\iota_f)} & P_f\ar[r]^-{j(\pi_f)} & A\ar[r]^-{j(f)} & B}\]
for some algebra homomorphism $f:A\to B$.

\section{Category of fibrant objects structures on \texorpdfstring{$\Alg$}{Alg}}\label{seccof}
Let $W_{H}$ be the class of polynomial homotopy equivalences in $\Alg$, and let $W_{kk}$ be the class of $kk$-equivalences---that is, those algebra homomorphisms that become isomorphisms in $\kk$. In this section we study the existence of category of fibrant objects structures on $\Alg$ having each of these classes as the class of weak equivalences. Consider the class $\Fib$ of surjective morphisms in $\Alg$ that have a linear section. We prove in Proposition \ref{prop:algcof} that $(\Alg, \Fib, W_{kk})$ is a category of fibrant objects. This result can be considered as an algebraic analogue of \cite{uuye}*{Theorem 2.29}. Moreover, in Proposition \ref{prop:algfnotcof} we prove that there is no reasonable class of fibrations $\cF$ making $(\Alg, \cF, W_H)$ into a category of fibrant objects. This shows that the algebraic counterpart of \cite{uuye}*{Theorem 2.19} does not hold. 
This happens because polynomial homotopies behave worse than continuous ones, as it is shown in the following lemma.

\begin{lem}\label{betano}
Let $\ell$ be a domain such that $\KH_0(\ell)\neq 0$---this happens, for example, if $\ell$ is a field or a principal ideal domain. 
Put $\Omega_0 = (t^2-t)\ell[t]$ and $\Omega_1=\{(p,q)\in\ell[t]\times\ell[t]: \ev_1(p)=\ev_0(q),\  \ev_0(p)=\ev_1(q)=0\}$. 
Then the morphism $\beta: \Omega_0 \to \Omega_1$ defined by $\beta(p)=(p, 0)$ is not a polynomial homotopy equivalence.
\end{lem}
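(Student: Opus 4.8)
The plan is to show that $\beta$ cannot be a polynomial homotopy equivalence by producing a homotopy invariant functor (or rather, a homotopy invariant homology theory) that distinguishes $\Omega_0$ from $\Omega_1$ via the map $\beta$. The natural candidate is $\KH$, or more precisely the $\kk$-theory functor $j\colon\Alg\to\kk$, which is homotopy invariant by construction. The key observation is that $\Omega_0 = (t^2-t)\ell[t] = \Omega\ell$ is the loop algebra of $\ell$, so $j(\Omega_0)\cong\Omega j(\ell)$ in $\kk$, whence $\kk_*(\ell,\Omega_0)\cong\kk_{*}(\ell,\Omega\ell)\cong\KH_{*-1}(\ell)$ by the identification $\kk_*(\ell,-)\cong\KH_*$; in particular $\kk_0(\ell,\Omega_0)\cong\KH_{-1}(\ell)$ and $\kk_{1}(\ell,\Omega_0)\cong\KH_0(\ell)\neq 0$.

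First I would identify $\Omega_1$ up to homotopy. By definition $\Omega_1$ is the algebra of pairs $(p,q)$ of polynomials glued along the endpoints of two copies of $[0,1]$, both vanishing at the two outer endpoints; this is exactly $\Omega\ell$ computed with the subdivided interval $\sd^1\Delta^1$ in place of $\Delta^1$, i.e. $\Omega_1 = \ell^{\partial(\sd^1\Delta^1)}$-type construction, which is the next term in the ind-algebra $(\Omega\ell)_\bullet$ from the last-vertex transition maps recalled in Section~\ref{sec:enrichKan}. So $\Omega_1$ is again a model for the loop algebra, and the map $\beta\colon\Omega_0\to\Omega_1$, $p\mapsto(p,0)$, is precisely one of the structural transition maps $\beta_0$ of that ind-algebra. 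Thus if $\beta$ were a polynomial homotopy equivalence, applying the homotopy invariant functor $j$ would make $j(\beta)$ an isomorphism $\Omega j(\ell)\xrightarrow{\sim}\Omega j(\ell)$ in $\kk$.

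Next I would compute the effect of $j(\beta)$ on $\kk_*(\ell,-)$ and show it is \emph{not} an isomorphism. The cleanest way is to compare the two loop extensions: $\Omega_0$ sits in the loop extension $\Omega\ell\to P\ell\to\ell$ (with $P\ell$ contractible), while $\Omega_1$ sits in the analogous extension built from the subdivided path algebra. The map $\beta$ fits into a morphism of extensions, and on the level of the connecting maps $\partial$ one can track what $j(\beta)_*\colon\KH_0(\ell)=\kk_1(\ell,\Omega_0)\to\kk_1(\ell,\Omega_1)=\KH_0(\ell)$ does. The point is that under the subdivision the outer interval $[0,1]$ of $\Omega_1$ receives the zero polynomial $q=0$, so $\beta$ factors through the inclusion of \emph{half} the loop; concretely, $\Omega_1$ as a model of $\Omega\ell$ decomposes (up to homotopy) as a sum/pushout of two copies of $\Omega_0$ along $\ell$, and $\beta$ is the inclusion of one summand, which on $\KH_0$ is multiplication by a proper idempotent-type operation rather than the identity — in particular it kills the generator of $\KH_0(\ell)$ coming from the other half. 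Since $\KH_0(\ell)\neq 0$ (which holds when $\ell$ is a field or PID, as $\KH_0(\ell)\supseteq K_0(\ell)\neq 0$), this shows $j(\beta)$ is not invertible in $\kk$, hence $\beta$ is not a polynomial homotopy equivalence.

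The main obstacle I anticipate is making the last step precise: correctly identifying $\Omega_1$ with a model of $\Omega\ell$ and, more delicately, pinning down the map induced by $\beta$ on $\kk_1(\ell,-)\cong\KH_0(\ell)$ so as to see it is genuinely non-invertible rather than merely "not obviously invertible". One robust way around this is to avoid computing the map explicitly and instead argue by contradiction combined with a counting/rank argument: if $\beta\in W_H$ then also the "collapse the outer interval" map $\Omega_1\to\Omega_0$ and its iterates would all be homotopy equivalences, forcing the transition maps of the ind-algebra $(\Omega\ell)_\bullet$ to be homotopy equivalences; but the colimit computes $\uHOM(\ell,-)$-style invariants where the last-vertex maps are known not to be $\pi_0$-isomorphisms unless $\KH_0(\ell)=0$. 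I would present the extension-comparison argument as the main line and keep the ind-algebra remark as a sanity check.
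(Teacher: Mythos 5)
Your central step cannot work: you propose to detect the failure of $\beta$ to be a polynomial homotopy equivalence by showing that $j(\beta)$ (equivalently, the induced map on $\kk_*(\ell,-)\cong\KH_*$) is not an isomorphism, but $\beta$ \emph{is} a $\kk$-equivalence --- the paper records exactly this in the remark immediately following the lemma. Indeed, $\beta$ fits into a morphism of extensions from $\Omega_0\to P_0\to\ell$ to $\Omega_1\to P_1\to\ell$ with both middle terms $P_0,P_1$ contractible, so excision together with homotopy invariance forces $j(\beta)$ to be invertible, and likewise $\beta_*$ is an isomorphism on $\KH$. Your heuristic that ``$\beta$ is the inclusion of half the loop and therefore kills the generator of $\KH_0(\ell)$'' is false: concatenating with a constant path is the identity up to any homotopy-invariant excisive theory, and the topological analogue of $\beta$ is an honest homotopy equivalence (also remarked in the paper). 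Consequently \emph{no} homotopy invariant, excisive homology theory can distinguish $\beta$ from an equivalence; the obstruction is a rigidity phenomenon of polynomial algebra maps, invisible to $\kk$ and $\KH$. Your fallback ``sanity check'' is also circular: whether the last-vertex transition maps of the ind-algebra $(\Omega\ell)_\bullet$ are polynomial homotopy equivalences is precisely what the lemma asserts, and $\pi_*\uHOM(\ell,-)$ does not compute $\KH$ (there is no matrix stabilization there; in fact $[\ell,\ell^{S^1}_\bullet]=0$ for a domain $\ell$), so the claimed non-isomorphism on $\pi_0$ has no support.

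The paper's actual argument is elementary and goes through the would-be inverse rather than through an invariant applied to $\beta$. If $[\alpha]$ inverts $[\beta]$ in $[\Alg]$, then since $(t^2-t,0)\cdot(0,t^2-t)=0$ in $\Omega_1$ and $\alpha$ lands in the domain $\ell[t]$, one of $\alpha(t^2-t,0)$, $\alpha(0,t^2-t)$ must vanish; a short computation then shows $\alpha\circ\beta=0$, respectively $\alpha\circ\tilde\beta=0$ where $\tilde\beta(p)=(0,p)$, and $\beta\sim\tilde\beta$ by an explicit elementary homotopy. Either way $[\id_{\Omega_0}]=[\alpha][\beta]=0$, so $\Omega_0\cong 0$ in $[\Alg]$ and hence in $\kk$, contradicting $\kk(\Omega\ell,\Omega\ell)\cong\kk(\ell,\ell)\cong\KH_0(\ell)\neq 0$. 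Note that the hypothesis $\KH_0(\ell)\neq 0$ is used only at this final step, to rule out the contractibility of $\Omega_0$, not to analyze the map $\beta$ itself; that is the role you would need to reassign to it in any repaired version of your argument.
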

\begin{proof}
 Suppose that $\beta$ is a polynomial homotopy equivalence. Then $[\beta]$ is an isomorphism in $[\Alg]$ and it has an inverse $[\alpha]$ where $\alpha$ is an algebra homomorphism $\alpha:\Omega_1 \rightarrow \Omega_0$.  
 In particular, $[\alpha]\circ[\beta]=[\id_{\Omega_0}]$. We have:
\[\alpha(t^2-t,0)\alpha(0,t^2-t)=\alpha\left((t^2-t,0)(0,t^2-t)\right)=\alpha(0,0)=0 \in \Omega_0\subset \ell[t]
\]
As $\ell[t]$ is a domain, we should have $\alpha(t^2-t,0)=0$ or $\alpha(0,t^2-t)=0$. Suppose that $\alpha(t^2-t,0)=0$. Then we have
\begin{align*}
    \alpha\left((t^2-t)\left(\textstyle\sum\limits_{0}^na_it^i\right), 0\right)&=\alpha\left((a_0(t^2-t), 0)+\left((t^2-t)\left(\textstyle\sum\limits_{1}^na_it^i\right),0\right)\right) \\
    &=a_0\alpha\left(t^2-t,0\right)+\alpha(t^2-t,0)\alpha\left(\textstyle\sum\limits_{1}^na_it^i,\textstyle\sum\limits_{1}^na_i(1-t)^i\right)\\
    &=0
\end{align*}
showing that $\alpha\circ \beta$ is the zero morphism. Then $[\id_{\Omega_0}]=[0]$ and $\Omega_0\cong 0$ in $[\Alg]$. 
Since $j:\Alg \to \kk$ factors through $[\Alg]$, then $\Omega_0\cong 0$ in $\kk$. But this cannot happen since we have:
\[0=\kk(\Omega_0, \Omega_0)=\kk(\Omega \ell, \Omega\ell)\cong\kk(\ell, \ell)\cong KH_0(\ell)\neq 0\]
If $\alpha(0,t^2-t)=0$, we can show that $\alpha\circ\tilde{\beta}$ is the zero morphism, where $\tilde{\beta}:\Omega_0\to\Omega_1$ is given by $\tilde{\beta}(p)=(0,p)$. But $\beta$ and $\tilde{\beta}$ are homotopic morphisms; indeed, an elementary homotopy $H:\Omega_0\to \Omega_1[s]$ is given by:
\[H(p)=(p(t(1-s)), p(1-s(1-t)))\]
Then $[\id_{\Omega_0}]=[\alpha]\circ[\beta]=[\alpha]\circ[\tilde{\beta}]=[\alpha\circ\tilde{\beta}]=0$ and we get to the same contradiction as before. This finishes the proof.
\end{proof}

\begin{rem}
    It is easily verified using excision that $\beta$ is a $\kk$-equivalence; see \cite{cortho}*{Sec. 6.3}.
\end{rem}

\begin{rem}
    The analogue of $\beta$ in the topological context is identified with the morphism $\beta:\bbC(0,1)\to \bbC(0,1)$ given by
    \[\beta(f)(t)=\begin{cases}
        f(2t) & \text{if $0\leq t\leq \tfrac12$,}\\
        0 & \text{if $\tfrac12\leq t\leq 1$.}
    \end{cases}\]
    Here, $\bbC(0,1)$ denotes the $C^*$-algebra of continuous functions $[0,1]\to\bbC$ that vanish at $t=0,1$. This $\beta$ is easily seen to be a homotopy equivalence.
\end{rem}

Recall the definition of the Kan complex $\uHOM(A,B)$ from \eqref{homkan}. The following propositions will be useful later on.

\begin{prop}[cf. \cite{uuye}*{Prop. 2.13}]\label{indhom}
    Let $f:A\to B$ be an algebra homomorphism. Then $f$ is a homotopy equivalence if and only if for every algebra $D$, the induced morphism
    \begin{equation}\label{eq:inducedHOM}f_*:\uHOM(D,A)\to \uHOM(D,B)\end{equation}
    is a weak equivalence of simplicial sets.
\end{prop}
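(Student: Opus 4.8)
The plan is to run a standard "represented functors detect equivalences" argument, but carried out at the level of the Kan-complex enrichment $\uHOM$ rather than at the level of homotopy classes $[-,-]$. The forward implication is the easy direction: if $f$ is a polynomial homotopy equivalence, pick a homotopy inverse $g$ together with homotopies $gf\sim \id_A$ and $fg\sim \id_B$. These homotopies are concrete morphisms into the ind-algebras $A^{\Delta^1}_\bullet$ and $B^{\Delta^1}_\bullet$, which by definition of $\uHOM$ (see \eqref{homkan}) give simplicial homotopies in $\uHOM(D,-)$ after applying $\hom_\Alg(D,-)$ and passing to the colimit that defines $\Exinf$. So $g_*$ is a homotopy inverse to $f_*$ in $\sSet$, and in particular $f_*$ is a weak equivalence; here one should check that $\uHOM(D,-)$ is functorial and that homotopies are sent to simplicial homotopies, which is exactly the content of the simplicial (indeed Kan) enrichment recalled in Section \ref{sec:enrichKan}.

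For the converse, the idea is to take $D=A$ and $D=B$ and extract an honest homotopy inverse from the hypothesis that $f_*$ is a weak equivalence of Kan complexes. Taking $D=B$, the map $f_*:\uHOM(B,A)\to \uHOM(B,B)$ is a weak equivalence; since both are Kan complexes, it is a homotopy equivalence, so there is a $0$-simplex $g\in\uHOM(B,A)_0$ with $f_*(g)=f\circ g$ lying in the same path component as $\id_B\in\uHOM(B,B)_0$. Unwinding \eqref{homkan}, $g$ is represented by an actual algebra homomorphism $g:B\to A$ (after possibly passing to a subdivision, but a $0$-simplex of $\Exinf$ of a simplicial set is still just a $0$-simplex, so no subdivision is needed here), and "same path component" means precisely $f\circ g\sim \id_B$ as recalled in the Remark following \eqref{homkan}. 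Similarly, applying the weak equivalence $f_*:\uHOM(A,A)\to\uHOM(A,B)$ on $\pi_0$: $f_*$ is a bijection on path components, and both $g\circ f$ and $\id_A$ are sent by $f_*$ to $f\circ g\circ f$ and $f$ respectively, which are homotopic because $f\circ g\sim\id_B$; hence $g\circ f\sim\id_A$. Thus $[f]$ is invertible in $[\Alg]$ with inverse $[g]$, i.e.\ $f$ is a polynomial homotopy equivalence.

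The main point requiring care—and what I expect to be the only real obstacle—is the bookkeeping around $\Exinf$ and the subdivisions $\sd^r$ hidden in the definition \eqref{homkan} of $\uHOM$. Concretely: (i) that $\pi_0\uHOM(A,B)=[A,B]$, i.e.\ that two $0$-simplices are connected by a path in $\Exinf\uhom(A,B)$ iff the corresponding homomorphisms are polynomially homotopic in the sense of Section \ref{sec:hah}—this follows from the cited Remark and the fact that $\Exinf$ preserves and reflects $\pi_0$; and (ii) that $f_*$ really is induced by post-composition and is natural in $D$, so that the functor $D\mapsto\uHOM(D,-)$ and its action on $f$ behave as expected—this is the enrichment of Section \ref{sec:enrichKan}, where composition is the map \eqref{eq:simplicialcomposition}. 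Once these identifications are in place the argument is the formal one above, and in fact for the converse one only needs that $f_*$ induces a bijection on $\pi_0$ for $D\in\{A,B\}$, so the full strength of "weak equivalence for all $D$" is not used in that direction—only in the forward direction, where it is automatic. I would state the proof so that the forward direction records the stronger conclusion (homotopy equivalence of Kan complexes, hence in particular weak equivalence) and the backward direction uses only $\pi_0$.
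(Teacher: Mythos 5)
Your proposal is correct, but one half of it takes a genuinely different route from the paper. For the implication ``$f_*$ is a weak equivalence for all $D$ $\Rightarrow$ $f$ is a homotopy equivalence'' you and the paper do essentially the same thing: the paper applies $\pi_0$, identifies $\pi_0\uHOM(D,-)$ with $[D,-]$, and invokes Yoneda in $[\Alg]$, while your extraction of an explicit inverse (surjectivity on $\pi_0$ at $D=B$ gives $g$ with $fg\sim\id_B$, injectivity at $D=A$ gives $gf\sim\id_A$) is just that Yoneda argument unwound; your remark that only $\pi_0$ and $D\in\{A,B\}$ are needed is accurate. For the other implication, however, the paper does \emph{not} argue via the enrichment: it uses the identification $\pi_n\uHOM(D,A)\cong[D,A^{S^n}_\bullet]=\colim_r[D,A^{S^n}_r]$ from \cite{htpysimp}*{Theorem 3.10}, the isomorphism $A^{S^n}_r\cong A\otimes\Z^{S^n}_r$, and the fact that tensoring preserves polynomial homotopy equivalences, to get bijections on all homotopy groups after passing to the colimit over subdivisions. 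You instead argue formally: an elementary homotopy is an edge of $\uhom(A,B)$, so post-composition produces a simplicial homotopy, and since $\uHOM(D,-)$ is a simplicial functor with Kan values (so that homotopy of maps into it is transitive, and $\Exinf$, preserving finite products, carries $\uhom$-level homotopies to $\uHOM$-level ones), a homotopy inverse $g$ of $f$ yields a simplicial homotopy inverse $g_*$ of $f_*$. This gives the slightly stronger conclusion that $f_*$ is a simplicial homotopy equivalence and avoids the $\pi_n$-computation entirely, at the cost of leaning on the Kan enrichment of \cite{cortho}*{Thm. 3.2.3}; both arguments are sound, so your proof stands as a legitimate alternative to the one in the paper.
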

\begin{proof}
    By \cite{htpysimp}*{Theorem 3.10} for every $n\geq 0$ we have a natural bijection
    \[\pi_n\uHOM(D,A)\cong [D, A^{S^n}_\bullet]=\colim_r[D, A^{S^n}_r]\]
    where $A^{S^n}_\bullet$ is the ind-algebra of polynomials on the $n$-dimensional cube with coefficients in $A$ vanishing at the boundary of the cube; see \cite{htpysimp}*{Ex. 2.11}.

    Suppose that \eqref{eq:inducedHOM} is a weak equivalence of simplicial sets for every algebra $D$. Upon applying $\pi_0$ we get bijections
    \[f_*:[D,A]\to [D,B]\]
    for every algebra $D$. By Yoneda, the latter implies that $f$ is an isomorphism in $[\Alg]$ --- i.e.\ a polynomial homotopy equivalence.

    Now suppose that $f$ is a polynomial homotopy equivalence and fix an algebra $D$. To prove that \eqref{eq:inducedHOM} is a weak equivalence of simplicial sets, we must show that it induces a bijection upon applying $\pi_n$, for all $n\geq 0$. By \cite{htpysimp}*{Lem. 2.10}, we have algebra isomorphisms $A^{S^n}_r\cong A\otimes\Z^{S^n}_r$ for all $n,r\geq 0$. Since tensoring with a ring preserves polynomial homotopy equivalences, we have a morphism of diagrams:
    \[\xymatrix{
    [D,A^{S^n}_0]\ar[r]\ar[d]^-{\cong}_-{f_*} & [D,A^{S^n}_1]\ar[r]\ar[d]^-{\cong}_-{f_*} & [D,A^{S^n}_2]\ar[r]\ar[d]^-{\cong}_-{f_*} & \cdots \\
    [D,B^{S^n}_0]\ar[r] & [D,B^{S^n}_1]\ar[r] & [D,B^{S^n}_2]\ar[r] & \cdots \\
    }\]
    Upon taking colimit over $r$ we get an isomorphism:
    \[f_*:\pi_n\uHOM(D,A)\to\pi_n\uHOM(D,B)\]
    This finishes the proof.
\end{proof}

\begin{prop}[cf. \cite{uuye}*{Cor. 2.6}]\label{pullhom}
    For any algebra $D$, the functor $\uHOM(D,-):\Alg\to\sSet$ preserves pullbacks.
\end{prop}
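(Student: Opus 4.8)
The plan is to factor $\uHOM(D,-)$ as the composite $\Exinf\circ\uhom(D,-)$ and show that each of the two factors preserves pullbacks; the whole statement then reduces to elementary facts about flat modules and filtered colimits, and in particular requires no analytic input of the kind used in \cite{uuye}*{Cor. 2.6}.

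For the factor $\uhom(D,-):\Alg\to\sSet$, I would argue as follows. Pullbacks of simplicial sets are computed degreewise, and $\hom_\Alg(D,-)$ preserves all limits, so it suffices to show that $(-)^{\Delta^n}:\Alg\to\Alg$ preserves pullbacks for every $n\geq 0$. Here I would use the identification $E^{\Delta^n}\cong E\otimes_\ell\ell^{\Delta^n}$ recalled above, together with the observation that $\ell^{\Delta^n}=\ell[t_0,\dots,t_n]/\langle t_0+\cdots+t_n-1\rangle\cong\ell[t_1,\dots,t_n]$ is a free, hence flat, $\ell$-module. Given a cospan $B\to A\leftarrow C$ in $\Alg$ with structure maps $\phi,\psi$, its pullback has underlying $\ell$-module $\ker\big(B\oplus C\to A,\ (b,c)\mapsto\phi(b)-\psi(c)\big)$, since limits in $\Alg$ are computed on underlying $\ell$-modules. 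Tensoring this left-exact sequence with the flat module $\ell^{\Delta^n}$ keeps it left exact, so $(B\times_A C)\otimes_\ell\ell^{\Delta^n}$ is the kernel of $B^{\Delta^n}\oplus C^{\Delta^n}\to A^{\Delta^n}$; checking that this identification is compatible with the multiplications is routine and yields $(B\times_A C)^{\Delta^n}\cong B^{\Delta^n}\times_{A^{\Delta^n}}C^{\Delta^n}$, naturally in the cospan.

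For the factor $\Exinf$, I would note that $\mathrm{Ex}$ is right adjoint to barycentric subdivision $\sd$, hence preserves all limits, and so does each iterate $\mathrm{Ex}^r$. Since $\Exinf=\colim_r\mathrm{Ex}^r$ is a colimit over the filtered poset $\N$ and filtered colimits commute with finite limits in $\sSet$ (both being computed degreewise in $\mathbf{Set}$), it follows that for any cospan $X\to Z\leftarrow Y$ in $\sSet$ one has $\Exinf(X\times_Z Y)\cong\colim_r\big(\mathrm{Ex}^r X\times_{\mathrm{Ex}^r Z}\mathrm{Ex}^r Y\big)\cong\Exinf X\times_{\Exinf Z}\Exinf Y$. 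Composing with the previous step shows that $\uHOM(D,-)=\Exinf\circ\uhom(D,-)$ preserves pullbacks.

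I do not expect a genuine obstacle. The only step that deserves a moment's care is the flatness argument: one must be sure that the pullback in $\Alg$ really has the module-theoretic fiber product as underlying $\ell$-module, and that $\ell^{\Delta^n}$ is free. After that, everything is a formal consequence of the adjunction $\sd\dashv\mathrm{Ex}$ together with the interchange of filtered colimits and finite limits.
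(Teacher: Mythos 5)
Your argument is correct and takes essentially the same route as the paper's proof: factor $\uHOM(D,-)=\Exinf\circ\uhom(D,-)$, use that $\Exinf$ preserves pullbacks (the paper asserts this without the $\sd\dashv\mathrm{Ex}$ and filtered-colimit justification you supply), and reduce to $(-)^{\Delta^n}$ preserving pullbacks via its identification with tensoring by the free $\ell$-module $\ell[t_1,\dots,t_n]$. Your flatness and left-exactness details merely make explicit what the paper leaves implicit.
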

\begin{proof}
    Since $\Exinf$ preserves pullbacks, it suffices to show that $\uhom(D,-)$ preserves pullbacks. But the latter follows from the facts that $\uhom(D,A)_n=\hom_{\Alg}(D,A^{\Delta^n})$ and that $(-)^{\Delta^n}:\Alg\to\Alg$ preserves pullbacks since it is naturally isomorphic to tensoring with the polynomial algebra $\ell[s_1,\dots, s_n]$.
\end{proof}

In \cite{uuye}*{Theorem 2.19} it is proved that $(\CAlg, \FSch, W_h)$ is a category of fibrant objects. One might expect a similar result to hold in the algebraic context---more precisely, that there exists a class of fibrations $\cF$ making $(\Alg, \cF, W_H)$ into a category of fibrant objects.
Natural candidates for path-objects $(FW_2)$ are the diagrams
\begin{equation}\label{eq:pathobject}\xymatrix@C=4em{B\ar[r]^-{\const}_-{\sim} & B_n[t]\ar[r]^-{(\ev_0,\ev_1)} & B\times B}\end{equation}
where $B_n[t]$, $\ev_0$ and $\ev_1$ are defined in section \ref{sec:hah}.
Thus, we want the family $\cF$ to contain the morphisms $\ev=(\ev_0, \ev_1):B_n[t]\to B\times B$.
Mimicking the definition of a Schochet fibration, we could take $\cF$ to be the class of algebra homomorphisms that induce a Kan fibration upon applying $\uHOM(D,-)$ for every algebra $D$. It is easily verified that this family $\cF$ is closed by pullbacks but it does not cointain the morphisms $\ev$, as we proceed to explain. Define:
\begin{align*}
P_0&=\{p\in \ell[t]: p(0)=0\}\subset \ell_0[t] \\
P_1&=\{(p,q)\in \ell[t]\times \ell[t] : p(0)=0, p(1)=q(0)\}\subset \ell_1[t]\\
\Omega_0&=\ker(\ev_1:P_0\to \ell)\\
\Omega_1&=\ker(\ev_1:P_1\to \ell)
\end{align*}
These fit into a commutative diagram of algebras
\begin{equation}\label{eq:sq1}
\begin{gathered}
\xymatrix@C=1.2em@R=1.2em{ & \Omega_0\ar@{->}[rr]\ar@{->}'[d][dd]
&& P_0\ar@{->>}[dd]^(.3){\ev_1}\\
\Omega_1\ar@{<-}[ur]^-{\beta}\ar@{->}[rr]\ar@{->}[dd]
&& P_1\ar@{<-}[ur]^-{\sim}\ar@{->>}[dd]^(.3){\ev_1}\\
& 0\ar@{->}'[r][rr] & & \ell\\
0\ar@{->}[rr]\ar@{=}[ur]
&& \ell\ar@{=}[ur]}
\end{gathered}
\end{equation}
where the front and back faces are pullbacks, $\beta$ is the morphism of Lemma \ref{betano}, and $P_0\overset{\sim}{\to} P_1$ is a homotopy equivalence since both $P_0$ and $P_1$ are contractible algebras.
Suppose that the morphisms $\ev$ belong to $\cF$. Since $\cF$ is closed by pullbacks, the right vertical morphisms in \eqref{eq:sq1} lie also in $\cF$.
By Proposition \ref{indhom} and Proposition \ref{pullhom} we get the diagram of Kan complexes
    \begin{equation}\label{eq:cuadradoHOM}\begin{gathered}
\xymatrix@C=-2em@R=1.2em{ & \uHOM(D,\Omega_0)\ar@{->}[rr]\ar@{->}'[d][dd]
&& \uHOM(D, P_0)\ar@{->>}[dd]\\
\uHOM(D,\Omega_1)\ar@{<-}[ur]^(.4){\beta_*}\ar@{->}[rr]\ar@{->}[dd]
&& \uHOM(D,P_1)\ar@{<-}[ur]^-{\sim}\ar@{->>}[dd]\\
& \star \ar@{->}'[r][rr] & & \uHOM(D,\ell)\\
\star \ar@{->}[rr]\ar@{=}[ur]
&& \uHOM(D,\ell) \ar@{=}[ur]}\end{gathered}\end{equation}
where the front and back faces are homotopy pullbacks.
It follows that $\beta_*$ is a weak equivalence of simplicial sets. Then $\beta: \Omega_0\to \Omega_1$ is a polynomial homotopy equivalence by Proposition \ref{indhom}, contradicting Lemma \ref{betano}. This shows that $\cF$ does not contain the morphisms $\ev$. In fact, almost the same argument shows that there is no family of fibrations $\cF$ that contains the morphisms $\ev$ and makes $(\Alg, \cF, W_H)$ into a category of fibrant objects.
\begin{prop}\label{prop:algfnotcof} Let $\ell$ be a domain with $\KH_0(\ell)\ne 0$. Let $W_H$ be the set of polynomial homotopy equivalences in $\Alg$. Let $\cF$ be a set of morphisms in $\Alg$ that is closed by pullbacks and that contains the evaluation map $\ev:\ell[t]\to \ell\times \ell$ defined by $\ev(p) = (p(0), p(1))$. Then 
$(\Alg, \cF, W_H)$ is NOT a category of fibrant objects.
\end{prop}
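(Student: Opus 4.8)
The plan is to argue by contradiction. Suppose $(\Alg,\cF,W_H)$ is a category of fibrant objects; recall that the zero algebra $0$ is the terminal object of $\Alg$. I will show that the morphism $\beta\colon\Omega_0\to\Omega_1$ of Lemma~\ref{betano} is then forced to be a polynomial homotopy equivalence, contradicting that lemma. Everything takes place inside the cube \eqref{eq:sq1}: its back and front faces are the pullback squares exhibiting $\Omega_i=\ker(\ev_1\colon P_i\to\ell)$ for $i=0,1$, its top edge is the weak equivalence $u\colon P_0\to P_1$ given by $u(p)=(p,p(1))$ (with the constant polynomial $p(1)$ in the second coordinate), and $\beta$ is the restriction of $u$ to the kernels.

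The first step is to check that $\ev_1\colon P_0\to\ell$ and $\ev_1\colon P_1\to\ell$ lie in $\cF$, using only that $\ev\colon\ell[t]\to\ell\times\ell$ lies in $\cF$ together with the axioms. Since $\ell\to 0$ is a fibration by $(F_3)$ and $\ell\times\ell=\ell\times_0\ell$, the two projections $\ell\times\ell\to\ell$ are base changes of $\ell\to 0$ and hence lie in $\cF$; because $\cF$ is a subcategory closed under composition, the evaluations $\ev_0,\ev_1\colon\ell[t]\to\ell$ therefore lie in $\cF$. Next, $\ev_1\colon P_0\to\ell$ is the base change of $\ev$ along the homomorphism $\ell\to\ell\times\ell$, $c\mapsto(0,c)$, so it lies in $\cF$. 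Finally, the projection $P_1\to\ell[t]$, $(p,q)\mapsto q$, is the base change of $\ev_1\colon P_0\to\ell$ along $\ev_0\colon\ell[t]\to\ell$, hence lies in $\cF$; postcomposing it with $\ev_1\colon\ell[t]\to\ell$ yields $\ev_1\colon P_1\to\ell\in\cF$.

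Next I would use that $P_0$ and $P_1$ are contractible, so $P_0\to 0$ and $P_1\to 0$ are weak equivalences and hence, by $(W_2)$, so is $u\colon P_0\to P_1$; moreover $u$ is a morphism over $\ell$ between the fibrations $\ev_1\colon P_0\to\ell$ and $\ev_1\colon P_1\to\ell$. Applying the gluing lemma for categories of fibrant objects to the induced map of cospans, from $(0\to\ell\xleftarrow{\ev_1}P_0)$ to $(0\to\ell\xleftarrow{\ev_1}P_1)$---whose three components $\id_0$, $\id_\ell$, $u$ are weak equivalences and whose two right-hand legs $\ev_1\colon P_i\to\ell$ are fibrations---one concludes that the induced map on pullbacks $\Omega_0=0\times_\ell P_0\to 0\times_\ell P_1=\Omega_1$ is a weak equivalence. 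This induced map is precisely $\beta$, so $\beta\in W_H$, contradicting Lemma~\ref{betano}.

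I expect the main obstacle to be the second step: starting from the single evaluation map $\ev\colon\ell[t]\to\ell\times\ell$ in the hypothesis---rather than from the whole family $B_n[t]\to B\times B$ used in the discussion preceding the statement---one must produce the fibrations $\ev_1\colon P_0\to\ell$ and $\ev_1\colon P_1\to\ell$ purely through base changes and composites, the identification of $P_1\to\ell[t]$ as a base change of $\ev_1\colon P_0\to\ell$ being the crucial observation. The appeal to the gluing lemma is routine, since it holds in any category of fibrant objects as a formal consequence of the axioms; a weaker incarnation, that the base change of a weak equivalence along a fibration is a weak equivalence, is \cite{brown}*{I.4 Lemma 2}.
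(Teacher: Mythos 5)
Your proposal is correct and follows essentially the same route as the paper's proof: the cube \eqref{eq:sq1}, showing the right vertical maps $\ev_1\colon P_i\to\ell$ lie in $\cF$, the co-gluing lemma for categories of fibrant objects (the paper cites \cite{goja}*{Lemma 9.10}), and the contradiction with Lemma \ref{betano}. The only difference is that you justify $\ev_1\colon P_1\to\ell\in\cF$ carefully, via an iterated base change plus composition using $(F_3)$ and the subcategory axiom, where the paper compresses this into a one-line appeal to closure under pullbacks; this is a useful added detail rather than a divergence.
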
 
\begin{proof}
Consider the commutative diagram of algebras \eqref{eq:sq1}. Since $\ev\in\cF$ and $\cF$ is closed by pullbacks, then the right vertical morphisms in \eqref{eq:sq1} lie in $\cF$ as well. If $(\Alg, \cF, W_H)$ was a category of fibrant objects, then $\beta$ would be a homotopy equivalence by the co-glueing lemma \cite{goja}*{Lemma 9.10}. But $\beta$ is not a homotopy equivalence by Lemma \ref{betano}. It follows that $(\Alg, \cF, W_H)$ is not a category of fibrant objects.
\end{proof}

Let $\Fib$ be the set of those surjective morphisms in $\Alg$ that have a linear section. By Proposition \ref{prop:algfnotcof},
$(\Alg, \Fib, W_H)$ is not a category of fibrant objects---all axioms are satisfied except for $(FW_1)$.
However, we do get a category of fibrant objects if we replace polynomial homotopy equivalences by $kk$-equivalences, as shown below.

\begin{prop}\label{prop:algcof} 
    Let $W_{kk}$ be the set of $\kk$-equivalences and let $\Fib$ be the set of those surjective morphisms in $\Alg$ that have a linear section. Then $(\Alg, \Fib, W_{kk})$ is a category of fibrants objects. 
\end{prop}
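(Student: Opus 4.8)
The plan is to verify the axioms $(F_1)$--$(F_3)$, $(W_1)$, $(W_2)$, $(FW_1)$, $(FW_2)$ one at a time for the triple $(\Alg,\Fib,W_{kk})$, reusing as much as possible the work already done for $(\Alg,\Fib,W_H)$. The axioms not involving $W_{kk}$ essentially concern only $\Fib$. For $(F_1)$: an isomorphism is surjective and its inverse is in particular a linear section, so isomorphisms lie in $\Fib$; one should also check $\Fib$ is closed under composition, which is clear since a composite of surjections is surjective and sections compose. For $(F_2)$: given $p\colon E\twoheadrightarrow B$ in $\Fib$ with linear section $s$, and any $f\colon B'\to B$, the pullback $E\times_B B'$ exists in $\Alg$ (computed on underlying $\ell$-modules), the projection to $B'$ is surjective because $p$ is, and $b'\mapsto (s(f(b')),b')$ is a linear section; hence the pullback projection is again in $\Fib$. $(F_3)$: $B\to 0$ is surjective with the zero map as linear section. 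The axioms $(W_1)$ and $(W_2)$ are immediate: $j\colon\Alg\to\kk$ sends isomorphisms to isomorphisms, and $W_{kk}=j^{-1}(\text{isos in }\kk)$ satisfies two-out-of-three because isomorphisms in any category do.

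The two substantive axioms are $(FW_2)$ and $(FW_1)$. For the path object $(FW_2)$: I would take $B^{\Delta^1}=B[t]$ with $s=\const\colon B\to B[t]$ and $d=(\ev_0,\ev_1)\colon B[t]\to B\times B$. The map $d$ is surjective (send $(b_0,b_1)$ to $b_0+t(b_1-b_0)$, or rather lift appropriately) and has an $\ell$-linear section $(b_0,b_1)\mapsto b_0(1-t)+b_1 t$, so $d\in\Fib$. It remains to see $s\in W_{kk}$, i.e. that $\const\colon B\to B[t]$ is a $\kk$-equivalence; this is exactly polynomial homotopy invariance of $j$, since $\const$ is a polynomial homotopy equivalence with homotopy inverse $\ev_0$. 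More generally one could use $B_n[t]$ as in \eqref{eq:pathobject}, but $B[t]$ suffices.

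The main obstacle is $(FW_1)$: the pullback of a trivial fibration must again be a trivial fibration. Closure of $\Fib\cap W_{kk}$ within $\Fib$ under pullback is handled by $(F_2)$; the content is that the pullback of a morphism in $\Fib\cap W_{kk}$ is still in $W_{kk}$. Here is where the triangulated structure of $\kk$ enters, and where the argument genuinely differs from the $W_H$ case (indeed Lemma \ref{betano} shows $(FW_1)$ fails for $W_H$). Let $p\colon E\twoheadrightarrow B$ be in $\Fib\cap W_{kk}$ with kernel $K$, so that $K\to E\to B$ is an extension; since $j(p)$ is an isomorphism and $\kk$ is triangulated, the associated distinguished triangle forces $j(K)=0$, i.e. $K$ is $\kk$-contractible. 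Now given $f\colon B'\to B$, form the pullback $E'=E\times_B B'$; its kernel is again $K$ (pullback of surjections preserves kernels), so we get an extension $K\to E'\to B'$. Applying $j$ and using excision, the distinguished triangle $j(K)=0\to j(E')\to j(B')\to 0$ shows $j(E')\to j(B')$ is an isomorphism, i.e. the pullback projection $E'\to B'$ is a $\kk$-equivalence. (One must check that $K\to E'\to B'$ is indeed an extension, i.e. $\ell$-split: the linear section of $p$ pulls back to a linear section of $E'\to B'$ exactly as in $(F_2)$.) This is the crux, and it is precisely the step that uses $\Minf$-stability/excisiveness of $j$ rather than mere homotopy invariance, explaining why $W_{kk}$ works where $W_H$ does not. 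Assembling these verifications completes the proof.
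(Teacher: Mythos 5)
Your proof is correct, and for the axioms other than $(FW_1)$ it matches the paper's (the paper only details $(F_2)$ and $(FW_1)$, declaring the rest straightforward; your path object $B[t]$ is the $n=0$ case of the paper's \eqref{eq:pathobject}). Where you genuinely diverge is the crux $(FW_1)$. The paper fixes an arbitrary algebra $E$, applies the contravariant groups $\kk_*(-,E)$ to the Milnor square, and invokes the Mayer--Vietoris long exact sequence of \cite{er}*{Lemma B.5}, concluding by a diagram chase that $\overline{f}^*$ is bijective for all $E$, hence $\overline{f}\in W_{\kk}$. You instead argue directly in $\kk$: the trivial fibration $p$ has $\ell$-split kernel extension $K\to E\to B$, whose excision triangle forces $j(K)\cong 0$; since pulling back a surjection does not change the kernel and the linear section pulls back, the pulled-back extension $K\to E'\to B'$ has the same ($\kk$-trivial) kernel, and its excision triangle shows $E'\to B'$ is a $\kk$-equivalence. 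Your route is more self-contained, using only the defining excisiveness of $j$ and elementary triangulated-category facts (no Mayer--Vietoris input), and it isolates exactly the point where $W_{\kk}$ succeeds while $W_H$ fails; the paper's route is slightly heavier but reuses an existing Mayer--Vietoris tool and avoids discussing kernels. One cosmetic correction: the relevant property is excision (the triangle $\Omega j(B')\to j(K)\to j(E')\to j(B')$ being distinguished), not $\Minf$-stability, and your displayed ``triangle $j(K)=0\to j(E')\to j(B')\to 0$'' should be written in that form; the conclusion you draw from it is nevertheless the standard fact that a distinguished triangle with one vertex zero has the adjacent map an isomorphism.
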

\begin{proof}
Most axioms are straightforward to verify. Let us show that fibrations and trivial fibrations are preserved by pullbacks.
Consider a \emph{Milnor square} of algebras, i.e.\ a pullback square as follows where $f$ is surjective and has a linear section:
\[
    \xymatrix{A\ar[r]^{\overline{g}}\ar[d]_-{\overline{f}} & B\ar[d]^-{f} \\
    C\ar[r]_{g} & D}
\]
Since this square is a pullback of $\ell$-modules, it follows from its universal property that $\overline{f}$ also belongs to $\Fib$. 
   \[\xymatrix{
   \save []+<0.5em,-0.5em>*+{C}="C"\restore & & \\
		& A \ar@{->>}[d]_-{\overline{f}}\ar[r]^-{\overline{g}} &
		B \ar@{->>}[d]^-{f} \\
		& C \ar[r]_-{g} &
		D \ar@/_1pc/[u]_{s}
            \ar@/^/"C";"2,3"^-{s\circ g}
            \ar@/_/"C";"3,2"_-{\id}
            \ar@{-->}"C";"2,2"
		}\]
Suppose now that $f\in W_\kk$. For each algebra $E$, we have a long exact Mayer-Vietoris sequence as follows \cite{er}*{Lemma B.5}, where $f^*:\kk_*(D,E)\to \kk_*(B,E)$ is an isomorphism:
\[\xymatrix@C=3em@R=2em{
\save []+<0em, 0.2em>*+{\vdots}="A"\restore &&& \\
\kk_*(D,E) \save []+<0em, -1.5em>*+{(\ast)}="1"\restore \ar[r]^-{\begin{pmatrix}
    \scriptstyle f^* \\ \scriptstyle g^*
\end{pmatrix}}& \kk_*(B,E)\oplus \kk_*(C,E) \save []+<0em, -1.5em>*+{(\ast\ast)}="2"\restore \ar[r]^-{\scriptstyle(\overline{g}^*\; -\overline{f}^*
)} & \kk_*(A,E)\ar[r]^-{\partial} & \kk_{*-1}(D,E)\\
&&& \save []+<0em, 0.2em>*+{\vdots}="Z"\restore
\ar@{->}"A";"2,1"
\ar@{->}"2,4";"Z"+<0em,0.6em>
}\]
Let $x\in \ker\overline{f}^*$. By the exactness at $(\ast\ast)$, there exists $z\in \kk_*(D,E)$ such that
$(f^*(z),g^*(z))=(0,x)$. Since $f^*$ is injective, then $z=0$ and $x=0$. This shows that $\overline{f}^*$ is injective.

It follows from the exactness at $(\ast)$ and the injectivity of $f^*$ that $\partial$ is the zero morphism. Hence, $(\overline{g}^*\; -\overline{f}^*
)$ is surjective.

Let $w\in  \kk_*(A,E)$. There exists a pair $(u,v)$ such that:
\[
w=\overline{g}^*(u)-\overline{f}^*(v) = \overline{g}^*(f^*(t))-\overline{f}^*(v)= \overline{f}^*(g^*(t))-\overline{f}^*(v)  = \overline{f}^*( g^*(t) -v )
\]
This shows that $\overline{f}^*$ is surjective. Then $\overline{f}^*$ is an isomorphism and $\overline{f}\in W_\kk$.
\end{proof}

\begin{rem} Let $\cC$ be a pointed category of fibrant objects.
By \cite{brown}*{Theorem 3} there is a functor $\Omega: \ho(\cC)\to\ho(\cC)$ such that, for any object $B$ and any path object $B^I$, $\Omega B$ is canonically identified with the fibre of $B^I\to B\times B$. Furthermore, $\Omega B$ has a natural group structure. The \emph{stable homotopy category} of $\cC$ is, by definition, the category $\ho(\cC)[\Omega^{-1}]$ obtained from $\ho(\cC)$ upon inverting this endofunctor $\Omega$; see \cite{uuye}*{Def. 1.23}.
We say that $\cC$ is \emph{stable} if $\Omega: \ho(\cC)\to\ho(\cC)$ is an isomorphism. 

Endow $\Alg$ with the category of fibrant objects structure of Proposition \ref{prop:algcof}.
A path object for an algebra $B$ is the diagram \eqref{eq:pathobject} for any $n$. The loop endofunctor mentioned above is induced by $\Omega: \Alg \to \Alg$, $\Omega B= (t^2-t)B[t]$. 
We will see in Theorem \ref{kkuniprop} that $\ho(\Alg)\cong \kk$.
This implies that this category of fibrant objects is stable since $\Omega: \kk \to \kk$ is an isomorphism \cite{cortho}*{Lemma 6.3.8}.
\end{rem}

\section{Algebraic \texorpdfstring{$\kk$}{kk}-theory as a localization of categories}

In this section we prove that the functor $j:\Alg\to\kk$ is the localization of $\Alg$ at the set of $\kk$-equivalences in the sense of ordinary categories.
Consider the following sets of morphims in $\Alg$:
\begin{itemize}
    \item $W_H$, the set of homotopy equivalences;
    \item $W_S:=\{\iota_A:A\to \Minf A: A\in\Alg\}$, the set of upper-left corner inclusions;
    \item $W_E$, the set of classifying maps of extensions
    \[\xymatrix{A\ar[r] & B\ar[r] & C}\]
    where $B$ is either a contractible ring or an infinite sum ring.
\end{itemize}
Write $\Iso(\cC)$ for the set of isomorphisms in a category $\cC$. The main technical result of this section is the following.
\begin{thm}[cf. \cite{bel}*{Proposition 2.1}, \cite{ln}*{Theorem 2.3}]\label{thm:kkloc} 
    The functor $j:\Alg\to\kk$ is initial in the category of those functors $F:\Alg\to \cC$ such that:
    \begin{itemize}
        \item $\cC$ is a category with finite products and $F$ preserves finite products;
        \item $F(W_H\cup W_S\cup W_E)\subseteq\Iso(\cC)$.
    \end{itemize}
\end{thm}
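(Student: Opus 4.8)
The plan is to show that $j:\Alg\to\kk$ enjoys the stated universal property by factoring it through a chain of intermediate localizations, each of which peels off one of the three classes $W_H$, $W_S$, $W_E$, and then identifying the composite with the Corti\~nas--Thom characterization of $\kk$ as the universal $\Minf$-stable, homotopy invariant, excisive homology theory. Concretely, suppose $F:\Alg\to\calC$ is a finite-product-preserving functor sending $W_H\cup W_S\cup W_E$ to isomorphisms; I must produce a unique finite-product-preserving functor $\bar F:\kk\to\calC$ with $\bar F\circ j\simeq F$. The existence half is the substantial part; uniqueness will follow once we know $j$ is essentially surjective (it is the identity on objects) and that every morphism in $\kk$ is built out of images of algebra homomorphisms and formal inverses of maps in $W_H\cup W_S\cup W_E$ --- this last point is exactly what the construction of $\kk$ in \cite{cortho} provides, since $\kk(A,B)$ is assembled from homotopy classes of maps into matrix-stabilized path/loop constructions and classifying maps of extensions.

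The first key step is to recall from \cite{cortho} the explicit model: $\kk(A,B)=\colim_j [J^jA, \Minf B^{S^j}_\bullt]$ (or the analogous presentation used there), so that a morphism in $\kk$ is represented by an algebra homomorphism after applying the loop/suspension functor $J$ finitely many times and stabilizing by $\Minf$. Thus any $F$ killing $W_H$ (so that $F$ descends to $[\Alg]$), killing $W_S$ (so that $F(\iota_A)$ is invertible, giving $\Minf$-stability of $F$), and killing $W_E$ for the relevant extensions forces $F$ to send the classifying map $A\to J^{-1}\!B$-type comparison maps to isomorphisms; chasing this through the colimit presentation shows $F$ factors set-theoretically through $j$ on hom-sets. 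The second key step is functoriality and well-definedness of $\bar F$: one checks that the factorization is compatible with composition in $\kk$, using that composition in $\kk$ is induced by composition of the representing algebra homomorphisms together with the structure maps, all of which $F$ respects. The third step is to verify that $\bar F$ preserves finite products; since $j$ preserves finite products (products in $\kk$ are finite sums, and $j$ sends algebra products to these) and $F$ does by hypothesis, and $j$ is essentially surjective, this is formal.

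An alternative, cleaner route --- which I would actually prefer to write up --- is to avoid re-deriving the colimit presentation and instead argue as follows. Let $F:\Alg\to\calC$ be as in the hypothesis. Because $F$ sends $W_H$ to isomorphisms it factors through $[\Alg]$; because it sends $W_S$ to isomorphisms the induced functor is $\Minf$-stable; and I claim it is then automatically an excisive homology theory on the nose, so that the Corti\~nas--Thom universal property of $j:\Alg\to\kk$ supplies the unique factorization. The content here is that a finite-product-preserving, homotopy invariant, $\Minf$-stable functor to an \emph{arbitrary} category $\calC$ --- not a priori triangulated --- nonetheless acquires enough structure: one uses that sending the maps in $W_E$ (classifying maps of extensions with contractible or infinite-sum-ring middle term) to isomorphisms is precisely what is needed to define the boundary maps $\partial_\scrE$ and the triangulated structure on a localization of $\calC$, following the cone/path-algebra construction of \cite{cortho}*{Sec. 4--6}. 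In effect one runs the Corti\~nas--Thom construction relative to $F$ rather than relative to $\id_\Alg$.

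The main obstacle, and the step that needs the most care, is precisely this last point: ensuring that the three chosen classes $W_H$, $W_S$, $W_E$ are \emph{exactly} what is required --- no more, no less --- so that inverting them produces the triangulated structure and not merely an ad hoc localization. In particular one must check (i) that classifying maps of extensions with contractible middle term being invertible forces $j(PA)\simeq 0$ and hence gives the suspension--loop adjunction needed for the connecting maps, and (ii) that classifying maps of extensions with infinite-sum-ring middle term being invertible encodes exactly the Eilenberg swindle / additivity that makes $\Omega$ invertible and the category stable. These are the technical heart; once they are in place, uniqueness of $\bar F$ is immediate because $j$ is the identity on objects and surjective on the relevant generating morphisms, and the finite-product compatibility is formal as indicated above. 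I expect to cite \cite{cortho}*{Theorem 6.6.2} and the lemmas on the loop extension (\cite{cortho}*{Sec. 4.5, Lemma 6.3.8, Lemma 6.3.9}) to package steps (i) and (ii), and to model the overall bookkeeping on \cite{ln}*{Theorem 2.3} and \cite{bel}*{Proposition 2.1}, adapting their $C^*$-algebraic arguments to the present algebraic setting where $\Minf$ replaces the compact operators and polynomial homotopy replaces continuous homotopy.
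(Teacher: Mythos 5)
The route you say you would actually prefer to write up --- factor $F$ through $[\Alg]$, note $\Minf$-stability, and then claim $F$ is ``automatically an excisive homology theory'' so that the Corti\~nas--Thom universal property of $j$ supplies the factorization --- does not work as stated, and this is a genuine gap rather than a presentational one. The universal property of \cite{cortho}*{Theorem 6.6.2} concerns functors into \emph{triangulated} categories equipped with connecting maps $\partial_{\scrE}$; here $\calC$ is only assumed to have finite products, and nothing in the hypotheses provides a desuspension, distinguished triangles, or boundary morphisms for $F$. Your suggestion to produce ``the triangulated structure on a localization of $\calC$'' both changes the target (the factorization must land in $\calC$ itself) and begs the question: the whole point of the theorem is that finite products suffice, precisely so that it can be applied in Theorem \ref{kkuniprop} to the ordinary localization $\Alg[W_{kk}^{-1}]$, which at that stage is not known to carry any triangulated structure. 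A proof that only covers triangulated targets would make that application circular or impossible.

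Your first sketch is essentially the paper's strategy (Theorem \ref{thm:kklocposta}): use the presentation $\hom_{\fk}(A,B)=\colim_v[J^vA,B^{S^v}_\bullet]$ together with matrix stabilization, and define $\tF$ on a representative $f:J^vA\to M_kM_\infty B^{S^v}_r$ by conjugating $F(\Sigma^v f)$ with identifications $F(\Sigma^vJ^vA)\cong F(A)$ and $F(\Sigma^vB^{S^v})\cong F(B)$. But the phrase ``chasing this through the colimit presentation'' conceals exactly the content of the proof: one must show that $\rho_A$, $x_A$, $c_A$, the last-vertex maps and the maps $\mu^{m,n}$ become isomorphisms after applying $F(C\otimes -)$ (the $F$-equivalence lemmas, Lemma \ref{lem:Fequivs}); one must prove that $F(A)$ is a commutative \emph{group} object of $\calC$ (Lemmas \ref{lem:comm} and \ref{lem:comg}), without which the sign $(-1)^{pq}$ in the composition law of $\fk$ and the group structure on $[J^qA,B^{S^q}_\bullet]$ cannot even be transported to $\calC$ --- your proposal never addresses this; and one must verify the coherence of the identifications $\alpha^{n,k}$, $\beta^{n,k}$ with the colimit transition maps and with composition (Lemmas \ref{lem:alphapq} through \ref{lem:kappaAB}). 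Finally, uniqueness is not ``immediate because $j$ is the identity on objects and surjective on the relevant generating morphisms'': $j$ is not full, and what is needed is that every morphism of $\kk$ is a zig-zag of morphisms in the image of $j$, which is the content of Lemma \ref{lem:zzagkk} and the appendix (e.g.\ the anticommuting square of Lemma \ref{lem:extdoble}). So the general direction of your first sketch matches the paper, but the proposal as written omits the steps that constitute the proof, and the route you say you would follow would not establish the theorem.
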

The idea of the proof is simple. Suppose that we have a functor $F:\Alg\to \cC$ satisfying the conditions stated in Theorem \ref{thm:kkloc}. We want to show that $F$ factors uniquely through some $\tF:\kk\to\cC$. It is easy to define $\tF$ on objects: since $j$ is the identity on objects we should have $\tF(A)=F(A)$ for all $A$. To define $\tF$ on morphisms, recall that any morphism $\phi:A\to B$ in $\kk$ is represented by some algebra homomorphism $f:J^vA\to M_kM_\infty B^{S^v}_r$ \cite{cortho}*{Sec. 6.1}. Moreover, as we explain below, in $\cC$ we have identifications $F(\Sigma^v J^vA)\cong F(A)$ and $F(\Sigma^vM_kM_\infty B^{S^v}_r)\cong F(B)$. We would then define $\tF(\phi)$ as the composite:
\[\xymatrix{F(A)\cong F(\Sigma^v J^vA)\ar[r]^-{F(\Sigma^vf)} & F(\Sigma^vM_kM_\infty B^{S^v}_r)\cong F(B)}\]
The rest of this section is devoted to showing that this definition indeed makes sense and gives a well defined functor $\tF:\kk\to\cC$.

\subsection{Excision and \texorpdfstring{$F$}{F}-equivalences}

\begin{defi}\label{defi:Fequiv}
    Let $F:\Alg\to\cC$ be a functor. We say that an algebra homomorphism is an \emph{$F$-equivalence} if it becomes an isomorphism upon applying $F(C\otimes -)$, for every algebra $C$.
\end{defi}

\begin{exa}\label{exa:rhoFequiv}
    Let $F:\Alg\to\cC$ be a functor such that $F(W_E)\subseteq \Iso(\cC)$ and let $A$ be an algebra. Let $\rho_A:JA\to A^{S^1}$ be the classifying map of the loop extension of $A$ \cite{cortho}*{Sec. 4.5}. Then $\rho_A$ is an $F$-equivalence. Indeed, for any algebra $C$, we have a morphism of extensions
    \[\xymatrix{C\otimes JA\ar[d]_-{C\otimes \rho_A}\ar[r] & C\otimes TA\ar[d]\ar[r] & C\otimes A\ar@{=}[d] \\
    C\otimes A^{S^1}\ar[r] & C\otimes PA\ar[r] & C\otimes A}\]
    that induces a commutative square
    \[\xymatrix{J(C\otimes A)\ar@{=}[d]\ar[r]^-{\clas} & C\otimes JA\ar[d]^-{C\otimes \rho_A}\\
    J(C\otimes A)\ar[r]^-{\clas} & C\otimes A^{S^1}}\]
    where the horizontal morphisms belong to $W_E$ and thus become isomorphisms upon applying $F$.
\end{exa}

\begin{lem}\label{lem:JFequiv}
    Let $F:\Alg\to\cC$ be a functor such that $F(W_E)\subseteq \Iso(\cC)$. If $f:A\to B$ is an $F$-equivalence, then $J(f):JA\to JB$ is an $F$-equivalence too.
\end{lem}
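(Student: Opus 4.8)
Unwinding Definition~\ref{defi:Fequiv}, we must show that $F(C\otimes J(f))$ is an isomorphism in $\calC$ for every algebra $C$. The plan is to reduce this to Example~\ref{exa:rhoFequiv} by exploiting the naturality of the classifying map of the loop extension. Recall that $\rho\colon J\Rightarrow(-)^{S^1}$ is a natural transformation of endofunctors of $\Alg$: it is built from the strictly functorial tensor algebra $T(-)$, its functorial kernel $J(-)=\ker(T(-)\to(-))$, and a natural $\ell$-linear section of the surjection $PA\to A$, so that for our given $f\colon A\to B$ the square
\[\xymatrix{JA\ar[r]^-{\rho_A}\ar[d]_-{J(f)} & A^{S^1}\ar[d]^-{f^{S^1}}\\ JB\ar[r]^-{\rho_B} & B^{S^1}}\]
commutes strictly (not merely up to homotopy). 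I would also record the standard fact that $(-)^{S^1}$ is naturally isomorphic to $(-)\otimes\ell^{S^1}$ for a fixed $\ell$-algebra $\ell^{S^1}$, under which $f^{S^1}$ corresponds to $f\otimes\id_{\ell^{S^1}}$.

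Now fix an arbitrary algebra $C$. First I would apply $C\otimes(-)$ to the square above and then $F$, obtaining a commutative square in $\calC$ whose horizontal arrows are $F(C\otimes\rho_A)$ and $F(C\otimes\rho_B)$; these are isomorphisms since $\rho_A$ and $\rho_B$ are $F$-equivalences by Example~\ref{exa:rhoFequiv}. Next I would show that the right-hand vertical $F(C\otimes f^{S^1})$ is an isomorphism: using the isomorphism $(-)^{S^1}\cong(-)\otimes\ell^{S^1}$ together with the associativity and symmetry of $\otimes$, the arrow $C\otimes f^{S^1}$ fits into a commutative square with vertical isomorphisms relating it to $(\ell^{S^1}\otimes C)\otimes f$, and $F$ sends the latter to an isomorphism precisely by Definition~\ref{defi:Fequiv} applied to the algebra $\ell^{S^1}\otimes C$. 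A commutative square three of whose arrows are isomorphisms has its fourth arrow an isomorphism; hence $F(C\otimes J(f))$ is an isomorphism. Since $C$ was arbitrary, $J(f)$ is an $F$-equivalence.

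The one place that needs care is the strict naturality of $\rho$: because $F$ is only assumed to invert $W_E$ and \emph{not} the homotopy equivalences $W_H$, it would not suffice for the square above to commute merely up to homotopy, nor to tensor such a square with $C$ and still hope $F$ sees a commutative diagram. This is why I would make explicit that $T(-)$, $J(-)$, $P(-)$ and the chosen section of $PA\to A$ are all strictly functorial; everything else is formal manipulation of tensor products. An alternative, slightly longer route is to first use the classifying maps $J(C\otimes A)\to C\otimes JA$ of Example~\ref{exa:rhoFequiv} — which lie in $W_E$, hence become isomorphisms under $F$, and are natural in $A$ — to reduce to the case $C=\ell$, and then run the same $\rho$-argument; but the direct version above avoids this detour.
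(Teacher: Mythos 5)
Your proof is correct and follows essentially the same route as the paper: tensor the (strictly) natural square relating $J(f)$ and $f^{S^1}$ via $\rho$ with $C$, apply $F$, invoke Example~\ref{exa:rhoFequiv} for the horizontal arrows, and use the identification $C\otimes A^{S^1}\cong(\ell^{S^1}\otimes C)\otimes A$ so that the remaining vertical arrow is an isomorphism because $f$ is an $F$-equivalence. Your extra remark on the strict (not merely up-to-homotopy) naturality of $\rho$ is a sensible precaution, but the argument is the paper's.
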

\begin{proof}
    Consider the following commutative diagram:
    \[\xymatrix@C=4em{F(C\otimes JA)\ar[d]_-{F(C\otimes J(f))}\ar[r]^-{F(C\otimes \rho_{JA})} & F(C\otimes A^{S^1})\cong F((\ell^{S^1}\otimes C)\otimes A)\ar@<3em>[d]^-{F((\ell^{S^1}\otimes C)\otimes f)} \\
    F(C\otimes JB)\ar[r]^-{F(C\otimes \rho_{JB})} & F(C\otimes B^{S^1})\cong F((\ell^{S^1}\otimes C)\otimes B)}\]
    The right vertical morphism is an isomorphism since $f$ is an $F$-equivalence and both horizontal morphisms are isomorphisms by Example \ref{exa:rhoFequiv}. Then the left vertical morphism is an isomorphism too.
\end{proof}

\begin{lem}\label{lem:loopingFequiv}
    Let $F:\Alg\to\cC$ be a functor such that $F(W_E)\subseteq \Iso(\cC)$. If $f$ is an algebra homomorphism that fits into a morphism of extensions
    \begin{equation}\label{eq:loppingFequiv}\begin{gathered}\xymatrix{A\ar[r]\ar[d]_-{f} & A'\ar[d]\ar[r] & A''\ar[d]^-{f''} \\
    B\ar[r] & B'\ar[r] & B''}\end{gathered}\end{equation}
    where $f''$ is an $F$-equivalence and $A'$ and $B'$ are either contractible rings or infinite sum rings, then $f$ is an $F$-equivalence. In particular, the classifying maps of extensions where the middle term is either a contractible ring or an infinite sum ring are $F$-equivalences.
\end{lem}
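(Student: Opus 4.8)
The plan is to compare $f$ with $J(f'')\colon JA''\to JB''$ through the classifying maps of the two rows $\scrE_A\colon A\to A'\to A''$ and $\scrE_B\colon B\to B'\to B''$ of \eqref{eq:loppingFequiv}. Let $\xi_A\colon JA''\to A$ and $\xi_B\colon JB''\to B$ be classifying maps of $\scrE_A$ and $\scrE_B$ (see \cite{cortho}*{Sec.~4.5}); the morphism of extensions \eqref{eq:loppingFequiv} produces a square
\[\xymatrix{JA''\ar[r]^-{J(f'')}\ar[d]_-{\xi_A} & JB''\ar[d]^-{\xi_B} \\ A\ar[r]^-{f} & B}\]
which commutes up to an explicit elementary homotopy (discussed below). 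Granting that $\xi_A$ and $\xi_B$ are $F$-equivalences, that $J(f'')$ is an $F$-equivalence by Lemma \ref{lem:JFequiv} since $f''$ is, and that $F(C\otimes-)$ collapses that homotopy, the proof concludes: for any algebra $C$, applying $F(C\otimes-)$ to the square makes the two vertical arrows and the top arrow into isomorphisms, hence $F(C\otimes f)$ is an isomorphism; as $C$ is arbitrary, $f$ is an $F$-equivalence. Note that the ``in particular'' clause is the special case $f=\xi_\scrE$, $f''=\mathrm{id}$ of the statement (taking the evident morphism from the universal extension to $\scrE$), so it suffices to prove it directly, which I do next.

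The core of the argument is that the classifying map $\xi_\scrE\colon JC\to A$ of an extension $\scrE\colon A\to B\to C$ with $B$ contractible or an infinite sum ring is an $F$-equivalence. Fix an algebra $D$. Since $D\otimes-$ preserves $\ell$-linear splittings, $D\otimes\scrE\colon D\otimes A\to D\otimes B\to D\otimes C$ is again an extension, and $D\otimes B$ is again contractible when $B$ is, and again an infinite sum ring when $B$ is; in either case $D\otimes\scrE$ is one of the extensions defining $W_E$, so its classifying map $\xi_{D\otimes\scrE}\colon J(D\otimes C)\to D\otimes A$ lies in $W_E$ and $F(\xi_{D\otimes\scrE})$ is an isomorphism. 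On the other hand, the universal extension of $D\otimes C$ maps canonically to $D$ tensored with the universal extension of $C$, which in turn maps (by tensoring the classifying morphism of $\scrE$) to $D\otimes\scrE$; composing and restricting to $J$-terms, uniqueness of classifying maps gives $\xi_{D\otimes\scrE}=(D\otimes\xi_\scrE)\circ\kappa_D$, where $\kappa_D\colon J(D\otimes C)\to D\otimes JC$ is the classifying map of $D\otimes(\text{universal extension of }C)$. Its middle term $D\otimes TC$ is contractible because $TC$ is, so $\kappa_D\in W_E$ and $F(\kappa_D)$ is an isomorphism; hence $F(D\otimes\xi_\scrE)=F(\xi_{D\otimes\scrE})\circ F(\kappa_D)^{-1}$ is an isomorphism. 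Since $D$ was arbitrary, $\xi_\scrE$ is an $F$-equivalence. (This is the ``$\clas$'' manipulation already used in Example \ref{exa:rhoFequiv}.) Applying this to $\scrE_A$ and $\scrE_B$ supplies the hypotheses needed in the first paragraph.

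I expect two points to require care. The main obstacle is the infinite sum ring case: contractibility of $B$ is visibly inherited by $D\otimes B$, but when $B$ is an infinite sum ring one must check that $D\otimes B$ still carries the Eilenberg-swindle data making it an admissible middle term for $W_E$, now on the possibly non-unital ring $D\otimes B$; for the infinite sum rings actually occurring in \cite{cortho} (cone rings, infinite matrix rings) this is immediate, and in general it should follow from the definition. The second point is the commutativity of the classifying-map square: because $\xi_A$ and $\xi_B$ depend on choices of $\ell$-linear splittings $s_A,s_B$, the composites $f\circ\xi_A$ and $\xi_B\circ J(f'')$ agree only up to the elementary homotopy $JA''\to B[t]$ obtained by extending over the tensor algebra $TA''$ the affine homotopy $t\mapsto t\,(f'\circ s_A)+(1-t)\,(s_B\circ f'')$ between maps $A''\to B'[t]$, where $f'\colon A'\to B'$ is the middle vertical map of \eqref{eq:loppingFequiv} --- it takes values in $B[t]$ precisely because $f''$ is an algebra homomorphism. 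Consequently $F(C\otimes-)$ identifies the two composites whenever $F$ also inverts homotopy equivalences, which is the situation in Theorem \ref{thm:kkloc}; absent that hypothesis one needs a separate argument that this particular homotopy is respected.
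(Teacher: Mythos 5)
Your strategy is essentially the paper's: compare $f$ with $J(f'')$ via classifying maps, invoke Lemma \ref{lem:JFequiv} for $J(f'')$, and use that the relevant classifying maps are inverted by $F$. The mechanical difference is that the paper first tensors the whole morphism of extensions \eqref{eq:loppingFequiv} with $C$ and then takes the classifying maps $J(C\otimes A'')\to C\otimes A$ and $J(C\otimes B'')\to C\otimes B$ of the two tensored rows; these lie in $W_E$ outright (granting, as the paper also does, that $C\otimes A'$ inherits contractibility, resp.\ the infinite-sum property), so a single square with verticals $F(J(C\otimes f''))$ and $F(C\otimes f)$ and horizontals $F(\clas)$ settles the general case, and the ``in particular'' clause is just the special case of the morphism $\scrU_{C''}\to\scrE$. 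Your route through the untensored classifying maps $\xi_A,\xi_B$ forces the extra factorization $\xi_{D\otimes\scrE}=(D\otimes\xi_\scrE)\circ\kappa_D$; that factorization is correct (it is exactly the device of Example \ref{exa:rhoFequiv}, and it is strict once the splitting of $D\otimes\scrE$ is taken to be $D\otimes s$), but it is bookkeeping the paper's arrangement avoids. Your worry about $D\otimes B$ remaining an admissible middle term in the infinite-sum case is shared verbatim by the paper's proof, so it is not a defect specific to your argument.

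The genuine gap is the one you flag yourself and then defer: the naturality square $f\circ\xi_A$ versus $\xi_B\circ J(f'')$ commutes only up to elementary homotopy (your homotopy is the correct one), while the lemma assumes only $F(W_E)\subseteq\Iso(\calC)$ --- no homotopy invariance is available, and the lemma is quoted later (e.g.\ in Lemma \ref{lem:Fequivs}) under exactly this hypothesis. Concluding ``whenever $F$ also inverts homotopy equivalences'' proves a weaker statement than the one asserted, so as written your first paragraph does not follow from the stated hypotheses. The paper's proof instead uses strict commutativity of its square of classifying maps; this is legitimate when the classifying maps of the two rows are formed from compatible splittings --- for instance whenever the given morphism of extensions is strong, which is the situation in every application in the paper (quotient map the identity for $c_A$ and $x_A$, and the strong morphisms $\scrP_{n,A,r}\to\scrP_{n,A,r+1}$ for $\gamma$ and $\mu$) --- though the paper does not comment on this point. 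To turn your proposal into a proof of the lemma as stated you must close this step rather than postpone it: either arrange the classifying maps with respect to compatible splittings (and explain why the maps so chosen still lie in $W_E$), or supply the promised separate argument that this particular elementary homotopy is collapsed by a functor that merely inverts $W_E$; simply appealing to homotopy invariance is not available here.
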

\begin{proof}
    For any algebra $C$, the morphism \eqref{eq:loppingFequiv} induces a commutative square:
    \[\xymatrix{F(J(C\otimes A''))\ar[d]_-{F(J(C\otimes f''))}\ar[r]^-{F(\clas)} & F(C\otimes A)\ar[d]^-{F(C\otimes f)} \\
    F(J(C\otimes B''))\ar[r]^-{F(\clas)} & F(C\otimes B)}\]
    The left vertical morphism is an isomorphism by Lemma \ref{lem:JFequiv}, since $f''$ -- and thus $C\otimes f''$ -- is an $F$-equivalence. The horizontal morphisms are isomorphisms since they result from applying $F$ to morphisms in $W_E$. Then the right vertical morphism is an isomorphism too.
\end{proof}

\begin{lem}\label{lem:Fequivs}
    Let $F:\Alg\to\cC$ be a functor such that $F(W_E)\subseteq \Iso(\cC)$. Then the following algebra homomorphisms are $F$-equivalences for any algebra $A$:
    \begin{enumerate}
        \item\label{item:FequivsX} $x_A:J\Sigma A\to \Minf A$, the classifying map of the cone extension of $A$; see \cite{cortho}*{Sec. 4.7};
        \item $c_A:J\Sigma A\to \Sigma JA$, the classifying map of the extension
        \[\xymatrix{\Sigma JA\ar[r] & \Sigma TA\ar[r] & \Sigma A};\]
        \item\label{item:FequivsLVM} $\gamma^n_A: A^{S^n}_r\to A^{S^n}_{r+1}$, the morphism induced by the last vertex map;
        \item $\mu^{m,n}_A:(A^{S^m}_r)^{S^n}_s\to A^{S^{m+n}}_{r+s}$, the morphism defined in \cite{htpysimp}*{Sec. 3.1}.
    \end{enumerate}
\end{lem}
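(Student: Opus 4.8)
The four statements will be handled in turn. Those concerning $x_A$ and $c_A$ follow at once from Lemma \ref{lem:loopingFequiv}. Indeed, $x_A$ is by construction the classifying map of the cone extension $\Minf A\to\Gamma A\to\Sigma A$, and the cone ring $\Gamma A$ is an infinite sum ring \cite{cortho}*{Sec.~4.7}, so $x_A$ is an $F$-equivalence by the final assertion of that lemma; and $c_A$ is the classifying map of the extension $\Sigma JA\to\Sigma TA\to\Sigma A$, so, again by the final assertion, it is enough to see that $\Sigma TA$ is a contractible ring. This last point follows from two observations: the tensor algebra $TA=\bigoplus_{n\ge1}A^{\otimes n}$ is contractible — a contracting homotopy $H\colon TA\to TA[s]$ is given on the degree-$n$ summand by $a_1\otimes\cdots\otimes a_n\mapsto s^{n}(a_1\otimes\cdots\otimes a_n)$, which is multiplicative and satisfies $\ev_0 H=0$ and $\ev_1 H=\id_{TA}$ — and tensoring a contractible ring with any ring yields a contractible ring (if $H_R\colon R\to R[s]$ contracts $R$, then $\id_S\otimes H_R$ contracts $S\otimes R$); hence $\Sigma TA=\Sigma\otimes TA$ is contractible.

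For $\gamma^n_A$ and $\mu^{m,n}_A$ the plan is an induction on $n$, using the reduced cone together with the following inputs from the simplicial machinery of \cite{htpysimp}: for each $n$ there is a cofibre sequence of pointed simplicial sets $S^n\hookrightarrow CS^n\to CS^n/S^n$ with $CS^n$ the reduced cone and $CS^n/S^n=S^{n+1}$, and the $D$-valued function algebra on any iterated subdivision of a reduced cone is polynomially contractible. Applying $\sd^r$ (which preserves colimits) and then passing to $D$-valued function algebras — a functor contravariant in the simplicial set that sends monomorphisms of finite simplicial sets to split epimorphisms of algebras — gives, for every algebra $D$ and every $r$, a linearly split extension
\[D^{S^{n+1}}_r\longrightarrow D^{CS^n}_r\longrightarrow D^{S^n}_r\]
whose middle term is contractible. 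Since the last-vertex maps $\sd^{r+1}\Rightarrow\sd^r$ are natural, this extension maps to the analogous one at level $r+1$, the three vertical maps being the canonical comparisons; on the sub-object it is $\gamma^{n+1}_D$ and on the quotient it is $\gamma^n_D$. Lemma \ref{lem:loopingFequiv} then shows that if $\gamma^n_D$ is an $F$-equivalence then so is $\gamma^{n+1}_D$. Because $\sd$ fixes discrete simplicial sets we have $\gamma^0_D=\id$, so by induction on $n$ every $\gamma^n_D$, in particular $\gamma^n_A$, is an $F$-equivalence. The case of $\mu^{m,n}_A$ is entirely parallel: for $n=0$, under the canonical identifications $\mu^{m,0}_A$ is an iterated composite of maps of the form $\gamma^m$ and hence an $F$-equivalence; for the inductive step one applies Lemma \ref{lem:loopingFequiv} to the morphism of extensions relating the cone cofibre sequence of $S^n$ with coefficients in $A^{S^m}_r$ to the cone cofibre sequence of $S^{m+n}$ with coefficients in $A$, whose middle terms are again contractible cone algebras, using the compatibility of the maps $\mu$ with the suspension coordinate \cite{htpysimp} to identify the vertical comparisons with $\mu^{m,n+1}_A$ on the sub-objects and $\mu^{m,n}_A$ on the quotients.

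The delicate part, and the place I expect the real work to lie, is precisely the simplicial input invoked for $\gamma^n_A$ and $\mu^{m,n}_A$: that $D^{CX}_r$ is polynomially contractible for every algebra $D$, pointed simplicial set $X$ and $r\ge0$, and that $\gamma$ and $\mu$ are natural with respect to the cone and suspension cofibre sequences at all subdivision levels — both of which are available in \cite{htpysimp}. Granting these, everything else is a formal propagation through Lemma \ref{lem:loopingFequiv}, using only the standing hypothesis $F(W_E)\subseteq\Iso(\calC)$ and neither homotopy invariance nor product-preservation of $F$.
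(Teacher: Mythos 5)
Your proof is correct and takes essentially the same route as the paper: items (1) and (2) follow from Lemma \ref{lem:loopingFequiv} applied to extensions whose middle terms ($T\Sigma A$, $\Sigma TA$, $\Gamma A$) are contractible or infinite sum rings, and items (3) and (4) are proved by induction on $n$ using the path/cone extensions with contractible middle terms together with Lemma \ref{lem:loopingFequiv}. The only cosmetic difference is in (4): the paper first reduces to $r=s=0$ via item (3) and then uses the level-zero morphism of extensions from \cite{loopstho}, whereas you invoke compatibility of $\mu$ with the cone cofibre sequences at all subdivision levels; either variant works.
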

\begin{proof}
    To see that $c_A$ and $x_A$ are $F$-equivalences, note that they fit into the following morphisms of extensions and use Lemma \ref{lem:loopingFequiv}.
    \[\xymatrix{ J\Sigma A\ar[d]_-{ c_A}\ar[r] &  T\Sigma A\ar[d]\ar[r] &  \Sigma A\ar@{=}[d] & & J\Sigma A\ar[d]_-{ x_A}\ar[r] &  T\Sigma A\ar[d]\ar[r] &  \Sigma A\ar@{=}[d] \\
     \Sigma JA\ar[r] &  \Sigma TA\ar[r] &  \Sigma A & & \Minf A\ar[r] &  \Gamma A\ar[r] &  \Sigma A}\]

    To prove that $\gamma^n_A$ is a weak equivalence we will proceed by induction on $n$. For $n=0$ the result is obvious since $A^{S^0}_r\cong A$ for any $r$ and $\gamma^0_A$ is identified with the identity of $A$. For the inductive step, recall from \cite{loopstho}*{Sec. 2.28} that we have a morphism of extensions 
    \[\xymatrix{ A^{S^{n+1}}_r\ar[d]_-{\gamma^{n+1}_A}\ar[r] &  P(n,A)_r\ar[d]\ar[r] &  A^{S^n}_r\ar[d]^-{\gamma^n_A}\\
     A^{S^{n+1}}_{r+1}\ar[r] &  P(n,A)_{r+1}\ar[r] &  A^{S^n}_{r+1}}\]
    where the middle terms are contractible. If $\gamma^n_A$ is an $F$-equivalence, then $\gamma^{n+1}_A$ is an $F$-equivalence as well by Lemma \ref{lem:loopingFequiv}.
    
    Let us prove that $\mu^{m,n}_A$ is an $F$-equivalence. First note that it suffices to consider the case $r=s=0$ since we have a commutative square
    \[\xymatrix@C=4em{(A^{S^m}_r)^{S^n}_s\ar[r]^-{\mu^{m,n}_A} & A^{S^{m+n}}_{r+s} \\
    (A^{S^m}_0)^{S^n}_0\ar[u]\ar[r]^-{\mu^{m,n}_A} & A^{S^{m+n}}_0\ar[u]}\]
    where the vertical morphisms are $F$-equivalences by \eqref{item:FequivsLVM}. We will proceed by induction on $n$. For $n=0$ there is nothing to prove, since $(A^{S^m})^{S^0}\cong A^{S^m}$ and $\mu^{m,0}_A$ identifies with the identity of $A^{S^m}$. For the inductive step, recall from \cite{loopstho}*{Example 2.29} that we have a morphism of extensions
    \[\xymatrix{(A^{S^m})^{S^{n+1}}\ar[d]_-{\mu^{m,n+1}_A}\ar[r] & P(n, A^{S^m})\ar[d]\ar[r] & (A^{S^m})^{S^n}\ar[d]^-{\mu^{m,n}_A} \\
    A^{S^{m+n+1}}\ar[r] & P(m+n,A)\ar[r] & A^{S^{m+n}}}\]
    where the middle terms are contractible. If $\mu^{m,n}_A$ is an $F$-equivalence, then $\mu^{m, n+1}_A$ is an $F$-equivalence as well by Lemma \ref{lem:loopingFequiv}.
\end{proof}

\subsection{Group objects in the localization}
\begin{lem}\label{lem:comm}
    Let $\cC$ be a category with finite products and let $F:\Alg\to \cC$ be a functor that preserves finite products and such that $F(W_H\cup W_S)\subseteq \Iso(\cC)$. Then:
    \begin{enumerate}
        \item\label{item:comm1} for every algebra $A$, $F(A)$ is a commutative monoid object in $\cC$;
        \item\label{item:comm2} for every algebra homomorphism $f:A\to B$, $F(f):F(A)\to F(B)$ is a morphism of monoid objects in $\cC$.
    \end{enumerate}
\end{lem}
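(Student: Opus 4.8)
Set $I:=(\Z_{\geq1})^2$, so that $\Minf A=M_IA$. The plan is to build the commutative monoid structure on $F(A)$ out of the block direct sum on infinite matrices, transported along the $\Minf$-stability isomorphism. Fix once and for all two injections $\phi_1,\phi_2\colon I\hookrightarrow I$ with disjoint images whose union is all of $I$, and let $\oplus_A\colon\Minf A\times\Minf A\to\Minf A$ be the block-sum homomorphism $\oplus_A(x,y)=(\phi_1)_*(x)+(\phi_2)_*(y)$, where $(\phi_i)_*$ denotes the relabelling map; this is an algebra homomorphism precisely because the images of $(\phi_1)_*$ and $(\phi_2)_*$ annihilate each other. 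Since $F$ preserves finite products and $F(\iota_A)$ is invertible (as $\iota_A\in W_S$), we define
\[m_A:=F(\iota_A)^{-1}\circ F(\oplus_A)\circ\bigl(F(\iota_A)\times F(\iota_A)\bigr)\colon F(A)\times F(A)\longrightarrow F(A)\]
using the canonical identification $F(\Minf A\times\Minf A)\cong F(\Minf A)\times F(\Minf A)$, and we take the unit $e_A\colon\ast\to F(A)$ (with $\ast$ the terminal object of $\calC$) to be $F$ applied to the zero homomorphism $0\to A$; note $F(0)=\ast$ since $F$ preserves the terminal object. The maps $\iota_{(-)}$, $\oplus_{(-)}$ and the zero homomorphisms are all natural in the algebra, so once we know that $(F(A),m_A,e_A)$ is a commutative monoid, assertion~\eqref{item:comm2} is immediate: $F(f)$ intertwines the structure morphisms because $f$ does.

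The technical heart is the following claim: for every injection $\phi\colon I\hookrightarrow I$, the induced relabelling endomorphism $\phi_*\colon\Minf A\to\Minf A$ satisfies $F(\phi_*)=\id_{F(\Minf A)}$. For a permutation $\sigma$ of $I$ this reads $F(\operatorname{Ad}(u_\sigma))=\id$, since relabelling by a bijection is conjugation by the corresponding permutation matrix. Granting the claim, the monoid axioms become bookkeeping: associativity and commutativity of $m_A$ reduce to identities of the form $\operatorname{Ad}(u_\sigma)\circ(\text{one iterated }\oplus)=(\text{another iterated }\oplus)$ for a suitable permutation $\sigma$ of $I$ — one matching up the two disjoint decompositions of $I$ occurring on the two sides, which exists because all the pieces are countably infinite with countably infinite complement — while unitality reduces to $\oplus_A(x,0)=(\phi_1)_*(x)$ and $\oplus_A(0,y)=(\phi_2)_*(y)$. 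Applying $F$ and invoking the claim collapses both sides, yielding assertion~\eqref{item:comm1}. Here and below I use that $F(W_H)\subseteq\Iso(\calC)$ forces $F$ to identify polynomially homotopic homomorphisms, by the usual argument with $\const\colon B\to B[t]$ and $\ev_0,\ev_1$.

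To prove the claim I would argue in three steps. First, for a permutation $\sigma$ of $I$, the block matrix $\operatorname{diag}(u_\sigma,u_\sigma^{-1})$ over $I\sqcup I$ is a product of finitely many block-elementary matrices $\id+N$ with $N^2=0$, and for each such factor the rule $x\mapsto(\id+tN)x(\id-tN)$ defines an elementary polynomial homotopy from $\operatorname{Ad}(\id+N)$ to the identity endomorphism of $M_{I\sqcup I}A$; hence $F$ sends $\operatorname{Ad}(\operatorname{diag}(u_\sigma,u_\sigma^{-1}))$ to the identity. Second, the first-block corner inclusion $j_1\colon\Minf A\to M_{I\sqcup I}A$ satisfies $\operatorname{Ad}(\operatorname{diag}(u_\sigma,u_\sigma^{-1}))\circ j_1=j_1\circ\operatorname{Ad}(u_\sigma)$, and $F(j_1)$ is invertible because, after identifying $I\sqcup I$ with $I$ suitably, the composite $j_1\circ\iota_A$ becomes the standard corner inclusion in $W_S$; cancelling the monomorphism $F(j_1)$ gives $F(\operatorname{Ad}(u_\sigma))=\id$. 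Third, for an arbitrary injection $\phi$ one compares $\phi_*\circ\iota_A$ with $\iota_A$ through a permutation of $I$ carrying $(1,1)$ to $\phi(1,1)$, obtaining $F(\phi_*)\circ F(\iota_A)=F(\iota_A)$, and cancels the epimorphism $F(\iota_A)$.

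I expect the main obstacle to be exactly this claim — equivalently, that $F$ trivializes $\operatorname{Ad}(u_\sigma)$ on $\Minf A$. This is where the two hypotheses on $F$ are jointly and essentially used: $\operatorname{Ad}$ of a transposition is \emph{not} polynomially homotopic to the identity (a transposition matrix has determinant $-1$ however many coordinates one adjoins, an obstruction that homotopy invariance alone cannot remove), so one is forced to absorb the obstruction into a second, complementary copy of $\Minf A$ via $u_\sigma\oplus u_\sigma^{-1}$ and then use $\Minf$-stability to transport the conclusion back to $\Minf A$. Everything else is routine diagram chasing.
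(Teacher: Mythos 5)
Your argument is correct, and at bottom it runs on the same engine as the paper's proof: define the addition as a matrix direct sum transported along the $\Minf$-stability isomorphism, reduce the monoid axioms to the fact that $F$ cannot distinguish homomorphisms differing by conjugation by a permutation matrix, and obtain item (2) from naturality of all the ingredients. The bookkeeping, however, is genuinely different. The paper stays with finite matrices: it embeds $A\times A$ diagonally into $M_2A$ via $\bar m_A$, verifies associativity inside $M_4A$, and simply asserts that the relevant pairs of homomorphisms are identified by $F$ because they are conjugate by a permutation matrix (implicitly it also uses that $F$ inverts the finite corner inclusions $A\to M_2A$ and $M_2A\to M_4A$, a standard consequence of the hypotheses). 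You instead take the block sum on $\Minf A$ attached to a decomposition $I=\phi_1(I)\sqcup\phi_2(I)$ and isolate a single key lemma --- $F(\phi_*)=\id_{F(\Minf A)}$ for every injection $\phi\colon I\hookrightarrow I$ --- proved by the Whitehead trick: $\mathrm{diag}(u_\sigma,u_\sigma^{-1})$ is a product of unipotent block matrices, each of whose conjugations is elementarily homotopic to the identity, after which $F(j_1)$ and $F(\iota_A)$ are cancelled. This buys a uniform treatment of associativity, commutativity and unitality, and it makes explicit precisely the homotopy-plus-stability argument that the paper leaves as a bare assertion; what it costs is a little care with multipliers, which you should record: the matrices $\id+tN$ are not elements of $M_{I\sqcup I}A$, but they are row- and column-finite, so $x\mapsto(\id+tN)x(\id-tN)$ does land in $(M_{I\sqcup I}A)[t]$ and is an algebra homomorphism because $N^2=0$. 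With that remark added (and the observation that your matching permutation in the associativity step must intertwine the injections exactly, e.g.\ $\sigma\circ\phi_1\circ\phi_1=\phi_1$, which your piecewise definition does provide --- the cardinality remark is not what is needed), the proposal is complete.
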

\begin{proof}
    Let $A$ be an algebra and let $\bar{m}_A:A\times A \to M_2(A)$ be defined by
    \[(a_1, a_2)\mapsto \begin{pmatrix}
        a_1 & 0 \\ 0 & a_2
    \end{pmatrix}.\]
    Let $m_A:F(A)\times F(A)\to F(A)$ be the following composite in $\cC$:
    \[\xymatrix@C=3em{F(A)\times F(A)\cong F(A\times A)\ar[r]^-{F(\bar{m}_A)} & F(M_2A)\ar[r]^-{F(\iota_A)^{-1}}_-{\cong} & F(A)}\]
    Note that $F(0)$ is a terminal object in $\cC$ since $F$ preserves products. Let $u_A:F(0)\to F(A)$ be induced by the zero morphism $0\to A$. We claim that $m_A$ and $u_A$ are, respectively, a multiplication and a unit that make $F(A)$ into a monoid object.
    Let us prove that $m_A$ is associative, i.e. that the following diagram in $\cC$ commutes:
    \[\xymatrix@C=4em{F(A)\times F(A)\times F(A)\ar[d]_-{m_A\times \id}\ar[r]^-{\id\times m_A} & F(A)\times F(A) \ar[d]^-{m_A} \\
    F(A)\times F(A)\ar[r]^-{m_A} & F(A)}\]
    Unravelling the definition of $m_A$, one shows that the commutativity of the square above is equivalent to that of the outer square in the following diagram:
    \begin{equation}\begin{gathered}\label{eq:comm}\xymatrix@C=3em{F(A\times A\times A)\ar[d]_-{F(\bar{m}_A\times \id)}\ar[r]^-{F(\id\times \bar{m}_A)} & F(A\times M_2A)\ar[r]^-{F(\id\times \iota_A)^{-1}}\ar[d]|-{F(\iota_A\times \id)} & F(A\times A) \ar[d]^-{F(\bar{m}_A)} \\
    F(M_2A\times A)\ar[r]^-{F(\id\times\iota_A)}\ar[d]_-{F(\iota_A\times\id)^{-1}} & F(M_2A\times M_2A)\ar[dr]|-{F(\bar{m}_{M_2A})}\ar@{}[r]^-{(\star)} & F(M_2A)\ar[d]^-{F(\iota_{M_2A})} \\
    F(A\times A)\ar[r]_-{F(\bar{m}_A)} & F(M_2A) \ar[r]_-{F(\iota_{M_2A})} & F(M_4A)}\end{gathered}\end{equation}
    Suppose for a moment that both trapezoids in \eqref{eq:comm} commute. Then, to prove that the outer square in \eqref{eq:comm} commutes it suffices to show that
    $\bar{m}_{M_2A}\circ (\iota_A\times \id)\circ (\id\times\bar{m}_A)$ and $\bar{m}_{M_2A}\circ (\id\times\iota_A)\circ (\bar{m}_A\times \id)$ become equal upon applying $F$. The latter is easily verified, as we proceed to explain. We have
    \begin{align*}
        [\bar{m}_{M_2A}\circ (\iota_A\times \id)\circ (\id\times\bar{m}_A)](a_1, a_2, a_3)&=\begin{pmatrix}
            a_1 & 0 & 0 & 0 \\ 0 & 0 & 0 & 0 \\ 0 & 0 & a_2 & 0 \\ 0 & 0 & 0 & a_3
        \end{pmatrix}, \\
        [\bar{m}_{M_2A}\circ (\id\times\iota_A)\circ (\bar{m}_A\times \id)](a_1, a_2, a_3)&=\begin{pmatrix}
            a_1 & 0 & 0 & 0 \\ 0 & a_2 & 0 & 0 \\ 0 & 0 & a_3 & 0 \\ 0 & 0 & 0 & 0
        \end{pmatrix},
    \end{align*}
    and both matrices are conjugate by a permutation matrix. Thus, they induce the same morphism in $\cC$ by \cite{friendly}*{Proposition 2.2.6}. To show that both trapezoids in \eqref{eq:comm} commute one can proceed in a similar fashion. For example, the trapezoid marked with $(\star)$ commutes since we have
    \begin{align*}[\iota_{M_2A}\circ \bar{m}_A](a_1,a_2)=\begin{pmatrix}
        a_1 & 0 & 0 & 0 \\ 0 & a_2 & 0 & 0 \\ 0 & 0 & 0 & 0 \\ 0 & 0 & 0 & 0
    \end{pmatrix},\\
    [\bar{m}_{M_2A}\circ (\iota_A\times \id)\circ (\id\times\iota_A)](a_1,a_2)=\begin{pmatrix}
        a_1 & 0 & 0 & 0 \\ 0 & 0 & 0 & 0 \\ 0 & 0 & a_2 & 0 \\ 0 & 0 & 0 & 0
    \end{pmatrix},\end{align*}
    and both matrices are conjugate by a permutation matrix. This proves the associativity of $m_A$. The commutativity of $m_A$ and the fact that $u_A$ is a unit are easily verified. This finishes the proof of \eqref{item:comm1}.
    For \eqref{item:comm2}, note that $m_A$ and $u_A$ are natural in $A$ with respect to algebra homomorphisms since both $\bar{m}_A$ and $\iota_A$ are.
\end{proof}


\begin{lem}\label{lem:comg}
    Let $\cC$ be a category with finite products and let $F:\Alg\to \cC$ be a functor that preserves finite products and such that $F(W_H\cup W_S\cup W_E)\subseteq\Iso(\cC)$. Then:
    \begin{enumerate}
        \item\label{item:comg1} $F(A)$ is a commutative group object in $\cC$ for every $A\in\Alg$ and moreover this group structure coincides with the monoid structure from Lemma \ref{lem:comm};
        \item\label{item:comg2} $F(f):F(A)\to F(B)$ is a morphism of group objects in $\cC$ for every morphism $f:A\to B$ in $\Alg$.
        \item\label{item:comg3} The function $F^\ind(C\otimes -):[A,B^{S^1}_\bullet]\to \hom_\cC(F(C\otimes A), F(C\otimes B^{S^1}_\bullet))$ is a group homomorphism for any $A,B,C\in\Alg$.
    \end{enumerate}
\end{lem}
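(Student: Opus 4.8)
The plan is to use the $F$-equivalences collected in Example~\ref{exa:rhoFequiv} and Lemma~\ref{lem:Fequivs} to identify $F(A)$ with $F$ of a ``loop algebra'', on which a group structure is produced by concatenation of loops, and then to match this structure with the monoid structure of Lemma~\ref{lem:comm} via an Eckmann--Hilton argument. Since $F$ inverts $W_H$, it factors as $\Alg\to[\Alg]\to\calC$, and the induced functor $[\Alg]\to\calC$ preserves finite products because finite products in $[\Alg]$ are computed as in $\Alg$.

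For~\eqref{item:comg1}, fix $B\in\Alg$ and write $LB:=B^{S^1}$. I would first prove that $F(LB)$ is a commutative group object with structure $m_{LB}$. Loops in $B^{S^1}_r$ vanish at the basepoint, so both evaluation maps $B^{S^1}_r\to B$ are zero and therefore $B^{S^1}_r\times_B B^{S^1}_r=B^{S^1}_r\times B^{S^1}_r$; concatenation of loops is then a genuine algebra homomorphism $\concat\colon B^{S^1}_0\times B^{S^1}_0\to B^{S^1}_1$. Post-composing $F(\concat)$ with the inverse of $F$ applied to the last-vertex map $B^{S^1}_0\to B^{S^1}_1$ (an isomorphism by Lemma~\ref{lem:Fequivs}\,\eqref{item:FequivsLVM}) gives a binary operation $\oplus$ on $F(LB)$; the standard homotopies witnessing that concatenation is associative, unital (with unit the constant loop) and admits inverses (with inverse the reversed loop) become identities after applying $F$, so $\oplus$ is a group structure. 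Moreover $\oplus$ and $m_{LB}$ have the same unit $F(0\to LB)$ and satisfy the interchange law: at the algebra level, concatenating block-diagonal matrices and taking the block-diagonal of concatenations define the same homomorphism from the fourfold product $B^{S^1}_0\times\cdots\times B^{S^1}_0$ to $M_2(B^{S^1}_1)$, since the $M_2$-direction and the $S^1$-direction do not interact. By Eckmann--Hilton, $\oplus=m_{LB}$, this common operation is commutative, and it is a group structure; hence $F(LB)$ is a commutative group object. Now, for arbitrary $A$, the chain of $F$-equivalences
\[A\xrightarrow{\ \iota_A\ }\Minf A\xleftarrow{\ x_A\ }J\Sigma A\xrightarrow{\ \rho_{\Sigma A}\ }(\Sigma A)^{S^1}=L\Sigma A\]
--- where $\iota_A\in W_S$, $x_A$ is an $F$-equivalence by Lemma~\ref{lem:Fequivs}\,\eqref{item:FequivsX}, and $\rho_{\Sigma A}$ is an $F$-equivalence by Example~\ref{exa:rhoFequiv} --- yields, after applying $F$, an isomorphism $F(A)\cong F(L\Sigma A)$; since each map in the chain is an algebra homomorphism and $m_{(-)}$ is natural (Lemma~\ref{lem:comm}\,\eqref{item:comm2}), this is an isomorphism of monoid objects, whence $F(A)$ with $m_A$ is a commutative group object. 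This proves~\eqref{item:comg1}, and~\eqref{item:comg2} follows at once: $F(f)$ is a morphism of monoid objects by Lemma~\ref{lem:comm}\,\eqref{item:comm2}, and a morphism of monoid objects between group objects is automatically a morphism of group objects.

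For~\eqref{item:comg3}, the functor $C\otimes-\colon\Alg\to\Alg$ preserves finite products and homotopies, hence descends to a product-preserving endofunctor of $[\Alg]$ that sends $B^{S^1}_\bullet$ to $(C\otimes B)^{S^1}_\bullet\cong C\otimes B^{S^1}_\bullet$. By Yoneda, $B^{S^1}_\bullet$ is a group object in the category of ind-objects of $[\Alg]$, since $[-,B^{S^1}_\bullet]\cong\pi_1\uHOM(-,B)$ is group-valued (cf.\ the proof of Proposition~\ref{indhom}), its multiplication being concatenation of loops; hence so is $C\otimes B^{S^1}_\bullet$, and $F^\ind$ --- well defined on ind-objects of $[\Alg]$ since $F$ inverts $W_H$, and product-preserving --- carries it to the group object $F(C\otimes B^{S^1}_\bullet)$. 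Thus $F^\ind(C\otimes-)$ factors as
\[[A,B^{S^1}_\bullet]\xrightarrow{\ C\otimes-\ }[C\otimes A,\,C\otimes B^{S^1}_\bullet]\xrightarrow{\ F^\ind\ }\hom_\calC\bigl(F(C\otimes A),\,F(C\otimes B^{S^1}_\bullet)\bigr),\]
a composite of group homomorphisms, because a product-preserving functor induces group homomorphisms on hom-sets into a group object; by~\eqref{item:comg1} the resulting group structure on the target agrees with the one induced by $m$, so $F^\ind(C\otimes-)$ is a group homomorphism.

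The step I expect to be the main obstacle is making the Eckmann--Hilton comparison in~\eqref{item:comg1} fully precise: one has to keep track of subdivision indices and last-vertex maps so that concatenation is literally an algebra homomorphism, realize the associativity, unit, inverse and interchange homotopies by explicit polynomial homotopies, and verify that transport along the chain $A\rightsquigarrow L\Sigma A$ carries $m_A$ to the transported concatenation. All of this is conceptually routine but notationally heavy.
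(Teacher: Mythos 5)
Your proposal is correct and follows essentially the same route as the paper: a concatenation product on $F(B^{S^1})$ (using that the last-vertex maps become invertible), an Eckmann--Hilton comparison with the matrix-diagonal monoid structure of Lemma \ref{lem:comm}, and transfer to arbitrary $A$ along the chain $A\to \Minf A\leftarrow J\Sigma A\to (\Sigma A)^{S^1}$ via $\iota_A$, $x_A$, $\rho_{\Sigma A}$. The only difference is that the ``notationally heavy'' verifications you flag (the loop-group axioms and part \eqref{item:comg3}) are not redone by hand in the paper, which instead cites that $B^{S^1}_\bullet$ is a group object in $[\Alg]^{\ind}$ (\cite{cortho}*{Thm.~3.3.2}, \cite{htpysimp}*{Thm.~3.10}) and invokes \cite{loopstho}*{Prop.~2.46(ii)} for \eqref{item:comg3}, so your hand-checks can simply be replaced by those references.
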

\begin{proof}
    Note that \eqref{item:comg2} will follow from Lemma \ref{lem:comm} \eqref{item:comm2} once we prove \eqref{item:comg1}.

    To prove \eqref{item:comg1}, recall from Lemma \ref{lem:Fequivs} \eqref{item:FequivsLVM} that we have an isomorphism $F(\gamma_A): F(A^{S^1}_r)\to F(A^{S^1}_{r+1})$ for every $r\geq 0$. This implies that $F(A^{S^1})$ and $F(A^{S^1}_\bullet)$ are isomorphic objects of $\cC^\ind$.
    It is well known that $A^{S^1}_\bullet$ is a group object in $[\Alg]^\ind$; see \cite{cortho}*{Thm. 3.3.2} and \cite{htpysimp}*{Thm. 3.10}. Upon applying $F^\ind$, we get that $F(A^{S^1}_\bullet)\cong F(A^{S^1})$ is a group object in $\cC^\ind$. Hence $F(A^{S^1})$ is a group object in $\cC$. We claim that the group operation in $F(A^{S^1})$ coincides with the commutative monoid operation from Lemma \ref{lem:comm}. Indeed, the group operation in $F(A^{S^1})$ can be described in terms of concatenation \cite{loopstho}*{Ex. 2.20} as the following composite:
    \[\xymatrix@C=4em{F(A^{S^1})\times F(A^{S^1})\cong F(A^{S^1}\times A^{S^1})\ar[r]^-{F(\concat)} & F(A^{S^1}_1) \ar[r]^-{F(\gamma_A)^{-1}}_-{\cong} & F(A^{S^1})}\]
    It is then clear that this composite is a morphism of monoid objects since both $F(\concat)$ and $F(\gamma_A)$ are by Lemma \ref{lem:comm} \eqref{item:comm2}. It follows that both operations coincide by the Hilton-Eckmann argument. This shows that the monoid structure of $F(A^{S^1})$ from Lemma \ref{lem:comm} is indeed a commutative group structure.

    Now consider the following chain of isomorphisms in $\cC$:
    \[\xymatrix{F(A)\ar[r]^-{F(\iota_A)}_-{\cong} & F(\Minf A) & F(J\Sigma A)\ar[l]^-{\cong}_-{F(x_A)}\ar[r]_-{\cong}^-{F(\rho_{\Sigma A})} & F((\Sigma A)^{S^1})}\]
    By Lemma \ref{lem:comm} \eqref{item:comm2}, Example \ref{exa:rhoFequiv} and Lemma \ref{lem:Fequivs} \eqref{item:FequivsX}, each of these is a monoid isomorphism for the monoid structure from Lemma \ref{lem:comm}. We have shown that the monoid structure of $F((\Sigma A)^{S^1})$ is indeed a commutative group structure. It follows that the monoid structure of $F(A)$ is a commutative group structure as well.

    The assertion \eqref{item:comg3} follows from the above and \cite{loopstho}*{Proposition 2.46 (ii)}.
\end{proof}

\subsection{An explicit construction of the \texorpdfstring{$\kk$}{kk}-theory category}
Following \cite{tesisema}, we proceed to recall one way to construct the algebraic $\kk$-theory category defined by Corti\~nas-Thom in \cite{cortho}. The construction goes in three steps:
\begin{enumerate}
    \item Construct a triangulated category $\fk$ whose objects are the pairs $(A,m)$ with $A\in\Alg$ and $m\in\Z$, and whose morphism sets are defined by:
    \[\hom_{\fk}((A,m),(B,n))=\colim_{v}[J^{m+v}A, B^{S^{n+v}}_\bullet]\]
    See \cite{loopstho}*{Definition 3.4} for more details. There is a functor $j:\Alg\to\fk$, $j(A)=(A,0)$, that is the universal excisive and homotopy invariant homology theory in the sense of \cite{cortho}*{Section 6.6}; see \cite{loopstho}*{Theorem 10.16} for a proof of this statement. This category $\fk$ is equivalent to the one defined by Garkusha in \cite{garkuni}*{Thm. 2.6} since they both share the same universal property.
    \item Construct a triangulated category $\kf$ whose objects are those of $\fk$ and whose morphism sets are defined by:
    \[\hom_{\kf}((A,m),(B,n))=\colim_{p}\hom_{\fk}((A,m),(M_pB, n))\]
    Here, the transition maps are induced by the upper-left corner inclusions $M_pB\to M_{p+1}B$; see \cite{tesisema}*{Definition 5.1.3} for more details on the definition of $\kf$. There is a functor $j_f:\Alg\to\kf$, $j_f(A)=(A,0)$, that is the universal excisive, homotopy invariant and $M_n$-stable homology theory; see \cite{tesisema}*{Theorem 5.1.12} for a proof of this statement. This category $\kf$ is equivalent to the one defined by Garkusha in \cite{garkuni}*{Thm. 6.5} since they both share the same universal property.
    \item Construct a triangulated category $\ks$ whose objects are those of $\kf$ and whose morphism sets are defined by:
    \[\hom_{\ks}((A,m),(B,n))=\hom_{\kf}((A,m),(M_\infty B, n))\]
    See \cite{tesisema}*{Definition 5.2.7} for more details. There is a functor $j_s:\Alg\to\ks$, $j_s(A)=(A,0)$, that is the universal excisive, homotopy invariant and $M_\infty$-stable homology theory; see \cite{tesisema}*{Theorem 5.2.16} for a proof of this statement. This category $\ks$ is equivalent to the category $\kk$ defined by Corti\~nas-Thom \cite{cortho} and to the category defined by Garkusha in \cite{garkuni}*{Theorem 9.3} since they all share the same universal property.
\end{enumerate}

\begin{rem}
    The translation functors of the triangulated categories $\fk$, $\kf$ and $\ks$ are given on objects by $(A,n)\mapsto (A, n+1)$. We have a commutative diagram \begin{equation}\label{eq:allk}\begin{gathered}\xymatrix{\fk\ar[r]^-{t_f} & \kf\ar[r]^-{t_s} & \ks \\
    & \Alg\ar@/^1pc/[ul]_-{j}\ar[u]_-{j_f}\ar@/_1pc/[ur]^-{j_s} &}\end{gathered}\end{equation}
    where the horizontal functors are triangulated and are the identity on objects.
\end{rem}

\subsection{Main theorem of the section}

We are now ready to prove Theorem \ref{thm:kkloc} (cf. Theorem \ref{thm:kklocposta}). For the rest of this section, we fix a category $\cC$ with finite products and a functor $F:\Alg\to\cC$ that preserves finite products and such that $F(W_H\cup W_S\cup W_E)\subseteq \Iso(\cC)$. We start by making precise how to identify $F(\Sigma^vJ^vA)\cong F(A)$ (Def. \ref{defi:alpha}) and $F(\Sigma^v A^{S^v})\cong F(A)$ (Def. \ref{defi:beta}).

\begin{defi}\label{defi:alpha}
    For $A\in\Alg$ and $n,k\geq 0$ we will define isomorphisms in $\cC$:
\[\xymatrix{\alpha^{n,k}_A:F(\Sigma^{k+n}J^k A)\ar[r]^-{\cong} & F(\Sigma^nA)}\]
We proceed inductively on $k$. Let $\alpha^{n,0}_A$ be the identity of $F(\Sigma^nA)$. Let $\alpha^{n,1}_A$ be the following composite in $\cC$:
\[
\xymatrix@C=2em{
F(\Sigma^{1+n}JA)=F(\Sigma^n(\Sigma JA))\ar[r]^-{(c_A)^{-1}_*}_-{\cong} & F(\Sigma^n(J\Sigma A))\ar[r]^-{(x_A)_*}_-{\cong} & F(\Sigma^nM_\infty A)\ar[r]^-{(\iota_A)^{-1}_*}_-{\cong} & F(\Sigma^nA)
}
\]
Here, the left and middle morphisms are isomorphisms by Lemma \ref{lem:Fequivs} and the right morphism is an isomorphism by $M_\infty$-stability.
Now suppose that we have defined $\alpha^{n,h}_A$ for all $A\in\Alg$, all $n\geq 0$ and all $1\leq h\leq k$. Let $\alpha^{n, k+1}_A$ be the following composite in $\cC$:
\[\xymatrix{
F(\Sigma^{1+k+n}J^{k+1}A)=F(\Sigma^{1+k+n}J(J^kA))\ar[r]^-{\alpha^{k+n,1}_{J^kA}}_-{\cong} & F(\Sigma^{k+n}J^kA)\ar[r]^-{\alpha^{n,k}_A}_-{\cong} & F(\Sigma^nA)
}\]
This defines $\alpha^{n,k}_A$ for all $A\in\Alg$ and all $n,k\geq 0$.
\end{defi}

\begin{lem}\label{lem:alphapq}
    Let $A\in\Alg$ and $n,p,q\geq 0$. Then the following diagram in $\cC$ commutes:
    \[\xymatrix{
        F(\Sigma^{q+p+n}J^{p+q}A)\ar[r]^-{\alpha^{p+n,q}_{J^pA}}\ar@/_1pc/[dr]_-{\alpha^{n,p+q}_A} & F(\Sigma^{p+n}J^pA)\ar[d]^-{\alpha^{n,p}_A} \\
        & F(\Sigma^n A)
    }\]
\end{lem}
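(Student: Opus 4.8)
The plan is to prove the asserted identity, which amounts to $\alpha^{n,p}_A\circ\alpha^{p+n,q}_{J^pA}=\alpha^{n,p+q}_A$, by a straightforward induction on $q$, keeping $A$, $n$ and $p$ fixed. The only ingredient is the recursive description of the isomorphisms $\alpha$ from Definition \ref{defi:alpha}. First I would record that recursion in a single uniform form, valid for all $k\geq 0$, namely
\[\alpha^{n,k+1}_A=\alpha^{n,k}_A\circ\alpha^{k+n,1}_{J^kA};\]
for $k\geq 1$ this is the defining formula, and for $k=0$ it reduces to the tautology $\alpha^{n,1}_A=\id\circ\alpha^{n,1}_A$ because $\alpha^{n,0}_A$ is an identity and $J^0A=A$. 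Having this also for $k=0$ is what makes the base case and the inductive step of the main induction line up without a separate argument.

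For the base case $q=0$, the map $\alpha^{p+n,0}_{J^pA}$ is the identity of $F(\Sigma^{p+n}J^pA)$, so both the diagonal and the composite through $F(\Sigma^{p+n}J^pA)$ equal $\alpha^{n,p}_A$. For the inductive step, assuming the identity for a given $q$, I would first peel off the outermost $J$ from $\alpha^{p+n,q+1}_{J^pA}$ using the recursion above with $k=q$ applied to the algebra $J^pA$ (together with $J^q(J^pA)=J^{p+q}A$), getting $\alpha^{p+n,q+1}_{J^pA}=\alpha^{p+n,q}_{J^pA}\circ\alpha^{q+p+n,1}_{J^{p+q}A}$. Composing with $\alpha^{n,p}_A$ and invoking the inductive hypothesis on the first two factors turns $\alpha^{n,p}_A\circ\alpha^{p+n,q+1}_{J^pA}$ into $\alpha^{n,p+q}_A\circ\alpha^{q+p+n,1}_{J^{p+q}A}$; and by the recursion once more, now with $k=p+q$ applied to $A$, this is exactly $\alpha^{n,p+q+1}_A$. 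That closes the induction.

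I do not expect any genuine obstacle here: the statement is a cocycle-type compatibility that follows formally from the fact that the $\alpha$'s are assembled one copy of $J$ at a time, and the only point requiring care is the bookkeeping of suspension exponents and $J$-exponents — in particular phrasing the recursion so that the $k=0$ case is subsumed, which is the one place where a careless treatment could force an annoying extra case.
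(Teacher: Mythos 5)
Your proof is correct and is essentially the paper's argument: both proceed by induction on $q$, with the inductive step combining the recursion $\alpha^{n,k+1}_A=\alpha^{n,k}_A\circ\alpha^{k+n,1}_{J^kA}$ from Definition \ref{defi:alpha} (applied once to $J^pA$ with $k=q$ and once to $A$ with $k=p+q$) with the inductive hypothesis. The only difference is cosmetic: the paper treats $q=1$ as a separate base case and invokes it twice as the two upper triangles of its diagram, whereas you fold that case into a uniform statement of the recursion valid also for $k=0$.
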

\begin{proof}
    We proceed by induction on $q$. For $q=0$ there is nothing to prove. For $q=1$, the result holds by definition of $\alpha^{n,p+1}_A$; see Definition \ref{defi:alpha}. Now suppose that the result holds for $q\geq 1$ and let us show that it also holds for $q+1$. Consider the following diagram in $\cC$:
    \[\xymatrix{
    & F(\Sigma^{1+q+p+n} J^{p+q+1}A)\ar@/_1pc/[ddl]_-{\alpha^{n, p+q+1}_A}\ar[d]^-{\alpha^{q+p+n,1}_{J^{p+q}A}}\ar@/^1pc/[ddr]^-{\alpha^{p+n,q+1}_{J^pA}} & \\
    & F(\Sigma^{q+p+n}J^{p+q}A)\ar[dl]_-{\alpha^{n,p+q}_A}\ar[dr]^-{\alpha^{p+n,q}_{J^pA}} & \\
    F(\Sigma^nA) & & F(\Sigma^{p+n}J^pA)\ar[ll]_-{\alpha^{n,p}_A}
    }\]
    The two upper triangles commute by the case $q=1$ and the lower triangle commutes by the inductive hypothesis. Then the outer triangle commutes as well, proving the result for $q+1$.
\end{proof}

\begin{defi}\label{defi:beta}
    For $A\in\Alg$ and $n,k\geq 0$ we will define isomorphisms in $\cC$:
\[\xymatrix{\beta^{n,k}_A:F(\Sigma^{k+n} A^{S^k})\ar[r]^-{\cong} & F(\Sigma^nA)}\]
We proceed inductively on $k$. Let $\beta^{n,0}_A$ be the identity of $F(\Sigma^nA)$. Let $\beta^{n,1}_A$ be the following composite in $\cC$:
\[\xymatrix{
F(\Sigma^{1+n}A^{S^1})\ar[r]^{(\rho_A)^{-1}_*}_-{\cong} & F(\Sigma^{1+n}JA)\ar[r]^{\alpha^{n,1}_A}_-{\cong} & F(\Sigma^nA)
}\]
Here, the morphism on the left is an isomorphism by Example \ref{exa:rhoFequiv}. Now suppose that we have defined $\beta^{n,h}_A$ for all $A\in\Alg$, all $n\geq 0$ and all $1\leq h\leq k$. Let $\beta^{n, k+1}_A$ be the following composite in $\cC$:
\[\xymatrix{
F(\Sigma^{1+k+n}A^{S^{k+1}})\ar[r]^-{(\mu^{k,1}_A)^{-1}_*}_-{\cong} & F(\Sigma^{1+k+n}(A^{S^k})^{S^1})\ar[r]^-{\beta^{k+n,1}_{A^{S^k}}}_-{\cong} & F(\Sigma^{k+n}A^{S^k})\ar[r]^-{\beta^{n,k}_A}_-{\cong} & F(\Sigma^nA)
}\]
Here, the morphism on the left is an isomorphism by Lemma \ref{lem:Fequivs}. This defines $\beta^{n,k}_A$ for all $A\in\Alg$ and all $n,k\geq 0$.
\end{defi}

\begin{lem}\label{lem:betapq}
    Let $A\in\Alg$ and $n,p,q\geq 0$. Then the following diagram in $\cC$ commutes:
    \[\xymatrix{
        F(\Sigma^{q+p+n}(A^{S^p})^{S^q})\ar[r]^-{(\mu^{p,q}_A)_*}\ar[d]_-{\beta^{p+n,q}_{A^{S^p}}} & F(\Sigma^{q+p+n}A^{S^{p+q}})\ar[d]^-{\beta^{n,p+q}_A} \\
        F(\Sigma^{p+n}A^{S^p})\ar[r]^-{\beta^{n,p}_A} & F(\Sigma^nA)
    }\]
\end{lem}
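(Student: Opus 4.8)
This is a standard "coherence of iterated identifications" lemma, and I would prove it by induction on $q$, exactly parallel to Lemma \ref{lem:alphapq}. The base case $q=0$ is trivial (both vertical maps are identities and $\mu^{p,0}_A$ is the identity, by the conventions in Definition \ref{defi:beta} and Lemma \ref{lem:Fequivs}). The case $q=1$ should be essentially the definition of $\beta^{n,p+1}_A$: unravelling Definition \ref{defi:beta}, the composite $\beta^{n,p+1}_A$ is by construction $\beta^{n,p}_A\circ\beta^{p+n,1}_{A^{S^p}}\circ(\mu^{p,1}_A)^{-1}_*$, which says precisely that the square commutes when $q=1$.

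\textbf{Inductive step.} Assume the statement for $q$ (and all $A,n,p$); prove it for $q+1$. The clean way is to paste together three squares. Write $B=A^{S^p}$. First decompose $\mu^{p,q+1}_A$ using the cocycle-type relation among the $\mu$'s: there is a commuting triangle relating $\mu^{p,q+1}_A$, $\mu^{q,1}$ applied inside, and $\mu^{p+q,1}$ — concretely, $\mu^{p,q+1}_A$ factors (up to the canonical identifications $(A^{S^p})^{S^{q+1}}\cong((A^{S^p})^{S^q})^{S^1}$ etc.) as $\mu^{p+q,1}_A\circ(\mu^{p,q}_A)^{S^1}$ composed with $\mu^{q,1}_B$; this is the associativity of the $\mu$ construction from \cite{htpysimp}*{Sec. 3.1} and is also what is implicitly used to even state Lemma \ref{lem:betapq}. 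Then the square for $q+1$ breaks into: (i) the square for $q=1$ applied to the algebra $B=A^{S^p}$ with exponents $(q+n,q,1)$ — wait, more precisely the $q=1$ case applied to $A^{S^p}$; (ii) the square for $q$ given by the inductive hypothesis; (iii) a naturality square for $\beta^{\bullet,1}$ with respect to the morphism $\mu^{p,q}_A$. I would draw the resulting (somewhat large) diagram in $\calC$, observe that each inner cell commutes by one of these three reasons, and conclude that the outer rectangle commutes.

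\textbf{Main obstacle.} The only real subtlety is bookkeeping: one must be careful about the canonical identifications $\Sigma^{q+p+n}(A^{S^p})^{S^q}\cong\Sigma^{(q+p+n)}A^{S^{p+q}}$ versus the $\mu$-maps, and about which $\beta^{\bullet,1}$ naturality square is being invoked. In particular step (iii) requires knowing that $\beta^{n,1}_{(-)}$ is natural in the algebra argument, which follows because $\rho_{(-)}$ and $\alpha^{n,1}_{(-)}$ are natural (the latter by naturality of $c$, $x$, $\iota$). A secondary point is that the decomposition of $\mu^{p,q+1}$ in step (i) must be the one compatible with the inductive clause of Definition \ref{defi:beta}, i.e. one should peel off the \emph{last} $S^1$ factor, matching how $\beta^{n,k+1}$ was defined; choosing the wrong decomposition would make the diagram not obviously commute. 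Once the diagram is set up with the factors peeled off consistently, every cell is one of the three named squares and the proof closes. I expect the write-up to be a single large commuting diagram plus one sentence per cell, entirely analogous to the proof of Lemma \ref{lem:alphapq}.
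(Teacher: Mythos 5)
Your proposal is correct and follows essentially the same route as the paper: induction on $q$, with the $q=1$ case being the defining clause of $\beta$, and the inductive step assembled from the associativity of $\mu$, the $q=1$/definition cells (applied both to $A^{S^p}$ and with $p$ replaced by $p+q$), the naturality of $\beta^{\bullet,1}$ in the algebra variable applied to $\mu^{p,q}_A$, and the inductive hypothesis. These are exactly the cells of the paper's single large diagram, so no further comparison is needed.
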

\begin{proof}
    We proceed by induction on $q$. For $q=0$ there is nothing to prove. For $q=1$, the result holds by definition of $\beta^{n,p+1}_A$; see Definition \ref{defi:beta}. Now suppose that the result holds for $q\geq 1$ and let us show that it also holds for $q+1$. Consider the following diagram in $\cC$:
    \[
    \xymatrix@C=-0.25em{
    *+[r]{F(\Sigma^{1+q+p+n}(A^{S^p})^{S^{q+1}})}\ar[rrrrrrrr]^-{(\mu^{p,q+1}_A)_*}\ar[ddd]^-{\beta^{p+n,q+1}_{A^{S^p}}} & & & & & & & & *+[l]{F(\Sigma^{1+q+p+n}A^{S^{p+q+1}})}\ar[ddd]_-{\beta^{n,p+q+1}_A} \\
    & F(\Sigma^{1+q+p+n}((A^{S^p})^{S^q})^{S^1})\ar[ul]_-{(\mu^{q,1}_{A^{S^p}})_*}\ar[rrrrrr]^-{(\mu^{p,q}_A)_*}\ar[d]_-{\beta^{q+p+n,1}_{(A^{S^p})^{S^q}}} & & & & & & F(\Sigma^{1+q+p+n}(A^{S^{p+q}})^{S^1})\ar[d]^-{\beta^{q+p+n,1}_{A^{S^{p+q}}}}\ar[ur]^-{(\mu^{p+q,1}_A)_*} & \\
    & F(\Sigma^{q+p+n}(A^{S^p})^{S^q})\ar[dl]^-{\beta^{p+n,q}_{A^{S^p}}}\ar[rrrrrr]^-{(\mu^{p,q}_A)_*} & & & & & & F(\Sigma^{q+p+n}A^{S^{p+q}})\ar[dr]_-{\beta^{n,p+q}_A} & \\
    *+[r]{F(\Sigma^{p+n}A^{S^p})}\ar[rrrrrrrr]^-{\beta^{n,p}_A} & & & & & & & & *+[l]{F(\Sigma^n A)}
    }
    \]
    The trapezoid on the top commutes by the associativity of $\mu$, the rectangle in the center commutes by the naturality of $\beta$ with respect to algebra homomorphisms, the left and right trapezoids commute by the case $q=1$ and the trapezoid on the bottom commutes by the inductive hypothesis. Then the outer rectangle commutes as well, proving the result for $q+1$.
\end{proof}

\begin{lem}\label{lem:kappaAB}
    Let $A\in\Alg$ and $p,q,r\geq 0$. Then the following diagram in $\cC$ commutes:
    \[\xymatrix{
    F(\Sigma^{p+q+r}J^p(A^{S^q}))\ar[d]_-{\alpha^{q+r,p}_{A^{S^q}}}\ar[rr]^-{(-1)^{pq}(\kappa^{p,q}_A)_*} & & F(\Sigma^{p+q+r}(J^pA)^{S^q})\ar[d]^-{\beta^{p+r,q}_{J^pA}} \\
    F(\Sigma^{q+r}A^{S^q})\ar[dr]_-{\beta^{r,q}_A} & & F(\Sigma^{p+r}J^p A)\ar[dl]^-{\alpha^{r,p}_A} \\
    & F(\Sigma^rA) &
    }\]
    Note that the sign $(-1)^{pq}$ makes sense by Lemma \ref{lem:comg} \eqref{item:comg1}.
\end{lem}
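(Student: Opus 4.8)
The plan is to prove Lemma \ref{lem:kappaAB} by a double induction on the pair $(p,q)$, exploiting the recursive definitions of $\alpha$ and $\beta$ from Definitions \ref{defi:alpha} and \ref{defi:beta} together with the reorganization lemmas \ref{lem:alphapq} and \ref{lem:betapq}, and a factorization of the comparison map $\kappa^{p,q}_A$ that mirrors these recursions.

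First I would dispose of the base cases $p=0$ and $q=0$. When $p=0$ the comparison map $\kappa^{0,q}_A$ is the identity of $A^{S^q}$, both $\alpha^{q+r,0}_{A^{S^q}}$ and $\alpha^{r,0}_A$ are identities, $\beta^{0+r,q}_{J^0A}=\beta^{r,q}_A$, and the diagram collapses to a tautology; the case $q=0$ is symmetric, using $A^{S^0}\cong A$, $\kappa^{p,0}_A=\id$, $\beta^{r,0}_A=\id$ and $(-1)^{p\cdot 0}=1$.

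For the inductive step I would first reduce to the case $p=1$ and then treat that case by a secondary induction on $q$. The reduction rests on the factorization
\[\kappa^{p,q}_A=\kappa^{1,q}_{J^{p-1}A}\circ J\bigl(\kappa^{p-1,q}_A\bigr)\]
(coming from $J^p=J\circ J^{p-1}$; see \cite{loopstho}, \cite{tesisema}), together with the bookkeeping identity $(-1)^{(p-1)q}(-1)^{q}=(-1)^{pq}$ for the signs. One then assembles into one large commutative diagram: an instance of the lemma for the pair $(p-1,q)$ and the algebra $A$, with the suspension shifted by one and with $\alpha^{\cdot,p}$ broken up via Lemma \ref{lem:alphapq} into $\alpha^{\cdot,p-1}$ followed by an $\alpha^{\cdot,1}$; the naturality squares of $\alpha$ and $\beta$ with respect to the algebra homomorphism $\kappa^{p-1,q}_A$; and an instance of the lemma for the pair $(1,q)$ applied to $J^{p-1}A$. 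This leaves the case $p=1$, which I would settle by induction on $q$ using the factorization
\[\kappa^{1,q}_A=\bigl(\kappa^{1,q-1}_A\bigr)^{S^1}\circ\kappa^{1,1}_{A^{S^{q-1}}}\circ\bigl(\mu^{q-1,1}_A\bigr)^{-1},\]
Lemma \ref{lem:betapq} to reorganize $\beta^{\cdot,q}$, Lemma \ref{lem:Fequivs} (that $\mu^{q-1,1}_A$ is an $F$-equivalence), and naturality of the structure maps; this reduces everything to the corner case $p=q=1$.

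The corner case is where the real content lies, and I expect it to be the main obstacle. Here $\kappa^{1,1}_A\colon J(A^{S^1})\to (JA)^{S^1}$, and after unwinding $\alpha^{n,1}_{-}=(\iota_{-})^{-1}_{*}(x_{-})_{*}(c_{-})^{-1}_{*}$ and $\beta^{n,1}_{-}=\alpha^{n,1}_{-}(\rho_{-})^{-1}_{*}$ the asserted identity becomes a statement about the interaction of the classifying maps $c_A$, $x_A$, $\rho_A$, the corner inclusions $\iota$ and $\kappa^{1,1}_A$, with the sign $-1=(-1)^{1\cdot 1}$ accounting for the orientation flip in the double loop. I would verify this by writing down the relevant morphisms of extensions and applying $F$, exactly as in the proofs of Lemmas \ref{lem:JFequiv} and \ref{lem:loopingFequiv}, invoking the explicit description of $\kappa^{1,1}$ and its compatibility with the loop extension from \cite{loopstho} and \cite{tesisema}. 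The remaining difficulty is purely organizational: tracking which algebra ($A^{S^q}$, $J^{p-1}A$, $(J^pA)^{S^{q-1}}$, and so on) each instance of $\alpha$ and $\beta$ is applied to, and checking that the superscripts match up after the suspension shifts — but this is precisely what Lemmas \ref{lem:alphapq} and \ref{lem:betapq} were designed to handle. As a byproduct the commutativity forces $(-1)^{pq}(\kappa^{p,q}_A)_*$, hence $(\kappa^{p,q}_A)_*$, to be an isomorphism.
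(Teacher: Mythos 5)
Your proposal is essentially sound, but it reorganizes the induction differently from the paper. The paper inducts on $q$ alone: the crucial case $q=1$ is settled for \emph{all} $p$ at once by quoting the homotopy identity $\kappa^{p,1}_A\circ J^p(\rho_A)=(-1)^p\,\rho_{J^pA}$ in the group $[J^{p+1}A,(J^pA)^{S^1}_\bullet]$ from \cite{loopstho}*{Lemma 4.4}, transferred into $\calC$ by Lemma \ref{lem:comg} \eqref{item:comg3}; the inductive step then uses the $\kappa$--$\mu$ compatibility pentagon \cite{loopstho}*{Lemma 2.38}, Lemma \ref{lem:betapq} and naturality. Your additional reduction in the $p$-variable, via $\kappa^{p,q}_A=\kappa^{1,q}_{J^{p-1}A}\circ J(\kappa^{p-1,q}_A)$ together with Lemma \ref{lem:alphapq} and naturality of $\alpha^{\cdot,1}_?$, does work (the bookkeeping $(-1)^{(p-1)q}(-1)^{q}=(-1)^{pq}$ and the shift $\beta^{1+(p-1+r),q}_{J(J^{p-1}A)}=\beta^{p+r,q}_{J^pA}$ come out right), and it buys you that only the pair $(1,1)$ has to be computed by hand. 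Your $q$-induction at $p=1$ is the paper's inductive step specialized to $p=1$, modulo a type-level slip: the factorization you write for $\kappa^{1,q}_A$ needs a $J$ applied to $(\mu^{q-1,1}_A)^{-1}$ and a $\mu^{q-1,1}_{JA}$ on the target, and $\mu$ is not invertible at the algebra level, only after applying $F$; what you are really using is the pentagon of \cite{loopstho}*{Lemma 2.38}, which should be cited as such.

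The point where I would push back is the corner case, which you rightly identify as the crux. Unwinding $\alpha^{\cdot,1}$ and $\beta^{\cdot,1}$ there, the assertion is precisely that $\kappa^{1,1}_A\circ J(\rho_A)$ is the \emph{inverse} of $\rho_{JA}$ in the group $[J^2A,(JA)^{S^1}_\bullet]$, which is then converted into the sign $-1$ in $\calC$ via Lemma \ref{lem:comg} \eqref{item:comg3}; this is exactly the $p=1$ case of \cite{loopstho}*{Lemma 4.4}, which the paper simply cites. Your plan to verify it ``as in Lemmas \ref{lem:JFequiv} and \ref{lem:loopingFequiv}'' understates the work: those are soft naturality-of-classifying-map arguments producing no signs, whereas here one must exhibit an explicit homotopy realizing the group inverse --- a computation of the same kind and length as the proof of Lemma \ref{lem:extdoble} in the appendix. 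So at that step either cite \cite{loopstho}*{Lemma 4.4} outright or be prepared to reproduce such an extension-level homotopy; as written, that step is a sketch rather than a proof, though the surrounding reduction scheme is solid.
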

\begin{proof}
We proceed by induction on $q$. For $q=0$ there is nothing to prove. For $q=1$, consider the following diagram, where all the morphisms are isomorphisms:
    \[\xymatrix@R=4em@C=4em{
    F(\Sigma^{p+1+r}J^p(A^{S^1}))\ar[d]_-{\alpha^{1+r,p}_{A^{S^1}}}\ar[rr]^-{(-1)^{p}(\kappa^{p,1}_A)_*} & & F(\Sigma^{p+1+r}(J^pA)^{S^1})\ar[d]^-{\beta^{p+r,1}_{J^pA}} \\
    F(\Sigma^{1+r} A^{S^1})\ar[dr]_-{\beta^{r,1}_A} & 
    \save []+<0em,1.8em>*+{F(\Sigma^{p+1+r}J^{p+1}A)}="A" \restore
    \save []+<0em,-1.5em>*+{F(\Sigma^{1+r} JA)}="B" \restore
    & F(\Sigma^{p+r}J^p A)\ar[dl]^-{\alpha^{r,p}_A} \\
    & F(\Sigma^rA) &
    \ar"B";"3,2"^-{\alpha^{r,1}_A}
    \ar"A";"B"^-{\alpha^{1+r,p}_{JA}}
    \ar"A";"1,1"_-{(\rho_A)_*}
    \ar"B";"2,1"_-{(\rho_A)_*}
    \ar"A";"2,3"^-{\alpha^{p+r,1}_{J^pA}}
    \ar"A";"1,3"^-{(\rho_{J^pA})_*}
    \ar@{}"1,2";"A"|-{\star}
    \ar@{}"1,1";"B"_-{\star\star}
    \ar@{}"B";"2,3"|-{\star\star\star}
    }\]
    We have to prove that the outer diagram commutes. The triangle marked with $\star$ commutes by \cite{loopstho}*{Lemma 4.4} and Lemma \ref{lem:comg} \eqref{item:comg3}. The square marked with $\star\star$ commutes by naturality of $\alpha^{1+r,p}_?$ and the square marked with $\star\star\star$ commutes by Lemma \ref{lem:alphapq}. The remaining two triangles commute by definition of $\beta$ (Def. \ref{defi:beta}). This proves the result for $n=1$.

    Now suppose that the statement of the lemma holds for $q\geq 1$ and let us prove that it also holds for $q+1$. Consider the following diagram in $\cC$, where all the morphisms are isomorphisms. To ease notation we write $N:=p+q+1+r$ and we omit writing the functor $F$, that should be applied everywhere.
    \[\xymatrix@C=4em{
    \save []+<-3em,4em>*+{\Sigma^NJ^p(A^{S^{q+1}})}="A" \restore
    \Sigma^NJ^p((A^{S^q})^{S^1})\ar[d]^-{\alpha^{q+1+r,p}_{(A^{S^q})^{S^1}}}\ar[r]^-{(-1)^p(\kappa^{p,1}_{A^{S^q}})_*} & \Sigma^N(J^p(A^{S^q}))^{S^1}\ar[d]_-{\beta^{p+q+r,1}_{J^p(A^{S^q})}}\ar[r]^-{(-1)^{pq}(\kappa^{p,q}_A)_*} &
    \save []+<3.2em,4em>*+{\Sigma^N(J^pA)^{S^{q+1}}}="B" \restore
    \Sigma^N((J^pA)^{S^q})^{S^1}\ar[d]_-{\beta^{p+q+r,1}_{(J^pA)^{S^q}}} \\
    \Sigma^{q+1+r}(A^{S^q})^{S^1}\ar@{}[ur]_{\star}\ar[dr]^-{\beta^{q+r,1}_{A^{S^q}}} & \Sigma^{p+q+r}J^p(A^{S^q})\ar[r]^-{(-1)^{pq}(\kappa^{p,q}_A)_*}\ar@{}[ur]^{\star\star}\ar[d]^-{\alpha^{q+r,p}_{A^{S^q}}} & \Sigma^{p+q+r}(J^pA)^{S^q} \\
    \save []+<-3em,0em>*+{\Sigma^{q+1+r}A^{S^{q+1}}}="C" \restore
    & \Sigma^{q+r}A^{S^q}\ar[d]^-{\beta^{r,q}_A}\ar@{}[ur]_-{\star} &
    \save []+<3.2em,0em>*+{\Sigma^{p+r}J^pA}="D" \restore \\
    & \Sigma^r A
    \ar"A"+<-2em,-0.8em>;"C"+<-2em,0.8em>^-{\alpha^{q+1+r,p}_{A^{S^{q+1}}}}
    \ar"A";"B"^-{(-1)^{p(q+1)}(\kappa^{p,q+1}_A)_*}
    \ar"B"+<1.8em,-0.8em>;"D"+<1.8em,0.8em>_-{\beta^{p+r,q+1}_{J^pA}}
    \ar"1,1";"A"_-{(\mu^{q,1}_A)_*}
    \ar"1,3";"B"^-{(\mu^{q,1}_{J^pA})_*}
    \ar"2,1";"C"^-{(\mu^{q,1}_A)_*}
    \ar"2,3";"D"_-{\beta^{p+r,q}_{J^pA}}
    \ar"C";"4,2"^-{\beta^{r,q+1}_A}
    \ar"D";"4,2"_-{\alpha^{r,p}_A}
    \ar@{}"C";"3,2"^-{\sharp}
    \ar@{}"1,2";"1,2"+<0em,4em>|-{\sharp\sharp}
    }\]
    We have to prove that the outer diagram commutes. The pentagons marked with $\star$ commute by the inductive hypothesis. The square marked with $\star\star$ commutes by naturality of $\beta^{p+q+r,1}_?$. The square marked with $\sharp$ commutes by definition of $\beta$. The pentagon marked with $\sharp\sharp$ commutes by \cite{loopstho}*{Lemma 2.38}. It remains to verify that two trapezoids commute. The one on the left commutes by naturality of $\alpha^{q+1+r,p}_?$ and the one on the right commutes by Lemma \ref{lem:betapq}. This finishes the proof.
\end{proof}

\begin{rem}
    Let $\uks$ be the full subcategory of $\ks$ whose objects are the pairs $(A,0)$ with $A\in\Alg$. We claim that the inclusion $\uks\subseteq \ks$ is an equivalence of categories. We have to show that every object of $\ks$ is isomorphic to one of $\uks$. For $n> 0$, we have
    \[(A,n)\cong (J^nA,0)\]
    in $\fk$ (and thus in $\ks$) by \cite{loopstho}*{Lemma 7.2}. Moreover, we have
    \[(A,-n)\cong (J^n\Sigma^nA, -n)\cong (\Sigma^nA, 0)\]
    where the isomorphism on the right holds by \cite{loopstho}*{Lemma 7.2} and the one on the left is induced by $\alpha^{0,n}_A:(\Sigma^nJ^nA,0)\to(A,0)$ --- take $F=j_s:\Alg\to\ks$. It follows that $\uks$ is equivalent to $\ks$ and, thus, to the category $\kk$ defined by Corti\~nas-Thom in \cite{cortho}.
\end{rem}

\begin{thm}\label{thm:kklocposta}
    Let $\cC$ be a category with finite products and let $F:\Alg\to\cC$ be a functor that commutes with finite products and such that $F(W_H\cup W_S\cup W_E)\subseteq \Iso(\cC)$. Then there exists a unique functor $\tF$ making the following triangle commute:    \begin{equation}\label{eq:UPufk}\begin{gathered}\xymatrix@R=2em{\Alg\ar[r]^-{F}\ar@/_1pc/[dr]_-{j_s} & \cC\\
    & \uks\ar@{-->}[u]_-{\exists!\, \tF}}\end{gathered}\end{equation}
\end{thm}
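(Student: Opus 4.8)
The plan is to construct $\tF$ by hand. Since $j_s$ is the identity on objects, we are forced to set $\tF(A,0):=F(A)$. To define $\tF$ on morphisms, recall that
\[\hom_{\uks}\bigl((A,0),(B,0)\bigr)=\colim_{p}\ \colim_{v}\ \bigl[J^vA,\,(M_pM_\infty B)^{S^v}_\bullet\bigr].\]
Given a morphism $\phi$, we choose a representative, that is an algebra homomorphism $f\colon J^vA\to(M_pM_\infty B)^{S^v}_r$, and define $\tF(\phi)\colon F(A)\to F(B)$ as the composite
\[F(A)\xrightarrow{\ (\alpha^{0,v}_A)^{-1}\ }F(\Sigma^vJ^vA)\xrightarrow{\ F(\Sigma^vf)\ }F\bigl(\Sigma^v(M_pM_\infty B)^{S^v}_r\bigr)\xrightarrow{\ \theta\ }F(B),\]
where $\alpha^{0,v}_A$ is the isomorphism of Definition \ref{defi:alpha} and $\theta$ is built by first descending to subdivision index $0$ through the isomorphisms $F(\Sigma^v\gamma^v_{M_pM_\infty B})$ of Lemma \ref{lem:Fequivs} \eqref{item:FequivsLVM} (after suspending, as in Definition \ref{defi:alpha}), then applying $\beta^{0,v}_{M_pM_\infty B}$ of Definition \ref{defi:beta}, and finally the isomorphism $F(M_pM_\infty B)\cong F(B)$ coming from the corner inclusions $B\to M_\infty B\to M_pM_\infty B$; these are inverted by $F$ because $F(W_S)\subseteq\Iso(\calC)$ (for $M_\infty B\to M_pM_\infty B$ one also uses the standard identification $M_\infty(M_pB)\cong M_\infty B$).

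The first task is to check that $\tF(\phi)$ is independent of the chosen representative, i.e. unaffected by the three transition maps of the double colimit. Post-composing $f$ with a corner inclusion (raising $p$) or with a last-vertex map (raising $r$) changes nothing, because $\theta$ was designed to factor through precisely these isomorphisms, while $F(\Sigma^v-)$ is functorial and $\gamma$, $\beta$ and the $W_S$-isomorphisms are natural in the algebra variable. The substantive case is raising $v$: the transition map sends $f$ to $\mu^{v,1}_{M_pM_\infty B}\circ\rho_{(M_pM_\infty B)^{S^v}_r}\circ J(f)$, and one checks that the recipe is unchanged by combining the defining formulas $\alpha^{0,v+1}_A=\alpha^{0,v}_A\circ\alpha^{v,1}_{J^vA}$ and $\beta^{0,v+1}_C=\beta^{0,v}_C\circ\beta^{v,1}_{C^{S^v}}\circ(\mu^{v,1}_C)^{-1}_{*}$ (Definitions \ref{defi:alpha}, \ref{defi:beta}), the identity $\beta^{v,1}_D=\alpha^{v,1}_D\circ(\rho_D)^{-1}_{*}$, the naturality of $\alpha^{v,1}_?$, $\rho_?$ and $\gamma$, and the fact that $J$ preserves $F$-equivalences (Lemma \ref{lem:JFequiv}); the coherences of Lemmas \ref{lem:alphapq} and \ref{lem:betapq} serve to reduce everything to these basic identities. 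One uses throughout that $F$ is homotopy invariant---so that the recipe depends only on the homotopy class of $f$---which follows from $F(W_H)\subseteq\Iso(\calC)$ by the usual argument with the evaluation maps $\ev_0,\ev_1\colon B[t]\to B$.

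It remains to see that $\tF$ is a functor and that $\tF\circ j_s=F$. One has $\tF(\id_{(A,0)})=\id_{F(A)}$ because $\id_{(A,0)}$ is represented by $\id_A$ at $p=1$, $v=r=0$, for which the recipe collapses; and $\tF\circ j_s=F$ because, for $g\colon A\to B$ in $\Alg$, the morphism $j_s(g)$ is represented by $\iota_B\circ g\colon A\to M_\infty B$ at $p=1$, $v=r=0$, and the recipe evaluates to $F(\iota_B)^{-1}\circ F(\iota_B\circ g)=F(g)$. The multiplicativity $\tF(\psi\circ\phi)=\tF(\psi)\circ\tF(\phi)$ is the crux: composition in $\uks$ is computed at the representative level by applying $J^{\bullet}$ and $(-)^{S^{\bullet}}$ to the two representatives, moving the resulting $J$'s past the $S$'s via the swap maps $\kappa^{p,q}$, recombining with $\mu$, and enlarging matrix indices; that the recipe respects this is exactly the content of Lemma \ref{lem:kappaAB}, whose sign $(-1)^{pq}$ makes sense because each $F(A)$ is a commutative group object (Lemma \ref{lem:comg} \eqref{item:comg1}), and the compatibility of the whole construction with passing to $\colim_v$ uses Lemma \ref{lem:comg} \eqref{item:comg3}. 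Uniqueness is then immediate, since the value of $\tF$ on every object and every morphism was forced at each step by functoriality and $\tF\circ j_s=F$.

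The main obstacle is this multiplicativity check, together with the $v\mapsto v+1$ case of well-definedness, which is of the same nature: one must track precisely how the inductively defined isomorphisms $\alpha^{n,k}$ and $\beta^{n,k}$ interact with $J$, $\rho$, $\mu$ and the swap maps $\kappa^{p,q}$, and in particular verify that the signs cancel. Everything else reduces to routine diagram chases with the lemmas established earlier in this section.
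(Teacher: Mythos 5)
Your existence construction is essentially the paper's: the same recipe $\theta\circ F(\Sigma^v f)\circ(\alpha^{0,v}_A)^{-1}$, the same well-definedness check over the colimit transitions, and the same multiplicativity argument resting on Lemmas \ref{lem:alphapq}, \ref{lem:betapq}, \ref{lem:kappaAB} and \ref{lem:comg}; you merely collapse the paper's three-step extension through $\ufk$ and $\ukf$ into a single formula on $\uks$, which is a harmless repackaging.

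The genuine gap is uniqueness, which you dismiss as immediate. The condition $\tF\circ j_s=F$ only pins down a functor on morphisms in the image of $j_s$ (and on inverses of those $j_s$-images that happen to be invertible in $\uks$); a general morphism $\langle f\rangle$ of $\uks$, represented by $f:J^vA\to M_pM_\infty B^{S^v}_r$ with $v>0$, is not visibly such a composite. Your formula produces an arrow of $\calC$ out of $F$-data, but it does not by itself show that, \emph{inside} $\uks$, the morphism $\langle f\rangle$ coincides with the composite $\beta^{0,v}_B\circ\Sigma^v\tilde f\circ(\alpha^{0,v}_A)^{-1}$ taken for $F=j_s$ (each factor being a zig-zag of $j_s$-images); that identity is precisely what is needed to force any second functor $\tF'$ with $\tF'\circ j_s=F$ to agree with $\tF$ on $\langle f\rangle$. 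In the paper this is Lemma \ref{lem:zzagkk}, whose proof occupies Appendix A: one must compare the inductively defined $\alpha^{0,v}$ and $\beta^{0,v}$ with the canonical isomorphisms $\langle\id_{J^vA}\rangle$, $\langle\id_{B^{S^v}}\rangle$ and $\epsilon^v$ in $\ks$ (Lemmas \ref{lem:trialpha} and \ref{lem:tribeta}), and this hinges on the anticommuting square of Lemma \ref{lem:extdoble}, the signs $(-1)^v$ cancelling only because they occur in both comparisons. Saying that the values of $\tF$ were ``forced at each step'' is circular: the assertion that $\uks$ is generated by $j_s$-images and inverses of $j_s$-images of $\kk$-equivalences is essentially the statement that $j_s$ is a localization, which is what Theorem \ref{kkuniprop} later deduces from the uniqueness you are trying to prove. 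So your proposal needs an argument of the type of Lemma \ref{lem:zzagkk} to close the uniqueness step.
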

\begin{proof}
    Let $\ufk$ (respectively $\ukf$) denote the full subcategory of $\fk$ (resp. $\kf$) whose objects are the pairs $(A,0)$ with $A\in\Alg$. To alleviate notation, we write $A$ for an object $(A,0)$ either of $\ufk$, $\ukf$ or $\uks$. To define $\tF$ on objects, put $\tF(A)=F(A)$ for every $A\in\Alg$. 

    We will start by extending $F$ to $\ufk$ --- that is, we will define a functor $\hF$ making the following triangle commute:
    \begin{equation}\label{eq:ufkPU}\begin{gathered}\xymatrix@R=2em{\Alg\ar[r]^-{F}\ar@/_1pc/[dr]_-{j} & \cC\\
    & \ufk\ar@{-->}[u]_-{\hF}}\end{gathered}\end{equation}
    To define $\hF$ on objects, set $\hF(A):=F(A)$ for every algebra $A$.
    To define $\hF$ on morphisms, we must define a function
    \begin{equation}\label{eq:coliUPfk}\hF:\hom_\fk(A,B)=\colim_v [J^vA, B^{S^v}_\bullet]\to \hom_{\cC}(F(A), F(B))\end{equation}
    for every pair of algebras $(A,B)$. Fix the pair $(A,B)$. For every $v\geq 0$, let $\hF_v:[J^vA, B^{S^v}_\bullet]\to \hom_\cC(F(A), F(B))$ be the function that sends the class of $f:J^vA\to B^{S^v}_r$ to the following composite:
    \[\xymatrix{F(A)\ar[r]^-{(\alpha^{0,v}_A)^{-1}}_-{\cong} & F(\Sigma^vJ^vA)\ar[r]^-{f_*} & F(\Sigma^v B^{S^v}_r)\ar[r]^-{\gamma_*^{-1}}_-{\cong} & F(\Sigma^v B^{S^v})\ar[r]^-{\beta^{0,v}_B}_-{\cong} & F(B)}\]
    Here, $\gamma_*$ is the isomorphism  induced by the last vertex map; see Lemma \ref{lem:Fequivs} \eqref{item:FequivsLVM}. Note that $\hF_v$ is well defined on the set of homotopy classes since the functor $F(\Sigma^v-)$ is homotopy invariant. Let us show that the $\hF_v$ are compatible with the transition morphisms of the colimit \eqref{eq:coliUPfk}. The transition morphism of \eqref{eq:coliUPfk} sends the homotopy class of $f$ to the homotopy class of $\mu^{v,1}_B\circ \rho_{B^{S^v}}\circ J(f)$. Consider the following diagram in $\cC$:
    \[\xymatrix@C=3em{
    F(A)\ar[r]^-{(\alpha^{0,v+1}_A)^{-1}}_-{\cong}\ar@{}[dr]_(0.2){(1)} & F(\Sigma^{v+1}J^{v+1}A)\ar[r]^{J(f)_*}\ar@{}[d]^(.6){(2)} & F(\Sigma^{v+1}J(B^{S^v}_r))\ar[r]^-{(\rho_{B^{S^v}})_*}\ar[d]^-{\gamma_*^{-1}}_-{\cong} & F(\Sigma^{v+1}(B^{S^v}_r)^{S^1})\ar[d]^-{(\mu^{v,1}_B)_*} \\
    \save []+<1em,-2em>*+{F(\Sigma^v J^vA)}="A" \restore
    & &
    F(\Sigma^{v+1}J(B^{S^v}))\ar[d]^(.3){(\rho_{B^{S^v}})_*}\ar@{{}{ }{}}@/_3pc/[dd]|(.7){(3)}\ar@/_6pc/[dd]^-{\alpha^{v,1}_{B^{S^v}}}\ar@{}[ur]^{(4)}\ar@{}[dr]^(.7){(5)}
    & F(\Sigma^{v+1}B^{S^{v+1}}_r)\ar[d]^-{\gamma_*^{-1}}_-{\cong} \\
    & &
    F(\Sigma^{v+1}(B^{S^v})^{S^1})\ar@{}[dr]^(.7){(6)}
    & F(\Sigma^{v+1}B^{S^{v+1}})\ar[d]^-{\beta^{0,v+1}_B}_-{\cong} \\
    \save []+<1em,0.05em>*+{F(\Sigma^nB^{S^n}_r)}="B" \restore
    & & F(\Sigma^v B^{S^v})\ar[r]^-{\beta^{0,v}_B}_-{\cong} & F(B)
    \ar"3,3";"4,3"^-{\beta^{v,1}_{B^{S^v}}}
    \ar"3,3";"3,4"_-{(\mu^{v,1}_B)_*}
    \ar"1,4";"3,3"^(.55){\gamma_*^{-1}}_-{\cong}
    \ar"1,1";"A"+<-1em,0.9em>^-{(\alpha^{0,v}_A)^{-1}}_-{\cong}
    \ar"A"+<-1em,-0.8em>;"B"+<-1em,0.9em>^-{f_*}
    \ar"B"+<2.6em,-0.05em>;"4,3"^-{\gamma_*^{-1}}_-{\cong}
    \ar"1,2";"A"^-{\alpha^{v,1}_{J^vA}}
    \ar"1,3";"B"_-{\alpha^{v,1}_{B^{S^v}}}
    \ar@{}"B";"3,3"^(.4){(2)}
    }\]
    The triangle $(1)$ commutes by Lemma \ref{lem:alphapq}. The squares labelled $(2)$ commute by naturality of $\alpha^{v,1}_?$. The triangle $(3)$ commutes by definition of $\beta$. The square $(4)$ commutes by naturality of $\rho_?$. The square $(5)$ commutes since $\mu$ is compatible with the morphisms induced by the last vertex map. The square $(6)$ commutes by Lemma \ref{lem:betapq}. It follows that the outer square commutes. Composing the vertical morphisms in the left column followed by the horizontal morphisms in the bottom row we get $\hF_v([f])$. Composing the horizontal morphisms in the top row followed by the vertical morphisms in the right column we get $\hF_{v+1}([\mu^{v,1}_B\circ \rho_{B^{S^v}}\circ J(f)])$. This proves the desired compatibility and shows that the morphism \eqref{eq:coliUPfk} is indeed well defined. Let us now prove that it is compatible with composition. Let $f:J^pA\to B^{S^p}_r$ and $g:J^qB\to C^{S^q}_s$ be algebra homomorphisms. The composite $\langle g\rangle\circ\langle f\rangle$ in $\fk$ is defined as $\langle f\star g\rangle$, where $f\star g$ is the following composite in $[\Alg]^\ind$; see \cite{loopstho}*{Def. 3.4}.
    \[\xymatrix@C=4em{
    J^{p+q}A\ar[r]^-{J^q(f)} & J^q(B^{S^p}_\bullet)\ar[r]^-{(-1)^{qp}\kappa^{q,p}_B} & (J^q B)^{S^p}_\bullet\ar[r]^-{g^{S^p}} & (C^{S^q}_\bullet)^{S^p}_\bullet\ar[r]^-{\mu^{q,p}_C} & C^{S^{q+p}}_\bullet
    }\]
    Note that the sign makes sense since $[J^q(B^{S^p}_\bullet), (J^qB)^{S^p}_\bullet]$ is an abelian group.
    Consider the following commutative diagram in $\cC$:
    \[\xymatrix@C=8.5em{
    F(\Sigma^p J^pA)\ar[d]_-{f_*} & F(A)\ar[l]_-{(\alpha^{0,p}_A)^{-1}}^-{\cong}\ar[r]^-{(\alpha^{0,p+q}_A)^{-1}}_-{\cong} & F(\Sigma^{p+q}J^{p+q}A)\ar@/^2pc/[ll]_-{\alpha^{p,q}_{J^pA}}\ar[d]^-{(J^q(f))_*}\\
    F(\Sigma^p B^{S^p}_r) & & F(\Sigma^{p+q}J^q(B^{S^p}_r))\ar[ll]^-{\alpha^{p,q}_{B^{S^p}}}\ar[d]^-{(-1)^{qp}(\kappa^{q,p}_B)_*}\\
    \save []+<0em,1em>*+{F(\Sigma^pB^{S^p})}="L1" \restore
    \save []+<0em,-2em>*+{F(B)}="L2" \restore
    &
    \save []+<0em,1em>*+{F(\Sigma^{p+q}J^q(B^{S^p}))}="C1" \restore
    \save []+<0em,-2em>*+{F(\Sigma^{p+q}(J^qB)^{S^p})}="C2" \restore
    & F(\Sigma^{p+q}(J^qB)^{S^p}_r)\ar[d]^-{(g^{S^p})_*} \\
    \save []+<0em,-1em>*+{F(\Sigma^qJ^qB)}="L3" \restore
    &
    \save []+<0em,-1em>*+{F(\Sigma^{p+q}(C^{S^q}_s)^{S^p})}="C3" \restore
    & F(\Sigma^{p+q}(C^{S^q}_s)^{S^p}_r)\ar[d]^-{(\mu^{q,p}_C)_*} \\
    F(\Sigma^qC^{S^q}_s)\ar[d]_-{\gamma_*^{-1}}^{\cong} &
    \save []+<-4em,-1em>*+{F(\Sigma^{p+q}(C^{S^q})^{S^p})}="A" \restore
    \save []+<5.5em,0.05em>*+{F(\Sigma^{p+q}C^{S^{p+q}}_s)}="B" \restore
    & F(\Sigma^{p+q}C^{S^{p+q}}_{r+s})\ar[d]_-{\cong}^-{\gamma_*^{-1}} \\
    F(\Sigma^qC^{S^q})\ar[r]_-{\beta^{0,q}_C} & F(C) & F(\Sigma^{p+q}C^{S^{p+q}})\ar[l]^-{\beta^{0,p+q}_C}
    \ar"A";"6,1"_-{\beta^{q,p}_{C^{S^q}}}
    \ar"A";"6,3"^-{(\mu^{q,p}_C)_*}
    \ar"B";"6,3"^(.6){\gamma_*^{-1}}_(.5){\cong}
    \ar"C1";"L1"_-{\alpha^{p,q}_{B^{S^p}}}
    \ar"2,3";"C1"+<4em,0em>_-{\gamma_*^{-1}}^-{\cong}
    \ar"2,1";"L1"_-{\gamma_*^{-1}}^{\cong}
    \ar"L1";"L2"_-{\beta^{0,p}_B}
    \ar"L2";"L3"_-{(\alpha^{0,q}_B)^{-1}}^{\cong}
    \ar"L3";"5,1"_-{g_*}
    \ar"C1";"C2"_-{(-1)^{qp}(\kappa^{q,p}_B)_*}
    \ar"C2"+<-4em,-0.3em>;"L3"+<2.6em,0.2em>_-{\beta^{q,p}_{J^qB}}
    \ar"C2";"C3"_-{g_*}
    \ar"3,3"+<-4em,0em>;"C2"+<4em,0em>_-{\gamma_*^{-1}}^-{\cong}
    \ar"C3"+<-4em,-0.3em>;"5,1"+<2.5em,0.2em>_-{\beta^{q,p}_{C^{S^q}}}
    \ar"C3";"A"_-{\gamma_*^{-1}}^-{\cong}
    \ar"C3";"B"_(.4){(\mu^{q,p}_C)_*}
    \ar"4,3"+<-4em,0em>;"C3"+<3.9em,0em>_-{\gamma_*^{-1}}^-{\cong}
    \ar"5,3";"B"_-{\gamma_*^{-1}}^-{\cong}
    \ar@{}"2,1";"1,2"|(.75){(a)}
    \ar@{}"2,1";"1,2"|(.35){(b)}
    \ar@{}"1,1";"C1"_(.67){(b)}
    \ar@{}"L2";"C1"|-{(c)}
    \ar@{}"L2";"C3"_(.6){(d)}
    \ar@{}"L3";"A"^(.77){(d)}
    \ar@{}"A";"6,2"|-{(e)}
    }\]
    The triangle $(a)$ commutes by Lemma \ref{lem:alphapq}. The squares marked with $(b)$ commute by naturality of $\alpha^{p,q}_?$. The pentagon $(c)$ commutes by Lemma \ref{lem:kappaAB}. The squares marked with $(d)$ commute by naturality of $\beta^{q,p}_?$. The square $(e)$ commutes by Lemma \ref{lem:betapq}. The rest of the diagram clearly commutes. Starting at $F(A)$, if we first apply $(\alpha^{0,p}_A)^{-1}$, then the vertical morphisms on the left column and finally $\beta^{0,q}_C$, we get $\hF(\langle g\rangle)\circ \hF(\langle f\rangle)$. Again at $F(A)$, if we  first apply $(\alpha^{0,p+q}_A)^{-1}$, then the vertical morphisms on the right column and finally $\beta^{0,p+q}_C$, we get $\hF(\langle g\star f\rangle)$. This shows that the definitions above indeed define a functor $\hF$ making \eqref{eq:ufkPU} commute.

    Now we would like to extend $\hF$ to $\ukf$ --- that is, we want to define a functor $\vF$ making the following diagram commute:
    \begin{equation}\label{eq:ufkfPU}\begin{gathered}\xymatrix{
    \Alg\ar[rr]^-{F}\ar@/_1pc/[dr]_-{j} & & \cC \\
    & \ufk\ar[r]_-{t_f}\ar[ur]^-{\hF} & \ukf\ar@{-->}[u]_-{\vF}
    }\end{gathered}\end{equation}
    Recall from \cite{tesisema}*{Def. 5.1.3} that we have
    \[\hom_{\ukf}(A,B)=\colim_k\hom_{\ufk}(A, M_kB)\]
    where the transition morphisms are induced by the upper-left corner inclusions. If $f:A\to M_kB$ and $g:B\to M_lC$ are morphisms in $\ufk$, the composition of the corresponding morphisms in $\ukf$ is represented by the composite:
    \[\xymatrix{A\ar[r]^-{f} & M_kB\ar[r]^-{M_k(g)} & M_kM_lC\overset{\theta}{\cong}M_{kl}C}\]
    Here, $\theta$ stands for any bijection $\{1,\dots,k\}\times\{1,\dots, l\}\cong\{1,\dots,kl\}$.
    To define $\vF$ on objects, set $\vF(A):=F(A)$ for every algebra $A$. Let
    \[\vF:\hom_{\ukf}(A,B)\to\hom_{\cC}(F(A),F(B))\]
    be the function that sends $f\in\hom_{\ufk}(A,M_kB)$ to the composite
    \[\xymatrix{
    F(A)\ar[r]^-{\hF(f)} & F(M_kB)\ar[r]^-{F(\iota)^{-1}}_-{\cong} & F(B),
    }\]
    where $\iota:B\to M_kB$ is the upper-left corner inclusion. It is easily verified that these definitions indeed give rise to a functor $\vF$ making \eqref{eq:ufkfPU} commute. To prove the compatibility with the product, let $f:A\to M_kB$ and $g:B\to M_lC$ be morphisms in $\ufk$ and consider the following commutative diagram in $\ufk$:
    \[\xymatrix{A\ar[r]^-{f} & M_kB\ar[r]^-{M_k(g)} & M_kM_lC\ar[r]^-{\theta}_-{\cong} & M_{kl}C & C\ar[l]_-{\iota}\ar@/^1pc/[dll]_-{\iota} \\
    & B\ar[u]_-{\iota}\ar[r]^-{g} & M_lC\ar[u]^-{\iota}    
    }\]
    Upon applying $\hF$, the vertical morphisms become invertible since $\hF(\iota)=F(\iota)$. The resulting diagram in $\cC$ shows that $\vF$ preserves composition.

    The final step is to extend $\vF$ to $\uks$. Recall from \cite{tesisema}*{Def. 5.2.7} that we have:
    \[\hom_{\uks}(A,B)=\hom_{\ukf}(A, M_\infty B)\]
    Put $\tF(A):=F(A)$ for every algebra $A$, and let
    \[\tF:\hom_{\uks}(A,B)\to \hom_{\cC}(F(A),F(B))\]
    be the function that sends $f\in\hom_{\ukf}(A, M_\infty B)$ to the composite
    \[\xymatrix{
    F(A)\ar[r]^-{\vF(f)} & F(M_\infty B)\ar[r]^-{F(\iota)^{-1}}_-{\cong} & F(B)
    }\]
    where $\iota:B\to M_\infty B$ is the upper-left corner inclusion. It is straightforward to verify that this defines a functor $\tF$ making \eqref{eq:UPufk} commute.

    It remains to prove the uniqueness of $\tF$. Suppose that $\tF_1,\tF_2:\uks\to\cC$ are two functors making \eqref{eq:UPufk} commute. Both functors are equal on objects since they both coincide with $F$ on objects. Let $f:J^vA\to M_kM_\infty B^{S^v}_r$ be an algebra homomorphism representing a morphism $\langle f\rangle:A\to B$ in $\uks$. By Lemma \ref{lem:zzagkk}, $\tF_i(\langle f \rangle)$ equals the following composite in $\cC$:
    \[\xymatrix@C=4em{
    F(A)\ar[r]^-{\tF_i(\alpha^{0,v}_A)} & F(\Sigma^v J^vA)\ar[r]^-{\tF_i(\Sigma^v \tilde{f})} & F(\Sigma^v B^{S^v})\ar[r]_-{\cong}^-{\tF_i(\beta^{0,v}_B)^{-1}} & F(B)
    }\]
    We claim that $\tF_1(\Sigma^v\tilde{f})=\tF_2(\Sigma^v\tilde{f})$. Indeed, $\Sigma^v \tilde{f}$ is a zig-zag in $\ks$ of morphisms in the image of $j_s$ --- obtained upon applying $\Sigma^v(-)$ to \eqref{eq:tildef}. The functors $\tF_1$ and $\tF_2$ coincide on the image of $j_s$ since we have $\tF_1\circ j_s = F = \tF_2\circ j_s$. With the same argument we can show that $\tF_1(\alpha^{0,v}_A)=\tF_2(\alpha^{0,v}_A)$ and $\tF_1(\beta^{0,v}_B)=\tF_2(\beta^{0,v}_B)$ --- it is clear from their definitions that $\alpha$ and $\beta$ are zig-zags of morphisms in the image of $j_s$. It follows that $\tF_1(\langle f\rangle )=\tF_2(\langle f\rangle )$. This finishes the proof.
\end{proof}

As a corollary we get the following result.

\begin{thm}[cf. \cite{ln}*{Theorem 2.5}]\label{kkuniprop}
    The functor $j_s:\Alg\to\uks$ exhibits $\uks$ as the localization of $\Alg$ at the set of $\kk$-equivalences. More precisely, letting $W_{kk}$ be the set of $\kk$-equivalences in $\Alg$, the functor $j_s$ induces an equivalence $\Alg[W_{kk}^{-1}]\cong \uks$.
\end{thm}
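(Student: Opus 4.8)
The plan is to derive the statement formally from the universal property of $j_s:\Alg\to\uks$ established in Theorem \ref{thm:kklocposta}, using crucially the category of fibrant objects structure of Proposition \ref{prop:algcof} to control finite products in the localization.

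First I would note that $j_s$ inverts every $\kk$-equivalence. Since $\uks$ is equivalent to $\kk$ (see the remark preceding Theorem \ref{thm:kklocposta}) and $j_s$ corresponds to $j:\Alg\to\kk$ under this equivalence, a homomorphism $f$ lies in $W_{kk}$ exactly when $j_s(f)$ is invertible in $\uks$. Hence $j_s$ factors through the localization functor $L:\Alg\to\Alg[W_{kk}^{-1}]$, producing a functor $\overline{\jmath}:\Alg[W_{kk}^{-1}]\to\uks$ with $\overline{\jmath}\circ L=j_s$; note that $L$, $j_s$ and $\overline{\jmath}$ are all the identity on objects.

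Next I would build an inverse to $\overline{\jmath}$ by applying Theorem \ref{thm:kklocposta} to the functor $F=L$. Two hypotheses must be checked. (i) $L$ preserves finite products: by Proposition \ref{prop:algcof} the triple $(\Alg,\Fib,W_{kk})$ is a category of fibrant objects, and its homotopy category is $\Alg[W_{kk}^{-1}]$, so this is precisely Lemma \ref{lem:cofproducts}. (ii) $L$ sends $W_H\cup W_S\cup W_E$ to isomorphisms; for this it is enough to observe the inclusion $W_H\cup W_S\cup W_E\subseteq W_{kk}$. Indeed, polynomial homotopy equivalences become isomorphisms in $\kk$ by homotopy invariance; the upper-left corner inclusions $A\to\Minf A$ become isomorphisms by $\Minf$-stability; and if the middle term $B$ of an extension $A\to B\to C$ is a contractible ring or an infinite sum ring then $j(B)=0$ in $\kk$ (by homotopy invariance in the first case, and by the standard Eilenberg-swindle argument for infinite sum rings in the second; see \cite{cortho}), so the distinguished triangle associated with the extension forces its classifying map to become an isomorphism in $\kk$. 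Theorem \ref{thm:kklocposta} then supplies a unique functor $\widetilde{\gamma}:\uks\to\Alg[W_{kk}^{-1}]$ with $\widetilde{\gamma}\circ j_s=L$, again the identity on objects.

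Finally I would check that $\overline{\jmath}$ and $\widetilde{\gamma}$ are mutually inverse. From $\widetilde{\gamma}\circ\overline{\jmath}\circ L=\widetilde{\gamma}\circ j_s=L$ and the uniqueness of factorizations through the localization $L$ we obtain $\widetilde{\gamma}\circ\overline{\jmath}=\id_{\Alg[W_{kk}^{-1}]}$. For the other composite, $\overline{\jmath}\circ\widetilde{\gamma}\circ j_s=\overline{\jmath}\circ L=j_s$; since $j_s$ itself preserves finite products---the product $A\times B$ of algebras is sent to the biproduct $j_s(A)\oplus j_s(B)$ in the triangulated category $\uks$, as follows from excision applied to the split extension $A\to A\times B\to B$---and $j_s$ inverts $W_H\cup W_S\cup W_E$, the uniqueness clause of Theorem \ref{thm:kklocposta} applied to $F=j_s$ gives $\overline{\jmath}\circ\widetilde{\gamma}=\id_{\uks}$. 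Hence $\overline{\jmath}$ is an isomorphism of categories with inverse $\widetilde{\gamma}$, which is the assertion. The only genuinely substantive inputs here are Theorem \ref{thm:kklocposta} and Proposition \ref{prop:algcof}---the latter being exactly what turns $\Alg[W_{kk}^{-1}]$ into a category with finite products preserved by $L$, which is where the difficulty really sits; granted these, the rest of the argument is purely formal and I do not anticipate any further obstacle.
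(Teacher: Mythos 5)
Your proposal is correct and follows essentially the same route as the paper's proof: use Proposition \ref{prop:algcof} together with Lemma \ref{lem:cofproducts} to see that the localization functor preserves finite products, then play the universal property of Theorem \ref{thm:kklocposta} against that of $\Alg[W_{kk}^{-1}]$ and conclude by uniqueness that the two induced functors are mutually inverse. The only differences are cosmetic (the order in which the two comparison functors are constructed, and your explicit checks that $W_H\cup W_S\cup W_E\subseteq W_{kk}$ and that $j_s$ preserves finite products, which the paper leaves implicit).
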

\begin{proof}
    Let $l:\Alg\to\Alg[W_{kk}^{-1}]$ be the localization functor. Since $(\Alg, \Fib, W_{kk})$ is a category of fibrant objects by Proposition \ref{prop:algcof}, it follows from Lemma \ref{lem:cofproducts} that $\Alg[W_{kk}^{-1}]$ has finite products and that $l$ preserves them. Since $l(W_H\cup W_S\cup W_E)\subseteq \Iso(\Alg[W_{kk}^{-1}])$, by Theorem \ref{thm:kklocposta} there exists a unique functor $\phi:\uks\to \Alg[W_{kk}^{-1}]$ such that $l=\phi\circ j_s$. Since $j_s(W)\subseteq \Iso(\uks)$, by the universal property of $\Alg[W_{kk}^{-1}]$ there exists a unique functor $\psi:\Alg[W_{kk}^{-1}]\to\uks$ such that $j_s = \psi\circ l$. It follows from uniqueness that $\phi$ and $\psi$ are mutually inverses.
\end{proof}

\section{A stable infinity category realizing \texorpdfstring{$\kk$}{kk}}

The triangulated categories that arise naturally in mathematics are usually (but not always \cite{muro}) the homotopy categories of stable $\infty$-categories. In this section we show that this is the case for $\kk$, following what was done in \cite{ln} for Kasparov's $\KK$-theory. 

\subsection{Main theorem}
We will prove that the triangulated category $\kk$ arises as the homotopy category of a stable $\infty$-category $\kk_\infty$. We use the language of $\infty$-categories as developed in \cite{HT}. We write $\ho(\cC)$ for the homotopy category of an $\infty$-category $\cC$. Ordinary categories will be considered as $\infty$-categories using the nerve, though we will not write the nerve explicitely.

We start by recalling the Dwyer-Kan localization for $\infty$-categories \cite{HA}*{1.3.4.1 and 1.3.4.2}. Let $\cC$ be an $\infty$-category and let $W$ be a collection of morphisms in $\cC$. Then there exists an $\infty$-category $\cC[W^{-1}]$ endowed with a functor $L:\cC\to \cC[W^{-1}]$ such that, for any $\infty$-category $\cD$, $L$ induces an equivalence
\begin{equation}\label{eq:locequiv}\Fun(\cC[W^{-1}],\cD)\overset{\simeq}{\to} \Fun^W(\cC,\cD)\end{equation}
where $\Fun^W(\cC,\cD)$ is the full subcategory of $\Fun(\cC,\cD)$ of those functors that send morphisms in $W$ to equivalences in $\cD$. This universal property characterizes $\cC[W^{-1}]$ up to equivalence of $\infty$-categories.


Let $W_\kk$ be the set of $\kk$-equivalences in $\Alg$ and let
\begin{equation}\label{eq:localization}j_\infty:\Alg\to \kk_\infty:=\Alg[W_\kk^{-1}]\end{equation}
be the Dwyer-Kan localization at $W_\kk$. By the universal property of $j_\infty$ we have an equivalence
\[\Fun(\kk_\infty, \cC)\overset{\simeq}{\longrightarrow} \Fun^{W_\kk}(\Alg, \cC)\]
for every $\infty$-category $\cC$, induced by precomposing with $j_\infty$. It follows from this that there exists a functor $h:\kk_\infty\to \kk$ making the following triangle commute:
\[\xymatrix{\Alg\ar@/_1pc/[dr]_-{j_\infty}\ar[r]^-{j} & \kk \\
& \kk_\infty\ar[u]_-{h}}\]

We will prove the following result:
\begin{thm}\label{thm:main}
    The $\infty$-category $\kk_\infty$ is a stable infinity category and the functor $\ho(h):\ho(\kk_\infty)\to\ho(kk)$ is an equivalence of triangulated categories.
\end{thm}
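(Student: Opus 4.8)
The plan is to deduce Theorem~\ref{thm:main} from the results already assembled, following the path of \cite{ln}*{Section 2}. The category of fibrant objects structure $(\Alg, \Fib, W_{kk})$ of Proposition~\ref{prop:algcof} is the key input: by a general principle (the $\infty$-categorical incarnation of a category of fibrant objects, cf.\ \cite{ln}*{Proposition 2.7} or the Cisinski--type results on categories of fibrant objects and their Dwyer--Kan localizations), the localization $\kk_\infty = \Alg[W_\kk^{-1}]$ admits finite limits, and these are computed via the fibrations and pullbacks in $\Alg$. In particular, for every algebra $B$ the path object \eqref{eq:pathobject} $B \overset{\sim}{\to} B_n[t] \twoheadrightarrow B\times B$ realizes, after localization, a pullback square exhibiting the loop object $\Omega^{\kk_\infty} B$; since the fibre of $B_n[t]\to B\times B$ is the loop algebra $\Omega B = (t^2-t)B[t]$, we get $\Omega^{\kk_\infty} B \simeq j_\infty(\Omega B)$. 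The same argument applied to the loop extension, or directly to \eqref{eq:pathseq}, shows that every distinguished triangle in $\kk$ in the sense of Corti\~nas--Thom is a fibre sequence in $\kk_\infty$. First I would record these facts: $\kk_\infty$ has finite limits and the zero object $j_\infty(0)$ is both initial and terminal (it is terminal since $0$ is terminal in $\Alg$, and initial because $0\to \ell$ is sent to an equivalence? — no; rather $j_\infty(0)$ is a zero object because $\kk(0,-)=0=\kk(-,0)$, so on $\ho$ it is a zero object, and then one upgrades this to $\kk_\infty$ using that $h$ is conservative once we know it is an equivalence on homotopy categories, or directly because $\Omega$ being an equivalence forces pointedness).

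The heart of the argument is showing that the loop functor $\Omega^{\kk_\infty}\colon \kk_\infty\to\kk_\infty$ is an equivalence; by \cite{HA}*{Corollary 1.4.2.27 and Proposition 1.4.2.24} this, together with the existence of finite limits and a zero object, is exactly what is needed for stability (one also checks that every morphism admits a fibre, which follows from finite limits, and then stability is equivalent to $\Omega$ being an equivalence). Here I would use Theorem~\ref{kkuniprop}: the ordinary localization $\Alg[W_{kk}^{-1}]$ is equivalent to $\uks \simeq \kk$, and $\ho(\kk_\infty) = \kk_\infty[\text{equivalences}^{-1}]$ maps to this ordinary localization — in fact the canonical functor $\ho(\kk_\infty)\to \Alg[W_{kk}^{-1}]$ is an isomorphism of categories, because the universal property of the Dwyer--Kan localization restricted to (nerves of) ordinary categories $\cD$ recovers exactly the universal property of the ordinary localization. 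Thus $\ho(\kk_\infty)\cong \kk$ as categories, and $\ho(h)$ is this isomorphism. Now the loop functor $\Omega^{\kk_\infty}$ descends to an endofunctor of $\ho(\kk_\infty)\cong\kk$ which, by the identification $\Omega^{\kk_\infty}B\simeq j_\infty(\Omega B)$ above, is the Corti\~nas--Thom desuspension $\Omega\colon\kk\to\kk$ — an equivalence by \cite{cortho}*{Lemma 6.3.8}. Since $\Omega^{\kk_\infty}$ is a functor between $\infty$-categories that becomes an equivalence on homotopy categories, and (crucially) $\kk_\infty$ is a $1$-category up to equivalence — no: this last point fails, so instead I argue as in \cite{ln}: a functor of $\infty$-categories is an equivalence iff it is essentially surjective and fully faithful, and full faithfulness of $\Omega^{\kk_\infty}$ must be checked at the level of mapping spaces, not just $\pi_0$.

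The main obstacle is precisely this last point: upgrading ``$\Omega^{\kk_\infty}$ is an equivalence on $\ho$'' to ``$\Omega^{\kk_\infty}$ is an equivalence on $\kk_\infty$''. Following \cite{ln}, the resolution is to first establish that $\kk_\infty$ is \emph{pointed} and that $\Omega^{\kk_\infty}$ has a left adjoint $\Sigma^{\kk_\infty}$ (the cofibre of the path fibration / suspension), then observe that on a pointed $\infty$-category with finite limits the full subcategory on which $\Omega\Sigma \to \id$ and $\id\to \Omega\Sigma$? — rather, one uses that $\Omega^{\kk_\infty}$ is automatically right adjoint to the cone construction and that the unit/counit of this adjunction are detected on homotopy categories because the relevant mapping spaces are, via the category-of-fibrant-objects structure and \cite{brown}, computed as homotopy classes — i.e.\ the mapping spaces $\Map_{\kk_\infty}(A,B)$ are \emph{discrete up to the suspension-loop periodicity}, reflecting that $\kk$ is already a triangulated (hence ``$1$-truncated after stabilization'') category. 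Concretely: since $\Omega$ is invertible on $\ho(\kk_\infty)=\kk$, for any $A,B$ and any $n$ we have $\pi_n\Map_{\kk_\infty}(A,B) \cong \pi_0\Map_{\kk_\infty}(\Sigma^n A, B) = \kk(\Sigma^n A, B) = \kk(A,\Omega^n B) = \kk_{-n}(A,B)$, so the mapping spaces are generalized Eilenberg--MacLane spectra determined by the $\kk$-groups; this forces $\Omega^{\kk_\infty}$ to be fully faithful and essentially surjective, hence an equivalence, and simultaneously identifies $\kk_\infty$ as stable with $\ho(\kk_\infty)\cong\kk$ as triangulated categories (the triangulated structure on $\ho$ of a stable $\infty$-category is the one generated by fibre sequences, which we matched to the Corti\~nas--Thom distinguished triangles via \eqref{eq:pathseq} in the first paragraph). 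I would spend the bulk of the write-up making the computation of $\pi_n\Map_{\kk_\infty}$ rigorous — this is where the category-of-fibrant-objects structure does its real work, exactly as in the topological case \cite{ln}*{proof of Theorem 2.5}.
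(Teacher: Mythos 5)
Your overall route is essentially the paper's: finite limits and the identification of the loop functor with $B\mapsto (t^2-t)B[t]$ are extracted from the category of fibrant objects structure of Proposition \ref{prop:algcof} (this is Lemmas \ref{lem:kkinflimits} and \ref{lem:omegas}, via \cite{cis}*{Proposition 7.5.6}); the identification $\ho(\kk_\infty)\cong\kk$ comes from Theorem \ref{kkuniprop} together with the compatibility of Dwyer--Kan localization with homotopy categories (Lemma \ref{lem:main}); stability is reduced to $\Omega$ being an equivalence on the homotopy category (the paper simply quotes \cite{ln}*{Lem. 3.4 (2)} at the point where you propose to redo the mapping-space computation); and the triangulated comparison is made by matching Corti\~nas--Thom path sequences with fibre sequences, as in Lemma \ref{lem:triangleshokk}. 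So the decomposition is the same, with the only genuine variation being that you inline the proof of the Land--Nikolaus reduction instead of citing it.

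Two of your stated justifications, however, would not survive as written. First, pointedness: knowing that $j_\infty(0)$ is a zero object of $\ho(\kk_\infty)$ only controls $\pi_0$ of mapping spaces, and no conservativity of $h$ upgrades this to contractibility (nor can you invoke invertibility of $\Omega$, which presupposes pointedness); the correct and simpler argument, used in the paper, is that $0$ is both initial and final in $\Alg$ and Dwyer--Kan localization preserves this (\cite{cis}*{Remark 7.1.15}). Second, in the key computation you write $\pi_n\Map_{\kk_\infty}(A,B)\cong\pi_0\Map_{\kk_\infty}(\Sigma^nA,B)$, using a suspension in $\kk_\infty$ that is not available before stability is established; the identification must instead go through the target, $\Map_{\kk_\infty}(A,\Omega B)\simeq\Omega\Map_{\kk_\infty}(A,B)$ (since $\Omega B$ is a pullback and $\Map(A,-)$ preserves limits), giving $\pi_n\Map_{\kk_\infty}(A,B)\cong\kk(A,\Omega^nB)$ at the zero basepoint, after which one must still handle arbitrary basepoints and check that the comparison map is compatible with these identifications --- this is precisely the content of \cite{ln}*{Lem. 3.4}, and either citing it or carrying out that argument carefully is needed. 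Finally, for the triangulated statement you only record that Corti\~nas--Thom triangles become fibre sequences; one also needs that every morphism of $\ho(\kk_\infty)$ is isomorphic to one of the form $[j_\infty(f)]$ with $f$ an algebra homomorphism (Lemma \ref{lem:triangleshokk} \eqref{item:morkkinf}, obtained from the fraction calculus of the fibrant-objects structure, or extracted from $\ho(\kk_\infty)\cong\kk$), so that all distinguished triangles of $\ho(\kk_\infty)$ are accounted for. With these repairs your proposal coincides with the paper's proof.
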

The proof of this result will be done in several steps.

Recall that a stable $\infty$-category is a pointed $\infty$-category that admits finite limits and such that the loop functor is an equivalence \cite{HA}*{Cor 1.4.2.27}.
In order to prove the existence of finite limits in $\kk_\infty$, we will calculate the localization \eqref{eq:localization} using a category of fibrant objects structure on $\Alg$ described in Section \ref{seccof}.

\begin{lem}\label{lem:kkinflimits}
    \begin{enumerate}
        \item[]
        \item\label{item:kkpointed} The $\infty$-category $\kk_\infty$ is pointed and the algebra $0$ is a zero object.
        \item\label{item:finlim} The $\infty$-category $\kk_\infty$ has finite limits.
        \item\label{item:ex} The functor $j_\infty:\Alg\to\kk_\infty$ sends Milnor squares to pullback squares.
    \end{enumerate}
\end{lem}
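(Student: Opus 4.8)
The plan is to exploit the fact, established in Section \ref{seccof}, that $(\Alg,\Fib,W_{kk})$ is a category of fibrant objects whose homotopy category is $\kk$, together with the comparison between the Dwyer--Kan localization of an ordinary category of fibrant objects and its $\infty$-categorical incarnation. Concretely, I would first invoke the general principle (as in \cite{cis} or \cite{ln}*{Section 2}) that for a category of fibrant objects $(\cC,\cF,W)$ the localized $\infty$-category $\cC[W^{-1}]$ is finitely complete, pointed whenever $\cC$ has a zero object that is both terminal and is the terminal object of the category-of-fibrant-objects structure, and that the localization functor sends homotopy pullback squares along fibrations to pullback squares in $\cC[W^{-1}]$. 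Applying this with $\cC=\Alg$, $\cF=\Fib$, $W=W_{kk}$ and $j_\infty$ the localization functor gives all three assertions at once: the zero algebra $0$ is the terminal object of $\Alg$ and $0\to 0$ is in $\Fib$, so $\kk_\infty$ is pointed with zero object $0$, proving \eqref{item:kkpointed}; $\kk_\infty$ has finite limits, proving \eqref{item:finlim}; and Milnor squares are precisely the pullback squares along a fibration in $(\Alg,\Fib)$, which are homotopy pullbacks, so $j_\infty$ carries them to pullbacks in $\kk_\infty$, proving \eqref{item:ex}.

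More explicitly, for \eqref{item:kkpointed} I would note that $0$ is a terminal object of $\Alg$ and hence, since $j_\infty$ is (by Remark \ref{rem:loctrivialKan}) the identity on objects and the localization of a category with a terminal object that maps to terminal objects, $j_\infty(0)$ is terminal in $\kk_\infty$; and because $\ho(\kk_\infty)$ carries the zero object $0$ of $\kk$ (via the functor $h$ and Theorem \ref{kkuniprop}, or directly because $\hom_{\kk}(0,0)=0$), the object $0$ is also initial, hence a zero object. For \eqref{item:finlim} the key input is that a category of fibrant objects presents a finitely complete $\infty$-category: this is the statement that $L:\cC\to\cC[W^{-1}]$ preserves the terminal object and that pullbacks of fibrations in $\cC$ become pullbacks in $\cC[W^{-1}]$, and that these suffice to build all finite limits (every finite limit can be computed from the terminal object and pullbacks, and every pullback in $\cC[W^{-1}]$ is equivalent to one along a fibration after factoring a map as a weak equivalence followed by a fibration, using axiom $(F_2)$ together with the factorization coming from the path object $(FW_2)$). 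For \eqref{item:ex}, the relevant observation is that in a category of fibrant objects a pullback square one of whose legs is a fibration is automatically a homotopy pullback, and the localization functor $\cC\to\cC[W^{-1}]$ sends homotopy pullback squares to pullback squares; a Milnor square is by definition a pullback square in $\Alg$ whose right vertical leg lies in $\Fib$, so $j_\infty$ sends it to a pullback.

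The main obstacle is not conceptual but bibliographic/technical: one must cite or prove the precise comparison theorem that the Dwyer--Kan localization of an ordinary category of fibrant objects at its weak equivalences is a finitely complete $\infty$-category in which homotopy pullbacks of the 1-categorical structure become genuine pullbacks. This is exactly the content of the analogous step in \cite{ln} (their Section 2, building on work of Cisinski and of Kapulkin--Szumi\l o on the $\infty$-category associated to a category of fibrant objects); I would either cite \cite{ln}*{Theorem 2.11} (or the corresponding statement) directly, or spell out the argument via the $\infty$-categorical localization of $\cC$ at $W$ being equivalent to the hammock localization, which for a category of fibrant objects is known to be finitely complete with the stated preservation property. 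A secondary point requiring a little care is verifying that the terminal object $0$ of the category-of-fibrant-objects structure is genuinely terminal in $\kk_\infty$ and not merely weakly so; this follows because $L$ preserves terminal objects for a localization of this type, but it is worth stating. Once these inputs are in place, the three assertions follow with essentially no further computation.
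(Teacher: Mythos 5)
Your proposal follows essentially the same route as the paper: it combines Proposition \ref{prop:algcof} with the general fact (the paper cites \cite{cis}*{Proposition 7.5.6}, after checking that $(\Alg,\Fib,W_{\kk})$ is an $\infty$-category of fibrant objects in the sense of \cite{cis}*{Definition 7.5.7}; your references to \cite{ln} and to Cisinski amount to the same statement) that the Dwyer--Kan localization of a category of fibrant objects at its weak equivalences has finite limits and carries pullbacks along fibrations to pullbacks, which yields \eqref{item:finlim} and \eqref{item:ex} exactly as you describe. The only minor deviation is in \eqref{item:kkpointed}: deducing initiality of $0$ from the homotopy category, as you do, strictly requires finite limits plus a detection statement in the style of \cite{ln}*{Lemma 3.4}, whereas the paper avoids this by noting that $0$ is both initial and final already in $\Alg$ and that localizations preserve initial and final objects (\cite{cis}*{Remark 7.1.15}).
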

\begin{proof}
    The assertion \eqref{item:kkpointed} follows from \cite{cis}*{Remark 7.1.15} since $0$ is both final and initial in $\Alg$.
    Let $\Fib$ be the set of those surjective morphisms in $\Alg$ that have a linear section and let $W_\kk$ be the set of $\kk$-equivalences in $\Alg$. Then $(\Alg, \Fib, W_\kk)$ is an $\infty$-category of fibrant objects in the sense of \cite{cis}*{Def. 7.5.7} --- see also \cite{cis}*{Def. 7.4.12}. Indeed, this follows immediately from Proposition \ref{prop:algcof} --- property (ii) in \cite{cis}*{Def. 7.4.12} holds by \cite{brown}*{Factorization Lemma}. The assertions \eqref{item:finlim} and \eqref{item:ex} now follow from \cite{cis}*{Proposition 7.5.6}.
\end{proof}

Let $\cC$ be a pointed $\infty$-category with finite limits. We proceed to recall from \cite{HA}*{1.1.2} the construction of the loop functor $\Omega_\cC:\cC\to \cC$. Let $\MOm$ be the full subcategory of $\Fun(\Delta^1\times\Delta^1,\cC)$ of those diagrams
\[\xymatrix{Y\ar[d]\ar[r] & 0\ar[d] \\ 0'\ar[r] & X}\]
that are pullback squares and where $0$ and $0'$ are zero objects of $\cC$. If $\cC$ admits finite limits, it can be shown that the evaluation at $(1,1)$ induces a trivial fibration $\MOm\to \cC$. Let $s:\cC\to\MOm$ be a section of this trivial fibration and let $e:\MOm\to \cC$ be the functor given by evaluation at $(0,0)$. Then $\Omega_\cC$ is defined as the composite $e\circ s$. For $\cC=\kk_\infty$ we can give a description of this loop functor using loop algebras.

\begin{lem}[cf. \cite{bel}*{Lemma 2.6}]\label{lem:omegas}
    \begin{enumerate}
        \item[]
        \item\label{item:omegacat} The functor $\Omega:\Alg\to\Alg$, $A\mapsto \Omega A$, uniquely descends to an equivalence of additive categories $\Omega:\kk\to\kk$ completing the square:
        \[\xymatrix{\Alg\ar[d]_-{j}\ar[r]^-{\Omega} & \Alg\ar[d]^-{j} \\
        \kk\ar@{-->}[r]^-{\Omega} & \kk}\]
        \item\label{item:omegainf} The functor $\Omega:\Alg\to\Alg$, $A\mapsto \Omega A$, essentially uniquely descends to a functor $\Omega:\kk_\infty\to\kk_\infty$ completing the square:
        \begin{equation}\label{diagram:omegainf}\begin{gathered}\xymatrix{\Alg\ar[d]_-{j_\infty}\ar[r]^-{\Omega} & \Alg\ar[d]^-{j_\infty} \\
        \kk_\infty\ar@{-->}[r]^-{\Omega} & \kk_\infty}\end{gathered}\end{equation}
        \item\label{item:loopsagree} The loop functor $\Omega_{\kk_\infty}:\kk_\infty\to\kk_\infty$ is naturally equivalent to the functor $\Omega:\kk_\infty\to\kk_\infty$ of \eqref{diagram:omegainf}.
    \end{enumerate}
\end{lem}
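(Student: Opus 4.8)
The plan is to establish the three parts in order, using \eqref{item:omegacat} together with Lemma \ref{lem:kkinflimits} to feed \eqref{item:omegainf} and \eqref{item:loopsagree}; the common input is the loop extension $\Omega A\to PA\xrightarrow{\ \ev_1\ }A$, which is natural in $A$, has contractible path algebra $PA=tA[t]$, and satisfies $\Omega A=(t^2-t)A[t]=\ker\ev_1$. For \eqref{item:omegacat}: by Theorem \ref{kkuniprop}, $j:\Alg\to\kk$ is the localization of $\Alg$ at $W_{kk}$, so a functor $\kk\to\kk$ strictly completing the square exists and is unique precisely when $j\circ\Omega$ inverts $\kk$-equivalences. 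I would see this from the distinguished triangle that $\kk$ assigns to the loop extension: since $j(PA)=0$ it yields a natural isomorphism $\Omega_\kk\circ j\xrightarrow{\ \sim\ }j\circ\Omega$, where $\Omega_\kk$ is the desuspension of \cite{cortho}*{Lemma 6.3.9}. As $j$ inverts $\kk$-equivalences and $\Omega_\kk$ is a functor on $\kk$, so does $j\circ\Omega$; the descended functor is thus naturally isomorphic to $\Omega_\kk$, hence an equivalence (even an isomorphism) of triangulated, in particular additive, categories by \cite{cortho}*{Lemma 6.3.8}.

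For \eqref{item:omegainf}: the isomorphism $\Omega_\kk\circ j\cong j\circ\Omega$ just produced shows in particular that $\Omega:\Alg\to\Alg$ carries $\kk$-equivalences to $\kk$-equivalences, so $j_\infty\circ\Omega$ sends $W_{kk}$ to equivalences in $\kk_\infty$ and is therefore an object of $\Fun^{W_{kk}}(\Alg,\kk_\infty)$. Since precomposition with $j_\infty$ is an equivalence $\Fun(\kk_\infty,\kk_\infty)\xrightarrow{\simeq}\Fun^{W_{kk}}(\Alg,\kk_\infty)$ by \eqref{eq:locequiv}, there is an essentially unique $\Omega:\kk_\infty\to\kk_\infty$ with $\Omega\circ j_\infty\simeq j_\infty\circ\Omega$, which is exactly \eqref{diagram:omegainf}.

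For \eqref{item:loopsagree}: the loop extension presents a Milnor square
\[\xymatrix{\Omega A\ar[r]\ar[d] & PA\ar[d]^-{\ev_1} \\ 0\ar[r] & A}\]
since $\ev_1$ is surjective with the $\ell$-linear section $a\mapsto ta$ and $\Omega A=\ker\ev_1$. By Lemma \ref{lem:kkinflimits}\eqref{item:ex} its image under $j_\infty$ is a pullback square in $\kk_\infty$, whose lower-left corner $j_\infty(0)$ is a zero object by Lemma \ref{lem:kkinflimits}\eqref{item:kkpointed} and whose upper-right corner $j_\infty(PA)$ is a zero object because $PA$ is contractible, so that $PA\to 0$ is a $\kk$-equivalence. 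This square has the shape defining $\Omega_{\kk_\infty}$, and, the loop extension being functorial in $A$, it assembles into a functor $\Alg\to\MOm$ that lifts $j_\infty$ along the trivial fibration $\ev_{(1,1)}:\MOm\to\kk_\infty$ and whose composite with $\ev_{(0,0)}$ is $j_\infty\circ\Omega$. Since any two lifts along a trivial fibration are equivalent, comparing with the lift $s\circ j_\infty$ used to define $\Omega_{\kk_\infty}$ and postcomposing with $\ev_{(0,0)}$ gives $\Omega_{\kk_\infty}\circ j_\infty\simeq j_\infty\circ\Omega\simeq\Omega\circ j_\infty$, the last step by \eqref{item:omegainf}. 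Because precomposition with $j_\infty$ reflects equivalences by \eqref{eq:locequiv}, we conclude $\Omega_{\kk_\infty}\simeq\Omega$.

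I expect the one genuinely delicate point to be the coherence bookkeeping in \eqref{item:loopsagree}: turning the objectwise identification $j_\infty(\Omega A)\simeq\Omega_{\kk_\infty}(j_\infty A)$ into a natural equivalence of functors and then transporting it across the localization. Everything else is a formal consequence of the universal properties in Theorem \ref{kkuniprop} and \eqref{eq:locequiv} together with the classical facts about path and loop algebras recalled from \cite{cortho}.
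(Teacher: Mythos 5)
Your proposal is correct and takes essentially the same route as the paper: part \eqref{item:omegainf} is argued identically, and part \eqref{item:loopsagree} uses the same ingredients (naturality of the Milnor loop square, Lemma \ref{lem:kkinflimits}, and the trivial fibration $\ev_{(1,1)}:\MOm\to\kk_\infty$), the only difference being that you compare two lifts of $j_\infty$ along this trivial fibration and then use that $(j_\infty)^*$ reflects equivalences, whereas the paper (via Remark \ref{rem:loctrivialKan} and a chosen section $\sigma$) builds a specific section $\bar a$ of $\MOm\to\kk_\infty$ so that $\Omega_{\kk_\infty}=e\circ\bar a$ completes the square strictly. For part \eqref{item:omegacat} the paper simply cites Corti\~nas--Thom (the descended functor is the desuspension of $\kk$), while your derivation from Theorem \ref{kkuniprop} together with the excision triangle of the loop extension (using $j(PA)=0$) is a correct, slightly more self-contained substitute.
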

\begin{proof}
    The assertion \eqref{item:omegacat} is well known. Indeed, the functor $\Omega:\kk\to\kk$ is the desuspension in the triangulated category $\kk$; see \cite{cortho}*{Sections 6.3 and 6.4}.
    
    The functor $\Omega:\Alg\to\Alg$ preserves $\kk$-equivalences by $\eqref{item:omegacat}$. Thus, the composite $j_\infty\circ \Omega:\Alg\to\kk_\infty$ sends $\kk$-equivalences to equivalences.
    Then \eqref{item:omegainf} follows from the universal property of $\kk_\infty$ being a Dwyer-Kan localization.
    
    Let us prove \eqref{item:loopsagree}.
    For $A\in\Alg$ we have a Milnor square
    \begin{equation}\label{eq:MilnorLoop}\begin{gathered}\xymatrix{\Omega A\ar[d]\ar[r] & PA\ar[d]^-{\ev_1} \\
    0\ar[r] & A}\end{gathered}\end{equation}
    where $PA=tA[t]$ and $\Omega A=(t^2-t)A[t]$ --- note that $a\mapsto at$ is a linear section of $\ev_1$. Since $PA$ is contractible, it is a zero object in $\kk_\infty$. Thus, upon applying $j_\infty$ to \eqref{eq:MilnorLoop} we get a pullback square that is equivalent to the one defining $\Omega_{\kk_\infty}$.
    This finishes the proof.
\end{proof}

\begin{lem}[cf. \cite{ln}*{Proposition 3.3}]\label{lem:main}
\begin{enumerate}
        \item[]
        \item\label{item:equiv} The functor $\ho(h): \ho(\kk_\infty)\to \ho(\kk)$ is an equivalence of ordinary categories.
        \item\label{item:stable} The $\infty$-category $\kk_\infty$ is stable.
    \end{enumerate}
\end{lem}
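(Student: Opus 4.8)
For part~\eqref{item:equiv}, I would rely on the fact that the homotopy category of a Dwyer--Kan localization is the ordinary Gabriel--Zisman localization of the homotopy category. Since $\ho\colon\mathrm{Cat}_\infty\to\mathrm{Cat}$ is a left adjoint and $\kk_\infty=\Alg[W_\kk^{-1}]$, this means that $\ho(j_\infty)\colon\Alg=\ho(\Alg)\to\ho(\kk_\infty)$ exhibits $\ho(\kk_\infty)$ as $\Alg[W_\kk^{-1}]$ in the sense of ordinary categories (see e.g.\ \cite{cis}). By Theorem~\ref{kkuniprop} this ordinary localization is isomorphic to $\uks$, hence equivalent to $\kk$; let $\Phi\colon\ho(\kk_\infty)\xrightarrow{\simeq}\kk$ be the composite equivalence, so that $\Phi\circ\ho(j_\infty)=j$. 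Since $h\circ j_\infty\simeq j$ gives $\ho(h)\circ\ho(j_\infty)\cong j$ as well, and precomposition with a localization functor is fully faithful on functor categories, it follows that $\ho(h)\cong\Phi$; in particular $\ho(h)$ is an equivalence.

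For part~\eqref{item:stable}, recall that a pointed $\infty$-category with finite limits is stable exactly when its loop functor is an equivalence \cite{HA}*{Cor.~1.4.2.27}. By Lemma~\ref{lem:kkinflimits} the $\infty$-category $\kk_\infty$ is pointed and has finite limits, and by Lemma~\ref{lem:omegas}\,\eqref{item:loopsagree} its loop functor $\Omega_{\kk_\infty}$ is naturally equivalent to the functor $\Omega\colon\kk_\infty\to\kk_\infty$ induced from the loop-algebra functor $\Omega\colon\Alg\to\Alg$ through the universal property of the localization. So the task is to show that this $\Omega$ is an equivalence of $\infty$-categories, and the plan is to exhibit an explicit quasi-inverse using the suspension-ring functor $\Sigma\colon\Alg\to\Alg$. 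As $\Sigma$ induces the inverse auto-equivalence of the desuspension on $\kk$ \cite{cortho}, it preserves $\kk$-equivalences, and hence --- by the same argument as Lemma~\ref{lem:omegas}\,\eqref{item:omegainf} --- descends to a functor $\Sigma\colon\kk_\infty\to\kk_\infty$ with $\Sigma\circ j_\infty\simeq j_\infty\circ\Sigma$.

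The heart of the argument is then that $\Omega$ and $\Sigma$ are mutually inverse on $\kk_\infty$, which I would deduce from two natural zig-zags of $\kk$-equivalences of algebras:
\[\Omega\Sigma A\ \xleftarrow{\rho_{\Sigma A}}\ J\Sigma A\ \xrightarrow{x_A}\ \Minf A\ \xleftarrow{\iota_A}\ A,\qquad \Sigma\Omega A\ \xleftarrow{\Sigma\rho_A}\ \Sigma JA\ \xleftarrow{c_A}\ J\Sigma A\ \xrightarrow{x_A}\ \Minf A\ \xleftarrow{\iota_A}\ A.\]
Here $\rho_?$, $x_?$ and $c_?$ are the classifying maps of the loop extension, the cone extension, and the extension $\Sigma JA\rightarrowtail\Sigma TA\twoheadrightarrow\Sigma A$, and $\iota_A$ is the upper-left corner inclusion; all of these are $\kk$-equivalences and natural in $A$ --- $x_A$ and $c_A$ by Lemma~\ref{lem:Fequivs}, $\rho_{\Sigma A}$ and $\rho_A$ by Lemma~\ref{lem:loopingFequiv} (or Example~\ref{exa:rhoFequiv}), $\Sigma\rho_A$ since $\Sigma$ preserves $\kk$-equivalences, $\iota_A$ by $\Minf$-stability, all with $F=j$. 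Composing with $j_\infty$ turns each zig-zag into a chain of natural equivalences, so $j_\infty\circ\Omega\Sigma\simeq j_\infty\simeq j_\infty\circ\Sigma\Omega$ in $\Fun^{W_\kk}(\Alg,\kk_\infty)$; transporting these along the equivalence $\Fun(\kk_\infty,\kk_\infty)\simeq\Fun^{W_\kk}(\Alg,\kk_\infty)$ induced by precomposition with $j_\infty$, and using $\Omega\circ j_\infty\simeq j_\infty\circ\Omega$ and $\Sigma\circ j_\infty\simeq j_\infty\circ\Sigma$, one gets $\Omega\Sigma\simeq\id_{\kk_\infty}$ and $\Sigma\Omega\simeq\id_{\kk_\infty}$. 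Therefore $\Omega_{\kk_\infty}$ is an equivalence and $\kk_\infty$ is stable.

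I expect the last step of part~\eqref{item:stable} to be the main obstacle. On homotopy categories the loop functor is visibly an equivalence --- under part~\eqref{item:equiv} it is the desuspension of the triangulated category $\kk$ --- but a functor of $\infty$-categories inducing an equivalence of homotopy categories need not itself be an equivalence, so one genuinely has to construct the inverse. Since there is no algebraic Bott periodicity, the $C^*$-algebraic shortcut of using $\Omega^2\simeq\id$ is unavailable, and one must instead exploit that the suspension \emph{ring} $\Sigma$ is both a left and a right homotopy inverse to $\Omega$; one also has to be somewhat careful because elementary homotopy is not transitive, so the comparisons $\Omega\Sigma A\simeq A$ and $\Sigma\Omega A\simeq A$ exist only as zig-zags, forcing the argument to be carried out in $\Fun(\kk_\infty,\kk_\infty)$ rather than pointwise.
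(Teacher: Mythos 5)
Your part \eqref{item:equiv} is essentially the paper's own argument: both identify $\ho(\kk_\infty)$ with the ordinary localization $\Alg[W_\kk^{-1}]$ (compatibility of Dwyer--Kan localization with homotopy categories), invoke Theorem \ref{kkuniprop}, and conclude by uniqueness of localizations that $\ho(h)$ is an equivalence. For part \eqref{item:stable} you take a genuinely different route. The paper cites \cite{ln}*{Lemma 3.4(2)}, which says precisely that the loop functor of a pointed $\infty$-category with finite limits is an equivalence as soon as it induces an equivalence on the homotopy category; combined with Lemma \ref{lem:omegas}\,\eqref{item:loopsagree} and part \eqref{item:equiv}, this reduces everything to the statement that $\Omega:\kk\to\kk$ is the (invertible) desuspension, and no Bott periodicity enters. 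So your closing caveat --- that ``one genuinely has to construct the inverse'' because homotopy-category equivalence does not suffice --- is not accurate for the loop functor specifically: that is exactly the special feature the cited lemma exploits, and it is what makes the paper's proof two lines long. Your alternative, constructing a quasi-inverse $\Sigma:\kk_\infty\to\kk_\infty$ descended from the suspension ring and proving $\Omega\Sigma\simeq\id$, $\Sigma\Omega\simeq\id$ by transporting the natural zig-zags $\Omega\Sigma A\leftarrow J\Sigma A\to \Minf A\leftarrow A$ and $\Sigma\Omega A\leftarrow \Sigma JA\leftarrow J\Sigma A\to \Minf A\leftarrow A$ through the equivalence $\Fun(\kk_\infty,\kk_\infty)\simeq\Fun^{W_\kk}(\Alg,\kk_\infty)$, is nevertheless correct: all the maps involved lie in $W_E$, are images under $\Sigma$ of such maps, or are corner inclusions, hence are $\kk$-equivalences natural in $A$ (the naturality is exactly what the appendix's strong morphisms of extensions provide), $\Sigma$ preserves $\kk$-equivalences because it realizes the inverse translation on $\kk$, and pointwise equivalences in functor $\infty$-categories are equivalences, so the fully faithfulness of $(j_\infty)^*$ lets you descend the identifications. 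What your approach buys is independence from \cite{ln}*{Lemma 3.4} and an explicit inverse to the loop functor at the $\infty$-level; what it costs is the extra bookkeeping of naturality and the descent of $\Sigma$, which the paper avoids entirely by checking invertibility only after passing to $\ho(\kk_\infty)\simeq\kk$.
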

\begin{proof}
    Let us prove \eqref{item:equiv}. Let $W$ denote the set of $\kk$-equivalences in $\Alg$. If $\cC$ is an infinity category, write $\gamma_\cC:\cC\to\ho(\cC)$ for the canonical functor. Since forming localizations is compatible with going over to homotopy categories, we have a commutative square
    \[\xymatrix{\Alg\ar[d]_-{\gamma_\Alg}\ar[r]^-{j_\infty} & \kk_\infty\ar[d]^-{\gamma_{\kk_\infty}} \\
    \ho(\Alg)\ar[r]^-{\ho(j_\infty)} & \ho(\kk_\infty)}\]
    where $\ho(j_\infty)$ presents $\ho(\kk_\infty)$ as the localization of $\ho(\Alg)$ at the set $\ho(W)$ in the sense of ordinary categories. Since $\Alg$ is an ordinary category, then $\gamma_\Alg$ is an equivalence and $\ho(j_\infty)\circ\gamma_\Alg$
    presents its $\ho(\kk_\infty)$ as the localization of $\Alg$ at $W$.
    Now consider the following commutative diagram of ordinary categories:
    \[\xymatrix{\Alg\ar[d]_-{j}\ar[r]^-{\gamma_\Alg} & \ho(\Alg)\ar[r]^-{\ho(j_\infty)}\ar[dr]_-{\ho(j)} &  \ho(\kk_\infty)\ar[d]^-{\ho(h)} \\
    \kk\ar[rr]_-{\gamma_\kk} & & \ho(\kk)}\]
    One one hand, we have seen that $\ho(j_\infty)\circ\gamma_\Alg$ presents $\ho(\kk_\infty)$ as the localization of $\Alg$ at $W$. On the other, the functor $\gamma_\kk\circ j$ presents $\ho(\kk)$ as the localization of $\Alg$ at $W$. Indeed, the latter follows from Theorem \ref{kkuniprop} and the fact that $\kk$ is an ordinary category --- and thus $\gamma_\kk$ is an equivalence. Since both composites share the same universal property, it follows that $\ho(h)$ is an equivalence.
    
    To prove \eqref{item:stable} we have to show that the loop functor on $\kk_\infty$, $\Omega_{\kk_\infty}$, is an equivalence. By \cite{ln}*{Lem. 3.4 (2)} it suffices to prove that $\Omega_{\kk_\infty}$ induces an equivalence on $\ho(\kk_\infty)$. By Lemma \ref{lem:omegas} \eqref{item:loopsagree}, it is enough to show that $\Omega:\kk_\infty\to\kk_\infty$ induces an equivalence on $\ho(\kk_\infty)$. In view of \eqref{item:equiv}, the latter is identified with $\Omega:\kk\to\kk$, that is an equivalence by Lemma \ref{lem:omegas} \eqref{item:omegacat}. 
\end{proof}

\begin{lem}[cf. \cite{bel}*{Lemma 2.17}]\label{lem:triangleshokk}
    \begin{enumerate}
        \item[]
        \item\label{item:morkkinf} Every morphism in $\kk_\infty$ is equivalent to a morphism $j_\infty(f)$ for some morphism $f$ in $\Alg$.
        \item\label{item:kkinfprod} Every product in $\kk_\infty$ is equivalent to the image under $j_\infty$ of a product in $\Alg$.
        \item\label{item:triangleshokk} Every distinguished triangle in $\ho(\kk_\infty)$ is isomorphic to one of the form
        \[\xymatrix@C=3em{\Omega B\ar[r]^-{[j_\infty(\iota_f)]} & P_f\ar[r]^-{[j_\infty(\pi_f)]} & A\ar[r]^-{[j_\infty(f)]} & B}\]
        for some algebra homomorphism $f:A\to B$; see \eqref{eq:pathseq} for the definitions of $\iota_f$ and $\pi_f$. Here, we are identifying $\Omega_{\kk_\infty}(j_\infty(B))\simeq j_\infty(\Omega B)$ as explained in Lemma \ref{lem:omegas}.
    \end{enumerate}
\end{lem}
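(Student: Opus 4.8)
The plan is to establish the three assertions in turn, using the category of fibrant objects structure $(\Alg,\Fib,W_\kk)$ of Proposition \ref{prop:algcof} and the facts about $\kk_\infty$ collected above. For \eqref{item:morkkinf}: by Lemma \ref{lem:main} \eqref{item:equiv} and Theorem \ref{kkuniprop}, the ordinary category $\ho(\kk_\infty)$ is equivalent to $\kk$, which is the homotopy category of the category of fibrant objects $(\Alg,\Fib,W_\kk)$; hence, by the Factorization Lemma and the calculus of fractions for categories of fibrant objects \cite{brown}, every morphism $X\to Y$ of $\ho(\kk_\infty)$ is of the form $[j_\infty(g)]\circ[j_\infty(p)]^{-1}$ for some trivial fibration $p\colon A'\to X$ in $\Alg$ and some algebra homomorphism $g\colon A'\to Y$. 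Given a morphism $\phi\colon X\to Y$ of $\kk_\infty$, I would apply this to $[\phi]$: then $j_\infty(p)$ is an equivalence of $\kk_\infty$ (as $p\in W_\kk$), and the square with top edge $j_\infty(g)$, bottom edge $\phi$, left edge $j_\infty(p)$ and right edge $\id_Y$ commutes up to homotopy, since $[\phi\circ j_\infty(p)]=[\phi]\circ[j_\infty(p)]=[j_\infty(g)]$ in $\ho(\kk_\infty)$. As both vertical edges are equivalences, this square is an equivalence $\phi\simeq j_\infty(g)$ in $\Fun(\Delta^1,\kk_\infty)$, proving \eqref{item:morkkinf}.

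For \eqref{item:kkinfprod}: since finite products are unique up to equivalence, it suffices to realize them via $j_\infty$. The terminal object is $0=j_\infty(0)$ by Lemma \ref{lem:kkinflimits} \eqref{item:kkpointed}, and for algebras $A$ and $B$ the pullback square in $\Alg$ with corners $A\times B$, $A$, $B$, $0$ is a Milnor square (the map $B\to 0$ is surjective with linear section $0\to B$), so by Lemma \ref{lem:kkinflimits} \eqref{item:ex} it maps to a pullback square in $\kk_\infty$; as $0$ is a zero object, this exhibits $j_\infty(A\times B)$ together with the images of the projections as the product of $j_\infty(A)$ and $j_\infty(B)$.

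For \eqref{item:triangleshokk}: by Lemma \ref{lem:main} \eqref{item:stable}, $\kk_\infty$ is stable, so every distinguished triangle of $\ho(\kk_\infty)$ is isomorphic to the rotation $\Omega_{\kk_\infty}Y\to\mathrm{fib}(g)\to X\xrightarrow{g}Y$ of the fiber sequence of some morphism $g$ of $\kk_\infty$. By \eqref{item:morkkinf} and functoriality of the fiber-sequence construction, after passing to an isomorphic triangle I may assume $g=j_\infty(f)$ for an algebra homomorphism $f\colon A\to B$. I would then extract the statement from the morphism of extensions \eqref{eq:pathseq}: its right-hand square presents $P_f$ as $A\times_B PB$ with $\ev_1\colon PB\to B$ admitting the linear section $b\mapsto bt$, so it is a Milnor square, and since $PB$ is contractible (so $j_\infty(PB)\simeq 0$), applying $j_\infty$ yields $\mathrm{fib}(j_\infty(f))\simeq j_\infty(P_f)$ with fiber projection $[j_\infty(\pi_f)]$. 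Likewise the square realizing $\Omega B=\ker\pi_f$ as $P_f\times_A 0$ is a Milnor square, yielding $\mathrm{fib}(j_\infty(\pi_f))\simeq j_\infty(\Omega B)$ with fiber inclusion $[j_\infty(\iota_f)]$; and because \eqref{eq:pathseq} restricts to the identity on $\Omega B$, this is compatible with the identification $\Omega_{\kk_\infty}(j_\infty(B))\simeq j_\infty(\Omega B)$ coming from \eqref{eq:MilnorLoop} in Lemma \ref{lem:omegas} \eqref{item:loopsagree}. Splicing these equivalences into the rotated fiber sequence turns it, up to isomorphism, into
\[\xymatrix@C=3em{\Omega B\ar[r]^-{[j_\infty(\iota_f)]} & P_f\ar[r]^-{[j_\infty(\pi_f)]} & A\ar[r]^-{[j_\infty(f)]} & B}\]
which is the required form.

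The step I expect to be the main obstacle is this last compatibility in \eqref{item:triangleshokk}: one must verify that the loop object implicit in the fiber sequence of $j_\infty(f)$ is identified with the loop object $\Omega_{\kk_\infty}(j_\infty(B))$ fixed in Lemma \ref{lem:omegas} in a manner compatible with the respective connecting maps, so that no spurious sign appears on $[j_\infty(\iota_f)]$; the morphism of extensions \eqref{eq:pathseq}, being the identity on $\Omega B$, is precisely what supplies this. By comparison, \eqref{item:morkkinf} and \eqref{item:kkinfprod} are routine once the category of fibrant objects structure is at hand.
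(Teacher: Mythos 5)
Your proposal is correct and follows essentially the same route as the paper: part \eqref{item:morkkinf} via the fraction calculus coming from the fibrant-objects structure $(\Alg,\Fib,W_\kk)$, part \eqref{item:kkinfprod} via the Milnor square over $0$, and part \eqref{item:triangleshokk} via the two Milnor squares of \eqref{eq:pathseq} becoming pullbacks in $\kk_\infty$ together with the reduction to morphisms of the form $j_\infty(f)$. The only cosmetic difference is in \eqref{item:triangleshokk}, where the paper assembles both squares into a single $\Delta^1\times\Delta^2$ diagram and cites (the dual of) \cite{HA}*{Def. 1.1.2.11}, while you rotate the fiber sequence and check the compatibility with $\Omega_{\kk_\infty}(j_\infty(B))\simeq j_\infty(\Omega B)$ by hand --- the outer rectangle of that diagram is exactly the square \eqref{eq:MilnorLoop}, which is the compatibility you were worried about.
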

\begin{proof}
    To prove \eqref{item:morkkinf}, let $g$ be a morphism in $\kk_\infty$. Since $\kk_\infty$ is the $\infty$-category associated to the $\infty$-category of fibrant objects $(\Alg, \Fib, W_{\kk})$, the class $[g]$ in $\ho(\kk_\infty)$ equals $[j_\infty(f)]\circ[j_\infty(s)]^{-1}$ for some morphisms $f$ and $s$ in $\Alg$, with $s$ a $\kk$-equivalence --- see, for example, the proof of \cite{cis}*{Thm. 7.5.6}. Then $j_\infty(f)$ is a composition of the morphisms $j_\infty(s)$ and $g$ in $\kk_\infty$ and therefore the morphisms $g$ and $j_\infty(f)$ are equivalent.

    Let us prove \eqref{item:kkinfprod}. First note that any object of $\kk_\infty$ is equivalent to one in the image of $j_\infty$. Let $j_\infty(A)$ and $j_\infty(B)$ be two objects of $\kk_\infty$. Since the square of algebras
    \[\xymatrix{A\times B\ar[d]\ar[r] & A\ar[d] \\ B\ar[r] & 0}\]
    is a Milnor square, it becomes a pullback upon applying $j_\infty$ by Lemma \ref{lem:kkinflimits} \eqref{item:ex}. This implies that $j_\infty(A)\times j_\infty(B)\simeq j_\infty(A\times B)$ in $\kk_\infty$.

    Let us prove \eqref{item:triangleshokk}. Since $\ho(\kk_\infty)$ is a triangulated category, every morphism in $\ho(\kk_\infty)$ can be extended to a distinguished triangle that is uniquely determined up to isomorphism, and every distinguished triangle arises in this way. By \eqref{item:morkkinf}, it suffices to consider those distinguished triangles that extend a morphism of the form $[j_\infty(f)]$ for some morphism $f$ in $\Alg$. Let $f:A\to B$ be an algebra homomorphism and consider the following diagram of algebras:
    \[\xymatrix{\Omega B\ar[d]\ar[r]^-{\iota_f} & P_f\ar[d]^-{\pi_f}\ar[r] & PB\ar[d]^-{\ev_1} \\0\ar[r] & A\ar[r]^-{f} & B}\]
    Since the vertical morphisms belong to $\Fib$, both squares are Milnor squares and thus become pullbacks in $\kk_\infty$. The assertion \eqref{item:triangleshokk} now follows from (the dual of) \cite{HA}*{Def. 1.1.2.11}.
\end{proof}

\begin{prop}[cf. \cite{bel}*{Prop. 2.18}]
    The functor $\ho(h): \ho(\kk_\infty)\to \ho(\kk)$ is an equivalence of triangulated categories.
\end{prop}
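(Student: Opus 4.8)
The plan is to assemble the statement from facts already in hand. By Lemma \ref{lem:main} \eqref{item:equiv} the functor $\ho(h)$ is an equivalence of ordinary categories (in particular additive, since both sides are additive), and by Lemma \ref{lem:main} \eqref{item:stable} the $\infty$-category $\kk_\infty$ is stable, so that $\ho(\kk_\infty)$ carries a triangulated structure; on the target we identify $\ho(\kk)$ with $\kk$ through the canonical functor $\gamma_\kk$, which is an equivalence because $\kk$ is an ordinary category. Hence the only remaining point is to show that $\ho(h)$ is \emph{exact}: that it commutes with the shift up to a natural isomorphism and sends distinguished triangles to distinguished triangles. Granting this, the conclusion follows from the standard fact that an exact functor which is an equivalence of the underlying categories is an equivalence of triangulated categories, a quasi-inverse being automatically exact with unit and counit isomorphisms of exact functors.

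First I would check compatibility with the shift. The shift on $\kk$ is the suspension $\Omega^{-1}$, with $\Omega:\kk\to\kk$ the equivalence of Lemma \ref{lem:omegas} \eqref{item:omegacat}, and the shift on $\ho(\kk_\infty)$ is $\Omega_{\kk_\infty}^{-1}$, which by Lemma \ref{lem:omegas} \eqref{item:loopsagree} may be replaced by the descended loop functor $\Omega:\kk_\infty\to\kk_\infty$ of \eqref{diagram:omegainf}. Both are induced from the single algebraic loop functor $A\mapsto\Omega A$ on $\Alg$, and $h\circ j_\infty\simeq j$. I would then observe that $h\circ\Omega$ and $\Omega\circ h$, as functors $\kk_\infty\to\kk$, agree after restriction along $j_\infty$, hence are equivalent because precomposition with $j_\infty$ yields an equivalence $\Fun(\kk_\infty,\kk)\xrightarrow{\simeq}\Fun^{W_\kk}(\Alg,\kk)$ and both composites send $W_\kk$ to isomorphisms. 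Passing to homotopy categories and inverting $\Omega$ (an equivalence on both sides) gives the required natural isomorphism $\ho(h)\circ[1]\xrightarrow{\cong}[1]\circ\ho(h)$.

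Next I would check that distinguished triangles are preserved. By Lemma \ref{lem:triangleshokk} \eqref{item:triangleshokk}, every distinguished triangle in $\ho(\kk_\infty)$ is isomorphic to one of the form $\Omega B\xrightarrow{[j_\infty(\iota_f)]}P_f\xrightarrow{[j_\infty(\pi_f)]}A\xrightarrow{[j_\infty(f)]}B$ for an algebra homomorphism $f:A\to B$, so it suffices to show that $\ho(h)$ carries such a triangle to a distinguished triangle of $\kk$. Applying $\ho(h)$, and using $\ho(h)\circ\ho(j_\infty)=j$ together with the natural isomorphism from the previous step to rewrite $\ho(h)(\Omega B)$ as $\Omega B$, turns this triangle into $\Omega B\xrightarrow{j(\iota_f)}P_f\xrightarrow{j(\pi_f)}A\xrightarrow{j(f)}B$, which is distinguished by the Corti\~nas--Thom description of the triangulated structure of $\kk$ recalled after \eqref{eq:pathseq}. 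Since exactness need only be verified on a family of triangles generating all distinguished ones up to isomorphism, this completes the argument. The point I expect to require the most care is the coherence bookkeeping: one must be sure that the natural isomorphism $\ho(h)\circ[1]\cong[1]\circ\ho(h)$ used to evaluate $\ho(h)$ on the triangle is precisely the one induced by the comparison $\Omega_{\kk_\infty}\simeq\Omega$ of Lemma \ref{lem:omegas}, so that the image triangle is genuinely the loop triangle of $f$ rather than merely abstractly isomorphic to one.
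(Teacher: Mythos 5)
Your proposal is correct and follows essentially the same route as the paper's proof: Lemma \ref{lem:main} \eqref{item:equiv} for the equivalence of ordinary categories, Lemma \ref{lem:omegas} for compatibility with the (de)suspension, and Lemma \ref{lem:triangleshokk} \eqref{item:triangleshokk} combined with the Corti\~nas--Thom description of the distinguished triangles recalled after \eqref{eq:pathseq}. The additional details you supply (the shift comparison via the universal property of the Dwyer--Kan localization, and additivity) merely make explicit what the paper leaves implicit, so no gap remains.
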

\begin{proof}
    By Lemma \ref{lem:main} \eqref{item:equiv}, the functor $\ho(h)$ is an equivalence of ordinary categories. Since binary products in both $\ho(\kk_\infty)$ and $\kk$ are represented by the cartesian product of algebras, it follows that $\ho(h)$ preserves binary products and that it is an additive functor. By Lemma \ref{lem:omegas}, it follows that $\ho(h)$ is compatible with the desuspension functors. Finally, it follows from Sec. \ref{sec:kk} and Lemma \ref{lem:triangleshokk} \eqref{item:triangleshokk} that $\ho(h)$ sends distinguished triangles in $\ho(\kk_\infty)$ to distinguished triangles in $\kk$. This finishes the proof.
\end{proof}

This finishes the proof of Theorem \ref{thm:main}.

\subsection{Inverting polynomial homotopy equivalences}
A stable $\infty$-category version of $KK$-theory is constructed in \cite{bunke} by making a sequence of localizations of the category of $\CAlg$ to enforce the universal properties of $KK$.
An alternative approach to constructing $\kk_\infty$ would be to mimic the steps described in \cite{bunke}*{p.3} in our algebraic context. We will show that this can be done to some extent in our setting, though it is not clear for the authors how to go further on. The first of these steps consists of inverting homotopy equivalences.

\begin{defi}
    Let $L_h:\Alg\to\Alg_h$ be the Dwyer-Kan localization of $\Alg$ at the set $W_H$ of polynomial homotopy equivalences.
\end{defi}

Let $\Alg_\infty$ be the $\infty$-category that results from applying the homotopy coherent nerve to the enrichment of $\Alg$ over Kan complexes described in \ref{sec:enrichKan}. We have a functor $\Alg\to\Alg_\infty$ induced by the inclusions of the zero skeletons of the mapping spaces.

\begin{prop}[cf. \cite{bunke}*{Proposition 3.5}]\label{prop:alghcoh}
    The functor $\Alg\to\Alg_\infty$ presents the Dwyer-Kan localization of $\Alg$ at the set $W_H$ of polynomial homotopy equivalences.
\end{prop}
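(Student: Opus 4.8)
The plan is to invoke the general fact that, for a category $\mathcal{M}$ enriched over Kan complexes, the homotopy coherent nerve $N_\Delta(\mathcal{M})$ together with the canonical functor $\mathcal{M}\to N_\Delta(\mathcal{M})$ from the underlying ordinary category---the one induced by the inclusions of the $0$-skeletons of the mapping complexes---exhibits $N_\Delta(\mathcal{M})$ as the Dwyer--Kan localization of $\mathcal{M}$ at the class $W$ of those morphisms that become equivalences in $N_\Delta(\mathcal{M})$. This is exactly the ingredient used in \cite{bunke}*{Proposition 3.5}, and it rests on the fact that a Kan-enriched category is a fibrant simplicial category, so that the comparison between quasi-categories and simplicial categories (Joyal vs.\ Bergner) applies on fibrant objects. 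By \cite{cortho}*{Thm. 3.2.3}, the category $\Alg$ with the mapping spaces $\uHOM(A,B)$ of \eqref{homkan} is enriched over Kan complexes, and $\Alg_\infty$ is by definition its homotopy coherent nerve, with $\Alg\to\Alg_\infty$ the canonical functor. Hence the general fact applies and it remains only to identify $W$ with $W_H$.

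For that identification, recall that for a fibrant simplicial category the homotopy category of its coherent nerve has hom-sets given by $\pi_0$ of the mapping complexes; thus $\ho(\Alg_\infty)$ has $\hom_{\ho(\Alg_\infty)}(A,B)=\pi_0\uHOM(A,B)$. Since $\Exinf$ preserves sets of connected components, $\pi_0\uHOM(A,B)=\pi_0\uhom(A,B)$, and two $0$-simplices of $\uhom(A,B)$---i.e.\ two algebra homomorphisms $A\to B$---lie in the same path component precisely when they are related by a finite chain of elementary homotopies $A\to B[t]$, using $B^{\Delta^1}\cong B[t]$; this is by definition the relation defining $[A,B]$. Therefore $\ho(\Alg_\infty)\cong[\Alg]$, and a morphism $f$ of $\Alg$ becomes an equivalence in $\Alg_\infty$ if and only if $[f]$ is invertible in $[\Alg]$, i.e.\ if and only if $f\in W_H$. (Alternatively, the equivalences of $\Alg_\infty$ are those $f$ for which $f_*\colon\uHOM(D,A)\to\uHOM(D,B)$ is a weak equivalence of simplicial sets for every $D$, since $\Map_{\Alg_\infty}(D,-)\simeq\uHOM(D,-)$, and one concludes by Proposition \ref{indhom}.) This gives $W=W_H$ and completes the argument.

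The main obstacle is the general fact invoked in the first paragraph: stated carefully and with its hypotheses verified, it packages a genuine piece of $\infty$-categorical machinery---essentially the Quillen equivalence between quasi-categories and simplicial categories restricted to fibrant objects, or equivalently Hinich's computation of Dwyer--Kan localizations---rather than anything specific to algebras. If one prefers not to cite it, the alternative is to reprove it along the lines of \cite{bunke}*{Proposition 3.5}, which proceeds by the same route. A minor additional point, also subsumed by the general fact, is to check that the particular functor $\Alg\to\Alg_\infty$ named in the statement---the one built from the $0$-skeleton inclusions---is the one carrying the universal property, and not merely some functor equivalent to it.
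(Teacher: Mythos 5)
The ``general fact'' on which your first paragraph rests is not true at the level of generality you state it: for an arbitrary Kan-enriched category $\mathcal{M}$, the coherent nerve need \emph{not} be the Dwyer--Kan localization of the underlying ordinary category $\mathcal{M}_0$ at the morphisms that become equivalences. (Take a one-object Kan-enriched category given by a simplicial group with trivial $\pi_0$ and nontrivial higher homotopy: localizing the underlying one-object category at all its morphisms yields a $1$-truncated $\infty$-groupoid, while the coherent nerve has the homotopy type of the classifying space of the simplicial group.) The Quillen equivalence between simplicial categories and quasi-categories only identifies the coherent nerve with the enriched category $\mathcal{M}$ itself; it says nothing about recovering it from $\mathcal{M}_0$ together with a class of weak equivalences. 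The precise general statement that Bunke invokes is \cite{HA}*{Proposition 1.3.4.7}, and it carries an extra hypothesis: roughly, functorial cotensors $B^{\Delta^n}$ with natural weak equivalences $\uhom(A,B^{\Delta^n})\simeq \uhom(A,B)^{\Delta^n}$, i.e.\ an exponential law. This hypothesis holds for $C^*$-algebras but \emph{fails} for the polynomial forms $B^{\Delta^n}$ in $\Alg$ (see \cite{cortho}*{Remark 3.1.4}), which is exactly why the proposition cannot be quoted here. So the step you defer to ``a genuine piece of $\infty$-categorical machinery'' is precisely the point where the algebraic context diverges from the topological one, and your fallback (``reprove it along the lines of Bunke'') runs into the same obstruction, since Bunke's proof consists of invoking that machinery.

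What the paper actually does is rework the \emph{proof} of \cite{HA}*{Proposition 1.3.4.7} by hand so as to avoid the exponential law. Working with the (opposite of the) simplicially enriched category $\calC$ and its fibrant replacement $\calC'$, one must show that $(N(\calC_0),W)\to(N(\calC'),W)$ is a weak equivalence of marked simplicial sets, and for this that each map $(N(\calC_0),W_0)\to(N(\calC_n),W_n)$ is one, where $\calC_n$ has hom-sets $\hom_\Alg(B,A^{\Delta^n})$. The substitute for Lurie's use of cotensors is an explicit algebraic construction: an endofunctor $U$ of $\calC_n$ with $U(A)=A^{\Delta^1}$, defined on morphisms via the substitution homomorphism determined by $s_i\mapsto \sigma s_i$, together with natural transformations $\tau_0\colon F\circ G\Rightarrow U$ and $\tau_1\colon \id\Rightarrow U$ whose components lie in $W_n$; these provide the required marked homotopy. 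None of this appears in your proposal, and it is the essential content of the proof. Your second paragraph (identifying the inverted morphisms with $W_H$ via $\pi_0\uHOM(A,B)\cong[A,B]$ and Proposition \ref{indhom}) is fine in itself, but it only addresses the easy identification step, not the localization statement.
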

\begin{proof}
    Let $\cC$ be the opposite category of the simplicially enriched category of algebras and let $\cC'$ be the opposite category of the Kan complex enriched category of algebras. Eplicitely, $\ob(\cC)=\ob(\cC')=\ob(\Alg)$ and for algebras $A$ and $B$ we have:
    \begin{align*}\Map_\cC(A,B)&=\uhom(B,A)\\
    \Map_{\cC'}(A,B)&=\uHOM(B,A)\end{align*}
    Note that the underlying ordinary category of $\cC$ is $\cC_0=\Alg^\op$ and that $\cC'$ is a fibrant replacement of $\cC$ as a simplicially enriched category. Let $W$ be the collection of morphisms in $\cC$ corresponding to polynomial homotopy equivalences. Following the proof of \cite{bunke}*{Proposition 3.5}, we would like to invoke \cite{HA}*{Proposition 1.3.4.7} but this is not possible because the exponential law fails in this algebraic context; see \cite{cortho}*{Remark 3.1.4}. Nevertheless, we will show that the proof of \cite{HA}*{Proposition 1.3.4.7} carries on with some changes. More precisely, we will show that the canonical morphism $\theta:(N(\cC_0),W)\to (N(\cC'),W)$ is a weak equivalence of marked simplicial sets.
    
    Let $f:A\to B$ be a morphism in $\cC$ that belongs to $W$ --- that is, a zero simplex $f\in\Map_\cC(A,B)_0=\hom_\Alg(B,A)$ that is a polynomial homotopy equivalence. It is clear that if $g:A\to B$ is any morphism that belongs to the same connected component of $\Map_\cC(A,B)$ as $f$, then $g$ belongs to $W$. Indeed, unravelling the definitions, $f$ and $g$ are connected by an edge of $\Map_\cC(A,B)$ if and only if they are elementarily homotopic.

    Fix $n\geq 0$ and let $\cC_n$ be the ordinary category whose objects are those of $\cC$ and whose morphisms are given by:
    \[\hom_{\cC_n}(A,B)=\hom_\sSet(\Delta^n,\Map_\cC(A,B))=\hom_\Alg(B, A^{\Delta^n})\]
    Let $W_n$ be the collection of morphisms in $\cC_n$ corresponding to morphisms $\Delta^n\to\Map_\cC(A,B)$ that carry each vertex of $\Delta^n$ to an element of $W$. As explained in the proof of \cite{HA}*{Proposition 1.3.4.7}, we must show that the morphism $F:(N(\cC_0),W_0)\to (N(\cC_n),W_n)$ induced by $[n]\to[0]$ is a weak equivalence of marked simplicial sets. Let $G:(N(\cC_n),W_n)\to (N(\cC_0),W_0)$ be induced by the inclusion $[0]\to [n]$. Then $G\circ F=\id$. We will show that $F\circ G$ is homotopic to the identity of $\cC_n$. Define a functor $U:\cC_n\to\cC_n$ as follows:
    \begin{itemize}
        \item On objects, $U$ is given by $U(A)=A^{\Delta^1}$.
        \item On morphisms, we should define compatible functions:
        \[\xymatrix{\hom_{\cC_n}(A,B)=\hom_{\Alg}(B, A^{\Delta^n})\ar@<-4em>[d]_-{U} \\ \hom_{\cC_n}(U(A), U(B))=\hom_{\Alg}(B^{\Delta^1}, (A^{\Delta^1})^{\Delta^n})}\]
        Recall that we have isomorphisms:
        \[\iota_p:\ell^{\Delta^p}=\dfrac{\ell[t_0,\dots,t_p]}{\langle1-\sum t_i\rangle}\to \ell[s_1,\dots,s_p]\text{,\hspace{2em}}\iota_p(t_i)=s_i\text{\hspace{1em}for $i>0$.}\]
        Define $\rho:(\ell^{\Delta^n})^{\Delta^1}\to(\ell^{\Delta^n})^{\Delta^1}$ as the composite
        \[\xymatrix{(\ell^{\Delta^n})^{\Delta^1}\ar[r]^-{\iota_n\otimes\iota_1}_-{\cong} & \ell[s_1,\dots ,s_n][\sigma]\ar[r]^-{r} & \ell[s_1,\dots ,s_n][\sigma] & (\ell^{\Delta^n})^{\Delta^1}\ar[l]_-{\iota_n\otimes\iota_1}^-{\cong}}\]
        where $r$ is determined by $r(\sigma)=\sigma$ and $r(s_i)=\sigma s_i$ for $1\leq i\leq n$. Upon tensoring $\rho$ with $A$ we get an algebra homomorphism $\rho:(A^{\Delta^n})^{\Delta^1}\to(A^{\Delta^n})^{\Delta^1}$.
        Let $f\in\hom_{\cC_n}(A,B)$ be represented by $f:B\to A^{\Delta^n}$. Define $U(f)$ as the morphism represented by the composite:
        \[\xymatrix{B^{\Delta^1}\ar[r]^-{f^{\Delta^1}} & (A^{\Delta^n})^{\Delta^1}\ar[r]^-{\rho} & (A^{\Delta^n})^{\Delta^1}\cong (A^{\Delta^1})^{\Delta^n}}\]
    \end{itemize}
    It is straightforward but tedious to verify that the latter defines a functor $U$. For $i=0,1$, let $\const_i:[n]\to[1]$ denote the constant function taking the value $i$ and let $\tau_i\in\hom_{\cC_n}(A, U(A))=\hom_{\Alg}(A^{\Delta^1}, A^{\Delta^n})$ be represented by:
    \[(\const_i)^*:A^{\Delta^1}\to A^{\Delta^n}\]
    Another straightforward verification shows that the $\tau_i$ assemble into natural transformations $\tau_0:F\circ G\to U$ and $\tau_1:\id_{\cC_n}\to U$. Note that both $\tau_0$ and $\tau_1$ are given by morphisms in $W_n$ (any morphism $A^{\Delta^1}\to A$ induced by a morphism $\Delta^0\to\Delta^1$ is a polynomial homotopy equivalence). It follows that $\tau_0$ and $\tau_1$ determine the desired homotopy from $F\circ G$ to the identity of $\cC_n$.
\end{proof}

\begin{coro}[cf. \cite{bunke}*{Corollary 3.7}]\label{coro:MapAlgh}
    For any two algebras $A$ and $B$ we have a natural equivalence of spaces:
    \[\Map_{\Alg_h}(A,B)\simeq \uHOM(A,B)\]
\end{coro}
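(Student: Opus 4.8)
\emph{Proof proposal.} The plan is to deduce the statement directly from Proposition \ref{prop:alghcoh} together with the standard comparison of mapping spaces for homotopy coherent nerves; the real content has already been established in that proposition.

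First I would fix convenient models. By Remark \ref{rem:loctrivialKan} we may assume that both $L_h\colon\Alg\to\Alg_h$ and the canonical functor $\iota\colon\Alg\to\Alg_\infty$ are the identity on objects. By Proposition \ref{prop:alghcoh}, $\iota$ presents $\Alg_\infty$ as a Dwyer--Kan localization of $\Alg$ at $W_H$; hence, by the universal property \eqref{eq:locequiv} characterizing such localizations up to equivalence, there is an equivalence of $\infty$-categories $\Psi\colon\Alg_h\xrightarrow{\ \simeq\ }\Alg_\infty$ with $\Psi\circ L_h\simeq\iota$. In particular $\Psi$ is the identity on objects, and for any two algebras $A,B$ it induces an equivalence of spaces $\Map_{\Alg_h}(A,B)\xrightarrow{\ \simeq\ }\Map_{\Alg_\infty}(A,B)$ which is natural in the morphisms of $\Alg$, since $\Psi\circ L_h\simeq\iota$ as functors out of $\Alg$.

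Next I would identify $\Map_{\Alg_\infty}(A,B)$. By construction, $\Alg_\infty$ is the homotopy coherent nerve of $\Alg$ regarded as a category enriched in Kan complexes via the mapping objects $\uHOM(-,-)$ of \eqref{homkan}; this enrichment is fibrant because each $\uHOM(A,B)=\Exinf\uhom(A,B)$ is a Kan complex by \cite{cortho}*{Thm. 3.2.3}. For any fibrant simplicially enriched category $\mathcal{S}$, the homotopy coherent nerve $N(\mathcal{S})$ is an $\infty$-category whose mapping spaces are naturally weakly equivalent to the enriched hom-objects $\Map_{\mathcal{S}}(-,-)$; see \cite{HT}. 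Applying this with $\mathcal{S}=\Alg$ gives a natural equivalence $\Map_{\Alg_\infty}(A,B)\simeq\uHOM(A,B)$. Composing it with the equivalence of the previous paragraph yields the claimed natural equivalence $\Map_{\Alg_h}(A,B)\simeq\uHOM(A,B)$.

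The only point that requires care is the naturality assertion: one must check that both equivalences can be chosen compatibly with the functoriality in $A$ and $B$ induced by $\Alg$. For the first equivalence this is automatic from $\Psi\circ L_h\simeq\iota$, and for the second it is built into the cited comparison for coherent nerves (the comparison is realized by an actual simplicial functor). I expect this bookkeeping, rather than any substantive argument, to be the main --- and only minor --- obstacle here.
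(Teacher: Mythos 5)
Your proposal is correct and matches the paper's intent: the corollary is deduced, exactly as you do, from Proposition \ref{prop:alghcoh} (giving an equivalence $\Alg_h\simeq\Alg_\infty$ over $\Alg$ via the universal property of the Dwyer--Kan localization) together with the standard identification of mapping spaces in the homotopy coherent nerve of a Kan-complex-enriched category with the enriched hom-objects $\uHOM(A,B)$. The paper records no further argument, so your write-up supplies essentially the same (immediate) proof.
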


\begin{prop}[cf. \cite{bunke}*{Propostion 3.8}]\label{prop:AlghProdCoprod}
    The category $\Alg_h$ admits finite products and finite coproducts, and $L_h$ preserves them.
\end{prop}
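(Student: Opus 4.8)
The plan is to deduce the statement from Corollary \ref{coro:MapAlgh}, which identifies $\Map_{\Alg_h}(A,B)$ with the Kan complex $\uHOM(A,B)$ naturally in both variables, together with Remark \ref{rem:loctrivialKan}, by which $L_h$ may be taken to be the identity on objects, so that every object of $\Alg_h$ is of the form $L_h(A)$. Since an $\infty$-category has all finite products once it has a terminal object and binary products, and dually for coproducts, it suffices to treat the empty and binary cases. The empty cases are immediate: $\uhom(D,0)$ and $\uhom(0,D)$ are constant on a point, hence so are their $\Exinf$'s, so by Corollary \ref{coro:MapAlgh} the zero algebra is a zero object of $\Alg_h$.

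For binary products, fix $A,B\in\Alg$ and consider $L_h(A\times B)$ equipped with the images of the two projections. By Yoneda it is the product of $L_h(A)$ and $L_h(B)$ in $\Alg_h$ as soon as $\Map_{\Alg_h}(D,A\times B)\to\Map_{\Alg_h}(D,A)\times\Map_{\Alg_h}(D,B)$ is an equivalence for every algebra $D$; by the naturality in Corollary \ref{coro:MapAlgh} this map is identified with the canonical $\uHOM(D,A\times B)\to\uHOM(D,A)\times\uHOM(D,B)$. Now $\uhom(D,A\times B)_n=\hom_\Alg\bigl(D,(A\times B)^{\Delta^n}\bigr)$; since $(-)^{\Delta^n}$ preserves products (being naturally isomorphic to tensoring with $\ell[s_1,\dots,s_n]$, as in the proof of Proposition \ref{pullhom}) and $\hom_\Alg(D,-)$ preserves products, this equals $\uhom(D,A)_n\times\uhom(D,B)_n$, naturally in $[n]$. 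Thus $\uhom(D,A\times B)\cong\uhom(D,A)\times\uhom(D,B)$, and applying $\Exinf$ --- which preserves finite products, since $\mathrm{Ex}$ does so and filtered colimits commute with finite products in $\sSet$ --- yields the required isomorphism. An isomorphism of Kan complexes is in particular an equivalence, so $L_h(A\times B)$ is a product in $\Alg_h$, and it is preserved by $L_h$ by construction.

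Finite coproducts are dual, with one twist: $(-)^{\Delta^n}$ does not preserve coproducts, so the levelwise argument cannot be run on the target. Instead I would invoke that $\Alg$ itself has finite coproducts --- write $A\amalg B$ for the coproduct of $A$ and $B$ in $\Alg$, which exists, e.g.\ as the augmentation ideal of the free product of the unitalizations --- and use the universal property in the \emph{source} variable: $\uhom(A\amalg B,D)_n=\hom_\Alg\bigl(A\amalg B,D^{\Delta^n}\bigr)=\hom_\Alg\bigl(A,D^{\Delta^n}\bigr)\times\hom_\Alg\bigl(B,D^{\Delta^n}\bigr)=\uhom(A,D)_n\times\uhom(B,D)_n$, naturally in $[n]$. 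Hence $\uhom(A\amalg B,D)\cong\uhom(A,D)\times\uhom(B,D)$, and applying $\Exinf$ gives $\uHOM(A\amalg B,D)\cong\uHOM(A,D)\times\uHOM(B,D)$; exactly as before, Corollary \ref{coro:MapAlgh} and Yoneda then show that $L_h(A\amalg B)$ with the coproduct inclusions is a coproduct in $\Alg_h$, preserved by $L_h$. The only steps that are not purely formal are the fact that $\Exinf$ preserves finite products --- already implicit in the proof of Proposition \ref{pullhom} --- and the bookkeeping ensuring that the comparison maps on mapping spaces are precisely those induced by the (co)product structure maps, where the naturality of Corollary \ref{coro:MapAlgh} is used; there is no substantial obstacle.
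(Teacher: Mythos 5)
Your proposal is correct and follows essentially the same route as the paper: the paper's proof likewise combines Corollary \ref{coro:MapAlgh} with the isomorphisms $\uHOM(A,\prod_{i\in I}B_i)\cong\prod_{i\in I}\uHOM(A,B_i)$ and $\uHOM(\bigast_{i\in I}A_i,B)\cong\prod_{i\in I}\uHOM(A_i,B)$ for finite $I$. You have merely spelled out the details (levelwise identification of $\uhom$, preservation of finite products by $\Exinf$, reduction via Yoneda) that the paper leaves implicit.
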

\begin{proof}
It follows from Corollary \ref{coro:MapAlgh} and the isomorphisms of simplicial sets
    \[\uHOM(A, \textstyle\prod_{i\in I} B_i) \cong  \textstyle\prod_{i\in I}\uHOM(A, B_i)
    \]
    \[\uHOM(\textstyle\bigast_{i\in I}A_i, B)\cong \textstyle\prod_{i\in I}\uHOM(A_i,B)\]
for finite $I$.
\end{proof}

\begin{lem}[cf. \cite{bunke}*{Lemma 3.10}]\label{lem:AlghPointed}
    The category $\Alg_h$ is pointed and the functor $L_h:\Alg\to\Alg_h$ is reduced --- i.e.\ it preserves zero objects.
\end{lem}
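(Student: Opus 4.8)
The plan is to exhibit the image of the zero algebra as a zero object of $\Alg_h$, which yields both claims simultaneously. First I would invoke Remark \ref{rem:loctrivialKan} to assume that $L_h$ is the identity on objects; then it remains to show that $0\in\ob(\Alg_h)$ is both initial and terminal, i.e.\ that $\Map_{\Alg_h}(0,A)$ and $\Map_{\Alg_h}(A,0)$ are contractible for every algebra $A$. By Corollary \ref{coro:MapAlgh} these mapping spaces are naturally equivalent to $\uHOM(0,A)$ and $\uHOM(A,0)$, so the whole lemma reduces to checking that these two Kan complexes are contractible.

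That step is immediate from the definitions and I would not belabour it: for any simplicial set $X$ one has $0^X=0$, so every set $\hom_\Alg(0,A^{\Delta^n}_r)$ and $\hom_\Alg(A,0^{\Delta^n}_r)=\hom_\Alg(A,0)$ appearing in \eqref{homkan} is the singleton consisting of the zero homomorphism; passing to the colimit over $r$ gives $\uHOM(0,A)_n=\uHOM(A,0)_n=\{\ast\}$ for all $n$, so both simplicial sets equal $\Delta^0$. Hence $0$ is a zero object of $\Alg_h$, so $\Alg_h$ is pointed, and since $L_h$ is the identity on objects it sends the zero object $0$ of $\Alg$ to the zero object $0$ of $\Alg_h$; that is, $L_h$ is reduced.

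I do not expect a genuine obstacle here; the only subtlety worth a remark is the variance convention used in Proposition \ref{prop:alghcoh} and Corollary \ref{coro:MapAlgh}, but since $0$ is both initial and terminal in $\Alg$ the computation is symmetric in the two variables and the convention is irrelevant.
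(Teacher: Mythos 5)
Your proposal is correct and follows essentially the same route as the paper, whose proof likewise identifies the mapping spaces of $\Alg_h$ with $\uHOM$ (via Corollary \ref{coro:MapAlgh}) and observes that $\uHOM(0,A)\cong\Delta^0\cong\uHOM(A,0)$ for every algebra $A$. The extra details you supply (that $0^X=0$, so each simplex set is a singleton) are exactly the verification the paper leaves implicit.
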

\begin{proof}
    The zero algebra represents the initial and final object of $\Alg_h$ since
    \[\uHOM(0,A)\cong \Delta^0\cong \uHOM(A,0)\]
    for every algebra $A$.
\end{proof}

At this point, a difference arises between the topological and the algebraic contexts. In \cite{bunke}*{Def. 3.2}, Bunke defines $L_h:\CAlg\to\CAlg_h$ as the Dwyer-Kan localization of the category of $C^*$-algebras at the homotopy equivalences. Later on, he uses the category of fibrant objects structure on $(\CAlg, \FSch, W_h)$ to prove that $\CAlg_h$ has finite limits and that $L_h$ sends Schochet fibrant cartesian squares in $\CAlg$ to pullbacks in $\CAlg_h$; see \cite{bunke}*{Proposition 3.17}. This is a key point in Bunke's construction of the $\infty$-category version of $KK$-theory since the existence of finite limits in $\CAlg_h$ is carried over by formal arguments to the succesive steps that enforce the universal properties of $KK$. In our algebraic context, the authors do not know whether $\Alg_h$ has finite limits, though it is easily verified that the Milnor squares
\[\begin{gathered}\xymatrix{\Omega_i\ar[d]\ar[r] & P_i\ar[d]\\
0\ar[r] & \ell}\end{gathered}\qquad (i=0,1)\]
do not give pullbacks in $\Alg_h$ simultaneously. Indeed, if they did, they would induce a morphism of homotopy pullbacks of Kan complexes as in \eqref{eq:cuadradoHOM} and the morphism $\beta:\Omega_0\to\Omega_1$ would be an homotopy equivalence. We have seen in Lemma \ref{betano} that this is not the case if $\ell$ is a domain with $KH_0(\ell)\neq 0$.

\appendix
\section{Algebraic \texorpdfstring{$\kk$}{kk}-theory}

Let $A\in\Alg$. We proceed to recall some extensions associated to $A$.

\subsection{The cone extension}
Recall from \cite{cortho}*{Section 4.7} that we have an extension of rings
\[\xymatrix{M_\infty\ar[r] & \Gamma\ar[r] & \Sigma}\]
that splits as a sequence of abelian groups. For the rest of the paper, we fix a splitting $\tau:\Sigma\to \Gamma$. Upon tensoring with $A$, we get an extension of algebras:
\begin{equation}\label{eq:coneext}
    \xymatrix{\scrC_A: M_\infty A\ar[r] & \Gamma A\ar[r]^-{q_A} & \Sigma A}
\end{equation}
We will always consider $\tau_A=\tau\otimes A$ as a splitting for \eqref{eq:coneext}. We call \eqref{eq:coneext} the \emph{cone extension} of $A$. It is clear that an algebra homomorphism $A\to B$ induces a strong morphism of extensions $\scrC_A\to \scrC_B$. We write $x_A:J\Sigma A\to M_\infty A$ for the classifying map of $\scrC_A$.

\subsection{The universal extension}
Recall from \cite{loopstho}*{Sec. 2.22} that we have the \emph{universal extension} of $A$
\[\xymatrix{\scrU_A: JA\ar[r] & TA\ar[r]^-{\eta_A} & A}\]
with splitting $\sigma_A$. An algebra homomorphism $A\to B$ induces a strong morphism of extensions $\scrU_A\to \scrU_B$.

\subsection{The path extension}
Let $n,r\geq 0$. Recall from \cite{loopstho}*{Sec. 2.28} that we have the \emph{path extension} of $A$:
\begin{equation}\label{eq:pathext}
    \xymatrix{\scrP_{n,A}: A^{S^{n+1}}_r\ar[r] & P(n,A)_r\ar[r] & A^{S^n}_r}
\end{equation}
 An algebra homomorphism $A\to B$ induces a strong morphism of extensions $\scrP_{n,A}\to \scrP_{n,B}$. Moreover, the last vertex map induces strong morphisms of extensions $\scrP_{n,A,r}\to\scrP_{n, A, r+1}$. Explicit descriptions of these extensions for $n=0$ and $r=0,1$ are given in \cite{loopstho}*{Example 2.32}. We write $\rho_A:JA\to A^{S^1}_r$ for the classifying map of $\scrP_{0,A}$.

\subsection{Every morphism in \texorpdfstring{$\kk$}{kk} is a zig-zag of algebra homomorphisms}

\begin{lem}\label{lem:extdoble}
    Let $A\in\Alg$. Then the following square in $\fk$ anticommutes (i.e.\ one way equals minus the other way):
    \[\xymatrix@C=4em{
    (J\Sigma A, 0)\ar[r]^-{\langle J(\rho_{\Sigma A})\rangle}\ar[d]_-{(x_A)_*} & (J(\Sigma A^{S^1}), -1)\ar[d]^-{\left(x_{A^{S^1}}\right)_*} \\
    (M_\infty A, 0)\ar[r]^-{\langle \rho_{M_\infty A} \rangle} & (M_\infty A^{S^1}, -1)
    }\]
\end{lem}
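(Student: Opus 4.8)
The plan is to evaluate both legs of the square as morphisms $(J\Sigma A,0)\to(M_\infty A^{S^1},-1)$ in $\fk$, to reduce each to the class of an honest algebra homomorphism $J^2\Sigma A\to M_\infty A^{S^1}$, and then to show that these two classes are negatives of one another in the relevant abelian group of homotopy classes. Unwinding the composition law of $\fk$ — its version for morphisms between objects carrying a nonzero shift, obtained from \cite{loopstho}*{Def. 3.4} by absorbing the shift into the identification of a $(-,-1)$ with a suspension, cf.\ the remark before Theorem \ref{thm:kklocposta} — and noting that in both cases the correction terms $\kappa$ and $\mu$ there are identities (only $S^0$-powers occur, since $x_A$ and $x_{A^{S^1}}$ are genuine algebra homomorphisms), one finds that the down-then-right leg $\langle\rho_{M_\infty A}\rangle\circ(x_A)_*$ is represented at level $v=1$ of the defining colimit by $\rho_{M_\infty A}\circ J(x_A)$, and that the right-then-down leg $(x_{A^{S^1}})_*\circ\langle J(\rho_{\Sigma A})\rangle$ is represented by $x_{A^{S^1}}\circ J(\rho_{\Sigma A})$; in particular, no sign is introduced by the composition law.

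First I would rewrite the down-then-right representative. The loop extension is functorial, so $x_A\colon J\Sigma A\to M_\infty A$ induces a strong morphism of extensions $\scrP_{0,J\Sigma A}\to\scrP_{0,M_\infty A}$; naturality of classifying maps with respect to morphisms of extensions then gives $\rho_{M_\infty A}\circ J(x_A)\simeq (x_A)^{S^1}\circ\rho_{J\Sigma A}$.

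Next I would rewrite the right-then-down representative, where the decisive sign appears. Since $(-)^{S^1}$ is exact, $\scrC_{A^{S^1}}$ is $(-)^{S^1}$ applied to $\scrC_A$; applying $(-)^{S^1}$ to the morphism of extensions $\scrU_{\Sigma A}\to\scrC_A$ that defines $x_A$ exhibits $(x_A)^{S^1}\colon(J\Sigma A)^{S^1}\to M_\infty A^{S^1}$ as the kernel component of a morphism $(\scrU_{\Sigma A})^{S^1}\to\scrC_{A^{S^1}}$ that is the identity on the quotient $\Sigma A^{S^1}$. Precomposing with the canonical comparison $\kappa^{1,1}_{\Sigma A}\colon J(\Sigma A^{S^1})\to(J\Sigma A)^{S^1}$ — the kernel component of a morphism $\scrU_{\Sigma A^{S^1}}\to(\scrU_{\Sigma A})^{S^1}$ which is the identity on the quotient — produces a morphism $\scrU_{\Sigma A^{S^1}}\to\scrC_{A^{S^1}}$ that is the identity on the quotient, so by uniqueness up to homotopy of classifying maps its kernel component satisfies $x_{A^{S^1}}\simeq (x_A)^{S^1}\circ\kappa^{1,1}_{\Sigma A}$. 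Finally, the $\kappa$–$\rho$ relation underlying Lemma \ref{lem:kappaAB} — namely \cite{loopstho}*{Lemma 4.4} in the case $p=1$ — reads $\kappa^{1,1}_{\Sigma A}\circ J(\rho_{\Sigma A})\simeq -\rho_{J\Sigma A}$ in the abelian group $[J^2\Sigma A,(J\Sigma A)^{S^1}]$. Chaining the three identities,
\[x_{A^{S^1}}\circ J(\rho_{\Sigma A})\ \simeq\ (x_A)^{S^1}\circ\kappa^{1,1}_{\Sigma A}\circ J(\rho_{\Sigma A})\ \simeq\ -\bigl((x_A)^{S^1}\circ\rho_{J\Sigma A}\bigr)\ \simeq\ -\bigl(\rho_{M_\infty A}\circ J(x_A)\bigr),\]
so the right-then-down leg is the negative of the down-then-right leg, which is exactly what anticommutativity of the square means.

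The step I expect to be the main obstacle is the sign bookkeeping: one must make the reduction in the first paragraph fully precise (tracking the degree shift $-1$ in the target and confirming the composition law really contributes only identity corrections), verify that the minus sign is computed in the correct abelian group — legitimate by Lemma \ref{lem:comg} — and extract from \cite{loopstho} the exact form and sign of the $\kappa$–$\rho$ relation in the case $p=1$. Everything else is a routine diagram chase of the kind carried out in the proof of Lemma \ref{lem:kappaAB}.
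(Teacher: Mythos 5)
Your argument is correct, but it follows a genuinely different route from the paper's. After the same initial reduction of the two legs to the representatives $x_{A^{S^1}_1}\circ J(\rho_{\Sigma A})$ and $\rho_{M_\infty A}\circ J(x_A)$, the paper re-derives the sign by hand: it constructs auxiliary extensions $(\scrE,s)$ and $(\scrE',s')$ inside $t(\Gamma A)[t]$, uses the interval-reversing automorphism $\omega(t)=1-t$ and repeated applications of \cite{loopstho}*{Proposition 2.26} to exhibit the two composites as mutually inverse in $[J^{2}\Sigma A,(M_\infty A)^{S^1}_\bullet]$ --- in effect redoing, with $x_A$ in place of a second $\rho$, the construction of \cite{loopstho}*{Lemma 4.3}. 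You instead factor both legs through $(x_A)^{S^1}$: naturality of the classifying map of the path extension gives $\rho_{M_\infty A}\circ J(x_A)\simeq (x_A)^{S^1}\circ\rho_{J\Sigma A}$, uniqueness up to homotopy of classifying maps gives $x_{A^{S^1}}\simeq (x_A)^{S^1}\circ\kappa^{1,1}_{\Sigma A}$, and the sign is imported wholesale from the $\kappa$--$\rho$ relation of \cite{loopstho}*{Lemma 4.4} in the case $p=1$ --- the same external input the paper itself invokes in the proof of Lemma \ref{lem:kappaAB}. Your route is shorter and more structural (no new extensions are built), at the price of resting on the cited lemma; the paper's explicit construction keeps the origin of the sign (loop reversal via $\omega$) visible and closer to self-contained. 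Two small points you should make explicit to close the bookkeeping you flagged: pulling the minus sign through $(x_A)^{S^1}$ uses that the group structure on $[J^{2}\Sigma A, B^{S^1}_\bullet]$ is natural in $B$ under postcomposition (cf. \cite{loopstho}*{Proposition 2.46}), and all identities must be read in the colimit over subdivisions, i.e. in $[J^{2}\Sigma A,(M_\infty A)^{S^1}_\bullet]$, since the various classifying maps are only defined at particular levels $r$.
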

\begin{proof}
    It is easily verified that the composite of the top morphism followed by the right morphism is represented by $x_{A^{S^1}_1}\circ J(\rho_{\Sigma A}):J^2\Sigma A\to M_\infty A^{S^1}_1$. Moreover, the composite of the left morphism followed by the bottom one is represented by
    $\rho_{M_\infty A}\circ J(x_A):J^2\Sigma A\to M_\infty A^{S^1}_1$.
    Thus, to prove that the square anticommutes it suffices to show that $x_{A^{S^1}_1}\circ J(\rho_{\Sigma A})$ and $\rho_{M_\infty A}\circ J(x_A)$ are mutually inverses in the group $[J^2\Sigma A, (M_\infty A)^{S^1}_\bullet]$.
    The idea is the same as in \cite{loopstho}*{Lemma 4.3}. Define
    \[I=\ker\left(\xymatrix{t(\Gamma A)[t]\ar[r]^-{\ev_1} & \Gamma A\ar[r]^{q_A} & \Sigma A}\right)\]
    and let $s:\Sigma A\to t(\Gamma A)[t]$ be given by $s(a)=\tau_A(a)t$ --- recall that $\tau_A$ is the section of the cone extension $\scrC_A$ \eqref{eq:coneext}. We have an extension:
    \[(\scrE, s):\xymatrix@C=4em{I\ar[r] & t(\Gamma A)[t]\ar[r]^-{q_A\circ \ev_1} & \Sigma A }\]
    Now put
    \[E=\left\{(u,v)\in t(\Gamma A)[t]\times t(M_\infty A)[t] \mid u(1)=v(1)\right\}\]
    and let $s':I\to E$ be given by $s'(u)=(u, u(1)t)$ --- here we use that $\ev_1:I\to\Gamma A$ factors through $M_\infty A$. We have an extension:
    \[(\scrE', s'):\xymatrix@C=4em{(t^2-t)(M_\infty A)[t]\ar[r]^-{(0, \inc)} & E\ar[r]^-{\pr_1} & I}\]

    Let $\chi:J\Sigma A\to I$ be the classifying map of $(\scrE, s)$. The following diagram shows a strong morphism of extensions $\scrU_{\Sigma A}\to \scrC_A$ extending the identity of $\Sigma A$:
    \[\xymatrix@C=4em{
    \scrU_{\Sigma A}\ar[d] & J\Sigma A\ar[d]_-{\chi}\ar[r] & T\Sigma A\ar[d]\ar[r] & \Sigma A\ar@{=}[d]\\
    (\scrE, s)\ar[d] & I\ar[d]_-{\ev_1}\ar[r] & t(\Gamma A)[t]\ar[d]^-{\ev_1}\ar[r] & \Sigma A\ar@{=}[d]\\
    \scrC_A & M_\infty A\ar[r] & \Gamma A\ar[r] & \Sigma A
    }\]
    It follows that $\ev_1\circ\chi=x_A$. Now let $\omega$ be the automorphism of $(M_\infty A)[t]$ given by $\omega(t)=1-t$ and consider the following strong morphism of extensions $\scrU_{J\Sigma A}\to\scrP_{0,M_\infty A}$ extending $x_A$:
    \[
    \xymatrix@C=3em{
    \scrU_{J\Sigma A}\ar[d] & J^2\Sigma A\ar[d]\ar[r] & TJ\Sigma A\ar[d]\ar[r] & J\Sigma A\ar[d]^-{\chi}\ar@/^2pc/[dd]^-{x_A}\\
    (\scrE',s')\ar[d] & (t^2-t)(M_\infty A)[t]\ar@{=}[d]\ar[r]^-{(0, \inc)} & E\ar[d]^-{\pr_2}\ar[r]^-{\pr_1} & I\ar[d]^-{\ev_1}\\
    \scrE''\ar[d] & (t^2-t)(M_\infty A)[t]\ar[d]_-{\omega}\ar[r] & t(M_\infty A)[t]\ar[d]^-{\omega}\ar[r]^-{\ev_1} & M_\infty A\ar@{=}[d]\\
    \scrP_{0,M_\infty A} & (t^2-t)(M_\infty A)[t]\ar[r] & (1-t)(M_\infty A)[t]\ar[r]^-{\ev_0} & M_\infty A
    }
    \]
    Since $\omega^{-1}=\omega$, it follows from \cite{loopstho}*{Proposition 2.26} that the classifying map of $\chi$ with respect to $(\scrE', s')$ equals the composite:
    \[\xymatrix{J^2\Sigma A\ar[r]^-{J(x_A)} & J(M_\infty A)\ar[r]^-{\rho_{M_\infty A}} & M_\infty A^{S^1}_0\ar[r]^-{\omega} & M_\infty A^{S^1}_0}\]
    Define $\theta:E\to (\Gamma A)^{S^1}_1=t(\Gamma A)[t]\tensor[_{\ev_1}]{\times}{_{\ev_1}} t(\Gamma A)[t]$ by $\theta(u,v)=(v,u)$ and consider the following morphisms of extensions:
    \begin{equation}\begin{gathered}\label{eq:notstr}
    \xymatrix@C=4em{
    \scrU_{J\Sigma A}\ar[d] & J^2\Sigma A\ar[d]_-{\omega\circ\rho_{M_\infty A}\circ J(x_A)}\ar[r] & TJ\Sigma A\ar[r]\ar[d] & J\Sigma A\ar[d]^-{\chi} \\
    \scrE'\ar[d] & (M_\infty A)^{S^1}_0\ar[r]\ar[d]_-{(\inc, 0)} & E\ar[d]^-{\theta}\ar[r] & I\ar[d]^-{(0, q_A)} \\
    (\scrC_A)^{S^1}_1 & (M_\infty A)^{S^1}_1\ar[r] & (\Gamma A)^{S^1}_1\ar[r] & (\Sigma A)^{S^1}_1
    }\end{gathered}\end{equation}
    Note that the morphism $\scrE'\to(\scrC_A)^{S^1}_1$ is not strong and that $(\scrC_A)^{S^1}_1\cong \scrC_{A^{S^1}_1}$. Put $\psi:=(0, q_A)\circ \chi:J\Sigma A\to (\Sigma A)^{S^1}_1$. It follows from \cite{loopstho}*{Proposition 2.26} applied to \eqref{eq:notstr} that the following diagram commutes in $[\Alg]$:
    \begin{equation}\label{eq:halg}\begin{gathered}\xymatrix{
    J^2\Sigma A\ar@{=}[dd]\ar[rrr]^-{J(\psi)} & & & J[(\Sigma A)^{S^1}_1]\ar@{=}[r] & J\Sigma (A^{S^1}_1)\ar[d]^-{x_{A^{S^1}_1}} \\
    & & & & M_\infty(A^{S^1}_1)\ar@{=}[d] \\
    J^2\Sigma A\ar[r]^-{J(x_A)} & J(M_\infty A)\ar[r]^-{\rho_{M_\infty A}} & (M_\infty A)^{S^1}_0\ar[r]^-{\omega} & (M_\infty A)^{S^1}_0\ar[r]^-{\gamma} & (M_\infty A)^{S^1}_1
    }\end{gathered}\end{equation}
    Here $\gamma=(\inc, 0): (t^2-t)(M_\infty A)[t]\to t(M_\infty A)[t]\tensor[_{\ev_1}]{\times}{_{\ev_1}}t(M_\infty A)[t]$ is the morphism induced by the last vertex map; see \cite{loopstho}*{Example 2.32}. We claim that $J(\psi)=J(\rho_{\Sigma A})$ in $[\Alg]$. By \cite{loopstho}*{Lemma 2.27} it suffices to show that $\psi=\rho_{\Sigma A}$ in $[\Alg]$. Define $\beta:t(\Gamma A)[t]\to (\Sigma A)[t]\tensor[_{\ev_1}]{\times}{_{\ev_1}}t(\Sigma A)[t]$ by $\beta(u)=(q_A(u)(1-t),0)$. The following diagram exhibits a strong morphism of extensions $\scrU_{\Sigma A}\to \scrP_{\Sigma A}$ extending the identity of $\Sigma A$:
    \[\xymatrix@C=4em{
    \scrU_{\Sigma A}\ar[d] & J\Sigma A\ar[d]_-{\chi}\ar[r] & T\Sigma A\ar[d]\ar[r] & \Sigma A\ar@{=}[d] \\
    (\scrE, s)\ar[d] & I\ar[d]_-{\beta}\ar[r] & t(\Gamma A)[t]\ar[d]^-{\beta}\ar[r] & \Sigma A\ar@{=}[d] \\
    \scrP_{\Sigma A} & (\Sigma A)^{S^1}_1\ar[r] & P(0,\Sigma A)_1\ar[r] & \Sigma A
    }\]
It follows that $\rho_{\Sigma A}$ equals the composite $J\Sigma A\overset{\chi}{\to}I\overset{\beta}{\to}(\Sigma A)^{S^1}_1$. To prove that $\rho_{\Sigma A}$ is homotopic to $\psi=(0, q_A)\circ \chi$, it suffices to show that $\beta, (0,q_A):I\to (\Sigma A)^{S^1}_1$ are homotopic, and the latter is easily verified (see for example the last part of the proof of \cite{loopstho}*{Lemma 4.3}). This proves our claim that $J(\psi)=J(\rho_{\Sigma A})$ in $[\Alg]$. Now, the commutativity of \eqref{eq:halg} implies that we have
\[x_{A^{S^1}_1}\circ J(\rho_{\Sigma A})=[\rho_{M_\infty A}\circ J(x_A)]^{-1}\in [J^2\Sigma A, (M_\infty A)^{S^1}_\bullet].\]
Here, the superscript $-1$ is the inverse for the group structure on $[J^2\Sigma A, (M_\infty A)^{S^1}_\bullet]$; see \cite{htpysimp}*{Example 3.12}. This finishes the proof.
\end{proof}


\begin{lem}
    Let $A\in\Alg$, $n\geq 0$ and $k\in \Z$. Then the identity of $J^nA$ induces an isomorphism
    \begin{equation}\label{eq:idJn}\langle\id_{J^nA}\rangle:(A,k)\to (J^nA,k-n)\end{equation}
    in $\ks$ that is natural in $A$ with respect to algebra homomorphisms.
\end{lem}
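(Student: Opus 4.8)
The plan is to reduce everything to the case $k=n$, where the morphism is directly visible in the defining colimit, and then to obtain the general case by applying powers of the (invertible) translation functor.

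\emph{The case $k=n$.} Here $\hom_{\fk}((A,n),(J^nA,0))=\colim_v[J^{n+v}A,(J^nA)^{S^v}_\bullet]$, and $\id_{J^nA}$ represents an element of the term indexed by $v=0$, namely $[J^nA,(J^nA)^{S^0}_\bullet]=[J^nA,J^nA]$; write $\langle\id_{J^nA}\rangle\colon(A,n)\to(J^nA,0)$ for the resulting morphism of $\fk$. By \cite{loopstho}*{Lemma 7.2} --- equivalently, by iterating the case $n=1$ and using that the middle term $TA$ of the universal extension $\scrU_A$ is contractible --- this is an isomorphism, and it is natural in $A$ with respect to algebra homomorphisms. (Naturality can also be seen directly: an algebra homomorphism $f\colon A\to A'$ induces a strong morphism of universal extensions $\scrU_A\to\scrU_{A'}$, and one checks that $\langle\id_{J^nA}\rangle$ is compatible with the resulting maps $J^nA\to J^nA'$.)

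\emph{General $k$.} Let $t\colon\fk\to\fk$ be the translation functor $(B,m)\mapsto(B,m+1)$; it is an automorphism of $\fk$, and on hom-sets it is the tautological identification of $\colim_v[J^{m+v}A,B^{S^{p+v}}_\bullet]$ with $\colim_v[J^{m+1+v}A,B^{S^{p+1+v}}_\bullet]$ obtained by reindexing. For arbitrary $k\in\Z$ set $\langle\id_{J^nA}\rangle\colon(A,k)\to(J^nA,k-n)$ to be $t^{k-n}$ applied to the isomorphism of the previous paragraph, where $t^{k-n}$ means $(t^{-1})^{\,n-k}$ when $k<n$. Since $t$ is an equivalence this is again an isomorphism, and since $t^{k-n}$ is a functor it is natural in $A$. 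When $k\le n$ this morphism is represented simply by $\id_{J^nA}$, now sitting in the term of $\colim_v[J^{k+v}A,(J^nA)^{S^{k-n+v}}_\bullet]$ indexed by $v=n-k$ (for which $k+v=n$ and $k-n+v=0$), which is what justifies the notation; when $k>n$ it is represented at $v=0$ by the iterated classifying map $J^kA=J^{k-n}(J^nA)\to(J^nA)^{S^{k-n}}_\bullet$.

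\emph{Passage to $\ks$, and the main obstacle.} Finally, the functor $t_s\circ t_f\colon\fk\to\ks$ of \eqref{eq:allk} is the identity on objects, so it carries $\langle\id_{J^nA}\rangle$ to an isomorphism $(A,k)\to(J^nA,k-n)$ in $\ks$ and takes the naturality squares over verbatim. The substantive ingredient is the $k=n$ assertion that $\langle\id_{J^nA}\rangle$ is invertible, which is precisely \cite{loopstho}*{Lemma 7.2} and ultimately rests on the universal property of $J$ together with contractibility of tensor algebras; everything else is bookkeeping with the indices of the colimits defining the hom-sets of $\fk$, $\kf$ and $\ks$ and with how $t$ acts on them. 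The one point that genuinely needs care is to confirm that the vertical maps $(A,k)\to(A',k)$ appearing in the naturality square are the $t$-translates of $j(f)$, so that this square really is the image under $t^{k-n}$ (and then under $\fk\to\ks$) of a square one can verify on representatives in $[\Alg]^\ind$.
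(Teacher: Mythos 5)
Your proof is correct and takes essentially the same route as the paper: the substantive input is \cite{loopstho}*{Lemma 7.2} for the isomorphism in $\fk$, followed by applying $t_s\circ t_f$ to land in $\ks$, with naturality preserved because these are functors. The only difference is that you re-derive the general-$k$ statement from the case $k=n$ by reindexing along the translation functor, whereas the paper cites Lemma 7.2 for the natural isomorphism $(A,k)\to(J^nA,k-n)$ in $\fk$ directly; this extra bookkeeping is harmless.
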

\begin{proof}
    By \cite{loopstho}*{Lemma 7.2} we have a natural isomorphism
    \[\langle\id_{J^nA}\rangle:(A,k)\to (J^nA,k-n)\]
    in $\fk$ that is natural in $A$ with respect to algebra homomorphisms. Upon applying $t_s\circ t_f$ we obtain the desired isomorphism \eqref{eq:idJn}.
\end{proof}

\begin{lem}\label{lem:idSn}
    Let $A\in\Alg$, $n,r\geq 0$ and $k\in\Z$. Then the identity of $A^{S^n}_r$ induces an isomorphism
    \begin{equation}\label{eq:idSn}\langle\id_{A^{S^n}_r}\rangle:(A^{S^n}_r,k-n)\to (A,k)\end{equation}
    in $\ks$ that is natural in $A$ with respect to algebra homomorphisms.
\end{lem}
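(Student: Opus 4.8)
The plan is to exhibit an explicit inverse for $\langle\id_{A^{S^n}_r}\rangle$ built from the structure morphisms $\rho$, $\mu$ and the last-vertex maps, each of which becomes invertible after applying $j_s$. First note that $j_s\colon\Alg\to\ks$ sends $W_H\cup W_S\cup W_E$ into $\Iso(\ks)$: homotopy invariance handles $W_H$, $M_\infty$-stability handles $W_S$, and excision together with the vanishing of $\kk$ on contractible rings and on infinite sum rings handles $W_E$. Hence $j_s$ is a legitimate choice of $F$ in Example \ref{exa:rhoFequiv} and Lemma \ref{lem:Fequivs}, so that $j_s(\rho_B)\colon(JB,m)\to(B^{S^1}_0,m)$, $j_s(\mu^{p,q}_B)\colon((B^{S^p}_r)^{S^q}_s,m)\to(B^{S^{p+q}}_{r+s},m)$ and the iterates of $j_s(\gamma^q_B)\colon(B^{S^q}_r,m)\to(B^{S^q}_{r+1},m)$ are isomorphisms, natural in $B$; moreover $\langle\id_{J^qB}\rangle\colon(B,m+q)\to(J^qB,m)$ is the natural isomorphism \eqref{eq:idJn} already established.

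Using these, I would first iterate the case $n=1$. For any algebra $B$ the composite $(B^{S^1}_0,m)\xrightarrow{j_s(\rho_B)^{-1}}(JB,m)\xrightarrow{\langle\id_{JB}\rangle^{-1}}(B,m+1)$ is a natural isomorphism. Applying it to $B=A^{S^{n-1}}_0$ and composing with $j_s(\mu^{n-1,1}_A)^{-1}\colon(A^{S^n}_0,m)\to((A^{S^{n-1}}_0)^{S^1}_0,m)$ and, inductively, with the corresponding isomorphism $(A^{S^{n-1}}_0,m+1)\to(A,m+n)$, yields a natural isomorphism $(A^{S^n}_0,m)\xrightarrow{\ \sim\ }(A,m+n)$. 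Precomposing it with the inverse of the iterated last-vertex isomorphism $(A^{S^n}_0,m)\to(A^{S^n}_r,m)$ and setting $m=k-n$ produces an isomorphism $\Phi_A\colon(A^{S^n}_r,k-n)\xrightarrow{\ \sim\ }(A,k)$, natural in $A$.

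It then remains to identify $\Phi_A$ with $\langle\id_{A^{S^n}_r}\rangle$. Unwinding the definitions of the morphism sets of $\fk$, $\kf$ and $\ks$, the morphism $\langle\id_{A^{S^n}_r}\rangle$ is represented, for $v$ large, by the algebra homomorphism $J^{k-n+v}(A^{S^n}_r)\to(M_\infty A)^{S^{k+v}}_\bullet$ obtained from a corner inclusion together with $k-n+v$ iterated applications of $\rho$ followed by $\mu$; matching this with $\Phi_A$ comes down to the coherence among these structure maps — associativity of $\mu$, compatibility of $\rho$ with $\mu$ and with the last-vertex maps, and the description of $\langle\id_{J^qB}\rangle$ from \cite{loopstho}*{Lemma 7.2} — verified by the same kind of diagram chases as in Lemmas \ref{lem:alphapq}, \ref{lem:betapq} and \ref{lem:kappaAB}, where one must in addition keep track of the sign twists $\kappa$ arising when $J$'s are commuted past $(-)^{S^q}$. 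This last bookkeeping is the only delicate point; naturality in $A$ is automatic since every map in sight is natural. Alternatively, if \cite{loopstho} already records the $\fk$-analogue of the statement, one obtains \eqref{eq:idSn} at once by applying the functor $t_s\circ t_f$ of \eqref{eq:allk}, exactly as in the proof of the preceding lemma.
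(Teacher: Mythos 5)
Your closing alternative is exactly the paper's proof: \cite{loopstho}*{Lemma 7.8} does record the $\fk$-analogue of the statement, and the paper's entire argument consists of citing it and then applying $t_s\circ t_f$ from \eqref{eq:allk}, precisely as in the preceding lemma about $\langle\id_{J^nA}\rangle$. If you take that route, you are done, and in essentially the same way as the paper.

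Your primary route, however, has a genuine gap at the point you yourself flag. Assembling $j_s(\rho)$, $j_s(\mu^{n-1,1})$, the last-vertex maps and $\langle\id_{JB}\rangle$ only produces \emph{some} natural isomorphism $\Phi_A:(A^{S^n}_r,k-n)\to(A,k)$; the lemma asserts invertibility of the particular morphism $\langle\id_{A^{S^n}_r}\rangle$, so everything hinges on identifying $\Phi_A$ with $\langle\id_{A^{S^n}_r}\rangle$ (identification up to sign would suffice, since the hom-sets are abelian groups, but even that must be proved). That identification is not a routine chase of the kind in Lemmas \ref{lem:alphapq}, \ref{lem:betapq} and \ref{lem:kappaAB}: those lemmas concern the comparison maps $\alpha,\beta,\kappa$ for an abstract product-preserving $F$ and say nothing about how $\langle\id_{A^{S^n}_r}\rangle$ interacts with them. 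In this paper the relevant comparison is Lemma \ref{lem:tribeta} (with Lemma \ref{lem:trialpha}), whose proof needs the anticommutativity statement of Lemma \ref{lem:extdoble} --- explicit manipulations of classifying maps and of the group structure on $[J^2\Sigma A,(\Minf A)^{S^1}_\bullet]$, with the $(-1)^n$ bookkeeping --- and, moreover, Lemma \ref{lem:tribeta} itself invokes the naturality and invertibility asserted in Lemma \ref{lem:idSn}, so borrowing it here would be circular. To make your first argument complete you would have to redo this computation at the level of $\fk$, which is exactly the content of \cite{loopstho}*{Lemma 7.8}; so either carry that out in full or simply keep your last sentence as the proof.
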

\begin{proof}
    By \cite{loopstho}*{Lemma 7.8} we have a natural isomorphism
    \[\langle\id_{A^{S^n}_r}\rangle:(A^{S^n}_r,k-n)\to (A,k)\]
    in $\fk$ that is natural in $A$ with respect to algebra homomorphisms. Upon applying $t_s\circ t_f$ we obtain the desired isomorphism \eqref{eq:idSn}.
\end{proof}

\begin{defi}
    Let $A\in\Alg$. For $n\geq 0$ we will define an isomorphism
    \[\xymatrix{\epsilon^n_A:(\Sigma^n A, k)\ar[r] & (A, k-n)}\]
    in $\ks$ that is natural in $A$ with respect to algebra homomorphisms. Let $\epsilon^0_A$ be the identity of $(A,k)$ and let $\epsilon^1_A$ be the following composite in $\ks$:
    \[\xymatrix@C=4em{
    (\Sigma A, k)\ar[r]^-{\langle \id_{J\Sigma A}\rangle} & (J\Sigma A, k-1)\ar[r]^-{(x_A)_*} & (M_\infty A, k-1)\ar[r]^-{(\iota_A)^{-1}_*} & (A, k-1)
    }\]
    It is clear that $\epsilon^1_A$ is a natural isomorphism since each of the morphisms in the above composition are. Suppose now that we have defined $\epsilon^n_A$ for $n\geq 1$. Let $\epsilon^{n+1}_A$ be the following composite in $\ks$:
    \[\xymatrix@C=4em{
    (\Sigma^{n+1}A, k)\ar[r]^-{\epsilon^1_{\Sigma^n A}} & (\Sigma^n A, k-1)\ar[r]^-{\epsilon^n_{A}} & (A, k-1-n)
    }\]
    This defines $\epsilon^n_A$ for every $n\geq 0$.
\end{defi}

\begin{lem}\label{lem:epspq}
    Let $A\in\Alg$. For any $p,q\geq 0$ we have $\epsilon^{p+q}_A=\epsilon^p_A\circ \epsilon^q_{\Sigma^pA}$.
\end{lem}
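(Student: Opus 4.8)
The plan is to prove the identity $\epsilon^{p+q}_A = \epsilon^p_A \circ \epsilon^q_{\Sigma^p A}$ by induction on $q$, mirroring the structure already used in the proofs of Lemma \ref{lem:alphapq} and Lemma \ref{lem:betapq}. The base cases $q=0$ and $q=1$ should be immediate: for $q=0$, $\epsilon^0_{\Sigma^p A}$ is the identity of $(\Sigma^p A, k)$, so both sides equal $\epsilon^p_A$; for $q=1$, the equality $\epsilon^{p+1}_A = \epsilon^p_A \circ \epsilon^1_{\Sigma^p A}$ is precisely the recursive definition of $\epsilon^{p+1}_A$ given just above the statement.

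For the inductive step, assume the identity holds for some $q \geq 1$ and consider $\epsilon^{p+q+1}_A$. First I would expand $\epsilon^{p+q+1}_A$ using the definition: $\epsilon^{p+q+1}_A = \epsilon^{p+q}_A \circ \epsilon^1_{\Sigma^{p+q}A}$. Then I would apply the inductive hypothesis to rewrite $\epsilon^{p+q}_A = \epsilon^p_A \circ \epsilon^q_{\Sigma^p A}$, giving
\[
\epsilon^{p+q+1}_A = \epsilon^p_A \circ \epsilon^q_{\Sigma^p A} \circ \epsilon^1_{\Sigma^{p+q}A}.
\]
On the other hand, the right-hand side of the desired identity for $q+1$ is $\epsilon^p_A \circ \epsilon^{q+1}_{\Sigma^p A}$, and by the definition of $\epsilon^{q+1}_{\Sigma^p A}$ (applied to the algebra $\Sigma^p A$ in place of $A$) this equals $\epsilon^p_A \circ \epsilon^q_{\Sigma^p A} \circ \epsilon^1_{\Sigma^q(\Sigma^p A)}$. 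Since $\Sigma^q \Sigma^p A = \Sigma^{p+q} A$, the two expressions coincide, completing the induction.

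The main obstacle, if any, is purely bookkeeping: one must be careful that the degree parameter $k$ is threaded correctly through the composites, since $\epsilon^n_A$ decreases the degree index by $n$, and that the recursive formula for $\epsilon^{q+1}_{\Sigma^p A}$ is applied with the correct algebra and degree. There is no geometric or homotopical content here beyond what is already encoded in the definitions; the statement is a formal associativity-type coherence for the family $\{\epsilon^n_A\}$, entirely analogous to Lemmas \ref{lem:alphapq} and \ref{lem:betapq}. I would write the argument compactly as the two-step induction above rather than drawing a commutative diagram, since the only facts invoked are the defining recursion and $\Sigma^q \Sigma^p = \Sigma^{p+q}$.
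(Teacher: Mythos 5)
Your proof is correct and follows essentially the same route as the paper: an induction on $q$ whose base case $q=1$ is exactly the defining recursion $\epsilon^{p+1}_A=\epsilon^p_A\circ\epsilon^1_{\Sigma^pA}$, with the inductive step unwinding the recursion for $\epsilon^{q+1}_{\Sigma^pA}$ and using $\Sigma^q\Sigma^pA=\Sigma^{p+q}A$. The paper merely states this as "an easy induction on $q$," so your writeup just supplies the details it leaves implicit.
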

\begin{proof}
    It follows from an easy induction on $q$. The base case $q=1$ holds by definition of $\epsilon^{p+1}_A$.
\end{proof}

\begin{lem}\label{lem:epsalpha}
    Let $A\in\Alg$. Recall from Definition \ref{defi:alpha} that we have isomorphisms $\alpha^{n,1}_A:(\Sigma^{1+n}JA,0)\to(\Sigma^nA,0)$ in $\ks$ for any $n\geq 0$. Then the following square in $\ks$ commutes for all $n\geq 0$:
    \[\xymatrix{
        (\Sigma^{1+n}JA, 0)\ar[r]^-{\alpha^{n,1}_A}\ar[d]_-{\epsilon^n_{\Sigma JA}} & (\Sigma^nA,0)\ar[d]^-{\epsilon^n_{A}} \\
        (\Sigma JA, -n)\ar[r]^-{\alpha^{0,1}_A} & (A, -n)
    }\]
\end{lem}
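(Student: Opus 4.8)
The plan is to peel $\alpha^{n,1}_A$ apart into the three algebra homomorphisms that define it and then transport $\epsilon^n$ across each of them using naturality. By Definition \ref{defi:alpha} with $F=j_s$, the isomorphism $\alpha^{n,1}_A\colon(\Sigma^{1+n}JA,0)\to(\Sigma^nA,0)$ in $\ks$ is the composite $j_s(\Sigma^n\iota_A)^{-1}\circ j_s(\Sigma^nx_A)\circ j_s(\Sigma^nc_A)^{-1}$, where $c_A\colon J\Sigma A\to\Sigma JA$ and $x_A\colon J\Sigma A\to\Minf A$ are as in Lemma \ref{lem:Fequivs} and $\iota_A\colon A\to\Minf A$ is the upper-left corner inclusion; likewise $\alpha^{0,1}_A$ is the same recipe with $n=0$. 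Since the colimits defining $\hom_{\ks}((\Sigma JA,-n),(A,-n))$ and $\hom_{\ks}((\Sigma JA,0),(A,0))$ coincide, the morphism denoted $\alpha^{0,1}_A$ in the statement is again $j_s(\iota_A)^{-1}\circ j_s(x_A)\circ j_s(c_A)^{-1}$, now read in degree $-n$. So it suffices to prove $\epsilon^n_A\circ\alpha^{n,1}_A=\alpha^{0,1}_A\circ\epsilon^n_{\Sigma JA}$.

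First I would record that $\epsilon^n$ is natural in the algebra argument; this is immediate from its inductive definition, since its building blocks $\langle\id_{J\Sigma(-)}\rangle$, $x_{(-)}$ and $\iota_{(-)}$ are all natural with respect to algebra homomorphisms. Concretely, for every algebra homomorphism $g\colon B\to B'$ the square
\[\xymatrix@C=3em{(\Sigma^nB,0)\ar[r]^-{j_s(\Sigma^ng)}\ar[d]_-{\epsilon^n_B} & (\Sigma^nB',0)\ar[d]^-{\epsilon^n_{B'}}\\ (B,-n)\ar[r]^-{j_s(g)} & (B',-n)}\]
commutes in $\ks$ (it is the $k=0$ case of the naturality of $\epsilon^n_{(-)}\colon(\Sigma^n(-),k)\to((-),k-n)$), and, $\epsilon^n$ being an isomorphism, so does the square obtained by inverting both horizontal arrows. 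Then I would apply this three times, from the outside in: naturality for $g=\iota_A$, inverted, gives $\epsilon^n_A\circ j_s(\Sigma^n\iota_A)^{-1}=j_s(\iota_A)^{-1}\circ\epsilon^n_{\Minf A}$; naturality for $g=x_A$ gives $\epsilon^n_{\Minf A}\circ j_s(\Sigma^nx_A)=j_s(x_A)\circ\epsilon^n_{J\Sigma A}$; and naturality for $g=c_A$, inverted, gives $\epsilon^n_{J\Sigma A}\circ j_s(\Sigma^nc_A)^{-1}=j_s(c_A)^{-1}\circ\epsilon^n_{\Sigma JA}$. Substituting these in turn into $\epsilon^n_A\circ\alpha^{n,1}_A=\epsilon^n_A\circ j_s(\Sigma^n\iota_A)^{-1}\circ j_s(\Sigma^nx_A)\circ j_s(\Sigma^nc_A)^{-1}$ yields $j_s(\iota_A)^{-1}\circ j_s(x_A)\circ j_s(c_A)^{-1}\circ\epsilon^n_{\Sigma JA}=\alpha^{0,1}_A\circ\epsilon^n_{\Sigma JA}$, which is the assertion.

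The only point needing care — and the mild obstacle I anticipate — is the degree bookkeeping for $\ks$: one must confirm that the three maps building $\alpha^{0,1}_A$ in degree $-n$ are genuinely the degree-$(-n)$ instances of those building it in degree $0$, and that the naturality squares used above are the claimed $k=0$ specializations of the naturality of $\epsilon^n$. Both are routine, and in contrast with Lemma \ref{lem:kappaAB} no signs intervene here, since neither $\alpha^{n,1}$ nor $\epsilon^n$ carries one; the structural content is just the three-step diagram chase above.
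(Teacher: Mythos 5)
Your proof is correct and takes essentially the same route as the paper's: both unravel $\alpha^{n,1}_A$ into $(\iota_A)^{-1}_*\circ(x_A)_*\circ(c_A)^{-1}_*$ as in Definition \ref{defi:alpha} and conclude by the naturality of $\epsilon^n_?$ with respect to $c_A$, $x_A$ and $\iota_A$, the paper simply displaying the three naturality squares as a single commutative ladder whose top and bottom rows compose to $\alpha^{n,1}_A$ and $\alpha^{0,1}_A$. The degree bookkeeping you flag is handled there by the same implicit identification of $\alpha^{0,1}_A$ with its degree-$(-n)$ instance.
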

\begin{proof}
    The following diagram in $\ks$ is commutative by naturality of $\epsilon^n_?$:
    \[\xymatrix{
    (\Sigma^{1+n}JA, 0)\ar[d]_-{\epsilon^n_{\Sigma JA}}\ar[r]^-{(c_A)^{-1}_*}_-{\cong} & (\Sigma^nJ\Sigma A, 0)\ar[d]_-{\epsilon^n_{J\Sigma A}}\ar[r]^-{(x_A)_*} & (\Sigma^nM_\infty A, 0)\ar[d]^-{\epsilon^n_{M_\infty A}}\ar[r]^-{(\iota_A)^{-1}_*}_-{\cong} & (\Sigma^nA,0)\ar[d]^-{\epsilon^n_A} \\
    (\Sigma JA, -n)\ar[r]^-{(c_A)^{-1}_*}_-{\cong} & (J\Sigma A,-n)\ar[r]^-{(x_A)_*} & (M_\infty A, -n)\ar[r]^-{(\iota_A)^{-1}_*}_-{\cong} & (A,-n)
    }\]
    By Definition \ref{defi:alpha}, the composite of the morphisms in the top row is $\alpha^{n,1}_A$ and the composite of the morphisms in the bottom row is $\alpha^{0,1}_A$.
\end{proof}

\begin{lem}\label{lem:tribeta}
    The following diagram in $\ks$ commutes for all $A\in\Alg$ and $n\geq 0$:
    \begin{equation}\label{eq:tribeta}\begin{gathered}\xymatrix@C=4em{
    (\Sigma^n A^{S^n}, 0)\ar[d]_-{\beta^{0,n}_A}\ar[r]^-{(-1)^n\epsilon^n_{A^{S^n}}} & (A^{S^n}, -n)\ar@/^1pc/[dl]^-{\langle \id_{A^{S^n}}\rangle} \\
    (A,0) &
    }\end{gathered}\end{equation}
\end{lem}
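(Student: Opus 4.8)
The plan is to argue by induction on $n$, with the case $n=1$ as the crux. For $n=0$ there is nothing to prove, since $\Sigma^{0}=\id=(-)^{S^{0}}$ and $\beta^{0,0}_A$, $\epsilon^{0}_{A^{S^{0}}}$ and $\langle\id_{A^{S^{0}}}\rangle$ are all the identity of $(A,0)$. For $n=1$ I would unravel the definitions: by Definitions \ref{defi:beta} and \ref{defi:alpha} (with $F=j_s$) one has $\beta^{0,1}_A=\alpha^{0,1}_A\circ(\rho_A)^{-1}_*=(\iota_A)^{-1}_*\circ(x_A)_*\circ(c_A)^{-1}_*\circ(\rho_A)^{-1}_*$, while the definition of $\epsilon^{1}$ gives $\epsilon^{1}_{A^{S^1}}=(\iota_{A^{S^1}})^{-1}_*\circ(x_{A^{S^1}})_*\circ\langle\id_{J\Sigma A^{S^1}}\rangle$. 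Substituting, the identity to be proved becomes
\[\langle\id_{A^{S^1}}\rangle\circ(\iota_{A^{S^1}})^{-1}_*\circ(x_{A^{S^1}})_*\circ\langle\id_{J\Sigma A^{S^1}}\rangle\;=\;-\,(\iota_A)^{-1}_*\circ(x_A)_*\circ(c_A)^{-1}_*\circ(\rho_A)^{-1}_*.\]
I would strip off the corner–inclusion isomorphisms $\iota$ and the shift isomorphisms $\langle\id_{-}\rangle$ using their naturality (Lemma \ref{lem:idSn}) together with the naturality of $\rho$, $x$ and $\iota$ with respect to algebra homomorphisms; what remains is precisely the anticommutativity of the square of Lemma \ref{lem:extdoble}, read in $\ks$ via $t_s\circ t_f$, modulo the standard identifications among $J(\rho_{\Sigma A})$, $c_A$ and $\rho_A$ coming from \cite{loopstho}*{Sec. 2}. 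The minus sign above is exactly what Lemma \ref{lem:extdoble} supplies, and this is the source of the factor $(-1)^{n}$.

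For the inductive step, assume the statement at level $n$. The key auxiliary fact is the $\beta$-analogue of Lemma \ref{lem:epsalpha}, namely $\epsilon^{n}_B\circ\beta^{n,1}_B=\beta^{0,1}_B\circ\epsilon^{n}_{\Sigma B^{S^1}}$ for every $B\in\Alg$ (and its translates); it follows from Lemma \ref{lem:epsalpha} applied to $\alpha^{n,1}_B$ together with the naturality of $\epsilon^{n}_{?}$ with respect to $\rho_B$. Now $\beta^{0,n+1}_A=\beta^{0,n}_A\circ\beta^{n,1}_{A^{S^n}}\circ(\mu^{n,1}_A)^{-1}_*$ by Definition \ref{defi:beta}; plugging in the inductive hypothesis for $\beta^{0,n}_A$, the auxiliary identity for $\beta^{n,1}_{A^{S^n}}$ and the case $n=1$ for $\beta^{0,1}_{A^{S^n}}$, the two copies of $\epsilon^{n}_{A^{S^n}}$ cancel and one is left with
\[\beta^{0,n+1}_A=(-1)^{n+1}\,\langle\id_{A^{S^n}}\rangle\circ\langle\id_{(A^{S^n})^{S^1}}\rangle\circ\bigl(\epsilon^{1}_{(A^{S^n})^{S^1}}\circ\epsilon^{n}_{\Sigma(A^{S^n})^{S^1}}\bigr)\circ(\mu^{n,1}_A)^{-1}_*.\]
By Lemma \ref{lem:epspq} the bracketed composite is $\epsilon^{n+1}_{(A^{S^n})^{S^1}}$, and by naturality of $\epsilon^{n+1}_{?}$ with respect to the isomorphism $\mu^{n,1}_A\colon(A^{S^n})^{S^1}\xrightarrow{\sim}A^{S^{n+1}}$ it may be commuted past $(\mu^{n,1}_A)^{-1}_*$. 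Finally, the isomorphisms $\langle\id_{(-)^{S^k}}\rangle$ of \cite{loopstho}*{Lemma 7.8} are compatible with the structure maps $\mu$, so that $\langle\id_{A^{S^n}}\rangle\circ\langle\id_{(A^{S^n})^{S^1}}\rangle\circ(\mu^{n,1}_A)^{-1}_*=\langle\id_{A^{S^{n+1}}}\rangle$; assembling these identifications yields $\beta^{0,n+1}_A=\langle\id_{A^{S^{n+1}}}\rangle\circ(-1)^{n+1}\epsilon^{n+1}_{A^{S^{n+1}}}$, completing the induction.

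The main obstacle is the case $n=1$: carrying out the unwinding above and, in particular, tracking the sign through Lemma \ref{lem:extdoble}. A recurring technical annoyance is that $\Sigma(-)$ and $(-)^{S^1}$ do not commute strictly, so the factorizations of $\beta^{0,n+1}$, $\epsilon^{n+1}$ and $\langle\id_{A^{S^{n+1}}}\rangle$ needed in the inductive step must be produced through the associativity-type results already available (Lemmas \ref{lem:betapq}, \ref{lem:epspq}, \ref{lem:epsalpha}) and naturality, rather than by manipulating algebras directly.
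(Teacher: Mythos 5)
Your proposal is correct and takes essentially the same route as the paper: induction on $n$, with the case $n=1$ reduced (after unwinding $\beta^{0,1}_A$ and $\epsilon^1_{A^{S^1}}$ and using the composition law in $\fk$) to the anticommuting square of Lemma \ref{lem:extdoble}, and the inductive step assembled from naturality of $\epsilon$ (your auxiliary identity is exactly the paper's appeal to Lemma \ref{lem:epsalpha} plus naturality), the case $n=1$, Lemma \ref{lem:epspq}, and the compatibility of the shift isomorphisms with $\mu^{n,1}_A$, which the paper gets directly from the definition of composition in $\fk$. The only ingredient you leave implicit that the paper proves explicitly is the identity $\rho_{\Sigma A}=\Sigma(\rho_A)\circ c_A$, obtained there from a strong morphism of extensions $\scrU_{\Sigma A}\to\Sigma\scrU_A\to\Sigma\scrP_A$; note also that $\mu^{n,1}_A$ is an isomorphism only in $\ks$, not of algebras, but since you use only naturality of $\epsilon^{n+1}_?$ with respect to the homomorphism and its invertibility in $\ks$, this does not affect the argument.
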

\begin{proof}
    We proceed by induction on $n$. For $n=0$ there is nothing to prove. Let us prove that \eqref{eq:tribeta} commutes for $n=1$. The following diagram shows a strong morphism of extensions $\scrU_{\Sigma A}\to \scrP_{\Sigma A}$ extending the identity of $\Sigma A$:
    \[\xymatrix@C=4em@R=1.5em{
    \scrU_{\Sigma A}\ar[d] & J\Sigma A\ar[d]_-{c_A}\ar[r] & T\Sigma A\ar[d]\ar[r] & \Sigma A\ar@{=}[d] \\
    \Sigma(\scrU_A)\ar[d] & \Sigma JA\ar[d]_-{\Sigma(\rho_A)}\ar[r] & \Sigma TA\ar[d]\ar[r] & \Sigma A\ar@{=}[d] \\
    \Sigma(\scrP_A) & \Sigma A^{S^1}\ar[r] & \Sigma PA\ar[r] & \Sigma A
    }\]
    It follows that $\rho_{\Sigma A}=\Sigma(\rho_A)\circ c_A:J\Sigma A \to\Sigma A^{S^1}$. Now consider the following diagram in $\ks$, where all the morphisms are isomorphisms:
    \[\xymatrix@C=3.8em@R=1.5em{
    (\Sigma A^{S^1}, 0)\ar[d]_-{(\rho_{\Sigma A})_*^{-1}}\ar[r]^-{\langle \id_{J(\Sigma A^{S^1})}\rangle} & (J(\Sigma A^{S^1}), -1)\ar[r]^-{-(x_{A^{S^1}})_*} & (M_\infty A^{S^1}, -1)\ar[r]^-{(\iota_{A^{S^1}})_*^{-1}}\ar@/^1pc/[ddll]_-{\langle \id_{M_\infty A^{S^1}}\rangle} & (A^{S^1}, -1)\ar@/^1pc/[dddlll]^-{\langle \id_{A^{S^1}}\rangle} \\
    (J\Sigma A, 0)\ar[d]_-{(x_A)_*}\ar@/_1pc/[ur]^-{\langle J(\rho_{\Sigma A})\rangle} & & & \\
    (M_\infty A, 0)\ar[d]_-{(\iota_A)_*^{-1}} & & & \\
    (A, 0) & & &
    }\]
    The small triangle in the upper left corner commutes by definition of the composition law in $\fk$. The lower rectangle commutes by naturality of the isomorphism $(A,0)\cong (A^{S^1}, -1)$. The rectangle in the middle commutes by Lemma \ref{lem:extdoble} (recall from \cite{loopstho}*{Lemma 7.4} that $\langle \id_{M_\infty A^{S^1}}\rangle$ is the inverse of $\langle \rho_{M_\infty A}\rangle$). It follows that the outer square commutes, and this proves the result for $n=1$. Indeed, the composite of the morphisms in the top row equals $-\epsilon^1_{A^{S^1}}$ and the composite of the morphisms in the left column equals $\beta^{0,1}_A$.

    Suppose that \eqref{eq:tribeta} commutes for $n\geq 1$ and let us prove that it also commutes for $n+1$. Consider the following diagram in $\ks$ where all the morphisms are isomorphisms:
    \[\xymatrix@R=1.5em@C=5.5em{
    (\Sigma^{n+1}A^{S^{n+1}}, 0)\ar[d]_-{(\mu^{n,1}_A)_*^{-1}}\ar[r]^-{(-1)^n\epsilon^n_{\Sigma A^{S^{n+1}}}} & (\Sigma A^{S^{n+1}}, -n)\ar[d]_-{(\mu^{n,1}_A)_*^{-1}}\ar[r]^-{-\epsilon^1_{A^{S^{n+1}}}} & (A^{S^{n+1}}, -n-1)\ar[d]^-{(\mu^{n,1}_A)_*^{-1}} \\
    (\Sigma^{n+1}(A^{S^n})^{S^1}, 0)\ar[d]_-{\beta^{n,1}_{A^{S^n}}}\ar[r]^-{(-1)^n\epsilon^n_{\Sigma (A^{S^n})^{S^1}}} & (\Sigma (A^{S^n})^{S^1},-n)\ar[d]_-{\beta^{0,1}_{A^{S^n}}}\ar[r]^-{-\epsilon^1_{(A^{S^n})^{S^1}}} & ((A^{S^n})^{S^1}, -n-1)\ar@/^1pc/[dl]^-{\langle \id_{(A^{S^n})^{S^1}}\rangle} \\
    (\Sigma^nA^{S^n}, 0)\ar[r]^-{(-1)^n\epsilon^n_{A^{S^n}}}\ar[d]_-{\beta^{0,n}_A} & (A^{S^n}, -n)\ar@/^1pc/[dl]^-{\langle \id_{A^{S^n}}\rangle} & \\
    (A,0) & &
    }\]
    The three squares commute by naturality of $\epsilon^n_?$ and the two triangles commute by the inductive hypothesis. Is follows that the outer diagram commutes, proving that \eqref{eq:tribeta} commutes for $n+1$. Indeed, the composite of the morphisms in the top row equals $(-1)^{n+1}\epsilon^{n+1}_{S^{S^{n+1}}}$ by Lemma \ref{lem:epspq}, the composite of the morphisms in the left column equals $\beta^{0, n+1}_A$ by Lemma \ref{lem:betapq} and the composite of the rightmost vertical morphism followed by the diagonal ones equals $\langle \id_{A^{S^{n+1}}}\rangle$ by definition of the composition law in $\fk$.
\end{proof}

\begin{lem}\label{lem:trialpha}
    The following diagram in $\ks$ commutes for all $A\in\Alg$ and $n\geq 0$:
    \begin{equation}\label{eq:trialpha}\begin{gathered}\xymatrix@C=4em{
    (\Sigma^n J^nA, 0)\ar[d]_-{\alpha^{0,n}_A}\ar[r]^-{(-1)^n\epsilon^n_{J^nA}} & (J^nA, -n) \\
    (A,0)\ar@/_1pc/[ur]_-{\langle \id_{J^nA}\rangle} &
    }\end{gathered}\end{equation}
\end{lem}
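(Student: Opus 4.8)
The plan is to establish $\langle\id_{J^nA}\rangle\circ\alpha^{0,n}_A=(-1)^n\epsilon^n_{J^nA}$ by induction on $n$, in close parallel with the proof of Lemma \ref{lem:tribeta}. For $n=0$ both composites are the identity of $(A,0)$, so there is nothing to do.

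The case $n=1$ carries all the content and I expect it to be the main obstacle. Here one must show $\langle\id_{JA}\rangle\circ\alpha^{0,1}_A=-\epsilon^1_{JA}$ as morphisms $(\Sigma JA,0)\to(JA,-1)$ in $\ks$. Unwinding Definition \ref{defi:alpha}, $\alpha^{0,1}_A=(\iota_A)^{-1}_*\circ(x_A)_*\circ(c_A)^{-1}_*$, while $\epsilon^1_{JA}=(\iota_{JA})^{-1}_*\circ(x_{JA})_*\circ\langle\id_{J\Sigma JA}\rangle$. Since $\langle\id_{J(-)}\rangle$ is natural with respect to algebra homomorphisms, I would slide $\langle\id_{JA}\rangle$ successively past $(\iota_A)^{-1}_*$, $(x_A)_*$ and $(c_A)^{-1}_*$, obtaining $\langle\id_{JA}\rangle\circ\alpha^{0,1}_A=(J\iota_A)^{-1}_*\circ(Jx_A)_*\circ(Jc_A)^{-1}_*\circ\langle\id_{J\Sigma JA}\rangle$; cancelling the common isomorphism $\langle\id_{J\Sigma JA}\rangle$ reduces the claim to an identity (with a sign) between the $J$-images of the classifying maps $x_A$, $c_A$, $\iota_A$ and the classifying map $x_{JA}$ of the cone extension of $JA$. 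I expect this identity to be proved along the lines of the $n=1$ step of Lemma \ref{lem:tribeta}: it should rest on the anticommuting square of Lemma \ref{lem:extdoble}---which is the source of the minus sign---together with the fact that $\langle\id_{M_\infty A^{S^1}}\rangle$ is the inverse of $\langle\rho_{M_\infty A}\rangle$ (\cite{loopstho}*{Lemma 7.4}). The delicate points are keeping track of the single sign and of the degrees.

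Granting the case $n=1$, the inductive step $n\rightsquigarrow n+1$ is purely formal. Using $\alpha^{0,n+1}_A=\alpha^{0,n}_A\circ\alpha^{n,1}_{J^nA}$ from Definition \ref{defi:alpha} together with the decomposition $\langle\id_{J^{n+1}A}\rangle=\langle\id_{J(J^nA)}\rangle\circ\langle\id_{J^nA}\rangle$ (which follows from the construction of these isomorphisms in \cite{loopstho}*{Lemma 7.2}), the inductive hypothesis gives
\begin{align*}
\langle\id_{J^{n+1}A}\rangle\circ\alpha^{0,n+1}_A&=\langle\id_{J(J^nA)}\rangle\circ\bigl(\langle\id_{J^nA}\rangle\circ\alpha^{0,n}_A\bigr)\circ\alpha^{n,1}_{J^nA}\\
&=(-1)^n\,\langle\id_{J(J^nA)}\rangle\circ\epsilon^n_{J^nA}\circ\alpha^{n,1}_{J^nA}.
\end{align*}
Then I would invoke, in order: Lemma \ref{lem:epsalpha} with $A$ replaced by $J^nA$ to rewrite $\epsilon^n_{J^nA}\circ\alpha^{n,1}_{J^nA}=\alpha^{0,1}_{J^nA}\circ\epsilon^n_{\Sigma J^{n+1}A}$; the already-proven case $n=1$ applied to the algebra $J^nA$ to rewrite $\langle\id_{J(J^nA)}\rangle\circ\alpha^{0,1}_{J^nA}=-\epsilon^1_{J^{n+1}A}$; and Lemma \ref{lem:epspq} (with $p=1$, $q=n$) to identify $\epsilon^1_{J^{n+1}A}\circ\epsilon^n_{\Sigma J^{n+1}A}=\epsilon^{n+1}_{J^{n+1}A}$. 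Collecting the signs yields $\langle\id_{J^{n+1}A}\rangle\circ\alpha^{0,n+1}_A=(-1)^{n+1}\epsilon^{n+1}_{J^{n+1}A}$, closing the induction. One should note that the case $n=1$ is used here at degree $-n$ rather than $0$; since the translation functor of $\ks$ is an equivalence and all the morphisms in sight are defined uniformly in the degree, this causes no difficulty.
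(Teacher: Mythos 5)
Your inductive step is essentially the paper's own argument (same ingredients: Lemma \ref{lem:epsalpha} applied to $J^nA$, the case $n=1$ applied to $J^nA$ in shifted degree, Lemma \ref{lem:epspq} and Lemma \ref{lem:alphapq}), and your remark about the degree shift is harmless for the same implicit reason it is in the paper. The problem is the base case $n=1$, which you yourself identify as carrying all the content and which you do not actually prove. After sliding $\langle\id_{JA}\rangle$ across $\alpha^{0,1}_A=(\iota_A)^{-1}_*\circ(x_A)_*\circ(c_A)^{-1}_*$ and cancelling $\langle\id_{J\Sigma JA}\rangle$, what remains to be shown is the identity
\begin{equation*}
(J\iota_A)^{-1}_*\circ(Jx_A)_*\circ(Jc_A)^{-1}_*\;=\;-(\iota_{JA})^{-1}_*\circ(x_{JA})_*\colon (J\Sigma JA,-1)\to(JA,-1)
\end{equation*}
in $\ks$, and this is left as an expectation. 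Neither of the tools you point to gives it directly: Lemma \ref{lem:extdoble} compares $x_{A^{S^1}}\circ J(\rho_{\Sigma A})$ with $\rho_{M_\infty A}\circ J(x_A)$, i.e.\ it involves $x_{A^{S^1}}$ and the classifying maps $\rho$, whereas your identity involves $x_{JA}$ and $c_A$ and no $\rho$ at all. To bridge the two you would still need the naturality of the cone extension under $\rho_A:JA\to A^{S^1}$, the identity $\rho_{\Sigma A}=\Sigma(\rho_A)\circ c_A$, and the compatibility of $\langle\id_{J(-)}\rangle$ with all of these maps --- in effect re-deriving the $n=1$ case of Lemma \ref{lem:tribeta} instead of using it. As written, the base case is a genuine gap.

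The paper closes the base case by a much shorter argument that you could adopt: by Definition \ref{defi:beta} one has $\beta^{0,1}_A=\alpha^{0,1}_A\circ(\rho_A)^{-1}_*$, so it suffices to combine the naturality square $\epsilon^1_{A^{S^1}}\circ(\rho_A)_*=(\rho_A)_*\circ\epsilon^1_{JA}$ with the already proved triangle $\beta^{0,1}_A=\langle\id_{A^{S^1}}\rangle\circ(-\epsilon^1_{A^{S^1}})$ (Lemma \ref{lem:tribeta} for $n=1$) and with $\langle\id_{A^{S^1}}\rangle\circ(\rho_A)_*=\langle\rho_A\rangle=\langle\id_{JA}\rangle^{-1}$ (\cite{loopstho}*{Lemma 7.2}); this yields $\langle\id_{JA}\rangle\circ\alpha^{0,1}_A=-\epsilon^1_{JA}$ with no new computation, the anticommuting square of Lemma \ref{lem:extdoble} entering only through Lemma \ref{lem:tribeta}. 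So either supply a full proof of the displayed identity above, or replace your $n=1$ step by this deduction from Lemma \ref{lem:tribeta}.
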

\begin{proof}
    We proceed by induction on $n$. For $n=0$ there is nothing to prove. For $n=1$, consider the following diagram in $\ks$ where all the morphisms are isomorphisms:
    \[\xymatrix@C=5em@R=1.5em{
    (\Sigma JA)\ar[d]_-{(\rho_A)_*}\ar[r]^-{-\epsilon^1_{JA}} & (JA, -1)\ar[d]^-{(\rho_A)_*} \\
    (\Sigma A^{S^1})\ar[d]_-{\beta^{0,1}_A}\ar[r]^-{-\epsilon^1_{A^{S^1}}} & (A^{S^1}, -1)\ar@/^1pc/[dl]^-{\langle \id_{A^{S^1}}\rangle} \\
    (A, 0)
    }\]
    The square commutes by naturality of $\epsilon^1_?$ and the triangle commutes by Lemma \ref{lem:tribeta}. It follows that the outer diagram commutes. The composite of the morphisms in the left column equals $\alpha^{0,1}_A$ by definition of $\beta^{0,1}_A$ and $\langle \id_{A^{S^1}}\rangle\circ (\rho_A)_*=\langle \rho_A\rangle$ is the inverse of $\langle \id_{JA}\rangle$ by \cite{loopstho}*{Lemma 7.2}. This proves that \eqref{eq:trialpha} commutes for $n=1$.
    
    Suppose \eqref{eq:trialpha} commutes for $n\geq 1$ and let us prove that it also commutes for $n+1$. Consider the following diagram in $\ks$:
    \[\xymatrix@C=5em{(\Sigma^{1+n}J^{n+1}A,0)\ar[r]^-{(-1)^n\epsilon^n_{\Sigma J^{n+1}A}}\ar[d]_-{\alpha^{n,1}_{J^nA}} & (\Sigma J^{n+1}A,-n)\ar[r]^-{-\epsilon^1_{J^{n+1}A}}\ar[d]^-{\alpha^{0,1}_{J^nA}} & (J^{n+1}A, -1-n) \\
    (\Sigma^n J^nA, 0)\ar[r]^-{(-1)^n\epsilon^n_{J^nA}}\ar[d]_-{\alpha^{0,n}_A} & (J^nA, -n)\ar@/_1pc/[ur]_-{\langle\id_{J^{n+1}A}\rangle} & \\
    (A,0)\ar@/_1pc/[ur]_-{\langle\id_{J^nA} \rangle} & &
    }\]
    Both triangles commute by the inductive hypothesis and the square commutes by Lemma \ref{lem:epsalpha}. The composite of the morphisms in the top row is $(-1)^{n+1}\epsilon^{n+1}_{J^{n+1}A}$ by Lemma \ref{lem:epspq} and the composite of the morphisms in the left column is $\alpha^{0, n+1}_A$ by Lemma \ref{lem:alphapq}. The result follows.
\end{proof}

\begin{lem}\label{lem:zzagkk}
    Let $\alpha\in\ks(A,B)$ and let $f:J^nA\to M_pM_\infty B^{S^n}_r$ be an algebra homomorphism representing $\alpha$. Let $\tilde{f}$ be the following composite in $\ks$:
    \begin{equation}\label{eq:tildef}\xymatrix{J^nA\ar[r]^-{f_*} & M_pM_\infty B^{S^n}_r\ar[r]^-{\iota_*^{-1}}_-{\cong} & M_\infty B^{S^n}_r \ar[r]^-{\iota_*^{-1}}_-{\cong} & B^{S^n}_r\ar[r]^-{\gamma_*^{-1}}_-{\cong} & B^{S^n}}\end{equation}
    Here, the isomorphism labelled $\gamma$ is induced by the last vertex map and the isomorphisms labelled $\iota$ are induced by upper-left corner inclusions into matrix algebras. Then the following diagram in $\ks$ commutes:
    \[\xymatrix{\Sigma^n J^nA\ar[d]_-{\Sigma^n\tilde{f}}\ar[r]^-{\alpha^{0,n}_A}_-{\cong} &  A\ar[d]^-{\alpha} \\
    \Sigma^nB^{S^n}\ar[r]^-{\beta^{0,n}_B}_-{\cong} & B}\]
\end{lem}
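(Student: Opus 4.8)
The plan is to rewrite the two index–changing isomorphisms $\alpha^{0,n}_A$ and $\beta^{0,n}_B$ in terms of the natural isomorphisms $\epsilon^n_{?}$ of the Appendix and thereby reduce the assertion to a single factorization of $\alpha$ in $\ks$. By Lemma~\ref{lem:trialpha} we have $\langle\id_{J^nA}\rangle\circ\alpha^{0,n}_A=(-1)^n\epsilon^n_{J^nA}$, hence $\alpha^{0,n}_A=(-1)^n\langle\id_{J^nA}\rangle^{-1}\circ\epsilon^n_{J^nA}$, and by Lemma~\ref{lem:tribeta} we have $\beta^{0,n}_B=(-1)^n\langle\id_{B^{S^n}}\rangle\circ\epsilon^n_{B^{S^n}}$. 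Substituting these into the two legs $\alpha\circ\alpha^{0,n}_A$ and $\beta^{0,n}_B\circ\Sigma^n\tilde f$ of the square to be proved and cancelling the common factor $(-1)^n$, the square commutes if and only if
\[\alpha\circ\langle\id_{J^nA}\rangle^{-1}\circ\epsilon^n_{J^nA}=\langle\id_{B^{S^n}}\rangle\circ\epsilon^n_{B^{S^n}}\circ\Sigma^n\tilde f .\]
Now $\tilde f=\gamma_*^{-1}\circ\iota_*^{-1}\circ\iota_*^{-1}\circ f_*$ is a composite of images under $j_s$ of algebra homomorphisms and of inverses of such images (the last vertex map $\gamma$ becoming an isomorphism in $\ks$ by Lemma~\ref{lem:Fequivs}\eqref{item:FequivsLVM} with $F=j_s$, and the two upper-left corner inclusions $\iota$ by $M_p$- and $M_\infty$-stability), so chaining the naturality squares of $\epsilon^n_{?}$ along the factors of $\tilde f$ gives $\epsilon^n_{B^{S^n}}\circ\Sigma^n\tilde f=\Omega^n(\tilde f)\circ\epsilon^n_{J^nA}$, where $\Omega^n(\tilde f)\colon(J^nA,-n)\to(B^{S^n},-n)$ is the $n$-fold desuspension of $\tilde f$ in $\ks$. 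Since $\epsilon^n_{J^nA}$ is invertible, the displayed identity is equivalent to
\[\alpha=\langle\id_{B^{S^n}}\rangle\circ\Omega^n(\tilde f)\circ\langle\id_{J^nA}\rangle\qquad\text{in }\ks .\]

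To establish this factorization I would unwind the three-step construction of $\ks$ out of $\fk$ and $\kf$. In $\fk$, the morphism $(A,0)\to(M_pM_\infty B,0)$ represented by the algebra homomorphism $f\colon J^nA\to(M_pM_\infty B)^{S^n}_r$ equals $\langle\id_{(M_pM_\infty B)^{S^n}_r}\rangle\circ\Omega^n(j(f))\circ\langle\id_{J^nA}\rangle$; this is essentially how a homomorphism $J^nA\to D^{S^n}_r$ represents a morphism in $\fk$, via the natural isomorphisms of Lemmas~\ref{lem:idJn} and~\ref{lem:idSn} (cf. \cite{loopstho}*{Lemmas 7.2 and 7.8}). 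Passing through $t_s\circ t_f$ to $\ks$, the corner inclusions $M_\infty B\to M_pM_\infty B$ and $B\to M_\infty B$ become isomorphisms (by the $M_p$-stability built into $\kf$ and the $M_\infty$-stability built into $\ks$) as does the last vertex map $B^{S^n}_0\to B^{S^n}_r$ (Lemma~\ref{lem:Fequivs}\eqref{item:FequivsLVM} with $F=j_s$); moreover, the identification of $\alpha\in\ks(A,B)=\kf((A,0),(M_\infty B,0))$ with the above $\fk$-morphism is precisely composition with these corner inclusions. Using the naturality of $\langle\id_{?}\rangle$ and the compatibility of the isomorphisms $\langle\id_{A^{S^n}_r}\rangle$ for varying $r$ with the last vertex maps, one transports the target from $M_pM_\infty B$ to $B$: this turns $\Omega^n(j(f))$ into $\Omega^n(\tilde f)$ — which, by the very definition of $\tilde f$, is $f$ corrected precisely by the inverses of these maps — and turns $\langle\id_{(M_pM_\infty B)^{S^n}_r}\rangle$ into $\langle\id_{B^{S^n}}\rangle$, while leaving $\langle\id_{J^nA}\rangle$ untouched. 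This yields the desired identity and hence the lemma.

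The step I expect to be the main obstacle is this last one: everything preceding it is formal manipulation of the natural isomorphisms $\epsilon^n_{?}$, $\alpha^{0,n}$ and $\beta^{0,n}$, whereas identifying ``the morphism of $\ks$ represented by $f$'' with the explicit zig-zag $\langle\id_{B^{S^n}}\rangle\circ\Omega^n(\tilde f)\circ\langle\id_{J^nA}\rangle$ requires careful bookkeeping of the matrix-stabilization and subdivision identifications hidden in the construction of $\ks$ and in the word ``representing''.
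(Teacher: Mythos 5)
Your proposal is correct and takes essentially the same route as the paper: it too replaces $\alpha^{0,n}_A$ and $\beta^{0,n}_B$ by the composites $\langle\id_{J^nA}\rangle^{-1}\circ\epsilon^n_{J^nA}$ and $\langle\id_{B^{S^n}}\rangle\circ\epsilon^n_{B^{S^n}}$ via Lemmas \ref{lem:trialpha} and \ref{lem:tribeta}, uses naturality of $\epsilon^n_?$ along the factors of $\tilde f$, and then reduces everything to identifying the morphism represented by $f$ with the zig-zag through $\langle\id_{J^nA}\rangle$, the shifted $f$, and the stabilization isomorphisms. The ``main obstacle'' you flag is precisely the step the paper handles by citing \cite{loopstho}*{Lemma 7.10} (for the $\fk$-level identification) together with the naturality of $\langle\id_{A^{S^n}_r}\rangle$ from Lemma \ref{lem:idSn} for the matrix- and subdivision-stabilization squares, so your plan matches the paper's ladder-diagram argument in substance.
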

\begin{proof}
    By Lemmas \ref{lem:tribeta} and \ref{lem:trialpha} we may replace $\alpha^{0,n}_A$ by the composite
    \[\xymatrix@C=4em{(\Sigma^n J^nA,0)\ar[r]^-{\epsilon^n_{J^nA}}_-{\cong} & (J^nA,-n)\ar[r]^-{\langle \id_{J^nA}\rangle^{-1}}_-{\cong} & (A,0)}\]
    and $\beta^{0,n}_B$ by the composite
    \[\xymatrix@C=4em{(\Sigma^n B^{S^n},0)\ar[r]^-{\epsilon^n_{B^{S^n}}}_-{\cong} & (B^{S^n},-n)\ar[r]^-{\langle\id_{B^{S^n}}\rangle}_-{\cong} & (B,0)};\]
    note that the signs $(-1)^n$ cancel out. Consider the following diagram in $\ks$:
    \[\xymatrix@R=1.5em@C=5em{
    (\Sigma^nJ^nA,0)\ar[d]_-{f_*}\ar[r]^-{\epsilon^n}_-{\cong} & (J^nA,-n)\ar[d]_-{f_*}\ar[r]^-{\langle\id_{J^nA}\rangle^{-1}}_-{\cong} & (A,0)\ar[d]^-{f_*} \\
    (\Sigma^nM_pM_\infty B^{S^n}_r,0)\ar[r]^-{\epsilon^n}_-{\cong}\ar[d]_-{\iota_*^{-1}} & (M_pM_\infty B^{S^n}_r,-n)\ar[r]^-{\langle\id_{M_pM_\infty B^{S^n}}\rangle}_-{\cong}\ar[d]_-{\iota_*^{-1}} & (M_pM_\infty B,0)\ar[d]^-{\iota_*^{-1}} \\
    (\Sigma^nM_\infty B^{S^n}_r,0)\ar[r]^-{\epsilon^n}_-{\cong}\ar[d]_-{\iota_*^{-1}} & (M_\infty B^{S^n}_r,-n)\ar[r]^-{\langle\id_{M_\infty B^{S^n}}\rangle}_-{\cong}\ar[d]_-{\iota_*^{-1}} & (M_\infty B,0)\ar[d]^-{\iota_*^{-1}} \\
    (\Sigma^nB^{S^n}_r,0)\ar[r]^-{\epsilon^n}_-{\cong}\ar[d]_-{\gamma_*^{-1}} & (B^{S^n}_r,-n)\ar[r]^-{\langle\id_{B^{S^n}}\rangle}_-{\cong}\ar[d]_-{\gamma_*^{-1}} & (B,0)\ar@{=}[d] \\
    (\Sigma^n B^{S^n},0)\ar[r]^-{\epsilon^n}_-{\cong} & (B^{S^n},-n)\ar[r]^-{\langle\id_{B^{S^n}}\rangle}_-{\cong} & (B,0)
    }\]
    Since the composite of the vertical morphisms on the left column is $\Sigma^n\tilde{f}$ and the composite of the morphisms on the right column is $\alpha$, the result will follow if we prove that the outer square commutes. The squares on the left commute bu naturality of $\epsilon^n$. The upper-right square commutes by \cite{loopstho}*{Lemma 7.10}. The remaining squares commute by the naturality stated in Lemma \ref{lem:idSn}. This finishes the proof.
\end{proof}

\begin{bibdiv}
\begin{biblist}

\bib{brown}{article}{
 Author = {Brown, Kenneth S.},
 Title = {Abstract homotopy theory and generalized sheaf cohomology},
 Journal = {Trans. Am. Math. Soc.},
 ISSN = {0002-9947},
 Volume = {186},
 Pages = {419--458},
 Year = {1974},
 Language = {English},
 DOI = {10.1090/S0002-9947-1973-0341469-9},
}
\bib{bunke}{article}{
   author={Bunke, Ulrich},
   title={KK- and E-theory via homotopy theory},
   Journal = {Orbita Math.},
 ISSN = {2993-6152},
 Volume = {1},
 Number = {2},
 Pages = {103-210},
 Year = {2024},
 DOI = {10.2140/om.2024.1.103},
}

\bib{bel}{article}{
   author={Bunke, Ulrich},
   author={Engel, Alexander},
   author={Land, Markus},
   title={A stable $\infty$-category for equivariant KK-theory},
   eprint={https://arxiv.org/abs/2102.13372},
}

\bib{cis}{book}{
   author={Cisinski, Denis-Charles},
   title={Higher categories and homotopical algebra},
   series={Cambridge Studies in Advanced Mathematics},
   volume={180},
   publisher={Cambridge University Press, Cambridge},
   date={2019},
   pages={xviii+430},
   isbn={978-1-108-47320-0},
   review={\MR{3931682}},
   doi={10.1017/9781108588737},
}

\bib{friendly}{article}{
   author={Corti\~nas, Guillermo},
   title={Algebraic v. topological $K$-theory: a friendly match},
   conference={
      title={Topics in algebraic and topological $K$-theory},
   },
   book={
      series={Lecture Notes in Math.},
      volume={2008},
      publisher={Springer, Berlin},
   },
   isbn={978-3-642-15707-3},
   date={2011},
   pages={103--165},
   review={\MR{2762555}},
   doi={10.1007/978-3-642-15708-0\_3},
}

\bib{cortho}{article}{
   author={Corti{\~n}as, Guillermo},
   author={Thom, Andreas},
   title={Bivariant algebraic $K$-theory},
   journal={J. Reine Angew. Math.},
   volume={610},
   date={2007},
   pages={71--123},
   issn={0075-4102},
   review={\MR{2359851 (2008i:19003)}},
   doi={10.1515/CRELLE.2007.068},
}

\bib{newlook}{article}{
   author={Cuntz, Joachim},
   title={A new look at $KK$-theory},
   journal={$K$-Theory},
   volume={1},
   date={1987},
   number={1},
   pages={31--51},
   issn={0920-3036},
   review={\MR{0899916}},
   doi={10.1007/BF00533986},
}

\bib{cuntzlc}{article}{
   author={Cuntz, Joachim},
   title={Bivariant $K$-theory and the Weyl algebra},
   journal={$K$-Theory},
   volume={35},
   date={2005},
   number={1-2},
   pages={93--137},
   issn={0920-3036},
   review={\MR{2240217}},
   doi={10.1007/s10977-005-3464-0},
}

\bib{ekk}{article}{
   author={Ellis, Eugenia},
   title={Equivariant algebraic $kk$-theory and adjointness theorems},
   journal={J. Algebra},
   volume={398},
   date={2014},
   pages={200--226},
   issn={0021-8693},
   review={\MR{3123759}},
   doi={10.1016/j.jalgebra.2013.09.023},
}

\bib{er}{article}{
 Author = {Ellis, Eugenia},
 Author = {Rodr{\'{\i}}guez Cirone, Emanuel},
 Title = {Algebraic {{\(k\,k\)}}-theory and the {KH}-isomorphism conjecture},
 Journal = {Doc. Math.},
 ISSN = {1431-0635},
 Volume = {29},
 Number = {2},
 Pages = {399--456},
 Year = {2024},
 Language = {English},
 DOI = {10.4171/DM/953}
}
\bib{garkuni}{article}{
   author={Garkusha, Grigory},
   title={Universal bivariant algebraic $K$-theories},
   journal={J. Homotopy Relat. Struct.},
   volume={8},
   date={2013},
   number={1},
   pages={67--116},
   issn={2193-8407},
   review={\MR{3031594}},
   doi={10.1007/s40062-012-0013-4},
}

\bib{gar1}{article}{
   author={Garkusha, Grigory},
   title={Algebraic Kasparov $K$-theory. I},
   journal={Doc. Math.},
   volume={19},
   date={2014},
   pages={1207--1269},
   issn={1431-0635},
   review={\MR{3291646}},
}

\bib{gar2}{article}{
 Author = {Garkusha, Grigory},
 Title = {Algebraic {Kasparov} {{\(K\)}}-theory. {II}},
 Journal = {Ann. \(K\)-Theory},
 ISSN = {2379-1683},
 Volume = {1},
 Number = {3},
 Pages = {275--316},
 Year = {2016},
 DOI = {10.2140/akt.2016.1.275},
}

\bib{goja}{book}{
Author = {Paul G. Goerss and John F. Jardine},
 Title = {Simplicial homotopy theory},
 Series = {Prog. Math.},
 ISSN = {0743-1643},
 Volume = {174},
 ISBN = {3-7643-6064-X},
 Year = {1999},
 Publisher = {Basel: Birkh{\"a}user},
}

\bib{higson}{article}{
   author={Higson, Nigel},
   title={A characterization of $KK$-theory},
   journal={Pacific J. Math.},
   volume={126},
   date={1987},
   number={2},
   pages={253--276},
   issn={0030-8730},
   review={\MR{0869779}},
}

\bib{kasparov}{article}{
   author={Kasparov, G. G.},
   title={The operator $K$-functor and extensions of $C\sp{\ast} $-algebras},
   language={Russian},
   journal={Izv. Akad. Nauk SSSR Ser. Mat.},
   volume={44},
   date={1980},
   number={3},
   pages={571--636, 719},
   issn={0373-2436},
   review={\MR{0582160}},
}

\bib{ln}{article}{
   author={Land, Markus},
   author={Nikolaus, Thomas},
   title={On the relation between $K$- and $L$-theory of $C^*$-algebras},
   journal={Math. Ann.},
   volume={371},
   date={2018},
   number={1-2},
   pages={517--563},
   issn={0025-5831},
   review={\MR{3788857}},
   doi={10.1007/s00208-017-1617-0},
}

\bib{HA}{article}{
   author={Lurie, Jacob},
   title={Higher Algebra},
   eprint={https://www.math.ias.edu/~lurie/},
}

\bib{HT}{book}{
   author={Lurie, Jacob},
   title={Higher topos theory},
   series={Annals of Mathematics Studies},
   volume={170},
   publisher={Princeton University Press, Princeton, NJ},
   date={2009},
   pages={xviii+925},
   isbn={978-0-691-14049-0},
   isbn={0-691-14049-9},
   review={\MR{2522659}},
   doi={10.1515/9781400830558},
}

\bib{mn}{article}{
   author={Meyer, Ralf},
   author={Nest, Ryszard},
   title={The Baum-Connes conjecture via localisation of categories},
   journal={Topology},
   volume={45},
   date={2006},
   number={2},
   pages={209--259},
   issn={0040-9383},
   review={\MR{2193334}},
   doi={10.1016/j.top.2005.07.001},
}

\bib{muro}{article}{
 Author = {Muro, Fernando},
 author = {Schwede, Stefan},
 author = {Strickland, Neil},
 Title = {Triangulated categories without models},
 Journal = {Invent. Math.},
 ISSN = {0020-9910},
 Volume = {170},
 Number = {2},
 Pages = {231--241},
 Year = {2007},
DOI = {10.1007/s00222-007-0061-2},
}

\bib{tesisema}{thesis}{
   author={Rodr\'{\i}guez Cirone, Emanuel},
   title={Bivariant algebraic $K$-theory categories and a spectrum for $G$-equivariant bivariant algebraic $K$-theory},
   type={Ph.D. Thesis},
   date={2017},
   organization={Universidad de Buenos Aires},
   eprint={http://cms.dm.uba.ar/academico/carreras/doctorado/tesisRodriguez.pdf},
}

\bib{htpysimp}{article}{
   author={Rodr\'{\i}guez Cirone, Emanuel},
   title={The homotopy groups of the simplicial mapping space between
   algebras},
   journal={Doc. Math.},
   volume={24},
   date={2019},
   pages={251--270},
   issn={1431-0635},
   review={\MR{3946708}},
}

\bib{loopstho}{article}{
   author={Rodr\'{\i}guez Cirone, Emanuel},
   title={The loop-stable homotopy category of algebras},
   journal={J. Algebra},
   volume={555},
   date={2020},
   pages={157--222},
   issn={0021-8693},
   review={\MR{4081492}},
   doi={10.1016/j.jalgebra.2020.02.024},
}

\bib{scho}{article}{
 Author = {Schochet, Claude},
 Title = {Topological methods for {{\(C^*\)}}-algebras. {III}: {Axiomatic} homology},
 Journal = {Pac. J. Math.},
 Volume = {114},
 Pages = {399--445},
 Year = {1984},
 DOI = {10.2140/pjm.1984.114.399},
}

\bib{uuye}{article}{
   author={Uuye, Otgonbayar},
   title={Homotopical algebra for $\rm C^*$-algebras},
   journal={J. Noncommut. Geom.},
   volume={7},
   date={2013},
   number={4},
   pages={981--1006},
   issn={1661-6952},
   review={\MR{3148615}},
   doi={10.4171/JNCG/141},
}

\bib{chuck}{article}{
   author={Weibel, Charles A.},
   title={Homotopy algebraic $K$-theory},
   conference={
      title={Algebraic $K$-theory and algebraic number theory},
      address={Honolulu, HI},
      date={1987},
   },
   book={
      series={Contemp. Math.},
      volume={83},
      publisher={Amer. Math. Soc., Providence, RI},
   },
   isbn={0-8218-5090-3},
   date={1989},
   pages={461--488},
   review={\MR{0991991}},
   doi={10.1090/conm/083/991991},
}

\end{biblist}
\end{bibdiv}
\end{document}